\newtheorem{corollary}{Corollary}[section]
\newtheorem{lemma}[corollary]{Lemma}
\newtheorem{proposition}[corollary]{Proposition}
\newtheorem{theorem}[corollary]{Theorem}
\newcommand{\Prob} {{\mathbb P}}
\newcommand{\Z}{{\mathbb Z}}
\newcommand{\E}{{\mathbb E}}
\newcommand{\R}{{\mathbb{R}} }
\newcommand{\dist}{{\rm dist}}
\def\H{\mathbb{H}}
\def\R{\mathbb{R}}
\def\Z{\mathbb{Z}}
\def\e{\epsilon}
\def\g{\gamma}
\def\k{\kappa}
\def \Im {{\rm Im}}
\def \Re {{\rm Re}}
\def \p {\partial}
\def \Half {{\mathbb H}}
\def \Disk {{\mathbb D}}
\def \diam {{\rm diam}}
\def \hcap {{\rm hcap}}
\def \F {{\cal F}}
\def \hm {{\rm hm }}
\newenvironment{definition}[1][Definition]{\begin{trivlist}
\item[\hskip \labelsep {\bfseries #1}]}{\end{trivlist}}
\def  \newernot{\Phi}
\begin{document}

\title{Basic properties of the natural parametrization for the
Schramm-Loewner evolution}

\author{Gregory F. Lawler \thanks{Research supported by National
Science Foundation grant DMS-0907143.}\\
University of Chicago  \\ \\ \\
Mohammad A. Rezaei\\ University of Chicago}

\maketitle

\begin{abstract}
The natural paramterization or length for the Schramm-Loewner 
evolution ($SLE_\kappa$) is the candidate for the scaling limit of the  length 
of discrete curves for
$\kappa < 8$.  We improve the proof of the existence of the
parametrization and use this to establish some new
results.   In particular,
we show that the natural  parametrization
is independent of domain and
it  is H\"older continuous with respect to the capacity
parametrization.     We also give up-to-constants bounds
for the two-point Green's function.  Although we do not
prove the conjecture that the natural length is given by
the appropriate Minkowski content, we do prove that
the corresponding expectations converge.

\end{abstract}

\section{Introduction}

 A number of measures on paths or clusters on two-dimensional
lattices arising from critical statistical mechanical
models are believed to exhibit some kind of conformal
invariance in the
scaling limit.
Schramm introduced a one-parameter family of such processes,
now called the
{\em (chordal) Schramm-Loewner evolution with parameter $\kappa$ ($SLE_\kappa$)},
and showed that these give the only possible limits for conformally
invariant processes in simply connected domains satisfying a certain
``domain Markov property''.  He defined the process as a 
probability
measure on curves
from $0$ to $\infty$ in $\Half$, and then used conformal invariance
to define the process in other simply connected domains.

The definition of the process in $\Half$ uses  parametrization
by {\em half-plane capacity} (see Section \ref{SLEdef} for definitions).    Suppose $\gamma:(0,t] \rightarrow \overline{\Half}$ is a
(non-crossing) curve
parameterized so that $\hcap(\gamma(0,t]) = at$ for some constant $a > 0$.
We write $\gamma_t$ for the set of points $\gamma(0,t]$.
Let $H_t$ denote the unbounded component of $\Half \setminus \gamma_t$
and $g_t: H_t \rightarrow \Half$ be
the unique conformal transformation with
$g_t(z) - z = o(1)$ as $z \rightarrow \infty$.  Then the following holds.
\begin{itemize}
\item  For $z \in \Half$, the map $t \mapsto g_t(z)$ is a smooth flow and satisfies
the Loewner differential equation
\[                  \p_t g_t(z) = \frac{a}{g_t(z) - U_t} , \;\;\;\; g_0(z) = z , \]
where $U_t$ is a continuous function on $\R$.
\item If $t,s > 0$, and $\eta(r) = g_t(\gamma(t+r))$,
\[             as = \hcap\left(\gamma_{t+s}\right) - \hcap\left(\gamma_t
\right) = \hcap\left(\eta_s\right). \]
\end{itemize}
Schramm defined chordal $SLE_\kappa$ to be the solution to the Loewner equation
with $a = 2$ and $U_t$ a Brownian motion with variance parameter $\kappa$.
An equivalent definition (up to a linear time change)
 which we use in this paper is to choose $U_t$ to be
a standard Brownian motion and $a = 2/\kappa$.  It has been shown that
a number of discrete random models have $SLE$ as the scaling limit provided
that the discrete models are parameterized using (discrete) half-plane
capacity.  Examples are loop-erased random walk for $\kappa=2$
\cite{LSW}, Ising interfaces for $\kappa=3$ \cite{Smir2},
harmonic explorer for $\kappa=4$ \cite{SS}, percolation interfaces on the triangular lattice for $\kappa=6 $ \cite{Smir1},
and  uniform spanning trees for $\kappa=8$
\cite{LSW}.

If $D$ is a simply connected domain with distinct boundary points $z,w$, then
chordal $SLE_\kappa$ from $z$ to $w$ in $D$ is defined by taking the conformal
image of $SLE_\kappa$ in the upper half plane under a transformation
$F: \Half \rightarrow D$ with $F(0) = z, F(\infty) = w$.  The map $F$ is not
unique, but scale invariance of $SLE$ in $\Half$ shows that the distribution on
paths is independent of the choice.  This can be considered as a measure
on the curves $F \circ \gamma$
with the induced parametrization  or as a measure
on curves modulo reparameterization.


While the capacity parametrization is useful for analyzing the curve, it is not
the scaling limit of the ``natural'' parametrization of the discrete models.  For
example, for loop-erased walks, it is natural to parameterize by the length of
the random walk.  One can ask whether the curves parameterized by a normalized
version of this ``natural length'' converge to $SLE$ with a different parametrization.
The Hausdorff dimension of the $SLE$ paths \cite{Bf} is  $d=1+\rm{min}\{\frac{\kappa}{8},1\}$.
It is conjectured, but still unproven, that the ``natural length'' of an $SLE$ path can be
given by an appropriate $d$-dimensional ``measure''.  If $\k \geq 8$, then the
paths are plane-filling, and we can choose the measure of $\gamma_t$ to be the
area of $\gamma_t$.  For the remainder of this paper we consider the case $\k < 8$
for which $1 < d < 2$.

\def \mcon { {\rm Cont}_d}
A candidate for the natural length of $\gamma(0,t]$ is the $d$-dimensional
Minkowski content defined as follows.  Let $f(\epsilon)$
be a positive function with $f(\epsilon) \rightarrow
0$ and $\epsilon/f(\epsilon)
\rightarrow 0$ as $\epsilon \downarrow 0$.  Let
\[  \mcon(\gamma_t;\epsilon,f) =  \epsilon^{d-2}
                     {\rm Area} \left\{z: \dist(z,\gamma_t) \leq \epsilon,
                     \dist(z,\p D) \geq f(\epsilon) \right\}, \] and
\begin{equation}  \label{oct12.2}
              \mcon(\gamma_t) =   
               \lim_{\epsilon \rightarrow 0+} \mcon(\gamma_t;\epsilon,f)  ,
                     \end{equation}
   provided that the limit exists and is independent of $f$.
It is not known whether or not this limit exists.  For the moment  let
us assume that it does  and,
moreover, that
the function $t \mapsto  \mcon(\gamma_t)  $ is continuous and strictly increasing.
In this case we can reparameterize $\gamma$ by
\[       \tilde \gamma(t) = \gamma(\sigma_t), \;\;\;\;\
  \sigma_t = \inf\{s: \mcon(\gamma_s) = t \}. \]
 We could now {\em define} $SLE_\kappa$ to be the measure on curves
 $\tilde \gamma$ with this ``natural'' parametrization.  Among the
 properties that this would have are the following.
 \begin{itemize}
 \item  Suppose $\tilde \gamma_t$ is an initial segment of an
 $SLE_\kappa$ from $0$ to $\infty$ in $\Half$ and $\tilde \gamma_t
 \subset D \subset \Half$.  If we consider $\tilde \gamma_t$ as being an $SLE_\kappa$
 path in $D$ instead, the amount of time to traverse $\tilde{\gamma}_t$
 is the same in $D$ as in $\Half$.
 \item Suppose $D$ is a simply connected domain with distinct boundary
 points $z,w$; $F: \Half \rightarrow D$ is a conformal transformation
 with $F(0) = z, F(\infty) = w$; and $\tilde \gamma(t)$ is an $SLE_\kappa$ curve
 in $\Half$ with the natural parametrization.   Then
 \[  \mcon(F \circ \tilde \gamma_t) = \int_0^t |F'(\tilde \gamma(s))|^d
  \, ds. \]
  In particular, to define $SLE_\kappa$ in $D$ with the natural parametrization,
  one lets
  \[         \tilde \eta(t) =   F \circ \tilde \gamma(\sigma_t) , \]
  where $\sigma_t$ is defined by
  \[           \int_0^{\sigma_t} |F'(\tilde \gamma(s))|^d
  \, ds = t  \]
  \item  Suppose $D$ is a bounded domain with locally analytic
  boundary points $z,w$,  and  $\gamma$ is an $SLE$ path
  from $z$ to $w$ (using any parametrization).  Let $\gamma^R$
  denote the reversed path from $w$ to $z$.  Then
  \[            \mcon(\gamma) = \mcon(\gamma^R) < \infty. \]
  Moreover,
\begin{equation}  \label{oct12.3}
 \E[\mcon(\gamma)] = c \int_D G_D(\zeta;z,w)\,
  dA(\zeta) ,
  \end{equation}
  where $c$ is a constant (depending only on $\kappa$),
  $dA$ denotes integration with respect to area,
 and $G_D(\zeta;z,w)$ denotes the ``Green's function'' for
 $SLE_\kappa$ in $D$.  The function $G_D$,
  whose definition is recalled in Section \ref{SLEdef},
  satisfies
  \begin{equation}  \label{oct12.1}
  \lim_{\epsilon \downarrow 0} \epsilon^{d-2}
   \, \Prob\{\dist(z,\gamma_\infty) \leq \epsilon\}
     = \hat c \, G(z) , 
  \end{equation}
  where $\hat c = \hat c_\kappa \in (0,\infty)$.

 \end{itemize}

It is still open to prove \eqref{oct12.2}.  It is known
that if we choose
\[       G(z) := G_\Half(z;0,\infty) =  \Im(z)^{d-2} \,
    [\sin \arg(z)]^{4a - 1} , \]
    then an analogue of \eqref{oct12.1} holds where
  distance is replaced with conformal radius.   One
  of the goals of this paper is to establish \eqref{oct12.1}
 although our proof does not determine the constant. 
(See Section \ref{SLEdef} for precise statements of what is
known and what we prove here.)
  For other simply connected domains, the
  Green's function can be computed using
  the scaling rule
  \[         G_D(\zeta;z,w) = |f'(\zeta)|^{2-d}
    \, G_{f(D)}(f(\zeta);f(z),f(w)). \]

In \cite{LS}, a different approach was taken to constructing the
natural parametrization, using \eqref{oct12.3} as the starting point.
For ease suppose that $D$ is a bounded domain and $z,w$ are
distinct boundary points with
\[                \Psi = \int_D G_D(\zeta;z,w)\,  d A(\zeta) < \infty . \]
Let $\gamma$ denote an $SLE_\kappa$ curve from $z$ to $w$
in $D$. Let us give the curve the capacity parametrization
inherited from capacity in $\Half$.  Let $\Theta_t$
denote the ``natural length'' of $\gamma_t$ (we expect that
this is a multiple of $\mcon[\gamma_t]$ but we do not
assume it as such).   Then
\[    \E[\Theta_\infty \mid \gamma_t] = \Theta_t + \E\left[\Theta_\infty
  - \Theta_t \mid \gamma_t\right]. \]
  Using \eqref{oct12.3}, we see that
  we would expect
\[  \E\left[\Theta_\infty
  - \Theta_t \mid \gamma_t\right] = \Psi_t :=  \int_{D_t} G_{D_t}(\zeta;
   \gamma(t),w) \, dA(\zeta), \]
where $D_t$ denotes the component of $D \setminus \gamma_t$
that contains $w$ in its boundary.  For each $\zeta$ one can
see that
\[    M_t(\zeta) := G_{D_t}(\zeta;
   \gamma(t),w)   \]
   is a positive local martingale and hence is a supermartingale.
 Therefore, $\Psi_t$ is a supermartingale.  However,
 $N_t := \E[\Theta_\infty \mid \gamma_t] $ should be a martingale.
Therefore, we {\em define} $\Theta_t$ to be the unique increasing
process such that
\[       \Psi_t + \Theta_t \]
is a martingale.  This is  a standard Doob-Meyer
decomposition.

In order to justify this definition, one needs to prove moment bounds.
Indeed, if $\Psi_t$ were actually a local martingale (which would not
be shocking since it is an integral of local martingales), then there would
be no nontrivial increasing process that we could add to $\Psi_t$
to make it a martingale.
\begin{itemize}
\item  In \cite{LS}, it was shown that for $\kappa < 5.0\cdots$, the process
$\Theta_t$ exists in $\Half$ (the definition has to be modified slightly
in $\Half$ because $\Psi_0$ as we have defined it above is infinite---
this is not very difficult).  The necessary second moment bounds
were obtained using the reverse Loewner flow.  It was shown that
for this range of $\kappa$, there exists $\alpha_0 = \alpha_0(\kappa)
>0$ such that the function $t \mapsto \Theta_t$
is H\"older continuous of order $\alpha$ for $\alpha<\alpha_0$ .
\item In \cite{LZ}, the natural parametrization was shown to exist
for all $\kappa < 8$.  There the necessary two-point estimates
were obtained from estimates on the ``two-point Green's function''
\cite{Bf,LW}.  However, the estimates were not strong enough to
determine H\"older continuity of the function $\Theta_t$.

\end{itemize}

This paper continues the study of the natural parametrization
which in turn leads to further study of the multi-point Green's
function.  We extend the definition of the Green's function and
prove some important bounds; we describe these results
in the next section which outlines the paper.  The main new
results about the natural parametrization are the following.

\begin{itemize}

\item  We improve the proof in \cite{LZ} by establishing
that
for all $\kappa < 8$, the discrete
approximations of the natural parametrization
 converge in $L^1$.  Moreover, we show
 that  there exists $\alpha_0 > 0$ such that the
$t \mapsto \Theta_t$ is H\"older continuous of order
$\alpha < \alpha_0$.

\item  We prove that the natural parametrization is ``independent
of domain''.  In other words, we prove the first property that
we listed that the  ``natural parametrization'' should satisfy.

\end{itemize}

We do not establish the reversibility of the
natural parametrization.  It is our hope that the results in this
paper will help us in establishing the limit \eqref{oct12.2}.

\subsection{Overview of the paper}

We start with a review of the Schramm-Loewner evolution (SLE) and the
notation we will use in Section \ref{SLEdef}.  This includes the
definition of the Green's function $G(z)$ and its relationship to the
probability of $SLE$ getting close to a point.  Roughly speaking,
$G(z)$ is the normalized probability that $SLE$ hits $z$.
 The next subsection
introduces the time-dependent Green's function $G^t(z)$ which
corresponds (again, roughly) to the probability that the $SLE$
path hits $z$ by time $t$ in the capacity parametrization.  This function
appears implicitly in \cite{LS} as $G(z) \, \phi(t;z)$, but we find it
useful to formalize this and to prove some estimates.
Section \ref{two-pointsec} studies the two-point Green's function
as introduced in \cite{LW} and defines a time-dependent version of
it.   Two important estimates are stated in this section.  Theorem
\ref{sept8.theorem}, which we label as a theorem because we believe the
estimate will be useful for others, completes the work in \cite{Bf,LW} by
giving a two-sided up-to-constants estimate for the (time
independent) two-point
Green's function.  Lemma \ref{may14.lemma1} gives a time-dependent
version of a two-point estimate from \cite{LZ}.  Both of these
estimates are important in our study of the natural parametrization.
We delay the proofs of these estimates to  
Section \ref{boundssec}.  We do derive some corollaries of these
estimates in Section \ref{two-pointsec}.

We define the natural parametrization in $\Half$ in Section
\ref{nathalfsec}.  Although the definition is the same as that
in \cite {LS,LZ}, we phrase the definition
in terms of  the time-dependent Green's function.   In the next
subsection we prove  the
existence and H\"older continuity of the parametrization.  Our
proof combines ideas in \cite{LS,LZ} as   well as
the H\"older continuity of the $SLE$ path for $\kappa < 8$.
Section \ref{usefulsec}  proves two lemmas that are used in
the subsequent subsection to establish the independence
of the natural parametrization and the domain.  An exact
statement of the independence is given in Theorem \ref{indyprop}.

Section \eqref{boundssec} gives proofs of two of the main
estimates.  These results generalize results from previous
papers, and the arguments rely on the work in those papers.
Lemma \ref{may14.lemma1} extends a result in \cite{LZ} 
to time-dependent Green's functions and uses one fact from
that paper. 
Proposition   \ref{nov17.prop1}, which is an important step in the proof of Theorem \ref{sept8.theorem}, extends a result in
\cite{LW}.   The main extension is to allow the interior target
points to close to the boundary.  

The final section gives the proof of \eqref{oct12.1}.  This proof 
uses properties of two-sided radial $SLE$ but 
is independent of the other results in this paper.

\section{Green's functions for $SLE$}  

\subsection{Schramm-Loewner evolution (SLE) and notation}  \label{SLEdef}

In this section we  review   the Schramm-Loewner evolution and the
chordal Green's function.
  See \cite{Law1,Law2} for   more details.

Suppose  that $\gamma:(0,\infty) \rightarrow
\Half =\{x+iy: y > 0\}$
is a  curve with $\gamma(0+) \in
\R$ and $\gamma(t) \rightarrow \infty$ as $t \rightarrow
\infty$.  Let $H_t$ be the unbounded component of
$\Half \setminus \gamma(0,t]$.   Using the Riemann mapping
theorem, one    can see that there is a unique conformal
transformation
\[            g_t: H_t \longrightarrow \Half \]
satisfying $g_t(z) - z \rightarrow 0$ as $z \rightarrow \infty$.
It has an expansion at infinity
\[           g_t(z) = z + \frac{a(t)}{z} + O(|z|^{-2}). \]
The coefficient $a(t)$  equals
$\hcap(\gamma(0,t])$ where $\hcap(A)$ denotes
the half-plane capacity from infinity of a bounded set
$A$.  There are a number of ways of defining $\hcap$, e.g.,
\[             \hcap(A) = \lim_{y \rightarrow \infty}
   y\,  \E^{iy}[\Im(B_{\tau})], \]
where $B$ is a complex Brownian motion and $\tau = \inf\{t:
B_t \in \R \cup A\}$.

We assume that $\gamma$ is a non-crossing curve; by this we
mean that
  for each $t$,
$\gamma(t) \in \p H_t$ and the image $V_t = g_t(\gamma(t))$
is well defined and a continuous function of $t$.  If $\gamma$ is simple,
then it is non-crossing, but there are non-simple, non-crossing
curves.
Then $g_t$ satisfies the
  {\em  (chordal) Loewner equation}
\begin{equation}  \label{loew}
       \dot g_t(z) = \frac{a}{g_t(z) - V_t} , \;\;\;\;
   g_0(z) = z,
\end{equation}
where $V_t = g_t(\gamma(t))$ is a continuous function.

Conversely, one can start with a continuous
real-valued function $V_t$  and define $g_t$ by \eqref{loew}.
For $z
\in \Half \setminus \{0\}$, the function $t \mapsto g_t(z)$ is
well
defined up to time $T_z := \sup\{t: \Im[g_t(z)]> 0\}$.
The {\em (chordal) Schramm-Loewner evolution (SLE) (from $0$
to $\infty$ in $\Half$)} is the solution to \eqref{loew}
where  $V_t = - B_t$ is a standard Brownian motion and
$a = 2/\kappa$.
There exists a random non-crossing curve $\gamma$, which
is also called $SLE$,
 such that $g_t$ comes from the curve
$\gamma$ as above.
Moreover, if $H_t$ denotes the unbounded component
of $\Half \setminus \gamma(0,t]$, then
\[   H_t = \{z \in \Half: T_z > t\}. \]
 We write \[    f_t(z) = g_t^{-1}(z + V_t). \]


 Throughout this paper we set $ \k = 2/a$.
There are three phases for the curve \cite{RS}: for $\k \leq 4$,
the curve is simple; if $\kappa \geq 8$, the curve is plane-filling;
and for $4 < \k  < 8$, the curve has self-intersections but is
not space-filling.  We will consider $\kappa < 8$ in this paper
in which case \cite{Bf} the Hausdorff dimension of $\gamma_t$
is
\begin{equation}  \label{nov28.20}
       d = 1 + \frac \kappa 8 = 1 + \frac 1{4a}.
\end{equation}

We will use the scaling property of $SLE$ that we recall in a proposition.

\begin{proposition}  Suppose $U_t$ is a standard
Brownian motion and let $g_t$ be the solution
to the Loewner equation \eqref{loew} with $V_t = U_t$
producing the $SLE_\kappa$ curve $\gamma(t)$.
Let $r > 0$ and define
\[   \hat \gamma(t) = r^{-1} \, \gamma(r^2 t), \;\;\;\;
    \hat g_t(z) = r^{-1} \, g_{r^2t}(rz), \;\;\;\;
       \hat U_t = r^{-1} \, U_{r^2t}. \]
Then $\hat \gamma(t)$ has the distribution of $SLE_\kappa$.
Indeed, $\hat g_t(z)$ is the solution to
\eqref{loew} with   $V_t = \hat U_t$.
\end{proposition}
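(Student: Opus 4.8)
The plan is to verify directly that $\hat g_t$ solves the Loewner equation with driving function $\hat U_t$ (the deterministic ``indeed'' part of the statement), and then to obtain the distributional claim from Brownian scaling.

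First I would carry out the differentiation. Since $\hat g_t(z) = r^{-1} g_{r^2 t}(rz)$, the chain rule gives
\[
  \p_t \hat g_t(z) = r^{-1}\cdot r^2 \cdot \dot g_{r^2 t}(rz)
  = r \cdot \frac{a}{g_{r^2 t}(rz) - U_{r^2 t}}
  = \frac{a}{\hat g_t(z) - \hat U_t},
\]
where the last equality uses $V_t = U_t$ together with the definitions $\hat U_t = r^{-1} U_{r^2 t}$ and $\hat g_t(z) = r^{-1} g_{r^2 t}(rz)$. The initial condition is immediate, $\hat g_0(z) = r^{-1} g_0(rz) = z$, so $\hat g_t$ is the Loewner flow of \eqref{loew} with $V_t = \hat U_t$.

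Next I would check that $\hat g_t$ is genuinely the conformal map associated with the curve $\hat\gamma$, so that $\hat\gamma$ is the curve generated by this flow. Writing $H_t$ for the unbounded component of $\Half\setminus\gamma(0,t]$, the set $\hat\gamma(0,t] = r^{-1}\gamma(0,r^2 t]$ has unbounded complementary component $r^{-1}H_{r^2 t}$, and $z\mapsto r^{-1} g_{r^2 t}(rz)$ maps this set conformally onto $r^{-1}\Half = \Half$ with $\hat g_t(z) - z = r^{-1}(g_{r^2 t}(rz) - rz)\to 0$ as $z\to\infty$; by uniqueness in the Riemann mapping theorem this is the normalized map for $\hat\gamma_t$. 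Because half-plane capacity scales as $\hcap(rA) = r^2\,\hcap(A)$, we get $\hcap(\hat\gamma(0,t]) = r^{-2}\,\hcap(\gamma(0,r^2 t]) = r^{-2}\cdot a\,r^2 t = at$, so $\hat\gamma$ is still parametrized by capacity, consistent with the constant $a$ in \eqref{loew}. Finally $\hat g_t(\hat\gamma(t)) = r^{-1} g_{r^2 t}(\gamma(r^2 t)) = r^{-1}V_{r^2 t} = \hat U_t$, and scaling a non-crossing curve by $r^{-1}$ leaves it non-crossing, so $\hat\gamma$ is exactly the curve encoded by $\hat g_t$, and $\hat\gamma(t)\to\infty$ since $\gamma(r^2 t)\to\infty$.

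It then remains only to identify the law. By the scaling property of Brownian motion, $\hat U_t = r^{-1} U_{r^2 t}$ is again a standard Brownian motion, so $\hat g_t$ is the Loewner flow driven by a standard Brownian motion with the same $a = 2/\kappa$, which is by definition $SLE_\kappa$; hence $\hat\gamma$ has the distribution of $SLE_\kappa$. There is no serious obstacle here: the computation is routine, and the only point requiring a little care is the bookkeeping in the middle step---confirming that the rescaled flow is the Riemann-mapping-normalized map for the rescaled curve and that the induced time change is precisely $t\mapsto r^2 t$---which follows from uniqueness of the conformal map and the quadratic scaling of half-plane capacity.
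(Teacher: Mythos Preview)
Your argument is correct and is exactly the standard verification. Note that the paper does not actually supply a proof of this proposition: it is stated as a recalled fact (``the scaling property of $SLE$ that we recall in a proposition'') and used freely thereafter, so there is nothing to compare against beyond observing that your direct chain-rule computation together with Brownian scaling is the expected justification.
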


$SLE_\kappa$ in other simply connected domains is
 defined by conformal
invariance.  To be more precise, suppose that $D$ is a simply connected
domain and $w_1,w_2$ are distinct points in $\p D$.  Let $F: \Half
\rightarrow D$ be a conformal transformation of $\Half$ onto $D$ with
$F(0) = w_1, F(\infty) = w_2$.  Then the distribution of
\[        \tilde \gamma(t) = F \circ \gamma(t) , \]
is that of $SLE_\kappa$ in $D$ from $w_1$ to $w_2$.  Although
the map $F$ is
not unique, scale invariance of $SLE_\kappa$ in $\Half$ shows that the
distribution is independent of the choice. This measure is often considered
as a measure on paths modulo reparameterization, but we can also consider
it as a measure on parameterized curves.

If $\gamma(t)$ is an $SLE_\kappa$ curve with transformations $g_t$
and driving function $U_t$, we write $\gamma_t = \gamma(0,t], \gamma
= \gamma_\infty$, and let $H_t$ be the unbounded component of
$\Half \setminus \gamma_t$.  If $z \in \Half$ and $t < T_z$, we let
\begin{equation}  \label{may24.1}
Z_t(z) = g_t(z) - U_t, \;\;\;\; S_t(z) = \sin \left[\arg Z_t(z)\right], \;\;\;\;
  \Upsilon_t(z) = \frac{\Im[g_t(z)]}{|g_t'(z)|}.
  \end{equation}
  If $Z_t(z) = X_t(z) + i Y_t(z)$, then the Loewner equation can
  be written as
  \[  dX_t = \frac{a\, X_t(z)}{|Z_t(z)|^2} \, dt + dB_t, \;\;\;\;
      \p_t Y_t(z) =  - \frac{a \, Y_t(z)}{|Z_t(z)|^2} , \]
 where $B_t = -U_t$.
More generally, if $D$ is a simply connected domain and $z \in D$, we
let $\Upsilon_D(z)$ denote $(1/2)$  times the conformal radius of
$D$ with respect to $z$, that is, if $F:\Disk \rightarrow D$ is a
conformal transformation with $F(0) = z$, then $|F'(0)| = 2\Upsilon_D(z)$.
Using the Schwarz lemma and the Koebe $(1/4)$-theorem we see that
\[  \frac{\Upsilon_D(z)}{2} \leq \dist(z,\p D) \leq 2 \, \Upsilon_D(z) . \]
It is easy to check that if $t < T_z$, then $\Upsilon_t(z)$ as given
in \eqref{may24.1} is the same as $\Upsilon_{H_t}(z)$.  Also,
if $z \not\in \gamma$, then
  $\Upsilon(z) := \Upsilon_{T_z-}(z) = \Upsilon_D(z)$
where $D$ denotes the connected component of $\Half \setminus
\gamma$ containing $z$.

Similarly, if $w_1,w_2$ are distinct boundary points on a simply
connected domain $D$
and $z \in D$, we define
\[            S_D(z;w_1,w_2) = \sin[\arg f(z)] , \]
where $f: D \rightarrow \Half$ is a conformal transformation with $f(w_1) = 0,
f(w_2) = \infty$.  If $t < T_z$, then
$S_t(z) = S_{H_t}(z;\gamma(t),\infty)$.  If $f:D \rightarrow f(D)$ is
a conformal transformation,
\[     S_D(z;w_1,w_2) = S_{f(D)}(f(z);f(w_1),f(w_2)). \]
If $\p_1,\p_2$ denote the two components of $\p D \setminus
\{w_1,w_2\}$, then
\begin{equation}  \label{hmeasure}
    S_D(z;w_1,w_2)  \asymp \min\left\{\hm_{D}(z,\p_1),
\hm_D(z,\p_2) \right\}.
\end{equation}
Here, and throughout this paper, $\hm$ will denote   harmonic measure; that is,
 $\hm_D(z,K)$ is the probability that a Brownian
motion starting at $z$ exits $D$ at $K$.

  Let
\begin{equation}  \label{green}
     G(z)
   =|z|^{d-2}  \, \sin^{\frac
  \kappa 8 + \frac 8{\kappa} -2} (\arg z)  = \Im(z)^{d-2}
    \, \sin^{4a-1}(\arg z)  ,
\end{equation}
denote the {\em (chordal) Green's function for $SLE_\kappa$ (in
$\Half$ from $0$ to $\infty$)}.  This function first appeared  in \cite{RS}
and the combination $(d,G)$
can be  characterized up to multiplicative constant
by the scaling rule $G(rz) =
r^{d-2} \, G(z)$ and the fact that
\begin{equation}  \label{localmart}
    M_t(z) := |g_t'(z)|^{2-d} \, G(Z_t(z))
\end{equation}
is a local martingale.
More generally, if $D$ is a simply connected domain with
distinct $w_1,w_2 \in \p D$, we define
\[    G_D(z;w_1,w_2) = \Upsilon_D(z)^{d-2} \ S_D(z;w_1,w_2)^{4a-1} . \]
The Green's function satisfies the conformal covariance rule
\[   G_D(z;w_1,w_2) = |f'(z)|^{2-d} \, G_{f(D)}(f(z);f(w_1),f(w_2)). \]
Note that if $t < T_z$, then
\[     M_t(z) = G_{H_t}(z;\gamma(t), \infty). \]
The local martingale $M_t(z)$ is not a martingale because
it ``blows up'' at time $t=T_z$.   If we stop it before that time, it
is actually a martingale.  To be precise, suppose that
\begin{equation} \label{nov18.1}
        \tau = \tau_{\epsilon,z} = \inf\{t: \Upsilon_t(z)
 \leq \epsilon\}.
 \end{equation}
 Then for every $\epsilon > 0$, $M_{t \wedge \tau}(z)$ is a
 martingale.  Moreover, on the event $\tau = \infty$, we have
 $M_\infty(z) = 0$.  Therefore, if $\sigma \leq \tau$ is a stopping
 time,
\begin{equation}
\label{oct18.1}
     G(z) = \E\left[M_\sigma(z) \right] = \E\left[|g_\sigma'(z)|^{2-d}
  \, G(Z_\sigma(z))\right].
  \end{equation}

The following is proved in \cite{Law2} (the proof there is in the upper half
plane, but it immediately extends by conformal invariance).

\begin{proposition} \label{mar14.prop1} Suppose $\kappa < 8$,
$z \in D, w_1,w_2 \in \p D$ and $\gamma$
is a chordal $SLE_\kappa$ path from $w_1$ to $w_2$ in $D$.  Let
$D_\infty$ denote the component of $D \setminus \gamma$ containing
$z$.  Then, as $\epsilon \downarrow 0$,
\[   \Prob\{\Upsilon_{D_\infty}(z) \leq
           \epsilon  \} \sim
  c_* \, \epsilon^{2-d} \, G_D(z;w_1,w_2) , \;\;\;\;
   c_*  = 2\,\left[\int_0^\pi  \sin^{4a}x \, dx\right]^{-1} . \]
\end{proposition}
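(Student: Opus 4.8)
The plan is to reduce to $D=\Half$ by conformal covariance and then read off the asymptotics from the local martingale $M_t(z)$ of \eqref{localmart}. For the reduction, take a conformal map $f:D\to\Half$ with $f(w_1)=0$, $f(w_2)=\infty$, put $z'=f(z)$, and use that the $SLE_\kappa$ path in $D$ is the $f^{-1}$-image of one in $\Half$, that $f$ carries $D_\infty$ onto the component of $\Half\setminus\gamma$ containing $z'$, and that conformal radii scale by $|f'(z)|$; hence $\{\Upsilon_{D_\infty}(z)\le\epsilon\}$ has the same probability as $\{\Upsilon(z')\le\epsilon\,|f'(z)|\}$ for $SLE_\kappa$ in $\Half$, and since $G_D(z;w_1,w_2)=|f'(z)|^{2-d}\,G(z')$ the statement for general $D$ follows from the case $D=\Half$, $w_1=0$, $w_2=\infty$. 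From now on $\gamma$ is $SLE_\kappa$ in $\Half$, $z\in\Half$ is fixed, and $\tau=\tau_{\epsilon,z}$ is as in \eqref{nov18.1}. Since $t\mapsto\Upsilon_t(z)$ is continuous and non-increasing with limit $\Upsilon_{D_\infty}(z)$, the event $\{\tau<\infty\}$ differs from $\{\Upsilon_{D_\infty}(z)\le\epsilon\}$ only inside $\{\Upsilon_{D_\infty}(z)=\epsilon\}$, which is harmless for the stated asymptotics (monotonicity in $\epsilon$, or that this distribution has no atoms for all but countably many $\epsilon$); so it suffices to estimate $\Prob\{\tau<\infty\}$.

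From \eqref{green} and \eqref{may24.1} we have $M_t(z)=|g_t'(z)|^{2-d}\,G(Z_t(z))=\Upsilon_t(z)^{d-2}\,S_t(z)^{4a-1}$. For $t\le\tau$ we have $\Upsilon_t(z)\ge\epsilon$ and $S_t(z)\le1$, and since $d-2<0$ this gives $M_{t\wedge\tau}(z)\le\epsilon^{d-2}$: thus $M_{t\wedge\tau}(z)$ is a bounded martingale. Letting $t\to\infty$, using that $M_\infty(z)=0$ on $\{\tau=\infty\}$ — equivalently $S_t(z)\to0$ on the a.s.\ event $\{z\notin\gamma\}$, which is where $\kappa<8$ enters — and that $\Upsilon_\tau(z)=\epsilon$ on $\{\tau<\infty\}$, optional stopping gives
\[
   G(z)\;=\;M_0(z)\;=\;\E\!\left[M_\tau(z);\,\tau<\infty\right]\;=\;\epsilon^{d-2}\,\E\!\left[S_\tau(z)^{4a-1};\,\tau<\infty\right].
\]
Rearranging, $\Prob\{\tau<\infty\}=\epsilon^{2-d}\,G(z)\big/\E[\,S_\tau(z)^{4a-1}\mid\tau<\infty\,]$, so the proposition is equivalent to
\[
   \E\!\left[S_\tau(z)^{4a-1}\mid\tau<\infty\right]\;\longrightarrow\;\tfrac12\int_0^\pi\sin^{4a}x\,dx\;=\;\frac1{c_*}\qquad(\epsilon\downarrow0).
\]

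To prove this I would pass to the radial picture. Writing $Z_t(z)=R_te^{i\theta_t}$ and changing time by $d\hat t=\sin^2(\theta_t)\,|Z_t(z)|^{-2}\,dt$, the Loewner equation \eqref{may24.1} yields $\Upsilon_t(z)=\Upsilon_0(z)\,e^{-2a\hat t}$ and the autonomous diffusion
\[
   d\theta_{\hat t}\;=\;-(2a-1)\cot(\theta_{\hat t})\,d\hat t\;+\;dW_{\hat t}
\]
on $(0,\pi)$, absorbed at $\{0,\pi\}$. Thus $\{\tau<\infty\}$ is exactly the event that $\theta$ survives past radial time $\hat t_\epsilon:=(2a)^{-1}\log(\Upsilon_0(z)/\epsilon)$, and on it $S_\tau(z)=\sin\theta_{\hat t_\epsilon}$. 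For $\kappa<8$ (i.e.\ $a>1/4$) this diffusion behaves near each endpoint like a Bessel process of dimension $3-4a<2$, so it is absorbed in finite radial time a.s.\ and its generator has a spectral gap; its principal eigenfunction (positive on $(0,\pi)$, vanishing at the endpoints) is $\phi_0(\theta)=\sin^{4a-1}\theta$ — the factor $\sin^{4a-1}(\arg\,\cdot)$ in $G$ — with eigenvalue $-(4a-1)/2$ and speed density $\propto\sin^{2-4a}\theta$. The Yaglom (quasi-stationary) limit then gives that, conditioned on survival, $\theta_{\hat t_\epsilon}$ converges in law as $\epsilon\downarrow0$ to the probability density $\propto\phi_0(\theta)\sin^{2-4a}\theta\propto\sin\theta$, that is to $\tfrac12\sin\theta\,d\theta$ on $(0,\pi)$. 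Hence $\E[S_\tau(z)^{4a-1}\mid\tau<\infty]\to\int_0^\pi\sin^{4a-1}\theta\cdot\tfrac12\sin\theta\,d\theta=\tfrac12\int_0^\pi\sin^{4a}x\,dx=1/c_*$, as required. (Equivalently, $M$-weighting is Girsanov's $h$-transform turning $\theta$ into the positive recurrent diffusion with drift $2a\cot\theta$ and invariant density $\propto\sin^{4a}\theta$ — two-sided radial $SLE_\kappa$ — and conditioning on $\{\Upsilon_{D_\infty}(z)\le\epsilon\}$ approaches conditioning on $\{\Upsilon_{D_\infty}(z)=0\}$.)

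The main obstacle is this last convergence of conditional laws: one must establish the quasi-stationary limit with enough uniformity in the (post-reduction) configuration, which amounts to quantitative ergodicity/contraction estimates for the one-dimensional diffusion above as it degenerates at the endpoints of $(0,\pi)$; the rest is bookkeeping, modulo the standard input that $M_\infty(z)=0$ on $\{z\notin\gamma\}$ (i.e.\ that the harmonic measure seen from $z$ of the far side of the curve tends to $0$, cf.\ \eqref{hmeasure}), which should be cited rather than reproved.
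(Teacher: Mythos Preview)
The paper does not actually prove this proposition: the sentence immediately preceding it reads ``The following is proved in \cite{Law2},'' so there is no in-paper proof to compare against. Your outline is essentially the argument that appears in that cited reference.

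Your reduction to $\Half$ is correct, as is the optional stopping identity $G(z)=\epsilon^{d-2}\,\E[S_\tau(z)^{4a-1};\tau<\infty]$; this is precisely \eqref{oct18.1} applied at $\sigma=\tau_{\epsilon,z}$ together with the paper's stated fact that $M_\infty(z)=0$ on $\{\tau=\infty\}$. Your radial time change and the resulting SDE $d\theta=-(2a-1)\cot\theta\,d\hat t+dW$ are correct, as is the identification of $\sin^{4a-1}\theta$ as the principal eigenfunction with eigenvalue $-(4a-1)/2$ (note $2a(2-d)=(4a-1)/2$, so this matches the decay rate of $\Upsilon_t^{2-d}$). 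The Yaglom limit $\tfrac12\sin\theta\,d\theta$ is also correct.

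The one place where your sketch is loose is exactly the step you flag: you need $\E[\sin^{1-4a}\theta_{\hat t}\cdot h(\theta_{\hat t})]$ under the $M$-weighted (two-sided radial) measure to converge to its value under the invariant law $\propto\sin^{4a}\theta$, but $\sin^{1-4a}\theta$ is unbounded at the endpoints, so weak convergence alone is insufficient. In the cited proof this is handled by the exponential convergence of the two-sided radial angle to equilibrium (the same fact the present paper records as \eqref{jul23.4}), which gives uniform control of the density and makes the unbounded test function harmless. With that ingredient your argument is complete; without it, it is a correct reduction but not yet a proof.
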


In this paper, we give the analagous result replacing
conformal radius with distance to the curve.  We do not
have an explicit form for the constant $\hat c$.   

\begin{theorem} \label{jul23.theorem1}
 Suppose $\kappa < 8$.  There exist
 $0 < \hat c,c,u  < \infty $ (depending
 on $\kappa$) such that the following holds.
 Suppose $D$ is a simply connected domain,
$z \in D$, $ w_1,w_2\in 
 \p D$ and $\gamma$
is a chordal $SLE_\kappa$ path from $w_1$ to $w_2$ in $D$.  Then, as $\epsilon \downarrow 0$,
\begin{equation}  \label{apr2.4}   \Prob\{\dist(z,\gamma_\infty) \leq
           \epsilon  \} \sim
  \hat c \, G_D(z;w_1,w_2) \, \epsilon^{2-d},
  \end{equation}
  Moreover, if  
   $\epsilon <\dist(z,\p D)/10$,
  \[ \left| \epsilon^{d-2} \,
   G_D(z;w_1,w_2)^{-1} \, \Prob\{\dist(z,\gamma_\infty)
    \leq \epsilon \}
    - \hat c\right| \leq c \, \left( \frac{\epsilon}{
    \dist(z,\p D)} \right)^{u}. \]
\end{theorem}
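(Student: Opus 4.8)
The plan is to reduce everything to the upper half-plane case with target point $z=i$ (or more generally a point at distance one from the boundary), and then to bootstrap from Proposition \ref{mar14.prop1}, which gives the analogous asymptotics with conformal radius in place of distance. By conformal covariance of $G_D$ and of $SLE_\kappa$, and by scaling, it suffices to prove the estimate for $D=\Half$, $w_1=0$, $w_2=\infty$, and $z$ ranging over points with $\dist(z,\R)=1$; the error term $(\epsilon/\dist(z,\p D))^u$ is exactly what conformal scaling produces. So I would first set up this normalization carefully, noting that $\Upsilon_D(z)$ and $\dist(z,\p D)$ are comparable up to a factor of $4$ (Koebe), so the two normalizations of $G$ differ by bounded multiplicative factors, but \emph{not} by $1+O(\epsilon^u)$, which is the whole point — the curve can come much closer to $z$ than the conformal radius would suggest, and conversely.

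The core step is a last-visit / two-sided decomposition. Let $\tau=\tau_{\epsilon,z}=\inf\{t:\Upsilon_t(z)\le \epsilon\}$ as in \eqref{nov18.1}. On the event $\{\dist(z,\gamma_\infty)\le \epsilon\}$ the curve gets within distance $\epsilon$ of $z$; I would condition on the first time $\sigma$ that $\gamma$ enters the disk of radius $\epsilon$ about $z$ (or, more robustly, run the process until $\tau$ and then analyze the conditional law of the remaining path). Using the optional stopping identity \eqref{oct18.1}, $G(z)=\E[|g_\sigma'(z)|^{2-d}G(Z_\sigma(z))]$ for any stopping time $\sigma\le\tau$, the quantity $\Prob\{\dist(z,\gamma_\infty)\le\epsilon\}$ can be written, via the strong Markov property at $\tau$, as an expectation over the configuration $(g_\tau, Z_\tau(z))$ of a hitting probability for a fresh $SLE$ started from the image of $\gamma(\tau)$. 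After the conformal map $g_\tau$, the point $z$ is mapped to $Z_\tau(z)+U_\tau$ at conformal radius comparable to $\epsilon$ times a bounded distortion factor; the continuation path is an $SLE$ in a half-plane-like domain, and whether it reaches within $\epsilon$ of $z$ is governed — up to the distortion of $g_\tau$ near $z$, which is $1+O((\epsilon/\dist)^{\text{const}})$ by the Koebe distortion theorem applied on the scale $\dist(z,\p D)$ — by a \emph{universal} quantity $q$ depending only on the ``shape'' data, namely the argument $\arg Z_\tau(z)$ and the local geometry. Averaging this universal $q$ against the exit distribution of the martingale $M_{t\wedge\tau}(z)$ (whose mean is $G(z)$) produces precisely $\hat c\, G_D(z;w_1,w_2)\epsilon^{2-d}$, with $\hat c$ defined as this average of $q$ in the canonical setup. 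This is the strategy that \cite{Law2} uses for the conformal-radius version, and the new input is controlling the distortion uniformly.

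The main obstacle is the uniformity near the boundary: I need the conclusion to hold for all $z$ with merely $\epsilon<\dist(z,\p D)/10$, i.e. $z$ may be very close to $\p D$ relative to its distance from itself, and the curve, being non-simple for $\kappa>4$, can approach $z$ from a thin ``fjord.'' Here I expect to invoke the time-dependent and two-point Green's function machinery of Section \ref{two-pointsec} — in particular the up-to-constants two-point bound of Theorem \ref{sept8.theorem} and the time-dependent estimate Lemma \ref{may14.lemma1} — to control the second moment of the number of $\epsilon$-scale balls near $z$ that the curve visits, hence to upgrade the ``first visit'' analysis to a genuine $1+O((\epsilon/\dist)^u)$ asymptotic rather than just an up-to-constants bound. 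Concretely: the difference between $\Prob\{\dist(z,\gamma)\le\epsilon\}$ and the idealized quantity comes from (i) multiple excursions of $\gamma$ near $z$, whose contribution is second-order by the two-point estimate, and (ii) distortion of $g_\tau$, which is power-law small in $\epsilon/\dist(z,\p D)$ by Koebe. I would organize the error accounting so that each source contributes a term $\le c(\epsilon/\dist(z,\p D))^u$ for some explicit $u>0$ coming from the gap $2d-2<d$ (valid since $d<2$), and then take $u$ to be the minimum of the exponents produced. The passage from $\Half$ to general simply connected $D$ is then immediate from conformal covariance, since the left side of \eqref{apr2.4} transforms with the same $|f'(z)|^{2-d}$ factor as $G_D$ once one checks that $\dist$ transforms up to $1+O(\epsilon/\dist)$ distortion under $f$.
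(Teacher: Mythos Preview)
Your proposal has a genuine gap at its core step. You stop at $\tau=\tau_{\epsilon,z}$, the first time the conformal radius drops to $\epsilon$, and then claim that the distortion of $g_\tau$ on the $\epsilon$-disk about $z$ is $1+O((\epsilon/\dist(z,\p D))^{\mathrm{const}})$ ``by the Koebe distortion theorem applied on the scale $\dist(z,\p D)$.'' But $g_\tau$ is a map on $H_\tau$, not on $D$, and at time $\tau$ we have $\dist(z,\p H_\tau)\asymp\Upsilon_\tau(z)=\epsilon$. So the only disk about $z$ on which Koebe distortion applies has radius $\asymp\epsilon$, and the distortion there is $1+O(1)$, not small. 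Consequently the image under $g_\tau$ of the Euclidean $\epsilon$-ball about $z$ is \emph{not} close to a round disk, and whether the continuation SLE hits it depends on the full shape of $\p H_\tau$ near $z$, not just on $\arg Z_\tau(z)$. There is no ``universal $q$'' of the kind you describe. The two-point Green's function machinery you invoke does not address this: the difficulty is not multiple excursions but the single-scale discrepancy between Euclidean distance and conformal radius, which is an $O(1)$ effect.

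The paper's proof takes a different route that avoids this obstacle entirely and uses none of the two-point estimates. It works in the disk with the radial parametrization, so that after radial time $t$ the conformal radius at the origin is exactly $e^{-t}$. Under the two-sided radial measure $\Prob^*$ (chordal SLE tilted by $M_t$), the angle process $\theta_t$ converges exponentially fast to its invariant density $c\sin^{4a}\theta$. Writing $q(t,\theta)=[\sin\theta]^{1-4a}\Prob\{\dist(0,\gamma)\le e^{-t}\}$, one couples scales $t$ and $t+s$ via the domain Markov property and the distortion bound $g_t(\Disk_{t+s})\subset \Disk_{s-ce^{-s}}$, $g_t(\Disk_{t+s})\supset\Disk_{s+ce^{-s}}$; the exponential mixing of $\theta_t$ then yields $|e^{t(2-d)}q(t,\theta)-\hat c|\le c\,e^{-\alpha t/2}$. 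The passage to general $D$ is a final one-line distortion argument on the uniformizing map $F:\Disk\to D$, which \emph{is} controlled on the scale $\dist(z,\p D)$ because $F$ is defined on all of $\Disk$. In short, the iteration across scales is done via ergodicity of the angle, not via a one-shot stopping at $\tau_\epsilon$.
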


\begin{proof}  See Section \ref{newsection}.\end{proof}

\begin{corollary}
Suppose $\kappa < 8$.  Let $f(r)$ be a positive function with
$f(r)  \rightarrow  0$ and $r/f(r) \rightarrow
0$ as $r \downarrow 0$.
Suppose $D$ is a simply connected domain
and 
 $ w_1,w_2 \in \partial D$. Let $\gamma$ be
 an $SLE_\kappa$ path from $w_1$ to $w_2$ in
 $D$ and let
 \[  Y_\epsilon = {\rm Area}\{z \in D:
  \dist(z,\p D) \geq f(\epsilon), \dist(z,\gamma_\infty)
  \leq \epsilon\} . \]
  Then
  \[  \lim_{\epsilon \downarrow 0}
      \epsilon^{d-2} \, \E\left[Y_\epsilon
      \right] =  \hat c \int_DG(z;w_1,w_2) \, dA(z), \]
      where $\hat c$ is the constant in \eqref{apr2.4}.
  
 \end{corollary}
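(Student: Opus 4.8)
The plan is to reduce the corollary to Theorem \ref{jul23.theorem1} by Tonelli's theorem followed by dominated convergence. Since $(z,\omega)\mapsto \mathbf{1}\{\dist(z,\gamma_\infty)\le\epsilon\}$ is jointly measurable, Tonelli gives
\[
  \epsilon^{d-2}\,\E[Y_\epsilon] \;=\; \int_D h_\epsilon(z)\,dA(z),
  \qquad
  h_\epsilon(z):=\mathbf{1}\{\dist(z,\p D)\ge f(\epsilon)\}\,\epsilon^{d-2}\,\Prob\{\dist(z,\gamma_\infty)\le\epsilon\}.
\]
I would then fix $z\in D$. Because $f(\epsilon)\to0$ and $\dist(z,\p D)>0$, the indicator equals $1$ for all small $\epsilon$; and since $G_D(z;w_1,w_2)>0$, Theorem \ref{jul23.theorem1} gives $\epsilon^{d-2}\,\Prob\{\dist(z,\gamma_\infty)\le\epsilon\}\to\hat c\,G_D(z;w_1,w_2)$. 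Hence $h_\epsilon(z)\to\hat c\,G_D(z;w_1,w_2)$ for every $z\in D$.

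The crucial point is a uniform domination. Since $\epsilon/f(\epsilon)\to0$, there is $\epsilon_0>0$ such that $f(\epsilon)\ge 10\epsilon$ for $0<\epsilon<\epsilon_0$. Consequently, for such $\epsilon$, on the support $\{\dist(z,\p D)\ge f(\epsilon)\}$ of $h_\epsilon$ we automatically have $\epsilon<\dist(z,\p D)/10$, so the quantitative estimate of Theorem \ref{jul23.theorem1} applies and yields
\[
  h_\epsilon(z)\;\le\;(\hat c+c)\,G_D(z;w_1,w_2)\,\mathbf{1}_D(z)\;=:\;\Phi(z),
  \qquad 0<\epsilon<\epsilon_0 .
\]
If $\Psi:=\int_D G_D(z;w_1,w_2)\,dA(z)<\infty$, then $\Phi\in L^1(dA)$ and dominated convergence gives $\epsilon^{d-2}\,\E[Y_\epsilon]\to\hat c\,\Psi$, as claimed. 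If $\Psi=\infty$, Fatou's lemma applied to the nonnegative functions $h_\epsilon$ gives $\liminf_{\epsilon\downarrow0}\epsilon^{d-2}\,\E[Y_\epsilon]\ge\hat c\,\Psi=\infty$, so the limit is again $\hat c\,\Psi$.

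I do not expect any serious obstacle here: essentially all the content is already in Theorem \ref{jul23.theorem1}. The one step that must be handled with care is the domination — specifically, the observation that the hypothesis $\epsilon<\dist(z,\p D)/10$ needed to invoke the quantitative bound is forced on the support of the indicator exactly by the assumption $\epsilon/f(\epsilon)\to0$. The remainder is routine measure theory; the case distinction on the finiteness of $\Psi$ can be dropped if one restricts to domains where $\Psi<\infty$ is already known, but it is not needed for the argument above.
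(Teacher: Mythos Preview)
Your argument is correct and is exactly the intended derivation: the paper states the corollary without proof, treating it as an immediate consequence of Theorem \ref{jul23.theorem1} via Tonelli and dominated convergence, with the cutoff $f(\epsilon)$ serving precisely to put you in the regime $\epsilon<\dist(z,\p D)/10$ where the quantitative error bound applies. Your handling of the case $\int_D G_D\,dA=\infty$ via Fatou is a nice touch, though not strictly needed for the paper's purposes.
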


\subsection{Time dependent Green's function}  \label{timesec}

If $z \in \Half$, then {\em two-sided radial $SLE_\kappa$ (stopped at $T_z$,
the time at  which it
reaches $z$)} is chordal $SLE_\kappa$ ``conditioned to go through $z$''.  This is
conditioning on an event of measure zero, but there are several equivalent ways
to make this precise.  The term two-sided radial comes from the fact that
at the interior point $z$, there are two paths coming out, $\gamma[0,T_z]$ and
$\gamma[T_z,\infty)$.   We are only considering the marginal distribution
on $\gamma[0,T_z]$.

The elegant way to define two-sided radial $SLE_\kappa$ is to say that it
is chordal $SLE_\kappa$ weighted by the local martingale $M_t(z)$
(see, e.g., \cite[Section 2.4]{LW}).  By
the Girsanov theorem, we can also describe this process by giving the
drift on the driving function.   Indeed, if $
\Prob_z^*$ denotes the probability measure of
two-sided radial and $X_t = X_t(z) = \Re[Z_t(z)]$, then
\begin{equation}  \label{oct19.1}
       dX_t = \frac{(1-3a)\, X_t}{|Z_t|^2} \, dt + dB_t, \;\;\;\; X_0 = \Re(z),
   \end{equation}
 where $B_t$ is a $\Prob_z^*$-Brownian motion.  (The equation
 for $Y_t = Y_t(z)$ is deterministic, and hence the same for
 as for usual chordal $SLE$.)  The following technical fact is useful.

\begin{proposition} \cite{Law3} If $\kappa < 8$ and $z \in \Half$, 
then with probability one, two-sided
radial $SLE_\kappa$ is continuous at $T_z$.  In other words, with
probability one, $T_z < \infty$
and
\[            \lim_{t \uparrow T_z} \gamma(t) = \gamma(T_z) = z. \]
\end{proposition}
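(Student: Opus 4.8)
The plan is to deduce the statement from the analogous fact for chordal $SLE_\kappa$ together with the characterization of two-sided radial $SLE_\kappa$ as chordal $SLE_\kappa$ weighted by the local martingale $M_t(z)$. Recall from the excerpt that for chordal $SLE_\kappa$ with $\kappa < 8$, the curve is a non-crossing curve, and moreover $z \notin \gamma$ almost surely so that $T_z = \infty$ and $\Upsilon_t(z) \downarrow \Upsilon_{D_\infty}(z) > 0$; in particular the continuity statement we want is \emph{false} under the chordal law. Thus the point of the proposition is precisely that the weighting by $M_t(z)$ kills exactly the event that the path avoids $z$, and we must show it concentrates all mass on continuity at $T_z$.

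First I would set up the Girsanov change of measure carefully. Fix $\epsilon > 0$ and let $\tau = \tau_{\epsilon,z} = \inf\{t : \Upsilon_t(z) \le \epsilon\}$ as in \eqref{nov18.1}. Since $M_{t\wedge\tau}(z)$ is a genuine martingale under the chordal law $\Prob$ with $M_0(z) = G(z)$, we may define $\Prob_z^*$ on $\mathcal F_{\tau}$ by $d\Prob_z^*/d\Prob = M_\tau(z)/G(z)$ (using $M_\tau(z) = 0$ on $\{\tau = \infty\}$, which has $\Prob_z^*$-probability zero); this is consistent across $\epsilon$ and agrees with the drift description \eqref{oct19.1} by Girsanov. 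The key structural input I would then invoke is that under $\Prob_z^*$ one has $\tau_{\epsilon,z} < \infty$ for every $\epsilon$ and $\Upsilon_t(z) \downarrow 0$ as $t \uparrow T_z < \infty$ --- this follows from analyzing the radial Bessel-type SDE for the relevant coordinate (e.g.\ $\arg Z_t$ or $\Upsilon_t$ under the new drift), which makes $z$ a boundary point that is hit in finite capacity time. At that point, $\Upsilon_t(z) \to 0$ means $\dist(z, H_t) \to 0$ by the Koebe/Schwarz bounds $\Upsilon_t(z)/2 \le \dist(z,\p H_t) \le 2\Upsilon_t(z)$, so the curve $\gamma(0,t]$ gets arbitrarily close to $z$; combined with the fact that $\gamma$ is a non-crossing curve defined on the whole half-line, continuity of $\gamma$ at $T_z$ with $\gamma(T_z) = z$ follows. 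This is the route taken in \cite{Law3}, which the proposition cites, so I would primarily be recalling and assembling that argument rather than reproving it from scratch.

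The main obstacle is establishing that $T_z < \infty$ and that $\gamma(t) \to z$ genuinely (not merely $\mathrm{dist}(z,\gamma_t) \to 0$ along a subsequence, which could a priori happen with the curve oscillating near $z$ without converging). Handling this requires two ingredients beyond the Girsanov setup: (i) a quantitative estimate, under $\Prob_z^*$, on how fast $\Upsilon_t(z)$ decays --- morally, $-\log \Upsilon_t(z)$ behaves like a diffusion with positive drift so it reaches $+\infty$ in finite time --- giving $T_z < \infty$ a.s.; and (ii) an argument that as $t \uparrow T_z$ the harmonic measure of $\gamma(0,t]$ as seen from $z$ concentrates near the tip $\gamma(t)$, so that $\gamma(t)$ itself (the driving-function preimage) converges to $z$. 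Ingredient (ii) is where one genuinely uses that we are looking at the curve, via the relation $S_t(z) = S_{H_t}(z;\gamma(t),\infty) \asymp \min\{\hm_{H_t}(z,\p_1), \hm_{H_t}(z,\p_2)\}$ from \eqref{hmeasure}: control on $S_t(z)$ staying bounded below (which the weighting by $M_t(z) = \Upsilon_t(z)^{d-2} S_t(z)^{4a-1}$ encourages, since $S_t(z)$ small is penalized) forces the point $z$ to remain ``between'' the two sides of the slit near the tip, and this localizes the limit of $\gamma(t)$. I would cite \cite{Law3} for the hard technical estimates in (i) and (ii), and present the proof as the combination: Girsanov gives the drift, the drift forces $\Upsilon_t(z) \to 0$ in finite time, and the harmonic-measure identity plus the non-crossing property upgrade this to $\gamma(t) \to z = \gamma(T_z)$.

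I should note that since the paper itself writes ``\cite{Law3}'' immediately after the proposition statement and calls it a ``technical fact,'' the authors clearly intend to quote it; accordingly my proposed proof is essentially a pointer to that reference together with the short deduction above, and I would not expect the paper to contain a self-contained argument here.
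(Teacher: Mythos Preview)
Your final paragraph is exactly right: the paper does not prove this proposition at all. It is stated with the citation \cite{Law3} attached and is introduced as a ``technical fact''; no argument is given in the text. So there is no ``paper's own proof'' to compare against --- the paper's approach is simply to quote the result.

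Your sketch goes beyond what the paper does, and as a rough outline of how one would prove it (Girsanov to get the drift \eqref{oct19.1}, a Bessel-type analysis to show $\Upsilon_t(z)\to 0$ in finite time, then upgrading $\dist(z,\gamma_t)\to 0$ to $\gamma(t)\to z$ via control of $S_t(z)$) it is reasonable. One small correction: your claim that under the chordal law $T_z=\infty$ is only true for $\kappa\le 4$; for $4<\kappa<8$ one has $T_z<\infty$ with positive probability (the point $z$ gets swallowed) but still $\gamma(T_z)\ne z$ a.s., so the conclusion that the continuity-at-$z$ statement fails under the chordal law remains valid. In any case, since the paper defers entirely to \cite{Law3}, the appropriate ``proof'' here is just the citation.
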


Fix $z \in \Half$ and let $\tau_\epsilon =\tau_{\epsilon,z}$ be as in \eqref{nov18.1}.
 For any $\delta < \epsilon$, we can define a probability
measure $\nu(\delta,\epsilon)$ on curves $\gamma(0,\tau_\epsilon]$ by considering
$SLE_\kappa$, conditioning on the event $\{\tau_\delta < \infty\}$, and then
viewing this as a measure on the paths $\gamma(0,\tau_\epsilon]$.
The following proposition which was  proved in \cite{LW}
gives justification to calling two-sided radial $SLE$, ``chordal
$SLE$ conditioned to go through $z$''.

\begin{proposition}
   As $\delta \rightarrow 0+$, the measures
$\nu(\delta,\epsilon)$ converge to two-sided radial $SLE_\kappa$
stopped at
  time $\tau_\epsilon$.
\end{proposition}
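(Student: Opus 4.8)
The plan is to use the characterization of two-sided radial $SLE_\kappa$ as chordal $SLE_\kappa$ weighted (up to the stopping time $\tau_\epsilon$) by the normalized local martingale $M_t(z)/G(z)$, and to show that conditioning on $\{\tau_\delta < \infty\}$ is, in the $\delta \to 0$ limit, exactly this weighting. Concretely, fix $\epsilon > 0$. For a bounded continuous functional $\Phi$ of the path $\gamma(0,\tau_\epsilon]$ (say, measurable with respect to $\mathcal F_{\tau_\epsilon}$ and depending on the curve in the capacity parametrization up to time $\tau_\epsilon$), I want to compute
\[
\E^{\nu(\delta,\epsilon)}[\Phi] = \frac{\E\big[\Phi \, \mathbf 1\{\tau_\delta < \infty\}\big]}{\Prob\{\tau_\delta < \infty\}},
\]
where the expectation on the right is with respect to ordinary chordal $SLE_\kappa$. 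The first step is to condition on $\mathcal F_{\tau_\epsilon}$ inside the numerator: on the event $\{\tau_\epsilon < \infty\}$ (which has full probability when $\kappa<8$ and $z\in\Half$, since the curve is dense enough that $\Upsilon_t(z)\to 0$) we have
\[
\E\big[\Phi\,\mathbf 1\{\tau_\delta < \infty\}\big] = \E\Big[\Phi \,\mathbf 1\{\tau_\epsilon < \infty\}\, \Prob\{\tau_\delta < \infty \mid \mathcal F_{\tau_\epsilon}\}\Big],
\]
and by the Markov property of $SLE$ the inner conditional probability equals $h_\delta(Z_{\tau_\epsilon}(z), g_{\tau_\epsilon}'(z))$, where $h_\delta(w, \cdot)$ is the probability that the (image) $SLE$ from $Z_{\tau_\epsilon}(z)$ reaches conformal radius $\delta$ of its target point; precisely, $\Prob\{\tau_\delta<\infty\mid\mathcal F_{\tau_\epsilon}\} = \Prob_{g_{\tau_\epsilon}}\{\Upsilon(z)\le\delta\}$ in the domain $H_{\tau_\epsilon}$.

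The second step is the asymptotics. By Proposition \ref{mar14.prop1} applied in $H_{\tau_\epsilon}$ (and using conformal covariance $G_{H_{\tau_\epsilon}}(z;\gamma(\tau_\epsilon),\infty) = |g_{\tau_\epsilon}'(z)|^{2-d} G(Z_{\tau_\epsilon}(z)) = M_{\tau_\epsilon}(z)$),
\[
\Prob\{\tau_\delta<\infty \mid \mathcal F_{\tau_\epsilon}\} \sim c_*\, \delta^{2-d}\, M_{\tau_\epsilon}(z), \qquad \delta \to 0+,
\]
and similarly, taking $\Phi\equiv 1$ and using $\E[M_{\tau_\epsilon}(z)] = G(z)$ (which is \eqref{oct18.1} with $\sigma = \tau_\epsilon \le \tau$),
\[
\Prob\{\tau_\delta<\infty\} \sim c_*\,\delta^{2-d}\, G(z).
\]
Dividing, the $c_*\delta^{2-d}$ factors cancel, and provided I can pass the limit through the outer expectation I obtain
\[
\lim_{\delta\to 0+}\E^{\nu(\delta,\epsilon)}[\Phi] = \frac{\E\big[\Phi\, M_{\tau_\epsilon}(z)\big]}{G(z)} = \E^{\Prob_z^*}[\Phi],
\]
which is precisely the statement that $\nu(\delta,\epsilon)$ converges to two-sided radial $SLE_\kappa$ stopped at $\tau_\epsilon$.

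The main obstacle is justifying the interchange of limit and expectation, i.e.\ upgrading the pointwise asymptotic $\Prob\{\tau_\delta<\infty\mid\mathcal F_{\tau_\epsilon}\}/(c_*\delta^{2-d}) \to M_{\tau_\epsilon}(z)$ to an $L^1$ statement against the bounded $\Phi$. For this I would want a uniform bound: there is a constant $C$ so that $\Prob\{\tau_\delta<\infty\mid\mathcal F_{\tau_\epsilon}\} \le C\,\delta^{2-d}\,M_{\tau_\epsilon}(z)$ for all $\delta\le\epsilon$, which would let me invoke dominated convergence (the dominating function being $C\Phi M_{\tau_\epsilon}(z)$, integrable since $\E[M_{\tau_\epsilon}(z)] = G(z) <\infty$ and $\Phi$ is bounded). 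Such a one-sided uniform estimate follows from the martingale property of $M_{t\wedge\tau_\delta}(z)$ in the sub-domain: indeed, starting the clock at $\tau_\epsilon$, on the event $\{\tau_\delta < \infty\}$ we have $\Upsilon_{\tau_\delta}(z) = \delta$, so $M_{\tau_\delta}(z) \ge c\,\delta^{2-d} S_{\tau_\delta}(z)^{4a-1}$, but that still has the $S$ factor to control; the cleaner route is to use that $\delta^{d-2}\Prob_w\{\Upsilon(z)\le\delta\}$ is bounded above by a constant times $G$ uniformly (not merely asymptotically), which is exactly the kind of non-asymptotic bound Proposition \ref{mar14.prop1} and its proof in \cite{Law2} provide, or can be extracted from them. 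With that uniform bound in hand the convergence is immediate; everything else is the bookkeeping of the Markov property and conformal covariance described above.
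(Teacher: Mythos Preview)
The paper does not give its own proof of this proposition; it is quoted from \cite{LW} as background. Your approach---write the conditional expectation as a ratio, use the domain Markov property at $\tau_\epsilon$, apply the one-point asymptotic of Proposition \ref{mar14.prop1} in both numerator and denominator, and justify the passage to the limit by dominated convergence---is the standard and correct route, and is essentially how the result is established in \cite{LW}.

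One point to fix: your parenthetical claim that $\{\tau_\epsilon < \infty\}$ has full probability is false. For $\kappa < 8$ the curve almost surely misses $z$, so $\Upsilon_\infty(z) = \Upsilon_{T_z-}(z) > 0$ a.s.; indeed $\Prob\{\tau_\epsilon < \infty\} \asymp \epsilon^{2-d} G(z)$, which is small. Fortunately this does not damage your argument: since $\delta < \epsilon$ one has $\{\tau_\delta < \infty\} \subset \{\tau_\epsilon < \infty\}$, so the indicator $\mathbf 1\{\tau_\epsilon < \infty\}$ in your decomposition is legitimate, and on the complementary event one takes $M_{\tau_\epsilon}(z) = M_\infty(z) = 0$ (as noted just after \eqref{oct18.1}), which is consistent with $\E[M_{\tau_\epsilon}(z)] = G(z)$.

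On the domination step: you are right that what is needed is the non-asymptotic one-sided bound $\Prob\{\Upsilon_\infty(z) \le \delta\} \le C\,\delta^{2-d} G(z)$ uniformly for $\delta \le \Im(z)$, and that this is available in \cite{Law2,LW}. Concretely, after mapping by $g_{\tau_\epsilon}$ the ratio of the target conformal radius to $\Im(Z_{\tau_\epsilon}(z))$ is exactly $\delta/\epsilon$, so the bound applies uniformly in the random configuration at time $\tau_\epsilon$ and yields the integrable dominator $C\,\delta^{2-d} M_{\tau_\epsilon}(z)$ you want.
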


We write $\Prob_z^*,\E_z^*$ for probabilities and expectations
with respect to two-sided radial $SLE$ going through $z$.
Let $\phi(z;t)= \Prob_z^*\{T_z \leq t\}$
 denote the distribution function of $T_z$ under the measure
of two-sided radial $SLE$.  The scaling property of $SLE$
implies that $\phi(rz;r^2t) = \phi(z;t)$.
The {\em time-dependent Green's function}
$G^t$ is defined by
\[               G^t(z) = G(z) \, \phi(z;t) . \]
An equivalent way of defining $G^t$ is by
\begin{equation}  \label{apr23.5}
             \E[M_t(z)] = G(z) - G^t(z) .
             \end{equation}
We have the scaling rule
\[    G^{r^2t}(rz) = G(rz) \, \phi(rz;r^2 t) = r^{d-2} \, G(z) \, \phi(z,t)
   = r^{d-2} \, G^t(z) . \]
   The analogue of \eqref{oct18.1} is the following: if $\epsilon > 0$
   and $\sigma \leq \tau_{\epsilon,z}$ is a stopping time, then
\begin{equation}  \label{oct18.2}
 G^t(z) =\E\left[|g_\sigma'(z)|^{2-d}  \, G^{t - \sigma}(Z_\sigma(z))\right]
  = \E\left[|g_\sigma'(z)|^{2-d}  \, G^{t - \sigma}(Z_\sigma(z));
   \sigma < t\right],
 \end{equation}
 where we set $G^s(z) = 0$ if $s \leq 0$.

 While the Green's function $G$ is independent of the
 parametrization of the $SLE$ path, the time-dependent
 function $G^t$ is defined in terms of the half-plane capacity.
 We finish this section by proving some lemmas about $G^t$.
 It follows from the Loewner equation (see, e.g., \cite[Lemma 4.13]{Law1}),
 that in time $t$, the diameter of the path $\gamma_t$ is bounded
 above by
 \[              2a \, \left[t^{1/2}   \vee
 \max\{|U_s| : 0 \leq s \leq t \}\right]. \]
 If $U_t$ is a Brownian motion, it is standard to
 use the reflection principle to show that
\begin{equation}  \label{reflection}
  \Prob\{  \max\{|U_s| : 0 \leq s \leq t \}\geq rt^{1/2}\}
  \leq c \, e^{-r^2/2}.
 \end{equation}
 Hence we also get tail estimates for $\diam[\gamma_t]$.  However,
 for $G^t$, we need tail estimates ``conditioned that $T_z < \infty$'',
 which we  prove in the next lemma. 
 The exponents in the estimates
 are not optimal but they will suffice for our needs.


\begin{lemma} \label{may19lemma}
 There exists $c < \infty$ such that if
$z=x+iy$, then
\[   G^t(z) = 0 , \;\;\;\;   y^2 \geq 2at , \]
\[            G^t(z) \leq c \, e^{-\frac{x^2}{8t}} \, G(z) .\]
Moreover, there exists $c_1 > 0$ such that
 if $  y^2 \leq 2at$,
\[            G^{2t}(z) \geq c_1 \, e^{-\frac{5x^2}{t}} \, G(z) , \]
\[      G^{100t} (z) \geq c_1 \, e^{\frac{x^2}{40t}} \, G^t(z) . \]
\end{lemma}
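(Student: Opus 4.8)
The plan is to treat the four statements in turn and to derive the last one from the others.

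\emph{The vanishing statement.} Write $Z_t(z)=X_t(z)+iY_t(z)$. The Loewner equation gives $\partial_t(Y_t(z)^2)=-2aY_t(z)^2/|Z_t(z)|^2\in[-2a,0]$ (using $Y_t\le|Z_t|$), so $Y_t(z)^2\ge y^2-2at$ for $t<T_z$. Hence if $y^2>2at$ the curve has not reached $z$ by capacity time $t$ under any realization, so $\phi(z;t)=\Prob_z^*\{T_z\le t\}=0$ and $G^t(z)=G(z)\phi(z;t)=0$; at the borderline $y^2=2at$ the event $\{T_z=y^2/(2a)\}$ would force $X_s\equiv0$ on $[0,T_z]$ and so is $\Prob_z^*$-null.

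\emph{The Gaussian upper bound.} Since $e^{-x^2/(8t)}$ is bounded below when $x^2$ is at most a constant multiple of $t$, and always $G^t(z)\le G(z)$, one may assume $|x|\gg\sqrt t$; by the vanishing statement one may also assume $y^2\le2at$, so that $|z|\asymp|x|$. Let $\sigma$ be the first time the curve reaches distance $(1-\d)|z|$ from the origin, $\d$ a small constant. For $\epsilon<\d|z|/2$ one has $\sigma\le\tau_{\epsilon,z}$, so by \eqref{oct18.2} and $G^{t-\sigma}\le G$,
\[ G^t(z)=\E\big[|g_\sigma'(z)|^{2-d}\,G^{t-\sigma}(Z_\sigma(z));\,\sigma<t\big]\le\E\big[M_\sigma(z);\,\sigma<t\big]. \]
At time $\sigma$ the hull lies in $\overline{B}(0,(1-\d)|z|)$, so $\dist(z,\gamma_\sigma)\ge\d|z|\gg y=\dist(z,\R)$; therefore $\Upsilon_{H_\sigma}(z)\asymp y$, and by \eqref{hmeasure} together with a standard harmonic-measure estimate $S_{H_\sigma}(z;\gamma(\sigma),\infty)\lesssim\sin\arg z$, whence $M_\sigma(z)=\Upsilon_{H_\sigma}(z)^{d-2}\,S_{H_\sigma}(z;\gamma(\sigma),\infty)^{4a-1}\le C\,G(z)$. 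On $\{\sigma<t\}$ the curve has diameter $\ge(1-\d)|z|$ by capacity time $t$, so the diameter bound recalled just before the lemma and the reflection principle \eqref{reflection} give $\Prob\{\sigma<t\}\le c\exp(-c'(1-\d)^2|z|^2/t)$. Combining, and choosing $\d$ and the stopping level optimally, yields $G^t(z)\le c\,e^{-x^2/(8t)}G(z)$ (the constant $8$ is not sharp).

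\emph{The lower bounds.} By the scaling rule for $G^t$ it suffices to prove the third statement with $t=1$: $\phi(z;2)\ge c_1e^{-5x^2}$ for $0<y\le\sqrt{2a}$. For $z$ in a fixed compact subset of $\Half$ this follows from continuity and strict positivity of $(w,s)\mapsto\phi(w;s)$, using that $T_z<\infty$ $\Prob_z^*$-a.s. and that two-sided radial is continuous at $T_z$. For $|x|$ large (and likewise for small $y$) I would apply \eqref{oct18.2} with $\sigma$ the first time the curve comes within $y/2$ of $z$, restricting the nonnegative integrand to the event $A$ that the driving function stays in a bounded tube around a path that steers the curve rapidly along a thin channel near $\R$ to just below $z$; a Cameron--Martin estimate gives $\Prob(A)\ge c_1e^{-cx^2}$ with $c\le5$ since $\kappa<8$, while on $A$ a Loewner-flow analysis yields $\sigma<1$, $|g_\sigma'(z)|^{2-d}\gtrsim1$, and $G^{2-\sigma}(Z_\sigma(z))\ge c\,G(z)$ (again by the comparison for $\phi$), so $G^2(z)\ge c_1e^{-5x^2}G(z)$. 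Finally, the fourth statement is formal: applying the second at time $t$ and the third at time $50t$,
\[ \frac{G^{100t}(z)}{G^t(z)}\ge\frac{c_1\,e^{-5x^2/(50t)}\,G(z)}{c\,e^{-x^2/(8t)}\,G(z)}=\frac{c_1}{c}\,e^{x^2\left(\frac{1}{8t}-\frac{1}{10t}\right)}=\frac{c_1}{c}\,e^{x^2/(40t)}. \]

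The step I expect to be the main obstacle is the Loewner-flow analysis in the lower bound: verifying that conditioning the driving function along the chosen trajectory genuinely drives the conditioned curve into a small neighborhood of $z$ within the allotted capacity time, with $|g_\sigma'(z)|$ and $\Im Z_\sigma(z)$ controlled so that $G^{2-\sigma}(Z_\sigma(z))$ is comparable to $G(z)$. A secondary difficulty is extracting the precise constant $8$ in the upper bound, which needs a more careful choice of the stopping level than the crude one used above; as the authors remark, these exponents are not optimal.
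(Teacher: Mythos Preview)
Your treatment of the first and last assertions matches the paper. For the two middle inequalities, however, the paper takes a cleaner and more direct route than you do: since $G^t(z)/G(z)=\Prob_z^*\{T_z\le t\}$, both are statements about the hitting time $T_z$ under the two-sided radial measure, and the paper works entirely with the one-dimensional SDE \eqref{oct19.1} for $X_t=\Re Z_t(z)$ under $\Prob_z^*$, never touching the geometry of the curve.

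For the upper bound the paper observes (after scaling to $x=1$) that $T_z\le t$ forces $X$ to reach $1/4$ by time $t$; bounding the drift in \eqref{oct19.1} on $\{X_s\ge1/4\}$ shows that the $\Prob_z^*$-Brownian motion $B$ must then reach $1/2$ by time $t$, and the reflection principle gives $ce^{-1/(8t)}$ directly. Your chordal stopping-time argument is essentially sound --- the pointwise bound $M_\sigma(z)\le CG(z)$ does follow from the harmonic-measure estimate you indicate --- but the passage from ``curve has diameter $\ge(1-\delta)|z|$'' to a bound on $\max_{s\le t}|U_s|$ loses an $a$-dependent factor, and it is not clear that any choice of $\delta$ recovers the constant $1/8$ for all $\kappa<8$; the SDE comparison sidesteps this.

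For the lower bound the paper again stays under $\Prob_z^*$: after reducing to $t=1$ and handling $x=0$ by compactness, it sets $T=\inf\{s:X_s=0\}$, compares \eqref{oct19.1} to the Bessel equation $dX_t=(4X_t)^{-1}dt+dB_t$ (the extremal case $a=1/4$), and uses a simple scaling recursion $\Prob\{T\le t\}\ge\Prob\{S\le t/2\}\Prob\{T\le 2t\}$ for the Bessel hitting time to obtain $\Prob^*\{T\le t\}\ge c_1e^{-5/(2t)}$. This bypasses entirely the Loewner-flow analysis you flag as the main obstacle, and that obstacle is a genuine gap in your argument: you have not specified which driving-function tube forces $\sigma<1$ with $|g_\sigma'(z)|\asymp1$ and $S_\sigma(z)\asymp1$, nor justified the claim that its Cameron--Martin cost satisfies $c\le5$ (the remark ``since $\kappa<8$'' is unmotivated --- the Brownian tube probability does not see $\kappa$). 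Carrying your program through would require substantial deterministic Loewner estimates; the paper's one-dimensional comparison is much shorter.
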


\begin{proof}  Without loss of generality we assume that $x \geq 0$.
The first inequality follows immediately from the Loewner
equation since $\p_t \left([\Im g_t(z)]^2 \right)\geq -2a.$
For the remainder of the proof we assume $y^2 \leq 2at$.

Let $\Prob^* = \Prob^*_z$ and let $X_t, Z_t, B_t$ be
as in  \eqref{oct19.1} with $X_0 = B_0 =  x$. Let $T_z = \inf\{t:
Z_t = 0\} = \inf\{t: \gamma(t) = z\}$.
Then the second
and third inequalities are equivalent to
\begin{equation}  \label{nov11.1}
    \Prob^*\{T_z \leq t\} \leq c \, e^{-\frac{x^2}{8t} }.
        \end{equation}
  \begin{equation} \label{nov11.2}
  \Prob^*\{T_z \leq 2t\} \geq c_1 \, e^{-\frac{5x^2}{t}}.
 \end{equation}

By scaling, it suffices to prove \eqref{nov11.1} with $x=1$
and for $t$ sufficiently small,
Let $R = \inf\{t: X_t = 1/4\}, S= \inf\{t:B_t = 1/2\}$.  Note that if $t \leq R$, then
$ B_t \leq X_t + 4rt$ where $r= (3a-1) \vee 0$. Therefore,
if $t \leq 1/(16r)$,
\[
\Prob^*\{T_z \leq t\}   \leq   \Prob^*\{R \leq t \} \leq
\Prob^*\{S \leq t\}. \]
The reflection principle for Brownian motion implies that
\[ \Prob^*\{S \leq t\} \leq \Prob^*\left\{\min_{0 \leq s \leq t}( B_s-B_0) \leq
- 1/2\right\} \leq 2 \Prob\{B_t \geq 1/2\}   \leq c \, e^{-\frac{1}{8t}}. \]

By scaling it suffices to prove
 \eqref{nov11.2}  for $t=1$.
 The estimate is easy to establish if $x=0$; indeed,
there exists $\delta  > 0$ such that if $z=iy$ with $y \leq \sqrt{2a}$,
\[ \Prob^*\{T_z \leq 3/2\} \geq  \delta. \]
If $x > 0$, let
\[            T = \inf\{t: X_t = 0 \}. \]
The strong Markov property implies that $\Prob^*\{T_z \leq 2\}
\geq \delta \, \Prob^*\{T \leq 1/2\}$.  Hence,
  it suffices to prove that
$      \Prob^*\{T \leq 1/2 \mid X_0 = x\} \geq c_1 \, e^{- 5x^2}, $
or equivalently, by scaling,
\[     \Prob^*\{T \leq t \mid X_0 = 1\} \leq c_1 \, e^{-\frac{5}{2t}}. \]
Using \eqref{oct19.1} and $a > 1/4$, we can see that it suffices to establish the
last inequality where $X$ satisfies the Bessel equation
\[         dX_t = \frac{dt}{4X_t} + d B_t, \;\;\;\; X_0 = 1 , \]
and $T =\inf\{t: X_t=0\}$.
 For the remainder of this argument
we assume this with $B_0 = 1$
and write just $\Prob$ for the
probability measure.    Let $S = \inf\{t: X_t = 1/2\}$.  Then the
strong Markov property and Bessel scaling imply that $T$ has
the same distribution as $S + (\tilde T/4)$ where $S,\tilde T$
are independent and $\tilde T$ has the same distribution as $T$.
Therefore,
\[   \Prob\{T \leq t \} \geq \Prob\{S \leq t/2\}
\, \Prob\{\tilde T \leq t/2\} = \Prob\{S \leq t/2\} \,
  \Prob\{T \leq 2t\}. \]
  If $s \leq S$, then $X_s \leq B_s + \frac s2$.  Therefore, if
  $t \leq 2$,
  \[ \Prob\{S \leq t/2\} \geq \Prob\{B_{t/2} \leq 0\}
    \geq c \, \sqrt t \, e^{-\frac{1}{t} }  .\]
  In particular, there exists $t_0 >0$, such that for $t \leq t_0$,
\begin{equation} \label{nov11.3}
     \Prob\{T \leq t \} \geq e^{-\frac{5}{4t}} \, \Prob\{T
  \leq 2t\}.
  \end{equation}
Since $\Prob\{T \leq t_0/2\} > 0$, there
 exists $c_1$ such that for $t_0/2 \leq t \leq t_0$,
\begin{equation}  \label{nov11.4}
  \Prob\{T \leq t\} \geq c_1 \, e^{-\frac{5}{2t} }.
  \end{equation}
Using \eqref{nov11.3} and induction, we see that \eqref{nov11.4}
holds for all $t \leq t_0$.   This finishes the proof of
the third inequality.

 The last inequality in the lemma
  follows from the second and third inequalities as follows.
\[
\frac{G^{100 t}(z)}{G(z)}  \geq   c_1 \, \exp \left\{-\frac{5x^2}{50t} \right\}
 = c_1 \, \exp\left\{\frac{x^2}{40 t} \right\} \, \exp\left\{- \frac{x^2}{8t} \right\}
 \geq \frac{c_1}{c} \, \exp\left\{\frac{x^2}{40 t} \right\} \, \frac{G^t(z)}{G(z)}.  \]

\end{proof}

 \begin{lemma}  \label{aug31.lemma1}
   There exists $\beta > 0, c < \infty$ such that if
 $z  = 1 + \delta i$ with $0 < \delta < 1$,  $x \geq 4$, $0 \leq t \leq 1$, and
 \[       \sigma'  = \sigma_{x}' = \inf\{s: |\Re[\gamma(s)]|  =  x \}, \]
 then
 \[   \Prob^*_z\{\sigma' \leq t \wedge T_z \} \leq c \, e^{-\beta x^2/t}. \]
 \end{lemma}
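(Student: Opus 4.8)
The plan is to reduce the statement to a tail estimate for the driving function of two-sided radial $SLE_\kappa$, and then to control that driving function through its $\Prob^*_z$-dynamics.

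First I would invoke the deterministic Loewner diameter bound quoted just above Lemma \ref{may19lemma}. On the event $\{\sigma' \le t\wedge T_z\}$ we have $\diam(\gamma_{\sigma'}) \ge |\gamma(\sigma')| \ge |\Re\gamma(\sigma')| = x$, while the diameter bound gives $\diam(\gamma_{\sigma'}) \le 2a\,[\sigma'^{1/2}\vee\max_{r\le\sigma'}|U_r|]$ and $\sigma'\le t\le 1$. Hence either $t\ge x^2/(4a^2)$, in which case $x^2/t$ is bounded and the asserted inequality is trivial for suitable $c$, or $\max_{r\le t\wedge T_z}|U_r|\ge x/(2a)$. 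So it suffices to prove that for all $K\ge 0$ and $t\in(0,1]$,
\[ \Prob^*_z\Big\{\max_{r\le t\wedge T_z}|U_r|\ge K\Big\}\le c\,e^{-\beta K^2/t}. \]

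Next I would record the $\Prob^*_z$-dynamics of $U_r$. Weighting chordal $SLE$ by $M_t(z)$ and applying the Girsanov theorem gives $dU_r = \frac{(4a-1)X_r(z)}{|Z_r(z)|^2}\,dr + d\hat B_r$ for some $\Prob^*_z$-Brownian motion $\hat B$, where $X_r(z)=\Re Z_r(z)$. Since this drift and the drift in \eqref{oct19.1} are both constant multiples of $X_r(z)/|Z_r(z)|^2$, for $\kappa\ne 6$ one can eliminate $\int_0^r X_s(z)|Z_s(z)|^{-2}\,ds$ between the two equations and obtain the identity
\[ U_r = \frac{4a-1}{1-3a}\,(X_r(z)-1)\;-\;\frac{a}{1-3a}\,B_r, \]
where $B$ is the Brownian motion of \eqref{oct19.1} and we used $X_0(z)=\Re z = 1$. (The borderline value $\kappa=6$, where $1-3a=0$, is handled separately: there one bounds $|U_r|$ using $|g_r(z)-z|\le a\int_0^r|Z_s(z)|^{-1}\,ds\le a\int_0^r|1+B_s|^{-1}\,ds$ and a Brownian occupation-time estimate.) Thus it remains to bound the tails of $\max_{r\le t}|B_r|$ — immediate from the reflection principle — and of $\max_{r\le t\wedge T_z}|X_r(z)-1|$.

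The subtle point, and the main obstacle, is that the drift of $X_r(z)$ is itself unbounded — it blows up like $|Z_r(z)|^{-1}$ as $\gamma$ approaches $z$ — so a naive exponential-martingale argument fails. The device is that the drift of $X_r(z)^2$ is bounded: since $|Z_r(z)|^2 = X_r(z)^2 + Y_r(z)^2 \ge X_r(z)^2$, Itô's formula gives $d(X_r(z)^2) = \big(\tfrac{2(1-3a)X_r(z)^2}{|Z_r(z)|^2}+1\big)\,dr + 2X_r(z)\,dB_r \le C_a\,dr + 2X_r(z)\,dB_r$ with $C_a = 1+2\max\{0,1-3a\}$, so $X_r(z)^2 \le 1 + C_a r + 2N_r$ with $N_r=\int_0^r X_s(z)\,dB_s$ a martingale. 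Its quadratic variation satisfies $\langle N\rangle_{t\wedge T_z} = \int_0^{t\wedge T_z} X_s(z)^2\,ds \le t\max_{s\le t\wedge T_z}X_s(z)^2 \le t\,(1+C_a+2\hat N)$, where $\hat N := \max_{s\le t\wedge T_z}|N_s|$. Feeding this self-bound on $\langle N\rangle$ into a dyadic decomposition over the values of $\hat N$, together with the standard estimate $\Prob(\max_s|N_s|\ge\lambda,\ \langle N\rangle\le V)\le 2e^{-\lambda^2/2V}$, produces $\Prob^*_z\{\max_{r\le t\wedge T_z}|X_r(z)-1|\ge M\}\le c\,e^{-\beta M^2/t}$ for $t\in(0,1]$; the range of small $M$ (where $X_0(z)=1$ prevents a naive bound) is absorbed into the constants, using in addition Lemma \ref{may19lemma} to bound $\{T_z\le t\}$ and a short bootstrap on $|X_r(z)-1|$ while it stays below $1/2$. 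Combining the three tail bounds with $K$ (respectively $M$) a fixed multiple of $x$ gives the lemma. The identity of the third paragraph is what circumvents the unbounded drift; the passage from $X_r(z)$ to $X_r(z)^2$ restores a bounded drift at the price of tracking only a squared quantity, and $\kappa=6$ is the single place where a minor separate argument is needed.
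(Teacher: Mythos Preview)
Your reduction to a tail bound on $\max_{r\le t\wedge T_z}|U_r|$ via the Loewner diameter estimate, followed by the identity
\[
U_r=\frac{4a-1}{1-3a}\,(X_r(z)-1)-\frac{a}{1-3a}\,B_r
\]
and the $X^2$-submartingale argument, is correct and clean for $\kappa\neq 6$. The self-referential quadratic-variation bound together with the dyadic decomposition does yield $\Prob^*_z\{\max|X_r(z)-1|\ge M\}\le c\,e^{-\beta M^2/t}$, and combining with the Brownian tail gives the result. This is a genuinely different route from the paper: there the event is split geometrically according to whether $z$ lies in the bounded or unbounded component of $H_\sigma$ minus the vertical segment from $\gamma(\sigma)$ to $\R$, and the Gaussian decay comes from an extremal-length estimate (a hull of capacity $\le t$ reaching real part $x$ must pass through a region of area $O(t)$ and width $\ge x/2$, forcing harmonic measure $\le c\,e^{-\beta x^2/t}$). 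Your approach is more analytic and avoids those harmonic-measure considerations entirely.

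The gap is at $\kappa=6$. Your proposed fix bounds $|U_r|$ by $|B_r|+a\int_0^r|Z_s(z)|^{-1}\,ds\le |B_r|+a\int_0^r|1+B_s|^{-1}\,ds$, but no Brownian occupation-time estimate will produce the needed tail: on the event $\{\min_{s\le t}|1+B_s|\ge\epsilon\}$ one gets only $\int\le t/\epsilon$, and $\Prob\{\min|1+B_s|<\epsilon\}\le c\,e^{-(1-\epsilon)^2/(2t)}$, so the best this yields is $\Prob\{\int>L\}\le c\,e^{-c/t}$ with an exponent \emph{independent of $L$}. That is enough when $x$ is bounded but fails to give $e^{-\beta x^2/t}$ for large $x$. (Indeed $\int_0^t|1+B_s|^{-1}\,ds=\infty$ whenever $B$ hits $-1$ before time $t$.) The algebraic identity degenerates precisely because at $a=1/3$ the tilted drift of $X$ vanishes, and there is no obvious repair within your framework; one really needs either the paper's extremal-length input or a separate geometric argument to recover the Gaussian-in-$x$ decay at $\kappa=6$.
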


 \begin{proof}  All constants in this proof are independent of
 $x$ and $\delta$ (but may depend on $a = 2/\kappa$).
 Let $\sigma =  \inf\{s: \Re[\gamma(s)] =  x \}$.  We will prove that
 \[   \Prob^*_z\{\sigma\leq t \wedge T_z \} \leq c \, e^{-\beta x^2/t}. \]
  A similar argument, which we omit,
  establishes the estimate for
  $\sigma_- =  \inf\{s: \Re[\gamma(s)] =  - x \}$.

   Let
 \[  Z_s= X_s+iY_s= g_s(z) - U_s = g_s(z) - g(\gamma(s)),\]
 and $Z = X+iY = Z_\sigma$.   As before, let $H_s$ be the
 unbounded component of $\Half \setminus \gamma_s$ and let
 $H = H_\sigma$.
 The Loewner equation \eqref{loew}
  implies that $Y \leq \delta$.
 Suppose that $\sigma < T_z$.  Let $\ell$ denote the vertical line
 segment from $\gamma(\sigma)$ to $\R$,
    ${\mathcal U} = H \setminus \ell,$   and let  ${\mathcal U}'$ be the unbounded
    component of ${\mathcal U}$.  We write $\{\sigma < t \wedge T_z\} = E_1 \cup E_2$ where
 $E_1$ ($E_2$) is the intersection of $\{\sigma <t \wedge T_z\}$ with the event
 that $z \in {\mathcal U}'$  (resp., $z \not\in {\mathcal U}'$) .

On the event $E_1$, deterministic conformal mapping estimates  imply
 that there exist $c_1$ such that
 \[                   X \leq - c_1 x . \]
 (We leave out the details of this estimate which can be obtained by considering
 the event that 
 the path of a  Brownian motion starting at $(x/2) + 10i$ and
 stopped when
 it leaves $H$, 
 concatenated with the half-infinite line $(x/2)+i[10,\infty)$, separates the disk of radius $\dist(z,\partial H)/2$ about $z$  from
 $U_\sigma$. By considering the image of this event under $g_\sigma$
 we can get the estimate.)
 Therefore,
 \[  \Prob_z^*(E_1) \leq \Prob_z^*\{X_s \leq - c_1 x \mbox{ for some }
   s \leq t  \wedge T_z\}. \]
   We know from \eqref{oct19.1} that $X_t$ satisfies
 \[         dX_t = \frac{(1-3a)\, X_t}{|Z_t|^2} \, dt + dB_t, \;\;\;\; X_0 = 1,\]
 where $B_t$ is a $\Prob^*_z$-Brownian motion.   By comparison with
 a Bessel process (we omit the details), we can see that for $y \geq 2$,
 \[ \Prob_z^*\{X_s \leq - y \mbox{ for some }
   s \leq t  \wedge T_z \mid X_0 \geq  1\} \leq c \, e^{-\beta' y^2/t},  \]
   for appropriate $c,\beta'$.

  We now assume $E_2$ holds.
 Let ${\mathcal U}''$ denote the component of ${\mathcal U}$ containing
  $z$.   Except on an event of $\Prob_z^*$ measure zero, we know that
  $\ell \subset \p {\mathcal U}''$.  We claim there exists $c_3,\beta_3$ such that
  for $z' \in {\mathcal U}''$ with $\Re(z') \leq 3/2$, we have
\begin{equation}  \label{aug31.1}
   \hm_{{\mathcal U}''} (z', \ell) \leq c_3 \, e^{-\beta_3 x^2/t}.
   \end{equation}
  To see this, note that we can find a crosscut
  $\ell''$  of the form $(2 + y_1 i, 2 +y_2 i)$ with the
  property that $\ell''$ disconnects $z'$ from $\ell$ in ${\mathcal U}''$.   Since
  $\hcap[\gamma_\sigma] \leq t$, we know (see \cite[Theorem 1]{LLN})
  that the area of ${\mathcal U}''$ is bounded
  by $c_4 t$.  Also, $\dist(\ell,\ell'') = x -2 \geq x/2$.
  Using extremal distance estimates (see, e.g., \cite[Lemma 3.74]{Law1}),
   we know that the extremal distance 
  between $\ell$ and $\ell''$ in ${\mathcal U}'' \setminus \ell''$ is bounded
  below by $c_5 x^2/t$.  The estimate \eqref{aug31.1} follows from the
  relationship between harmonic measure and extremal distance.

 Let $\Upsilon_t = Y_t/|g_t'(z)|$ denote (one half) times the conformal radius
of $z$ in $H_t$.  We know that $\Upsilon_0 = \delta$ and
 $\Upsilon_t \asymp_2
\dist(z,\p H_t)$.  Let $\eta_k = \inf\{t: \Upsilon_t \leq 2^{-k}
 \, \delta \}$, and write
 \[  \Prob_z^*(E_2) = \sum_{k=1}^\infty
   \Prob_z^*(E_2 \cap \{\eta_{k-1}  \leq \sigma  < \eta_{k} \}). \]  
 Using the Beurling estimate, \eqref{hmeasure}, and  \eqref{aug31.1},
  we can see that on the event $E_2 \cap \{\eta_{k-1} \leq \sigma <\eta_{k} \}$,
 \begin{equation} \label{aug31.2}
          S_\sigma \leq  c\, 2^{-k/2} \,  \delta \, e^{-\beta_3 x^2/t} .
          \end{equation}
 By the study of two-sided radial $SLE_\kappa$ in the radial parametrization (see \cite[Section 2.1.1]{LZ})
 we know that there exist $c_6,\alpha$ such that
 \[   \Prob^*_z\{ \exists t \leq \eta_1: S_t \leq \delta \, r\} \leq c \, r^\alpha , \]
 and for $k \geq 1$,
 \[   \Prob^*_z\{\exists \eta _k \leq t \leq \eta_{k+1}: S_t \leq r\} \leq c \, r^\alpha . \]
 Combining this with \eqref{aug31.2}, we get
 \[  \Prob^*_z\left[E_2 \cap\{\eta_k < \sigma \leq \eta_{k-1} \}\right] \leq c \, 2^{-\alpha k/2}
  \, e^{-\alpha \beta_3 x^2/t} . \]
  By summing over $k$, we see that $\Prob^*_z(E_2) \leq c \,
 e^{-\alpha \beta_3 x^2/t} . $
  \end{proof}

  \begin{corollary}  \label{aug31.corollary}
  For every $\epsilon > 0$ there exists $K < \infty$  such that
  the following holds.  Suppose $z = x+iy|x|$ with $ x \neq 0, y \leq 1$ and $x^2y^2/a \leq t \leq x^2$.
  Then
  \[  \Prob_z^*\{\diam[\gamma] \geq Kx  \mid T_z \leq t \} \leq   \epsilon. \]
  \end{corollary}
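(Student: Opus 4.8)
The plan is to write the conditional probability as a ratio, bound the denominator $\Prob^*_z\{T_z\le t\}$ from below using Lemma~\ref{may19lemma}, and bound the numerator $\Prob^*_z\{\diam[\gamma]\ge Kx,\ T_z\le t\}$ from above using Lemma~\ref{aug31.lemma1}. First I would reduce to a normalized configuration: by the reflection symmetry $x\mapsto-x$ of $SLE$ we may assume $x>0$, and then by the scaling property of two-sided radial $SLE$ (scaling space by $1/x$ and capacity-time by $1/x^2$) we may take $x=1$, so that $z=1+iy$ with $0<y\le 1$ and $t\in[y^2/a,1]$; since $\diam[\gamma]$ scales like $x$, the event $\{\diam[\gamma]\ge Kx\}$ becomes $\{\diam[\gamma]\ge K\}$. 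For the denominator: $z=1+iy$ has real part $1$, and $y^2\le at=2a(t/2)$, so the third inequality of Lemma~\ref{may19lemma} (with its ``$2t$'' taken equal to our $t$) gives $G^t(z)\ge c_1 e^{-10/t}G(z)$, i.e. $\Prob^*_z\{T_z\le t\}=\phi(z;t)=G^t(z)/G(z)\ge c_1 e^{-10/t}$.

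Next I would show that on $\{\diam[\gamma]\ge K,\ T_z\le t\}$ the path is forced to reach real part $\pm cK$ before time $T_z$. On that event $\gamma=\gamma[0,T_z]$ is a curve through $\gamma(0+)=0$ of diameter $\ge K$, so there is $s\le T_z$ with $|\gamma(s)|\ge K/2$. The hull $\gamma_s$ satisfies $\hcap[\gamma_s]=2as\le 2at\le 2a$; invoking the elementary estimate that any $\H$-hull $A$ has $\sup_{w\in A}\Im w\le C_0\sqrt{\hcap(A)}$ (obtained by comparison with a vertical slit, in the spirit of the $\hcap$ estimates in \cite{Law1}), I get $\Im[\gamma(s)]\le H_0:=C_0\sqrt{2a}$. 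Hence $|\Re[\gamma(s)]|^2\ge K^2/4-H_0^2\ge K^2/16$ once $K$ is large, and then, since $\Re[\gamma(0+)]=0$ and $\gamma$ is continuous, $\sigma'_{K/4}\le s\le T_z$; as $T_z\le t$ this yields $\sigma'_{K/4}\le t\wedge T_z$. Applying Lemma~\ref{aug31.lemma1} with its ``$x$'' equal to $K/4\ge 4$ (legitimate because $z=1+iy$ with $y\le 1$ and $t\le 1$) gives $\Prob^*_z\{\diam[\gamma]\ge K,\ T_z\le t\}\le\Prob^*_z\{\sigma'_{K/4}\le t\wedge T_z\}\le c\,e^{-\beta K^2/(16t)}$.

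Finally I would combine the two estimates: since $t\le 1$ and hence $1/t\ge 1$, we have $e^{-\beta K^2/(16t)}\le e^{-10/t}\,e^{-(\beta K^2/16-10)}$ once $\beta K^2/16\ge 10$, so $\Prob^*_z\{\diam[\gamma]\ge K\mid T_z\le t\}\le (c/c_1)\,e^{-(\beta K^2/16-10)}$, which is $\le\epsilon$ for $K$ chosen large depending only on $\kappa$ and $\epsilon$. I expect the one genuinely delicate step to be the geometric reduction to Lemma~\ref{aug31.lemma1}: a priori the path could attain a large diameter by going essentially ``straight up'' without ever making $|\Re[\gamma(\cdot)]|$ large, and it is precisely the $\hcap$-versus-height bound (using $\hcap[\gamma_s]\le 2at\le 2a$) that rules this out. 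The remaining points are routine bookkeeping with the scaling relations, together with the boundary case $y=1$, which is recovered from $y<1$ by continuity (or handled by an arbitrarily small perturbation of $z$).
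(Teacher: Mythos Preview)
Your proof is correct and follows the same approach as the paper: reduce to $x=1$ by symmetry and scaling, bound the denominator $\Prob^*_z\{T_z\le t\}$ below via Lemma~\ref{may19lemma}, bound the numerator above via Lemma~\ref{aug31.lemma1}, and take the ratio. The paper's proof is terser---it simply asserts that Lemma~\ref{aug31.lemma1} yields $\Prob^*_z\{T_z\le t,\ \diam[\gamma]\ge K\}\le c_2 e^{-\beta_2 K^2/t}$ without spelling out the passage from ``large diameter'' to ``large real part''; your explicit argument via the height bound $\Im\gamma(s)\le\sqrt{2as}$ (which follows from $\partial_t Y_t^2\ge -2a$ in the Loewner ODE, exactly as used in the first line of the proof of Lemma~\ref{may19lemma}) fills that gap and is the right justification. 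One small slip: in this paper's normalization $\hcap[\gamma_s]=as$, not $2as$, so your $H_0$ should be $\sqrt{2a}$; this changes nothing in the argument.
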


  \begin{proof}  By scaling and symmetry we may assume $x=1$.  By Lemma
 \ref{may19lemma},  there exist $c_1,\beta_1$ such that
  \[ \Prob_z^*\{T_z \leq t \} \geq c_1 \, e^{-\beta_1/t}.\]  By Lemma \ref{aug31.lemma1},
  there exist $c_2,\beta_2$ such that
  \[    \Prob_z^*\{ T_z \leq t \, , \, \diam[\gamma]  \geq K \} \leq c_2 \, e^{-\beta_2 K^2/t}. \]
  Therefore, if $K \geq  2 \beta_1/\beta_2$,
  \[  \Prob_z^*  \{\diam[\gamma] \geq K \mid T_z \leq t \}  \leq (c_2/c_1)
   \, \exp \left\{ \frac{\beta_1 - \beta_2 K^2 }{t} \right\}  \leq
     (c_2/c_1) \, \exp \left\{- \frac{\beta_2K}{2t} \right\} \]

  \end{proof}

%
%
%
%
%
%

Let
\[    M_{s}^t(z)
             =  |g_s'(z)|^{2-d} \,  G^{t-s}(Z_s(z)) . \]
For fixed $t$, $M_{s}^t(z)$ is a continuous,
nonnegative local martingale
with $M_{s}^t(z) = 0$ for $s \geq  t$.  Moreover,
if $r \leq t$, then
\begin{equation}  \label{Diffmartingale}
 N_s = M_{s}^t(z) - M_{s}^r(z)
  = |g_s'(z)|^{2-d} \, \left[G^{t-s}(Z_s(z)) -
    G^{r-s}(Z_s(z))\right] ,
\end{equation}
is a {\em martingale} for $0 \leq s \leq r$.
In analogy to \eqref{apr23.5} we can see that if  $s \leq t $,
\[   \E\left[M_{s}^t(z) \right] = G^t(z) - G^s(z). \]

Let
\[          L(0,t) = \int G^t(z) \, dA(z) , \]
and more generally, if $s < t$,
\begin{equation}  \label{oct2.5}
             L(s,t) = \int M_s^t(z) \, dA(z)
  = \int |g_s'(z)|^{2-d} \, G^{t-s}(Z_s(z)) \, dA(z).
  \end{equation}
  Here, and throughout this paper, $dA$ refers to
  integration with respect to 2-dimensional Lebesgue
  measure (area) and, unless specified otherwise, integrals
  are over $\Half$.
  Note that
\begin{equation}  \label{apr23.6}
  \E[L(s,t)] = \int [G^t(z) - G^s(z)] \, dA(z) ,
  \end{equation}
and  if $s < t  \leq t_1$,
\begin{equation}  \label{apr23.6.alt}
  L(s,t) = \E\left[L(s,t_1) - L(t,t_1) \mid \F_s\right].
  \end{equation}
For fixed $t$, $M_s^t(z)$ is a positive supermartingale
in $s$.  Therefore, we have the following important property.
\begin{itemize}
\item  For fixed $t$,  $L(s,t)$ is a nonnegative
supermartingale in $s$
with $L(s,t) = 0$ for $s \geq t$.
\end{itemize}

If $f_s(w) = g_s^{-1}(w+U_s)$, then the substitution
$z = f_s(w)$ in \eqref{oct2.5} yields
\[    L(s,s+t) = \int |f_s'(w)|^d \, G^t(w) \, dA(w).\]
If $r >0$, then the scaling rules for $SLE$ imply
\begin{eqnarray}
L(s,s+r^2t) & = &  \int  |f_s'(w)|^d \, G^{r^2 t}(w) \, dA(w)\nonumber
\\
 & = &  r^{d-2}  \int |f_s'(w)|^d \, G^{t}(w/r) \, dA(w)
 \nonumber \\
 & = & r^d \int |f_t'(rz)|^{d} \, G^t(z) \, dA(z) .
 \label{oct2.6}
 \end{eqnarray}

  \subsection{Two-point Green's function}  \label{two-pointsec}

 The proof of existence of $\Theta_t$ in \cite{LZ}
 uses the two-point Green's function $G(z,w)$ introduced
 in \cite{LW}.  Our proof of
 Theorem \ref{maytheorem2} will  use  a time-dependent
 version.

The two-point Green's function $G(z,w)$ is the
normalized probability that $z$ and $w$ are both in
$\gamma(0,\infty)$.  More precisely, it satisfies
\begin{equation}  \label{mar29.2}
     \Prob\left\{\Upsilon_\infty(z) < \epsilon,
\Upsilon_\infty(w) < \delta \right\}
  \sim c_*^2 \, G(z,w) \, \epsilon^{2-d} \, \delta^{2-d}, \;\;\;\;
    \epsilon, \delta \downarrow 0.
    \end{equation}
 It can be written as
\begin{equation}  \label{mar29.3}
  G(z,w) = \hat G(z,w) + \hat G(w,z) ,
  \end{equation}
 where $\hat G(z,w)$, roughly speaking, is the probability to visit
 $z$ first and then $w$ later.  To be precise, we can write
 \begin{equation}  \label{mar29.4}
     \hat G(z,w) = G(z) \, \E_z^* \left[M_{T_z}(w)\right].
     \end{equation}
In \cite{LW} it is shown that if one defines $G(z,w)$ by
\eqref{mar29.3} and \eqref{mar29.4}, then \eqref{mar29.2} holds.
Given \eqref{mar29.2}, we see that there is a corresponding
two-point local martingale
\[   M_t(z,w) = |g_t'(z)|^{2-d} \, |g_t'(w)|^{2-d}
 \, G(Z_t(z),Z_t(w)). \]
Note that $G$ is symmetric in $z,w$  but the ordered Green's
function $\hat G$ is not.  We will
need a bounded time version of the Green's function, and it is easier
to define first the  nonsymmetric version.

\begin{definition}  The Green's function
$\hat G^{s,t}(z,w)$ is defined by
\[      \hat G^{s,t}(z,w) =
    G(z) \, \E_z^* \left[M_{T_z}^{t-T_z}(w) \; ;\;  T_z \leq
     s\right].\]
The function $G^{s,t}(z,w)$ is defined by
\[    G^{s,t}(z,w) = \hat G^{s,t}(z,w)
  + \hat G^{t,s}(w,z).\]
\end{definition}

We will be using the symmetric function
$G^{t,t}(z,w)$.
%
 The rough interpretation of $G^{t,t}(z,w)$ is the normalized
 probability that the path goes through both $z$ and $w$ and
 both of the visits
 occur before time $t$.
   As for the single point Green's
 function, we can give some equivalent formulations that
 follow immediately from the definition and the scaling rules.

 \begin{proposition}  $\;$

 \begin{itemize}

\item If $s < t$,
\[ \hat  G^{t,t'}(z,w) - \hat G^{s,t'}(z,w)
  = \E\left[|g_s'(z) \, g_s'(w)|^{2-d}
    \, \hat G^{t-s,t' - s}(Z_s(z),Z_s(w))\right].\]

   \item  If $r >0$,
   \[   \hat G^{s,t}(z,w)  = r^{2(2-d)} \, \hat G^{r^2s,r^2t}(rz,rw), \]
\[  G^{s,t}(z,w)  = r^{2(2-d)} \,  G^{r^2s,r^2t}(rz,rw).\]

    \end{itemize}

    \end{proposition}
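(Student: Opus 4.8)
The statement has two independent parts. The first is the domain-Markov (tower) property of two-sided radial $SLE_\kappa$, and the second is a scaling identity that follows, exactly as the scaling rule for $G^t$ and \eqref{oct2.6} did, from the scaling Proposition for $SLE$. I take the first to be the substance. For that part, since $s<t$ the definition of $\hat G^{s,t}$ gives
\[ \hat G^{t,t'}(z,w)-\hat G^{s,t'}(z,w)=G(z)\,\E_z^*\!\left[M_{T_z}^{t'-T_z}(w)\;;\;s<T_z\le t\right], \]
where $M_{T_z}^{t'-T_z}(w)$ is the one-point functional occurring in that definition. The plan is to condition on $\F_s$: the integrand is supported on $\{T_z>s\}$ (and there $z,w\in H_s$), so on that event we introduce the Loewner continuation $\gamma^*(r)=g_s(\gamma(s+r))-U_s$, a non-crossing curve with its own maps $g^*_r$ and driving function $U^*_r$.

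On $\{T_z>s\}$ the standard chain-rule identities for composed Loewner flows give $T_z=s+T^*$ with $T^*$ the time at which $\gamma^*$ reaches $\zeta:=Z_s(z)$, $g'_{T_z}(w)=(g^*_{T^*})'(Z_s(w))\,g'_s(w)$, and $Z_{T_z}(w)=Z^*_{T^*}(Z_s(w))$. Hence the integrand factors as $|g'_s(w)|^{2-d}$ times a functional $\Phi$ of $\gamma^*$ alone (with $w$ replaced by $Z_s(w)$ and both time parameters decreased by $s$, since $T_z=s+T^*$), and $\Phi$ is precisely the functional whose average under two-sided radial $SLE_\kappa$ aimed at $\zeta$, multiplied by $G(\zeta)$, equals $\hat G^{t-s,t'-s}(Z_s(z),Z_s(w))$. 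The key input is the domain Markov property of two-sided radial: conditionally on $\F_s$ and on $\{T_z>s\}$, $\gamma^*$ is two-sided radial $SLE_\kappa$ aimed at $\zeta=Z_s(z)$. This holds because two-sided radial aimed at $z$ is chordal $SLE_\kappa$ reweighted by the local martingale $M_\cdot(z)$ of \eqref{localmart}, and on $\{T_z>s\}$ the ratio $M_{s+r}(z)/M_s(z)$ equals, by the same chain rule, $|(g^*_r)'(\zeta)|^{2-d}\,G(Z^*_r(\zeta))/G(\zeta)$, which is the local-martingale weight for $\gamma^*$ aimed at $\zeta$; together with the domain Markov property of chordal $SLE_\kappa$ this identifies the conditional law. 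Therefore the conditional expectation of the integrand given $\F_s$ is $|g'_s(w)|^{2-d}\,G(Z_s(z))^{-1}\,\hat G^{t-s,t'-s}(Z_s(z),Z_s(w))$ on $\{T_z>s\}$.

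It then remains to convert $G(z)\,\E_z^*[\,\cdot\;;\,T_z>s]$ back to an unweighted chordal expectation. Since $M_s(z)=|g'_s(z)|^{2-d}G(Z_s(z))$ on $\{T_z>s\}$, the weighting relation used to obtain \eqref{oct18.2} — applied with the stopping time $s\wedge\tau_{\epsilon,z}$ and then letting $\epsilon\downarrow0$, which is legitimate because $\Upsilon_t(z)$ decreases continuously to $0$ at $T_z$ so that $\tau_{\epsilon,z}\uparrow T_z$ — gives $G(z)\,\E_z^*[Y\;;\,T_z>s]=\E\!\left[|g'_s(z)|^{2-d}G(Z_s(z))\,Y\;;\,T_z>s\right]$ for $\F_s$-measurable $Y$. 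Taking $Y=|g'_s(w)|^{2-d}G(Z_s(z))^{-1}\hat G^{t-s,t'-s}(Z_s(z),Z_s(w))$ cancels the factor $G(Z_s(z))$, and the restriction to $\{T_z>s\}$ can be dropped since $\hat G^{t-s,t'-s}(Z_s(z),Z_s(w))$ vanishes (by the usual convention that the Green's function is $0$ at swallowed points) whenever $z$ or $w$ lies outside $H_s$. This yields $\E\!\left[|g'_s(z)\,g'_s(w)|^{2-d}\,\hat G^{t-s,t'-s}(Z_s(z),Z_s(w))\right]$, as claimed. I expect the only genuine difficulty to be the measure-theoretic bookkeeping around $T_z$, where $M_\cdot(z)$ blows up, together with the events on which $z$ or $w$ is swallowed before time $s$; these are handled exactly as in \cite{LW} via the cutoff $\tau_{\epsilon,z}$ and the convention just mentioned.

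For the scaling rules, let $\hat\gamma,\hat g,\hat U$ be as in the scaling Proposition. Then $\hat\gamma$ reaches $rz$ at time $r^2T_z$, $|\hat g_t'(z)|=|g_{r^2t}'(rz)|$, $\hat Z_t(z)=r^{-1}Z_{r^2t}(rz)$, and $\phi(rz;r^2t)=\phi(z;t)$; plugging these into the definition of $\hat G^{s,t}$ and using the one-point rules $G(rz)=r^{d-2}G(z)$ and $G^{r^2t}(rz)=r^{d-2}G^t(z)$ gives $\hat G^{r^2s,r^2t}(rz,rw)=r^{2(d-2)}\hat G^{s,t}(z,w)$, which is the stated identity. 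The doubled exponent $2(2-d)$ is simply the two one-point conformal factors, one from $z$ and one from $w$, exactly as a single $2-d$ appears for $G^t$. The rule for $G^{s,t}$ follows by adding the two terms in its definition.
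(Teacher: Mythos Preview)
The paper does not supply a proof: it introduces the proposition as ``equivalent formulations that follow immediately from the definition and the scaling rules.'' Your write-up correctly fills in what the paper takes for granted, and the route you take---condition on $\F_s$, invoke the domain Markov property of two-sided radial (derived, as you do, from the tilting definition plus the domain Markov property of chordal $SLE_\kappa$), then untilt via the Girsanov relation $G(z)\,\E_z^*[Y;T_z>s]=\E[M_s(z)\,Y;T_z>s]$ for nonnegative $\F_s$-measurable $Y$---is the intended one. Your treatment of the cutoff $\tau_{\epsilon,z}$ and the convention that $\hat G^{t-s,t'-s}(Z_s(z),Z_s(w))$ vanishes when $z$ or $w$ has been swallowed is correct and matches how the paper handles the one-point case around \eqref{oct18.1}--\eqref{oct18.2}.

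One small wording slip in the scaling paragraph: you write ``$\hat\gamma$ reaches $rz$ at time $r^2 T_z$,'' but what you use (and what is true) is that $\hat\gamma$ reaches $z$ exactly when $\gamma$ reaches $rz$, i.e.\ $\hat T_z=r^{-2}T_{rz}$. The computation that follows is unaffected.
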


The next two results are  estimates on the
 two-point Green's function that we will use.
 We delay the proofs until later.

 \begin{theorem}  \label{sept8.theorem}
 There exist $0 < c_1 < c_2 < \infty$ such that
  If  $z,w \in \Half$
  with $|z| \leq |w|$,
  then
 \[  c_1 \, q^{d-2}
        \, [S(w) \vee q]^{-\beta} \leq
         \frac{G(z,w)}{G(z)\,  G(w)}   \leq c_2 \,   q^{d-2}
        \, [S(w) \vee q]^{-\beta} , \]
        where
        \[  q = \frac{|w-z|}{|w|} \leq 2 , \;\;\;\; \beta=  (4a-1) - (2-d) = \frac{\kappa} 8 +
        \frac 8 \kappa - 2 > 0 . \]
   \end{theorem}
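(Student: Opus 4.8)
The plan is to reduce the estimate, by scaling and the decomposition $G(z,w)=\hat G(z,w)+\hat G(w,z)$ of \eqref{mar29.3}--\eqref{mar29.4}, to a single up-to-constants bound on the ordered quantity $\E_z^*[M_{T_z}(w)]$; that bound, in the generality where the interior point may approach $\R$, is Proposition \ref{nov17.prop1}. First I would use scaling: since $G(rz)=r^{d-2}G(z)$ and, applying the scaling of $SLE$ to \eqref{mar29.2}, $G(rz,rw)=r^{2(d-2)}G(z,w)$, the ratio $G(z,w)/[G(z)G(w)]$ is invariant under $(z,w)\mapsto(rz,rw)$, so I may assume $|w|=1$; then $q=|w-z|$, $S(w)=\Im w$, and \eqref{green} yields the identity $G(w)=S(w)^\beta$, which I use throughout. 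Using $\hat G(z,w)=G(z)\,\E_z^*[M_{T_z}(w)]$ and $\hat G(w,z)=G(w)\,\E_w^*[M_{T_w}(z)]$,
\[ \frac{G(z,w)}{G(z)\,G(w)}=\frac{\E_z^*\!\left[M_{T_z}(w)\right]}{G(w)}+\frac{\E_w^*\!\left[M_{T_w}(z)\right]}{G(z)}, \]
so it suffices to prove (i) $\E_z^*[M_{T_z}(w)]\asymp G(w)\,q^{d-2}[S(w)\vee q]^{-\beta}$ for $|z|\le|w|=1$, and (ii) $\E_w^*[M_{T_w}(z)]\le C\,G(z)\,q^{d-2}[S(w)\vee q]^{-\beta}$; the lower bound on $G(z,w)$ is then supplied by the first term alone.

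For (i): by the continuity of two-sided radial $SLE_\kappa$ at $T_z$ (\cite{Law3}), under $\Prob_z^*$ the curve $\gamma[0,T_z]$ runs from $0$ to $z$, $H_{T_z}$ is the unbounded component of $\Half\setminus\gamma[0,T_z]$, and $M_{T_z}(w)=G_{H_{T_z}}(w;z,\infty)=\Upsilon_{H_{T_z}}(w)^{d-2}\,S_{H_{T_z}}(w;z,\infty)^{4a-1}$. I would estimate this expectation by decomposing according to how closely $\gamma[0,T_z]$ approaches $w$, on the scale $q\vee S(w)$: one has $\Upsilon_{H_{T_z}}(w)\asymp\min\{S(w),\dist(w,\gamma[0,T_z])\}$, while $S_{H_{T_z}}(w;z,\infty)$ is controlled by the Beurling estimate, extremal-distance estimates, and \eqref{hmeasure} in terms of the relative positions of $w$, the tip $z=\gamma(T_z)$, and $\R$; the heuristic ``typical configuration'' is made rigorous, in both directions, using the regularity of two-sided radial $SLE$ near its tip and near an interior point (the $S_t$-tail estimates of \cite{LZ} that also enter Lemma \ref{aug31.lemma1}, together with \cite{Bf}). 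The exponents then match the claim. When $q<S(w)$ the dominant configurations have $\Upsilon_{H_{T_z}}(w)\asymp q$ and $S_{H_{T_z}}(w;z,\infty)\asymp1$, so $M_{T_z}(w)\asymp q^{d-2}=G(w)\,q^{d-2}S(w)^{-\beta}$ since $G(w)=S(w)^\beta$. When $q\ge S(w)$ the dominant configurations have $\Upsilon_{H_{T_z}}(w)\asymp S(w)$ and $S_{H_{T_z}}(w;z,\infty)\asymp S(w)/q$, so $M_{T_z}(w)\asymp S(w)^{d-2}(S(w)/q)^{4a-1}=S(w)^\beta q^{-(4a-1)}=G(w)\,q^{d-2}q^{-\beta}$, using $d-2-\beta=-(4a-1)$. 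The main obstacle — and the one genuinely new ingredient beyond \cite{LW} — is the regime $\dist(w,\R)=S(w)=o(1)$: there the local picture near $w$ mixes the curve $\gamma[0,T_z]$ and the real line, so the Beurling and extremal-distance analysis of $S_{H_{T_z}}(w;z,\infty)$ must treat both boundary pieces at once, and obtaining matching constants in the upper and lower directions is the heart of Proposition \ref{nov17.prop1}.

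For (ii) I would run the same analysis under $\Prob_w^*$, with $\gamma[0,T_w]$ going from $0$ to $w$ and $M_{T_w}(z)=\Upsilon_{H_{T_w}}(z)^{d-2}\,S_{H_{T_w}}(z;w,\infty)^{4a-1}$, $\dist(z,w)=q$; the same geometry gives $\E_w^*[M_{T_w}(z)]\asymp G(z)\cdot(\text{geometric factor})$, and one checks $G(w)\cdot(\text{factor})\le C\,G(z)G(w)\,q^{d-2}[S(w)\vee q]^{-\beta}$. When $q<S(w)$ one has $G(z)\asymp G(w)$ and both ordered Green's functions are in fact $\asymp G(z)\,q^{d-2}$, giving $G(z,w)\asymp G(z)\,q^{d-2}$; when $q\ge S(w)$ only the upper bound in (ii) is needed, and it follows because conditioning the path to reach the farther point $w$ first cannot make reaching the nearer point $z$ easier (a short comparison argument). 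Combining both directions of (i) with (ii) yields the theorem.
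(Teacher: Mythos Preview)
Your overall strategy --- decompose $G(z,w)=\hat G(z,w)+\hat G(w,z)$ and estimate $\E_z^*[M_{T_z}(w)]$ directly under two-sided radial --- is different from what the paper does, and your sketch has two genuine gaps.

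First, you have misidentified Proposition~\ref{nov17.prop1}. That proposition is \emph{not} a two-sided bound on $\E_z^*[M_{T_z}(w)]$; it is a one-sided upper bound on $\Prob\{\newernot(z)\le\epsilon_z\newernot_0(z),\,\newernot(w)\le\epsilon_w\newernot_0(w)\}$ in the \emph{separated} regime $|z|\le |w|/4$. In the paper it is used (via \eqref{nov17.1}--\eqref{nov17.2}) only to get $G(z,w)\le c\,G(z)G(w)$ when $q$ is bounded away from $0$; the lower bound there is quoted from \cite{LZ}. It says nothing about the close-together case $q\le 1/10$, which you correctly identify as the hard regime. So the sentence ``obtaining matching constants in the upper and lower directions is the heart of Proposition~\ref{nov17.prop1}'' is simply wrong: that proposition gives neither a lower bound nor the $q^{d-2}[S(w)\vee q]^{-\beta}$ factor.

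Second, your ``dominant configuration'' paragraph is a heuristic, not a proof. You assert that under $\Prob_z^*$ one typically has $\Upsilon_{H_{T_z}}(w)\asymp q$ and $S_{H_{T_z}}(w;z,\infty)\asymp 1$ (or the analogous pair when $q\ge S(w)$), but $M_{T_z}(w)$ blows up on the event that $\gamma[0,T_z]$ comes much closer to $w$ than $q$, and you give no tail estimate under $\Prob_z^*$ to control that contribution to the expectation. Likewise, the lower bound requires showing that the ``good'' configuration occurs with $\Prob_z^*$-probability bounded below, uniformly as $S(w)\downarrow 0$; the $S_t$-tail bounds from \cite{LZ} you invoke concern the angle at the tip $z$, not the geometry near the secondary point $w$. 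Finally, the sentence ``conditioning the path to reach the farther point $w$ first cannot make reaching the nearer point $z$ easier'' is an intuition, not an argument --- there is no stochastic monotonicity of that kind available for SLE.

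For comparison, the paper handles the close-together case $q\le 1/10$ by a stopping-time reduction: set $\sigma=\inf\{t:|\gamma(t)-w|=2q\}$, write
\[
G(z,w)=\E\!\left[|g_\sigma'(z)|^{2-d}|g_\sigma'(w)|^{2-d}\,G(Z_\sigma(z),Z_\sigma(w));\,\sigma<\infty\right],
\]
use Lemma~\ref{oct23} to get $\Prob\{\sigma<\infty\}$ and the lower bound with $S_\sigma(\zeta)\ge 1/10$, and apply distortion estimates plus Proposition~\ref{sept9.prop1} to control $G(\hat z,\hat w)$ at a scale where the images are well separated. This avoids analysing $\E_z^*[M_{T_z}(w)]$ altogether.
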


   \begin{proof} See Section \ref{uglysec2}.
   \end{proof}

 \begin{lemma}  \label{may14.lemma1}

  There exists $c < \infty$ such that
 if   $z,w \in \Half$ and  $s,t > 0$,
 \begin{equation}
 \label{twopointer}
    G^s(z) \, G^t(w) \leq c \, G^{cs,ct}(z,w) .
    \end{equation}
  \end{lemma}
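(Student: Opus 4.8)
The plan is to discard the second term, $G^{cs,ct}(z,w)\ge\hat G^{cs,ct}(z,w)$, and to read $\hat G^{cs,ct}(z,w)$ through its definition as ``(normalized probability the path reaches $z$ by time $\asymp s$)$\times$(normalized probability the path, restarted at $z$, then reaches $w$ within its remaining budget $\asymp t$)''; the claimed inequality then amounts to a quantitative Markov property at $T_z$. Because $G^{s,t}(z,w)=\hat G^{s,t}(z,w)+\hat G^{t,s}(w,z)$, both sides of the assertion are invariant under the swap $(z,s)\leftrightarrow(w,t)$, so I may assume $s\le t$. I may also assume $G^s(z)>0$ and $G^t(w)>0$, whence (Lemma \ref{may19lemma}) $\Im(z)^2<2as$ and $\Im(w)^2<2at$, and by the scaling rule I normalize $|z|=1$.

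Unfolding the definition,
\[
 \hat G^{cs,ct}(z,w)=G(z)\,\E_z^*\!\left[\,|g_{T_z}'(w)|^{2-d}\,G^{ct-2T_z}\!\big(Z_{T_z}(w)\big)\,;\,T_z\le cs\right].
\]
Since $u\mapsto G^u(\zeta)$ is nondecreasing, restricting the expectation to $\{T_z\le s\}\subseteq\{T_z\le cs\}$ and taking $c$ large force $ct-2T_z\ge(c-2)t\ge c_0 t$ there, where $c_0,C_0$ are the constants in a Harnack-in-time estimate --- an easy consequence of Lemma \ref{may19lemma} and the explicit form of $G$ --- stating that $G^{t'}(\zeta)\le C_0\,G^{c_0 t'}(\zeta')$ whenever $\zeta'$ lies in the half-disk $B(\zeta,\tfrac12\Im\zeta)$, or in a bounded horizontal translate of it, uniformly in $t'>0$. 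Hence it suffices to produce a ``good'' event $A\subseteq\{T_z\le s\}$ with $\Prob_z^*(A)\ge C^{-1}\phi(z;s)$ on which $Z_{T_z}(w)\in B(w,\tfrac12\Im w)$ and $|g_{T_z}'(w)|\asymp1$: on such an $A$ the integrand is $\ge C^{-1}G^t(w)$, so $\hat G^{cs,ct}(z,w)\ge C^{-1}G(z)\phi(z;s)G^t(w)=C^{-1}G^s(z)G^t(w)$.

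When $|w|$ exceeds a fixed multiple of the constant $K$ below, take $A=\{T_z\le s\}\cap\{\diam\gamma_{T_z}\vee\max_{u\le T_z}|U_u|\le K\}$; by Corollary \ref{aug31.corollary} together with the reflection and diameter bounds used in its proof, $\Prob_z^*(A)\ge\tfrac12\phi(z;s)$ once $K$ is large enough. On $A$ the curve, the tip $z$ and $U_{T_z}$ all lie in $B(0,K)$ while $w$ is far from it, so Koebe's theorem and the expansion $g_{T_z}(\zeta)=\zeta+\hcap(\gamma_{T_z})\zeta^{-1}+O(|\zeta|^{-2})$ at $\infty$, together with Schwarz reflection of $g_{T_z}$ across the part of $\R$ nearest $w$, give $g_{T_z}(w)=w+O(K^2/|w|)$, $g_{T_z}'(w)=1+O(K^2/|w|^2)$ and $\Im g_{T_z}(w)\asymp\Im w$; hence $Z_{T_z}(w)\in B(w,\tfrac12\Im w)$ and $|g_{T_z}'(w)|\asymp1$. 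This disposes of well-separated $z,w$.

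The remaining case --- $w$ within a bounded distance of $z$, so that $\gamma_{T_z}$ must come near $w$ and $g_{T_z}$ distorts both $w$ and the capacity time by an unbounded factor --- is the crux, and here the comparison above fails. One has slack, however, since $G^s(z)G^t(w)\le G(z)G(w)$ whereas reaching a small disk about $z$ (which then effectively contains $w$) is comparatively cheap. I would argue by a different decomposition: the path first comes within distance $\asymp|w-z|$ of $z$, hence of $w$, using capacity-time $\lesssim s$, with normalized probability $\gtrsim|w-z|^{2-d}G^s(z)$ (a time-dependent one-point hitting estimate of the kind behind Theorem \ref{jul23.theorem1}), and thereafter hits both $z$ and $w$ within capacity-time $\lesssim t$, the last step being controlled by the radial-parametrization estimate for two-sided radial $SLE$ from \cite{LZ} --- the one fact borrowed from that paper, also used in the proof of Lemma \ref{aug31.lemma1}. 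The principal obstacle is precisely this case: making ``multiply the two probabilities'' rigorous when $w$ is not comfortably separated from $\gamma_{T_z}$, and verifying that every constant is uniform over the finitely many geometric regimes (the position of $w$ relative to $z$, to $\R$ and to $0$, and the ratios $s/\Im(z)^2$ and $t/\Im(w)^2$). In the end $c$ is taken $\ge c_0+2$ and large enough to cover all of them.
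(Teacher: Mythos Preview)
Your outline for the well-separated regime ($|w|$ larger than a fixed multiple of $K$) is essentially the paper's Claim~3: restrict to the event where the two-sided radial path to $z$ stays in a bounded ball, and observe that on that event $g_{T_z}$ moves $w$ only slightly. Two caveats. First, the ``Harnack-in-time'' statement you invoke is false as a general fact: a bounded horizontal translate can change $S(\zeta)$, and hence $G(\zeta)$, by an arbitrarily large factor (take $\zeta=iy$ with $y$ small). It is valid in the regime where you actually apply it, namely $|w|\gg K$, because then $|Z_{T_z}(w)|\asymp|w|$ and $S(Z_{T_z}(w))\asymp S(w)$, so $G(Z_{T_z}(w))\asymp G(w)$; but you then still need the last inequality of Lemma~\ref{may19lemma} (gaining a factor $e^{x^2/40t}$ by enlarging the time) to absorb the exponential coming from the shift in $\Re w$. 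So the step works, but it is not just ``Harnack for nearby points''.

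The genuine gap is the close case, which you correctly identify as the crux and do not complete. Your proposed route---go all the way to $T_z$ and then control the hitting of $w$ from there---runs into exactly the difficulty you name: when $|w-z|$ is small, $g_{T_z}$ distorts a neighborhood of $w$ uncontrollably, and restricting to $\hat G^{cs,ct}(z,w)$ (always visiting $z$ first) can fail when, say, $t\ll y_z^2$. The paper's argument avoids this by a different decomposition. It first (Claim~1) disposes of the time-mismatch cases $t\ll y_z^2$ or $s\ll y_w^2$ by going through the \emph{other} point first. After that one is in the balanced regime $s,t\gtrsim (y_z\vee y_w)^2$, and the key move (Claim~4) is \emph{not} to run to $T_z$: one stops instead at the first time $\sigma$ the path comes within distance of order $\sqrt t$ of $z$, verifies by hand that on a good event of probability $\gtrsim e^{-\beta/t}$ the rescaled pair $(Z_\sigma(z)/\sqrt t,\,Z_\sigma(w)/\sqrt t)$ lands in a fixed compact configuration, and then applies the time-\emph{independent} two-point bound \eqref{sept1.1} from \cite{LZ} after rescaling by $\sqrt t$. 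The exponential loss $e^{-\beta/t}$ is recovered from the upper bound $G^t(w)\le c\,e^{-\beta_2/t}G(w)$ by taking $c$ large (Claim~5). This scale-$\sqrt t$ stopping, rather than going to $T_z$, is the missing idea in your sketch.
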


  \begin{proof}
See  Section \ref{uglysec}. \end{proof}

  We will now prove some  consequences
  of these estimates.

   \begin{lemma}
   There exists $c < \infty $ such that if $z,w \in \Half$ with
   $|z| \leq |w|$,
   \begin{equation}  \label{may19.11}
     G(z,w) \leq c \,
 |z| ^{d-2} \, |z-w|^{d-2 }.
 \end{equation}
   \end{lemma}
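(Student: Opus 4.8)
The plan is to deduce \eqref{may19.11} directly from the upper bound in Theorem \ref{sept8.theorem}, combined with the explicit form of the one-point Green's function. Recall from \eqref{green} that, writing $S(z) = \sin(\arg z)$, we have $G(z) = |z|^{d-2}\, S(z)^{\beta}$ where $\beta = \frac{\kappa}{8} + \frac{8}{\kappa} - 2 = (4a-1)-(2-d) > 0$ is exactly the exponent appearing in Theorem \ref{sept8.theorem}; this follows from $\Im(z) = |z|\,S(z)$ together with $(4a-1)+(d-2) = \beta$.

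Now suppose $|z| \le |w|$ and set $q = |w-z|/|w| \le 2$ as in Theorem \ref{sept8.theorem}. That theorem gives
\[
 G(z,w) \;\le\; c_2\, G(z)\,G(w)\, q^{d-2}\, [S(w)\vee q]^{-\beta}.
\]
Substituting $G(z)=|z|^{d-2}\,S(z)^\beta$, $G(w)=|w|^{d-2}\,S(w)^\beta$, and $q^{d-2} = |w-z|^{d-2}\,|w|^{2-d}$, the two powers of $|w|$ cancel and we obtain
\[
 G(z,w) \;\le\; c_2\, |z|^{d-2}\, |w-z|^{d-2}\, S(z)^\beta\, \frac{S(w)^\beta}{[S(w)\vee q]^\beta}.
\]
Since $\beta > 0$ and $0 < S(z) \le 1$ we have $S(z)^\beta \le 1$, and since $S(w) \le S(w)\vee q$ we have $S(w)^\beta\,[S(w)\vee q]^{-\beta} \le 1$. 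Hence $G(z,w) \le c_2\, |z|^{d-2}\,|w-z|^{d-2}$, which is \eqref{may19.11} with $c = c_2$.

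The computation is entirely routine once Theorem \ref{sept8.theorem} is available; the only point to watch is that the factor $q^{d-2}$ carries precisely the power $|w|^{2-d}$ needed to cancel the $|w|^{d-2}$ coming from $G(w)$, leaving only powers of $|z|$ and $|w-z|$, and that the residual trigonometric factors are bounded by $1$ because $\beta$ is positive and the sines lie in $(0,1]$. Thus there is no real obstacle here: the lemma is a corollary of the (substantially harder) two-sided estimate recorded as Theorem \ref{sept8.theorem}, whose proof is deferred to Section \ref{uglysec2}.
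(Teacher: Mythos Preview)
Your proof is correct and follows essentially the same route as the paper's own argument: both apply the upper bound of Theorem~\ref{sept8.theorem}, write $G(z)G(w)=|z|^{d-2}|w|^{d-2}S(z)^\beta S(w)^\beta$, cancel the $|w|^{d-2}$ against the $|w|^{2-d}$ coming from $q^{d-2}$, and bound the remaining sine factors by~$1$. Your write-up is slightly more explicit about the identity $G(z)=|z|^{d-2}S(z)^\beta$ and the reason each residual factor is at most one, but the argument is the same.
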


   \begin{proof}
   \begin{eqnarray*}
   G(z,w) & \leq & c_2q^{d-2} \, [S(w) \vee q]^{-\beta} \, G(z) \, G(w) \\
      & \leq & c \, q^{d-2} \,  [S(w) \vee q]^{-\beta} \, |z|^{d-2}
        \, |w|^{d-2} \, S(w)^{\beta}  \\
          & \leq & c \, |w-z|^{d-2} \, |z|^{d-2}.
        \end{eqnarray*}
        \end{proof}

 \begin{lemma}  \label{nov16.1}
  There exist $c< \infty, \beta > 0$
 such that if $z = x_z + i y_z,w = x_w + i y_w\in \Half$, then
\begin{equation}  \label{may19.12}
  G^{t,t}(z,w) \leq c\, e^{-\beta |w|^2/t}
   \, G(z,w) .
   \end{equation}
\end{lemma}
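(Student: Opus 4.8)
The plan is to reduce to a normalized configuration, decompose $G^{t,t}(z,w)$ according to which of $z,w$ is visited first, and in each piece extract the factor $e^{-\beta|w|^2/t}$ from the fact that the $SLE$ path cannot traverse a macroscopic distance in a tiny capacity time. Since $G^{t,t}$ and $G$ are symmetric in their two arguments and $e^{-\beta|z|^2/t}\le e^{-\beta|w|^2/t}$ exactly when $|z|\ge|w|$, it suffices to prove the bound assuming in addition $|z|\le|w|$. Using the scaling rules $G^{s,t}(z,w)=r^{2(2-d)}G^{r^2s,r^2t}(rz,rw)$ and $G(z,w)=r^{2(2-d)}G(rz,rw)$ with $r=1/|w|$, I may take $|w|=1$, $|z|\le 1$, and (by the reflection symmetry of $SLE$ in $\Half$) $x_z,x_w\ge 0$. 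If $t\ge 1$ the claim is immediate from $G^{t,t}\le G(z,w)$, so I also assume $t\le t_0$ for a small constant $t_0$ to be fixed; the goal becomes $G^{t,t}(z,w)\le c\,e^{-\beta/t}\,G(z,w)$. Finally I split $G^{t,t}(z,w)=\hat G^{t,t}(z,w)+\hat G^{t,t}(w,z)$ and bound each ordered term by $c\,e^{-\beta/t}\,G(z,w)$; since $\hat G^{t,t}(z,w)\le\hat G(z,w)\le G(z,w)$ and likewise for $\hat G(w,z)$, only the exponential factor is at issue.

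For the ``far point second'' term $\hat G^{t,t}(z,w)=G(z)\,\E_z^*[M_{T_z}^{t-T_z}(w)]$, note that $M_{T_z}^{t-T_z}(w)=|g_{T_z}'(w)|^{2-d}\,G^{t-2T_z}(Z_{T_z}(w))$ vanishes unless $T_z<t/2$. I split that event using $B=\{\max_{0\le s\le T_z}|U_s|>\rho\}$ for a small constant $\rho$ with $t_0\le\rho^2$. On $B^c$, the Loewner diameter bound gives $\diam[\gamma_{T_z}]\le 2a\rho$, and standard conformal estimates (comparing $g_{T_z}$ with the identity off $\gamma_{T_z}$, whose half-plane capacity is $\le at$, and bounding $|U_{T_z}|=|g_{T_z}(\gamma(T_z))|\le c\rho$) give $|Z_{T_z}(w)-w|\le c(\rho+t)\le 1/2$, hence $|Z_{T_z}(w)|\ge 1/2$; since also $(\Im Z_{T_z}(w))^2<2a(t-2T_z)<2at$, this forces $(\Re Z_{T_z}(w))^2\ge 1/8$, so by Lemma~\ref{may19lemma}, $M_{T_z}^{t-T_z}(w)\le c\,e^{-1/(64t)}\,M_{T_z}(w)$. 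Integrating, the $B^c$-contribution is at most $c\,e^{-1/(64t)}\,G(z)\,\E_z^*[M_{T_z}(w)]=c\,e^{-1/(64t)}\,\hat G(z,w)\le c\,e^{-1/(64t)}\,G(z,w)$. On $B$, using only $M_{T_z}^{t-T_z}(w)\le M_{T_z}(w)$, the contribution is $\hat G(z,w)\,\Prob_z^{**}(B\cap\{T_z<t/2\})$, where $\Prob_z^{**}$ is $\Prob_z^*$ reweighted on $\sigma(\gamma_{T_z})$ by $M_{T_z}(w)/\E_z^*[M_{T_z}(w)]$, i.e.\ two-sided radial conditioned to pass through $z$ and then $w$. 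It remains to show $\Prob_z^{**}(B\cap\{T_z<t/2\})\le c\,e^{-\beta/t}$.

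For the ``far point first'' term $\hat G^{t,t}(w,z)=G(w)\,\E_w^*[M_{T_w}^{t-T_w}(z)]$, the factor again forces $T_w<t/2$. If $y_w^2\ge at$, then by Lemma~\ref{may19lemma}, $\Prob_w^*\{T_w<t/2\}=\phi(w;t/2)=0$ and the term vanishes. Otherwise $x_w^2=1-y_w^2\ge 1/2$, and Lemma~\ref{may19lemma} gives $\phi(w;t/2)\le c\,e^{-x_w^2/(4t)}\le c\,e^{-1/(8t)}$; bounding $M_{T_w}^{t-T_w}(z)\le M_{T_w}(z)$, the term is at most $\hat G(w,z)\,\Prob_w^{**}\{T_w<t/2\}$, with $\Prob_w^{**}$ the analogous ``through $w$, then $z$'' law, and I must show $\Prob_w^{**}\{T_w<t/2\}\le c\,e^{-\beta/t}$.

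The two remaining probability bounds are the heart of the argument and the main obstacle. Each says that under a two-sided-radial-type law the driving function cannot reach a fixed level $\rho$, respectively the path cannot reach a point at distance $1$ from $\gamma(0)=0$, within capacity time $t/2$, except with probability exponentially small in $1/t$. The tempting shortcut---``$\Prob_w^*\{T_w<t/2\}$ is exponentially small by the reflection principle, and reweighting by a positive density only changes constants''---fails: the density $M_{T_w}(z)$ (resp.\ $M_{T_z}(w)$) is unbounded, so the reweighting could a priori concentrate on the small event, and the Girsanov drift it adds to $U_s$ is $\partial_x\log G(Z_s(z))$ (resp.\ $\partial_x\log G(Z_s(w))$) plus a bounded term, of size only $O(1/\Im g_s(z))$, which blows up as the curve approaches $z$; so the reflection principle does not apply off the shelf to the reweighted driving function. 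The fix is to re-run, in this two-point setting, the Bessel-comparison argument that underlies Lemma~\ref{aug31.lemma1} (and Corollary~\ref{aug31.corollary}): one writes the relevant radial coordinate as a time-changed Bessel process of dimension $>1$ up to a drift of controllable sign, uses the Gaussian tails of Bessel hitting times, and translates the level-$\rho$, respectively distance-$1$, event into such a hitting-time event. For $\hat G^{t,t}(w,z)$ with $y_w$ comparable to $x_w$ one can instead adapt Corollary~\ref{aug31.corollary} directly, while when $y_w$ is small one combines the tail $\phi(w;t/2)\le c\,e^{-x_w^2/(4t)}$ with a comparison of $\Prob_w^{**}$ and $\Prob_w^*$ on $\sigma(\gamma_{T_w})$, restricted to the event that $\gamma_{T_w}$ stays a fixed distance from $z$, on which the relative density is bounded.
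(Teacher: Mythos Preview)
Your reduction (WLOG $|z|\le|w|=1$, $t$ small) and the $B^c$-computation for $\hat G^{t,t}(z,w)$ are fine, but the proof is not complete: you correctly identify that the two remaining estimates
\[
\Prob_z^{**}(B\cap\{T_z<t/2\})\le c\,e^{-\beta/t},\qquad
\Prob_w^{**}\{T_w<t/2\}\le c\,e^{-\beta/t}
\]
are the crux, and your sketches for them do not close. The suggested ``restrict to the event that $\gamma_{T_w}$ stays a fixed distance from $z$, where the density is bounded'' fails: the density $M_{T_w}(z)/\E_w^*[M_{T_w}(z)]$ is \emph{not} uniformly bounded on that event. Indeed, on it one only gets $M_{T_w}(z)\le c\,\Upsilon_{T_w}(z)^{d-2}\le c\,y_z^{d-2}$, while $\E_w^*[M_{T_w}(z)]=\hat G(w,z)/G(w)\asymp G(z)\asymp y_z^{4a-1}$ (when $|z|,|w-z|$ are bounded below), so the ratio is $\gtrsim y_z^{-\beta}$ and blows up as $y_z\to 0$. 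Similarly, the Bessel-comparison idea for $\Prob_z^{**}$ requires controlling the additional Girsanov drift coming from $M_t(w)$, which is again of size $1/\Im g_t(w)$ and not uniformly bounded; you have simply moved the difficulty rather than resolved it. Handling this double-tilted measure $\Prob^{**}$ uniformly in $z,w$ is genuinely delicate and is not a routine adaptation of Lemma~\ref{aug31.lemma1}.

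The paper avoids $\Prob^{**}$ altogether by working under the \emph{original} chordal measure. When $|z|\ge 1/2$ it stops the curve at $\sigma=\inf\{t:|\gamma(t)|=1/4\}$; since $G^{t,t}(z,w)\le\Prob\{\sigma\le t\}\,\E[M_\sigma(z,w)\mid\sigma\le t]$, the exponential factor comes from the plain Brownian tail $\Prob\{\sigma\le t\}\le c\,e^{-\beta/t}$, while distortion estimates and Theorem~\ref{sept8.theorem} give $M_\sigma(z,w)\asymp G(z,w)$. When $|z|\le 1/2$ (so $G(z,w)\asymp G(z)G(w)$), the paper stops at $\xi=\inf\{t:|\gamma(t)|=1\}$, separates the cases $T_z<\xi$ and $\xi\le t$, and in each extracts the exponential from a one-point estimate (Lemma~\ref{may19lemma} or the Brownian tail for $\xi$) combined with pathwise bounds on $M_\xi(z,w)$; the only subtlety is a harmonic-measure/extremal-distance argument when $z$ lies in the ``wrong'' component of $H_\xi\setminus\{|\zeta|=1\}$. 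The key organizational point you are missing is that the two-point \emph{local martingale} $M_t(z,w)$, stopped at a geometric time, can be compared to $G(z,w)$ directly via Theorem~\ref{sept8.theorem}, so one never has to pass to the doubly-tilted measure.
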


\begin{proof} Without loss of generality we assume that $|w|^2
 \geq |z|^2$.   By scaling and symmetry, we may assume $|w| = 2$,
 and it suffices to prove the estimate for $t$ sufficiently small.

 We first
suppose that  $|z| \geq
1/2$, and let $\sigma = \inf\{t: |\gamma(t)| = 1/4\}$.  Then
\[  G^{t,t}(z,w) \leq \Prob\{\sigma \leq t\} \,
  \E[M_{\sigma}(z,w) \mid \sigma \leq t]. \]
Using distortion estimates and Theorem \ref{sept8.theorem},
we see that  $M_\sigma(z,w)  \asymp
G(z,w)$, and hence
 \[   G^{t,t}(z,w) \leq c \, \Prob\{\sigma \leq t\} G(z,w). \]
Using the Loewner equation, there exists $\delta >0, t_0 > 0$ such
that if $t \leq t_0$ and $|U_s| \leq \delta, 0 \leq s \leq t$,
then $\sigma
> t$.  Therefore,
\begin{equation}  \label{may22.8}
 \Prob\{\sigma \leq t \} \leq
  \Prob\left\{\max_{0 \leq s \leq t } |U_s| \geq \delta\right\}
         \leq c \, e^{-\beta/t },
         \end{equation}
         for some $c,\beta$.
Similarly, for every $\epsilon > 0$, we can find $c_\epsilon,
\beta_\epsilon$ such that
\begin{equation}  \label{nov15}
      G^{t,t}(z,w) \leq c_\epsilon \, e^{-\beta_\epsilon /t} \, G(z,w), \;\;\;\;
   \epsilon \leq |z| , |w| = 2 .
   \end{equation}

We now suppose $|z| \leq 1/2$ for which Theorem \ref{sept8.theorem}
implies that $G(z,w) \asymp G(z) \, G(w)$.  Let
\[  \xi = \inf\{t: |\gamma(t) | = 1\} , \]
  and note that
\[ G^{t,t}(z,w) \leq  \E[M_\xi^{t , t }(z,w); \xi \leq t] +
      G(z) \, \E^*_z[M_{T_z}^{t}(w) ; T_z < \xi] .\]
      If $T_z < \xi$, then straightforward estimates show that
 $|g_{T_z}'(w)| \asymp 1,  G_{T_z}(w) \asymp G(w)$,
  and hence we can use Lemma \ref{may19lemma} to conclude
\[    \E^*_z[M_{T_z}^t(w)  \mid  T_z < \xi]  \leq c \, e^{-\beta/t} \, G(w).\]
    We will now show that
   \[    \E[M_\xi^{t , t }(z,w); \xi \leq t]\leq  c \, e^{-\beta/t} \, G(z) \, G(w). \]
   By \eqref{nov15},
   it suffices to show this for $t,|z|$ sufficiently small.
    Let $H = H_ \xi  $, $g = g_ \xi $,  $\hat z = g(z) - U_\xi,
    \hat w = g(w) - U_\xi$.
 $r = \dist(z,\p H)/y$.
   We need to show
  \[ \E[|g'(z)|^{2-d} \, |g'(w)|^{2-d}
  \, G^{t,t}(\hat z,\hat w) ; \xi \leq t] \leq c \, e^{-\beta/t} \,G(z) \, G(w) . \]

 On the event $\{\xi \leq t\}$, we know that $\hcap[\gamma_\xi]
 \leq t$.  In particular, for $t$ sufficiently small, $\Im[\gamma(\xi)]
  \leq 1/10$.  Let $\eta_1,\eta_2$ denote the two open subarcs
  of the unit half circle with $\gamma(\xi)$ removed.  Let
  $\eta_2$ be the smaller arc.  Let $\eta$ be the subarc
  that disconnects $z$ from $\infty$ in $H$.  We consider the
  two cases $\eta = \eta_1$ and $\eta = \eta_2$.

   Suppose $\eta = \eta_1$ so that its length is at least
   $\pi/2$.
 Since $|z| \leq 1/2$, we can use
conformal invariance, to see that
 $c_1 \leq |\hat z|, |\hat w| \leq c_2  $. Therefore, \eqref{nov15}
 implies that
 \[     G^{t,t}(\hat z,\hat w) \leq c \, e^{-\beta/t} \, G(\hat z, \hat w) . \]
 Hence,
\begin{eqnarray*}
   \E[|g'(z)|^{2-d} \, |g'(w)|^{2-d}
  \, G^{t,t}(\hat z,\hat w) ; \xi \leq t, \eta = \eta_1]
    & \leq &  c \, e^{-\beta/t} \,
  \E[|g'(z)|^{2-d} \, |g'(w)|^{2-d}
  \, G(\hat z,\hat w) ]\\
  & \leq & c \, e^{-\beta/t} \, G(z,w) .
  \end{eqnarray*}
  The second inequality follows from the fact that $M_t(z,w)$ is
  a martingale.

  Now suppose $\eta = \eta_2$ and let $V$ be the component of
  $H  \setminus \eta$ containing $z$.  Since $\hcap[\gamma_\eta]
   \leq t$,  we can deduce \cite{LLN} that ${\rm Area}(V) \leq c \, t$.
   We claim that there exist $c,\alpha$ such that
\begin{equation}  \label{nov15.2}
        S(z) \leq c \,y \, r^{1/2} \, e^{-\alpha/t}.
        \end{equation}
To see this we use estimates on extremal distance  as in
the proof of Lemma \ref{aug31.lemma1}
to see that if $\zeta
   \in V$ with $|\zeta| \leq 1/2$, then the probability that a Brownian
   motion starting at $\zeta$ exits $V$ at $\eta$ is $O(e^{-\alpha/t})$
   for some $\alpha$.   However, combining the Beurling estimate
   and the gambler's ruin estimate, the probability that a Brownian
   motion starting at $z$ reaches the circle of radius $1/2$ without
   leaving $H$ is $O(r^{1/2} y)$.  This gives \eqref{nov15.2}
   and since $\Upsilon_H(z) \asymp  r \, y$,
   we get
   \[ |g'(z)|^{2-d} \, G(\hat z) = G_H(z; \gamma(\xi), \infty)
       \leq c \ (ry)^{d-2} \, (yr^{1/2} \, e^{-\alpha/t})^{4a-1}
        \leq c \, G(y) \, e^{-\beta/t} \, r^{ d-2} \, r^{  2a - \frac 12} . \]
   Hence,
    \[ \E\left[  |g'(z) \, g'(w)|^{2-d} \, G(\hat z,\hat w)
   \mid  2^{-n} \leq r \leq 2^{-n+1} , \eta = \eta_2 \right]
     \hspace{.5in} \]
     \[  \leq c \, e^{-\beta/t} \, 2^{-n(d-2) }\, 2^{-nu}   \,
      G(z) \, G(w) , \]
      where $u = 2a - \frac 12 >0$.
      Proposition \ref{mar14.prop1} implies that
  \[  \Prob\{
  2^{-n} \leq r \leq 2^{-n+1} , \eta = \eta_2 \} \leq  \Prob\left\{2^{-n} \leq r \leq 2^{-n+1} \right\}
      \leq c\, 2^{-n(2-d)}, \]
   and hence
    \[ \E\left[  |g'(z) \, g'(w)|^{2-d} \, G(\hat z,\hat w)
;  2^{-n} \leq r \leq 2^{-n+1} , \eta = \eta_2 \right]
   \leq c \, e^{-\beta/t} \, 2^{-un}
      G(z) \, G(w) . \]
   The estimate is obtained by summing over $n$.
%
%
%
%
%

\end{proof}

We will write $dA(z,w) = dA(z) \, dA(w)$ and
\[    \int_{V_1 \times V_2} f(z,w) \, dA(z,w) = \int_{V_1} \left[\int_{V_2} f(z,w)
 \, dA(w) \right] \, dA(z) , \]
 \[   \int f(z,w) \, dA(z,w) = \int_{\Half \times \Half} f(z,w) \, dA(z,w). \]

\begin{lemma}
\begin{equation}  \label{sept23.1}
 \int   G^{1,1}(z,w) \, dA(z,w) < \infty .
 \end{equation}
Moreover, there exists $c < \infty$ such that for every
$\epsilon > 0$,
\begin{equation}  \label{may14.5}
   \int
   G^{1,1}(z,w) \, 1\{|z-w| \leq \epsilon \}
   \, dA(z,w) \leq c \, \epsilon^d .
   \end{equation}
   \end{lemma}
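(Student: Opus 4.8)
The plan is to bound $G^{1,1}(z,w)$ using the estimates already available and then integrate. The key tool is the combination of Lemma \ref{nov16.1}, which gives the Gaussian decay $G^{1,1}(z,w) \leq c\, e^{-\beta|w|^2}\, G(z,w)$ when $|w| \geq |z|$, together with the pointwise bound \eqref{may19.11}, $G(z,w) \leq c\,|z|^{d-2}\,|z-w|^{d-2}$. So the integrand is controlled by $c\, e^{-\beta|w|^2}\, |z|^{d-2}\, |z-w|^{d-2}$ on the region $|w| \geq |z|$, and by the symmetric expression on $|z| \geq |w|$; by symmetry of $G^{1,1}$ it suffices to handle one region and double.

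For \eqref{sept23.1}, I would fix $w$ (with $|w| \geq |z|$, so in particular $|z| \leq |w|$) and integrate over $z$ in the ball $\{|z| \leq |w|\}$. The inner integral $\int_{|z| \leq |w|} |z|^{d-2}\, |z-w|^{d-2}\, dA(z)$ is a standard Riesz-type convolution: since $d-2 \in (-1,0)$, each factor $|z|^{d-2}$ and $|z-w|^{d-2}$ is locally integrable in the plane, and a scaling argument shows the integral is bounded by $c\,|w|^{2+2(d-2)} = c\,|w|^{2(d-1)}$. Multiplying by $e^{-\beta|w|^2}$ and integrating $dA(w)$ over $\Half$ gives a finite constant because the Gaussian factor kills the polynomial growth $|w|^{2(d-1)+1}$ at infinity (and $2(d-1) > 0$ means there is no singularity at $w = 0$). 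That establishes finiteness.

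For \eqref{may14.5}, I restrict further to $|z-w| \leq \epsilon$. Again fix $w$ and integrate over $z$ with $|z-w| \leq \epsilon$ and $|z| \leq |w|$. Now the factor $|z-w|^{d-2}$ is integrated over a disk of radius $\epsilon$: $\int_{|z-w| \leq \epsilon} |z-w|^{d-2}\, dA(z) \leq c\,\epsilon^{d}$. The remaining factor $|z|^{d-2}$ is not bounded near $z = 0$, but $z=0$ can only occur inside $\{|z-w| \leq \epsilon\}$ when $|w| \leq \epsilon$, so I split: for $|w| \geq 2\epsilon$ we have $|z| \geq |w|/2$ on the relevant disk, hence $|z|^{d-2} \leq c\,|w|^{d-2}$, giving contribution $\leq c\,\epsilon^d\,|w|^{d-2}\,e^{-\beta|w|^2}$, which integrates over $|w| \geq 2\epsilon$ to $\leq c\,\epsilon^d$; for $|w| \leq 2\epsilon$ I bound $G^{1,1}(z,w) \leq G(z,w) \leq c\,|z|^{d-2}|z-w|^{d-2}$ and integrate both $|z|^{d-2}$ and $|z-w|^{d-2}$ over the region $|z|, |w| \leq c\epsilon$, which by the Riesz estimate above is $\leq c\,\epsilon^{2(d-1)}\cdot\epsilon^2 = c\,\epsilon^{2d}$, which is $O(\epsilon^d)$ since $d > 1$. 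Adding the two pieces gives \eqref{may14.5}.

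The only mildly delicate point — and the part I would be most careful about — is the Riesz-convolution estimate $\int_{|z| \leq |w|} |z|^{d-2}|z-w|^{d-2}\,dA(z) \leq c\,|w|^{2(d-1)}$ and its local analogue; these are entirely standard (dyadic decomposition of the $z$-ball by $|z| \approx 2^{-k}|w|$, or a direct scaling/homogeneity argument since the integrand is homogeneous of degree $2(d-2)$ and integrable at both singularities because $d-2 > -1$), but one should make sure the constants do not degenerate. Everything else is bookkeeping with the Gaussian weight from Lemma \ref{nov16.1}.
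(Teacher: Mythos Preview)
Your argument is correct. Both you and the paper combine the same two ingredients---the Gaussian tail \eqref{may19.12} from Lemma \ref{nov16.1} and the pointwise bound \eqref{may19.11}---but you organize the integration differently. The paper proceeds via an explicit dyadic decomposition of the $z$-variable into annuli $V_k = \{2^{-k-1} < |z| \leq 2^{-k}\}$, computes the contribution from each $V_k \times \Half$ using the scaling rule for $G$, and then sums over $k$ against the exponential weight; the estimate \eqref{may14.5} is obtained the same way with an additional indicator $\psi_m(z,w) = 1\{|z-w| \leq 2^{-m}\}$. Your approach bypasses the dyadic machinery by directly invoking the homogeneity of the Riesz kernel: the scaling $z = |w|\zeta$ immediately gives $\int_{|z| \leq |w|} |z|^{d-2}|z-w|^{d-2}\,dA(z) = c\,|w|^{2(d-1)}$, after which the Gaussian in $|w|$ finishes the job. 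For \eqref{may14.5} your split into $|w| \geq 2\epsilon$ (where $|z| \asymp |w|$) and $|w| \leq 2\epsilon$ is clean and avoids the second round of dyadic summation. The two arguments are equivalent in spirit---the dyadic sum is just the discrete version of your scaling identity---but yours is shorter and makes the role of the exponent $d$ more transparent; the paper's version has the minor advantage of never needing to check uniformity of the Riesz integral over the direction $w/|w|$, though as you note that is routine since $d-2 > -2$.
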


\begin{proof} By symmetry it suffices to bound the
integrals on the set $\{|z| \leq |w| \}$.

If $z \in \Half$, and $|w| \geq 2 |z|$,
 then \eqref{may19.11} and \eqref{may19.12} imply  that
 there exist $\beta > 0$ such that
\begin{eqnarray*}
  \int_{|w| \geq 2 |z|} G^{1,1}(z,w) \, dA(w)
  & \leq & c\, |z|^{d-2} \, \int_{|w| \geq 2 |z|} \, e^{-\beta |w|^2}
    \, |w|^{d-2} \, dA(w) \\
    & \leq & c \, |z|^{d-2}\, \int_{2|z|}^\infty r^{d-1} \, e^{-\beta r^2}
     \, dr \\
      & \leq & c \,  |z|^{d-2} \, e^{-2\beta|z|^2}.
\end{eqnarray*}
Therefore,
\begin{equation}  \label{may14.1}
  \int G^{1,1}(z,w) \,1\{
   |w| \geq 2 |z|\} \,  dA(z,w)
  \leq  c \, \int |z|^{d-2} \, e^{-2 \beta |z|^2}
   \, dA(z) < \infty .
   \end{equation}

Let
\[   V_k = \{z \in \Half: 2^{-k-1} <|z| \leq 2^{-k} \}, \]
and let $\psi_m(z,w)$ be the indicator that $|z| \leq |w|$ and
$|z-w| \leq  2^{-m}$.  If $z \in V_0$ and $|z| \leq |w|$,
Theorem \ref{sept8.theorem} implies that
\[   G(z,w) \leq c \, |z-w|^{d-2}, \]
and hence
\[
 \int_{V_0  \times \Half}
G(z,w) \, \psi_{m}(z,w) \, dA(z,w)
     \leq   c \, \int_{|w| \leq 2^{-m}}
  |w|^{d-2} \,  dA(w) \leq  c \,   2^{-md}
    .\]
 More generally, if $k$ is an integer,  we can use
 the scaling rule for the Green's function to see that
\begin{eqnarray*}
      \int_{V_k \times \Half}
G(z,w) \, \psi_{m}(z,w) \, dA(z,w)  &
   = & 2^{2k(2-d)}  \int_{V_k \times \Half}
G(2^kz,2^kw) \, \psi_{m-k}(2^kz,2^kw) \, dA(z,w)
\\&=&
2^{-2kd}   \int_{V_k \times \Half}
G(2^kz,2^kw) \, \psi_{m-k}(2^kz,2^kw) \, dA(2^kz,2^kw) \\
& = & 2^{-2kd}  \int_{V_0 \times \Half}
G(z,w) \, \psi_{m-k}(z,w) \, dA(z,w) \\
& \leq &  c\, 2^{-kd}
  \, 2^{-md}.
\end{eqnarray*}

If $z \in V_k$ and $w \in V_k \cup V_{k-1}$, then
$|z-w| \leq 2^{-k} + 2^{1-k} < 2^{2-k}$.  Setting $m=k-2$
in the last inequality, we see that
\[ \int_{V_k \times (V_k \cup V_{k-1})}
    G(z,w) \, dA(z,w) \leq c \, 2^{-2kd}. \]
Combining this with  \eqref{may14.1}, we see
that
\[  \int_{|z|,|w| \leq 1} G(z,w) \,dA(z,w):=
C_0 < \infty . \]
The scaling rule gives
\[   \int_{|z|,|w| \leq 2^{k}} G(z,w) \,dA(z,w) =
C_0 \, 2^{2kd} . \]
Using \eqref{may19.12}, we get
\begin{eqnarray*} \lefteqn{
 \int_{|z|,|w| \leq 2^{k}} G^{1,1}(z,w) \, dA(z,w) }
\hspace{1in} \\
& \leq &
  \int_{|z|,|w| \leq 2^{k-1}} G^{1,1}(z,w) \, dA(z,w)
   + c \, e^{- \beta 2^k} \,
    \int_{|z|,|w| \leq 2^{k}} G(z,w) \,dA(z,w)
    \\
    & \leq &
 \int_{|z|,|w| \leq 2^{k-1}} G^{1,1}(z,w) \,dA(z,w)
  + c \,  e^{- \beta 2^k} \, C_0 \, 2^{2kd}.
  \end{eqnarray*}
By summing over $k \in \Z$, we see that
\[  \int G^{1,1}(z,w) \, dA(z,w) < \infty.\]

Using \eqref{may19.12} again, we get
\begin{eqnarray*}
 \int_{V_k  \times \Half}
G^{1,1}(z,w) \, \psi_{m}(z,w) \, dA(z,w)
&  \leq  & c \, e^{- \beta \, 2^{-2k}}
      \int_{V_k  \times \Half}
G(z,w) \, \psi_{m}(z,w) \, dA(z,w)\\
& \leq & c \, 2^{-md} \, e^{-\beta \, 2^{-2k}} \,
  2^{-kd} .
  \end{eqnarray*}
  By summing over   $k \in \Z$, we get
\[
 \int
G^{1,1}(z,w) \, \psi_{m}(z,w) \, dA(z,w)
  \leq c \, 2^{-md} \]
  which gives the second estimate.
\end{proof}

\section{Natural parametrization}

\subsection{Natural parametrization in $\Half$}  \label{nathalfsec}

Recall that
\[  L(s,t) = \int M_s^t(z) \, dA(z) \]
and if $r < s < t$,
\begin{eqnarray*}
 \E\left[L(r,t) - L(s,t) \mid \F_r\right]& = &
    \int M_r^s(z) \, dA(z) \\\
    & = &
    L(r,s) =
    \int |f_r'(z)|^d \, G^{s-r}(Z_r(z)) \, dA(z) .
    \end{eqnarray*}
 If we fix $t_0$, then
 \[    L(t,t_0) , \;\;\; 0 \leq t \leq t_0 \]
 is a positive supermartingale.  The {\em natural parametrization
 or natural length} $\Theta_t$ is the unique adapted increasing
 process $\Theta_t$ such that for each $t_0$,
 \[           L(t,t_0)  + \Theta_t ,\;\;\;\;\;  0 \leq t \leq t_0 \]
 is a martingale.   Although it may appear at first glance
 that the
 definition of $\Theta_t$ depends on $t_0$,  the
 fact that
 $N_t$ in \eqref{Diffmartingale} is martingale
 implies that   $\Theta_t$ is independent of $t_0$.  In
 our proof of the existence, it is shown
 that    with probability one
 for all $0 < s < t < \infty$,
\begin{equation}  \label{oct21.3}
  \Theta_t - \Theta_s
 = \lim_{n \rightarrow \infty} [\Theta_t^{(n)}
 - \Theta_s^{(n)}] ,
 \end{equation}
 where
\[     \Theta_t^{(n)} = \sum_{j < t2^{n}}
      L\left(\frac{j}{2^n}, \frac{j+1}{2^n} \right). \]
  Our definition here is slightly different but equivalent
  to the one used in \cite{LS,LZ}.  The equivalence can be seen
  in that in both cases we derive the formula \eqref{oct21.3}.
  We note that if $t = k \, 2^{-n}$ for positive integer $k$,
  then
\begin{equation}  \label{oct21.5}
\E\left[\Theta_t^{(n)}\right] = \E[L(0,t)]
 = \int G^t(z) \, dA(z) = t^d \, \int G^1(z) \, dA(z)
    < \infty .
   \end{equation}

The existence of such a process was established in \cite{LS}
for $\kappa < 5.0\cdots$ and for all $\kappa < 8$ in \cite{LZ}.
In this section, we reprove the results with some
improvements.   We will consider only the case $t_0 = 1$;
other values of $t_0$ can be handled using scaling.  For
the remainder of this section, we let
\[               L_t = L(t,1)    , \]
and recall that $L_t = 0$ if $t \geq 1$.  Proof of existence
follows from the Doob-Meyer decomposition once we have obtained
sufficient bounds on the second moment.
The key is the following proposition
which is very similar to  \cite[(16)]{LZ}. The proof is quick because
the hard work  is in
 the  estimate  \eqref{twopointer} on the time-dependent
Green's function.

\begin{proposition}  There
exists $C < \infty$ such that
for every stopping time $T \leq 1$,
\begin{equation}  \label{sept23.2}
      \E\left[L_T^2\right] \leq C.
    \end{equation}
\end{proposition}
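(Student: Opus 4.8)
Since $L_T=L(T,1)=\int M_T^1(z)\,dA(z)$ with nonnegative integrand, Tonelli gives
\[
\E\bigl[L_T^2\bigr]=\int_{\Half\times\Half}\E\bigl[M_T^1(z)\,M_T^1(w)\bigr]\,dA(z,w),
\]
so it suffices to produce a constant $c_0<\infty$, independent of the stopping time $T\le 1$ and of $z,w$, with
\[
\E\bigl[M_T^1(z)\,M_T^1(w)\bigr]\le c_0\,G^{c_0,c_0}(z,w).
\]
Granting this, the scaling rule $G^{s,t}(z,w)=r^{2(2-d)}G^{r^2s,r^2t}(rz,rw)$ with $r=c_0^{-1/2}$ gives $\int G^{c_0,c_0}(z,w)\,dA(z,w)=c_0^{\,d}\int G^{1,1}(z,w)\,dA(z,w)$, which is finite by \eqref{sept23.1}; hence $\E[L_T^2]\le c_0^{\,d+1}\int G^{1,1}(z,w)\,dA(z,w)=:C$. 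Thus the whole statement reduces to the displayed pointwise bound, which (as advertised) is quick once Lemma \ref{may14.lemma1} is in hand.

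\textbf{The pointwise bound.} Fix $z,w$ and work on the event $\{T<T_z\wedge T_w\}$; off this event $M_T^1(z)M_T^1(w)=0$, and for $dA(z,w)$-a.e.\ pair this event has full probability, so nothing is lost. We may assume the constant of Lemma \ref{may14.lemma1} is $\ge 1$; call it $c_0$. Applying that lemma at the points $Z_T(z),Z_T(w)\in\Half$ with $s=t=1-T>0$, then using that $s,t\mapsto G^{s,t}(z',w')$ is nondecreasing (immediate from $\hat G^{s,t}(z',w')=G(z')\,\E^*_{z'}[M_{T_{z'}}^{t-T_{z'}}(w');T_{z'}\le s]$ and $G^{s,t}=\hat G^{s,t}+\hat G^{t,s}$) together with $c_0(1-T)\le c_0-T$, we obtain
\[
M_T^1(z)\,M_T^1(w)=|g_T'(z)g_T'(w)|^{2-d}\,G^{1-T}(Z_T(z))\,G^{1-T}(Z_T(w))\le c_0\,M_T^{c_0,c_0}(z,w),
\]
where we write $M_s^{t,t'}(z,w):=|g_s'(z)g_s'(w)|^{2-d}\,G^{t-s,t'-s}(Z_s(z),Z_s(w))$, so that $M_s^{c_0,c_0}(z,w)=0$ for $s\ge c_0$.

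\textbf{Optional stopping.} It remains to show $\E[M_T^{c_0,c_0}(z,w);T<T_z\wedge T_w]\le G^{c_0,c_0}(z,w)$, which is the two-point analogue of \eqref{oct18.2}. Mirroring the single-point facts recorded around \eqref{localmart}--\eqref{oct18.2}, the process $s\mapsto M_s^{c_0,c_0}(z,w)$ is a nonnegative local martingale (this is the conditional form of the identity for $\hat G^{t,t'}(z,w)-\hat G^{s,t'}(z,w)$ stated in Section \ref{two-pointsec}, combined with the domain Markov property), hence a supermartingale, which however degenerates as $s$ approaches the swallowing time of $z$ or of $w$. So, exactly as in \eqref{nov18.1}--\eqref{oct18.2}, set $\tau_\epsilon:=\tau_{\epsilon,z}\wedge\tau_{\epsilon,w}$: the stopped process $M_{s\wedge\tau_\epsilon}^{c_0,c_0}(z,w)$ is a bounded supermartingale (dominated by the genuine two-point local martingale $M_{s\wedge\tau_\epsilon}(z,w)$ of \eqref{mar29.2}), so $\E[M_{T\wedge\tau_\epsilon}^{c_0,c_0}(z,w)]\le G^{c_0,c_0}(z,w)$, using $T\le 1\le c_0$. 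On $\{T<T_z\wedge T_w\}$ we have $M_{T\wedge\tau_\epsilon}^{c_0,c_0}(z,w)=M_T^{c_0,c_0}(z,w)$ for all small $\epsilon$, so Fatou's lemma yields $\E[M_T^{c_0,c_0}(z,w);T<T_z\wedge T_w]\le G^{c_0,c_0}(z,w)$, and combining with the previous paragraph finishes the proof.

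\textbf{Main obstacle.} Everything is routine except the optional-stopping input of the last paragraph: one must confirm that $M_s^{c_0,c_0}(z,w)$ is a local martingale one may legitimately stop before the swallowing of either point and that the stopped process is controlled. This is a direct transcription of the single-point statements surrounding \eqref{oct18.1}--\eqref{oct18.2}, with $M_s(z,w)$ in the role of $M_s(z)$, so it should be dispatched briefly; the substantive work is entirely in Lemma \ref{may14.lemma1} and the integrability \eqref{sept23.1}, both established elsewhere.
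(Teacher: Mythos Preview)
Your proof is correct and follows essentially the same route as the paper's: write $\E[L_T^2]$ as a double integral via Tonelli, apply Lemma~\ref{may14.lemma1} pointwise to bound $G^{1-T}(Z_T(z))\,G^{1-T}(Z_T(w))$ by $c_0\,G^{c_0(1-T),c_0(1-T)}(Z_T(z),Z_T(w))$, then use the two-point (super)martingale to pass the expectation inside and bound by $G^{c_0,c_0}(z,w)$, which is integrable by \eqref{sept23.1} and scaling. The paper compresses your optional-stopping paragraph into the single line $u\,\E[|g_T'(z)g_T'(w)|^{2-d}G^{u(1-T)}(Z_T(z),Z_T(w))]\le u\,G^{u}(z,w)$, relying implicitly on the identity for $\hat G^{t,t'}-\hat G^{s,t'}$ from Section~\ref{two-pointsec} together with the monotonicity $c_0(1-T)\le c_0-T$; your version spells out why this step is legitimate (localizing at $\tau_\epsilon$ and invoking Fatou), which is a fair elaboration but not a different argument.
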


\begin{proof}  Let  $u$   denote  the constant $c$ appearing
in \eqref{twopointer}.  Let $G^{t}(z,w)  =
 G^{t,t}(z,w)$.  We   establish the estimate with
\[   C =   u\int G^{u}(z,w)
\, dA(z,w)  \]
which is finite by  \eqref{sept23.1}.
  Let $T$ be a stopping time and let
$g = g_T, Z = Z_T$.
Using the two-point martingale and \eqref{twopointer}, we
see that for all $z,w \in \Half$,
\begin{eqnarray*}
\E\left[M_T^{1}(z) \, M_T^{1}(w)\right] & = &
 \E\left[|g'(z) \, g'(w)|^{2-d} \,
G^{1 - T}(Z(z)) \, G^{1- T}(Z(w)) \right]  \\
 & \leq & u\,  \E\left[|g'(z) \, g'(w)|^{2-d} \,
G^{u(1 - T)}(Z(z), Z(w)) \right]  \\
& \leq & u\, G^{u(1 - T)}(z,w)\\
& \leq & u\, G^{u}(z,w) .
\end{eqnarray*}
Therefore,
\[
\E[L_T^2]   =   \int
 \E\left[M_T^{1}(z) \, M_T^{1}(w)\right] \, dA(z,w)
 \leq    u\int G^{u}(z,w)
\, dA(z,w)   .
\]

\end{proof}

With this estimate, we can see that the supermartingale
$L_t$ is in the ``class $DL$''  and a process $\Theta_t$ exists
which makes $L_t  + \Theta_t$ a martingale.
(See \cite[Section 1.4]{KS} for relevant facts about the
Doob-Meyer decomposition.)
Moreover,  we can
deduce \eqref{oct21.3} if we weaken our notion of limit
to weak-$L^1$ convergence, that
 is,  for every integrable $Y$
 \[       \lim_{n \rightarrow \infty} \E\left[(\Theta_1^{(n)} - \Theta_1)
  Y\right] = 0 . \]
For the restricted range of $\kappa$, an almost sure limit was
established as well as an estimate of the H\"older continuity
of the paths in \cite{LS}.
  One cannot conclude  these stronger results  using only the estimate
\eqref{sept23.2}.  In this section, we will establish the almost
sure limit and H\"older continuity for all $\kappa < 8$ by
giving a stronger moment bound. Given the existence of $\Theta_t$,
we can also write
\[  L(s,t) = \E\left[\Theta_t - \Theta_s \mid \F_s\right]
 = \int_\Half |f_s'(z)|^{d} \, G^{t-s}(z) \, dA(z) . \]

Before proving this theorem. we make a comment that will
be important when we study the natural parametrization in
a smaller domain.  Suppose $\{\sigma_s\}$ is a collection of
bounded
stopping times for the supermartingale $L_t$ such that with
probability one $s \mapsto \sigma_s$ is continuous and
strictly increasing.
Then,
\[          \tilde L_s = L_{\sigma_s} , \]
is a time change of a positive supermartingale and hence is
a positive
supermartingale.   A stopping time $T$ for $\tilde L_s$ can
be considered as a stopping time for $L_t$ and hence we
can see that
$\E[\tilde L_s^2]$ is uniformly bounded.  We can apply the
Doob-Meyer theory to $\tilde L_s$ and conclude that there
is an increasing process $\tilde \Theta_s$ such that
$\tilde L_s + \tilde \Theta_s$ is a martingale.  Indeed,
it is easy to see that
$\tilde \Theta_{s} = \Theta_{\sigma_s}$.  If $\tilde \F_s=
\F_{\sigma_s}$ and
\[   \tilde \Theta_s^{(n)}  =  \sum_{0 \leq j <s 2^n}
   \E\left[\tilde L_{j2^{-n}} -\tilde L_{(j+1)2^{-n} } \mid \tilde\F_{j2^{-n}}
    \right] , \]
    then we know that $\tilde \Theta_s^{(n)} \rightarrow
    \tilde \Theta_s = \Theta_{\sigma_s}$, at least weakly in
    $L^1$.   It is difficult, in general, to give useful
    expressions for
   \[   \E\left[\tilde L_{j2^{-n}} -\tilde L_{(j+1)2^{-n} } \mid \tilde\F_{j2^{-n}}
    \right] . \]
 However, we will use this with a smooth reparameterization for which
we can estimate   the conditional expectation.

   Our proof  of almost sure convergence of $\Theta_t^{(n)}-
   \Theta_s^{(n)} $
   will
 make use of the H\"older continuity of $SLE_\kappa$  curves
 with $\kappa \neq 8$
 which we now review.
  We say that a function $f:[0,\infty) \rightarrow \R$
 is {\em weakly $\alpha$-H\"older} if
 it is continuous at $0$ and for every
 $0 < s_1 < s_2 < \infty$ and $\beta < \alpha$,
 $f(t), s_1 \leq t \leq s_2$  is  H\"older continuous of
 order $\beta$.
The following was proved in \cite{Lind}; see also \cite{LJ1}
where $\alpha_*$ is shown to be optimal.

 \begin{lemma}  \label{holder}
 Suppose that $\kappa > 0, \kappa \neq 8$, and let
 \[ \alpha_* = \alpha_*(\kappa) =
 1 - \frac{ \kappa} {24 + 2 \kappa -
 8 \sqrt {8 + \kappa}}    > 0. \]
 Then with probability one, the $SLE_\kappa$ curve
 parameterized by half-plane capacity
 is weakly $\alpha_*$-H\"older.
 \end{lemma}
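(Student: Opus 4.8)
Since Lemma \ref{holder} is quoted from \cite{Lind}, I only outline the argument; it sharpens the H\"older estimate for the trace first obtained in \cite{RS}. Fix a compact interval $[s_1,s_2]\subset(0,\infty)$ and an exponent $\beta<\alpha_*$. Continuity of $\gamma$ at $0$ is already part of the construction of the curve, so it suffices to show that, with probability one, $t\mapsto\gamma(t)$ is $\beta$-H\"older on $[s_1,s_2]$; intersecting over a countable family of such intervals and over $\beta\uparrow\alpha_*$ then gives the weak $\alpha_*$-H\"older property. Recall $\gamma(t)=\lim_{y\downarrow0}f_t(iy)$ with $f_t(z)=g_t^{-1}(z+U_t)$. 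The plan is to reduce the regularity of $\gamma$ to a uniform boundary-derivative bound for the maps $f_t$, and to obtain that bound from a moment estimate computed via the reverse Loewner flow.

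\emph{Reduction.} By the Koebe distortion and growth theorems, $|\gamma(t)-f_t(iy)|\le c\,y\,|f_t'(iy)|$ (after passing to a comparable radius), and for $|t-s|$ small, $|f_t(iy)-f_s(iy)|$ is controlled by integrating the Loewner vector field $a/(g_u-U_u)$ and using the a priori modulus of continuity of $U$. Hence, if almost surely there is $y_0(\omega)>0$ with $|f_t'(iy)|\le y^{-(1-\alpha_*)}$ for all $t\in[s_1,s_2]$ and $0<y\le y_0$, then $\gamma$ is $\beta$-H\"older on $[s_1,s_2]$ for every $\beta<\alpha_*$. It is enough to verify this derivative bound for dyadic $y=2^{-n}$ and for $t$ in a mesh of spacing comparable to $2^{-2n}$, interpolating in between.

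\emph{Moment estimate.} For fixed $t$, the map $f_t$ has the same law as the time-$t$ value $\hat f_t$ of the reverse-time Loewner flow $\partial_s\hat f_s(z)=-a/(\hat f_s(z)-\hat B_s)$ driven by a Brownian motion $\hat B$; so it suffices to bound $\E[|\hat f_t'(iy)|^\lambda]$. Writing $\hat Z_s=\hat f_s(iy)-\hat B_s=\hat X_s+i\hat Y_s$, the triple $(\hat Y_s,|\hat f_s'(iy)|,\hat Z_s)$ satisfies the coupled system coming from the Loewner equation, and one applies It\^o's formula to a product of powers,
\[
   \Phi_s = \hat Y_s^{\,\mu}\,|\hat f_s'(iy)|^{\lambda}\,(\sin\arg\hat Z_s)^{\,\nu},
\]
choosing $\mu,\nu$ as functions of $\lambda$ and $a$ so that the $ds$-term vanishes, which makes $\Phi_s$ a local martingale. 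Optional stopping together with elementary control of the remaining factors then yields $\E[|\hat f_1'(iy)|^{\lambda}]\le c\,y^{-\rho(\lambda)}$ for an explicit exponent $\rho(\lambda)$, and the scaling rule for $SLE$ transfers this bound to $f_t$ for $t\in[s_1,s_2]$ up to constants.

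\emph{Optimization and Borel--Cantelli.} From the moment bound, $\Prob\{|f_t'(iy)|\ge y^{-(1-\alpha)}\}\le c\,y^{\lambda(1-\alpha)-\rho(\lambda)}$. Summing over the $O(2^{2n})$ pairs $(t,y)$ with $y=2^{-n}$ and $t$ in the mesh, Borel--Cantelli gives the a.s. derivative bound of the Reduction step as soon as $\lambda(1-\alpha)-\rho(\lambda)>2$. Maximizing the admissible $\alpha$ over the free parameter $\lambda$ --- the step where \cite{Lind} improves on \cite{RS} --- gives $\alpha=\alpha_*(\kappa)=1-\kappa/(24+2\kappa-8\sqrt{8+\kappa})$, the surd arising from the discriminant of the quadratic in $\lambda$ controlling $\rho$. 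With the Reduction step this proves the lemma. The main obstacle is the It\^o computation: one must pin down the exact relation among $(\lambda,\mu,\nu)$ that kills the drift of $\Phi_s$ and handle the integrable blow-up of the reverse flow carefully enough to justify optional stopping; and to obtain the \emph{optimal} $\alpha_*$, rather than merely some positive exponent, one must carry the $\lambda$-dependence through the entire argument and optimize only at the end.
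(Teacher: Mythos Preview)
Your proposal is appropriate: the paper itself does not prove Lemma~\ref{holder} but merely cites \cite{Lind} (and \cite{LJ1} for optimality), and you correctly recognize this and give an outline of Lind's argument. The outline you sketch---reduction of H\"older regularity of $\gamma$ to a boundary derivative bound on $f_t'(iy)$, a moment estimate for $|f_t'(iy)|^\lambda$ via a local martingale for the reverse flow, Borel--Cantelli over a dyadic mesh, and optimization in $\lambda$ to extract $\alpha_*$---is indeed the structure of \cite{Lind}, so there is nothing to compare against in the present paper.
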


 We now state the main theorem in this section.

 \begin{theorem} \label{existtheorem2}
  If $\kappa < 8$, then with probability one
 for all $0 < s < t< \infty$ the limit
 \begin{equation}  \label{apr23.1}
                \Theta_t - \Theta_s= \lim_{n \rightarrow \infty}
      \left[\Theta_t^{(n)} - \Theta_s^{(n)} \right]
      \end{equation}
  exists and $t \mapsto \Theta_t$ is
  weakly $(\alpha_*d/2)$-H\"older.
  Moreover, for each $t$, $\Theta_t^{(n)} \rightarrow
  \Theta_t$ in $L^1$.
  \end{theorem}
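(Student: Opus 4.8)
By the scaling relation \eqref{oct2.6} it is enough to treat $t_0=1$; write $t_j^{(n)}=j\,2^{-n}$. The plan is to replace the constant bound \eqref{sept23.2} by a \emph{localized} second moment estimate: for $0\le a\le a+h\le 1$,
\[
   \E\!\left[L(a,a+h)^2\right]\;\le\; C\,h^{d},
\]
with $C$ independent of $a$. The mechanism is the argument of \eqref{sept23.2} --- the two‑point martingale and \eqref{twopointer} bound $\E[M_a^{a+h}(z)\,M_a^{a+h}(w)]$ by a constant times $G^{c(a+h),\,c(a+h)}(z,w)$, so $\E[L(a,a+h)^2]$ is at most a constant times $\iint G^{c(a+h),c(a+h)}(z,w)\,dA(z,w)$ --- combined with the Gaussian localization of Lemma \ref{may19lemma}: $G^{h}(Z_a(z))$ is negligible unless $|Z_a(z)|\lesssim\sqrt h$, which confines the effective support of $M_a^{a+h}$ to a $\sqrt h$‑neighborhood of the tip $\gamma(a)$ and hence forces $z,w$ to be close; the rescaled diagonal estimate \eqref{may14.5} then yields the factor $h^{d}$ in place of a bounded quantity.

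\emph{Fixed‑time convergence.} For fixed $t$ and $j<t2^{n}$ put
\[
   D_j^{(n)}=L\!\bigl(t_{2j}^{(n+1)},t_{2j+1}^{(n+1)}\bigr)+L\!\bigl(t_{2j+1}^{(n+1)},t_{2j+2}^{(n+1)}\bigr)-L\!\bigl(t_{2j}^{(n+1)},t_{2j+2}^{(n+1)}\bigr),
\]
so that $\Theta_t^{(n+1)}-\Theta_t^{(n)}=\sum_{j<t2^n}D_j^{(n)}$. Identity \eqref{apr23.6.alt} gives $\E[D_j^{(n)}\mid\F_{t_j^{(n)}}]=0$ (and $D_j^{(n)}$ is $\F_{t_{j+1}^{(n)}}$‑measurable), so the $D_j^{(n)}$ are orthogonal and, by the estimate above,
\[
   \E\!\left[\bigl(\Theta_t^{(n+1)}-\Theta_t^{(n)}\bigr)^{2}\right]=\sum_{j<t2^n}\E\!\left[(D_j^{(n)})^{2}\right]\le C\,2^{n}\cdot 2^{-nd}=C\,2^{-n(d-1)} .
\]
Since $d>1$, $\sum_n\|\Theta_t^{(n+1)}-\Theta_t^{(n)}\|_{2}<\infty$, so $\Theta_t^{(n)}$ is Cauchy in $L^{2}$ and converges in $L^{1}$, and Chebyshev together with Borel--Cantelli upgrades this to almost sure convergence; the limit must coincide with the weak‑$L^{1}$ limit $\Theta_t$ furnished by the Doob--Meyer step recalled before the theorem, so \eqref{apr23.1} holds for each fixed $t$ (and for fixed dyadic pairs $s<t$) and $\Theta_t^{(n)}\to\Theta_t$ in $L^{1}$.

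\emph{H\"older continuity.} This is where Lemma \ref{holder} is used. Fix a compact $[\delta,1]$ and $\beta<\alpha_* d/2$. By Lemma \ref{holder} there is, for a suitable $\beta_1<\alpha_*$ and a constant $K_0$ as large as we wish, an event $E$ of probability close to $1$ on which $\diam\gamma[s,t]\le K_0(t-s)^{\beta_1}$ for all $s,t\in[\delta,1]$. On $E$, a dyadic arc $\gamma[t_k^{(n)},t_{k+1}^{(n)}]$ with $[t_k^{(n)},t_{k+1}^{(n)}]\subset[\delta,1]$ is swept in capacity time $2^{-n}$ and lies in the disk of radius $\rho_n=K_0 2^{-n\beta_1}$ about $\gamma(t_k^{(n)})$, so since $\Theta$ is increasing, $\Theta_{t_{k+1}^{(n)}}-\Theta_{t_k^{(n)}}\le\Theta_{\sigma_k}-\Theta_{t_k^{(n)}}$, where $\sigma_k$ is the first time after $t_k^{(n)}$ at which $\diam\gamma[t_k^{(n)},\cdot]$ reaches $\rho_n$. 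By the strong Markov property and the scaling rule for $\Theta$, the conditional second moment of $\Theta_{\sigma_k}-\Theta_{t_k^{(n)}}$ is at most $C\rho_n^{2d}$ times the (uniformly bounded) second moment of the natural length of an $SLE_\kappa$ run until it first has diameter one --- finite by Theorem \ref{sept8.theorem} applied via conformal covariance in the slit domain $H_{t_k^{(n)}}$, i.e.\ by the finiteness of $\iint_{|z|,|w|\le 1}G(z,w)\,dA$ established just above; a sharper version of this bound, which also exploits the smallness of the capacity time $2^{-n}$ alongside that of $\rho_n$ through \eqref{twopointer} and Lemma \ref{may14.lemma1}, is what is needed to reach $\beta<\alpha_* d/2$. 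Chebyshev, a union bound over the $\le 2^n$ dyadic subintervals of $[\delta,1]$, and Borel--Cantelli then give that a.s.\ on $E$, $\max_k(\Theta_{t_{k+1}^{(n)}}-\Theta_{t_k^{(n)}})\le 2^{-n\beta}$ for all large $n$; since $\Theta$ is increasing this forces $|\Theta_t-\Theta_s|\le C|t-s|^{\beta}$ on $[\delta,1]$. Letting the probability of $E$, then $\delta\downarrow0$, then $\beta\uparrow\alpha_* d/2$ exhaust their ranges gives weak $(\alpha_* d/2)$‑H\"older continuity of $t\mapsto\Theta_t$; because the $\Theta_t^{(n)}$ are increasing in $t$ and converge a.s.\ at dyadic $t$ to the (now continuous) limit, they converge uniformly on compacts, so \eqref{apr23.1} holds a.s.\ simultaneously for all $0<s<t<\infty$.

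\emph{Where the difficulty lies.} The main obstacle is the moment estimate in the H\"older step: one must transfer the up‑to‑constants two‑point bounds of Theorem \ref{sept8.theorem} and the time‑dependent refinement Lemma \ref{may14.lemma1} to the natural length of a short arc localized in a small disk, \emph{uniformly} over the random ($\F_{t_k^{(n)}}$‑measurable) shape of $H_{t_k^{(n)}}$ near the tip $\gamma(t_k^{(n)})$, and one must balance the competing scales --- the spatial radius $\rho_n=2^{-n\beta_1}$, the capacity time $2^{-n}$, the dimension $d$, and the loss inherent in using only second moments --- so that the union bound over dyadic intervals closes with the target exponent $\alpha_* d/2$.
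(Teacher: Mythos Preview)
Your central claim --- the unstopped estimate $\E[L(a,a+h)^2]\le C\,h^{d}$ uniformly in $a$ --- is not justified by the argument you give, and this is precisely the obstacle the paper works to get around. You argue that $G^{h}(Z_a(z))$ is negligible unless $|Z_a(z)|\lesssim\sqrt h$, and conclude that the effective support of $M_a^{a+h}$ lies in a $\sqrt h$-neighborhood of $\gamma(a)$. But the second step fails: $z=f_a(Z_a(z))$, and $|f_a'|$ near $0$ is completely uncontrolled, so a small ball in the $Z_a$-chart can map to a set of large diameter in $\Half$. Hence nothing forces $|z-w|$ to be small, and you cannot invoke \eqref{may14.5}. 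In fact the bound you state is only known (via reverse-flow methods) for $\kappa<4(7-\sqrt{33})\approx 5.02$, and the paper explicitly notes that no such unstopped estimate was available for larger $\kappa$; if your two-line argument worked it would make the stopping-time machinery of Section \ref{maysection} unnecessary.

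The paper's fix is exactly to replace the missing analytic control on $|f_a'|$ by a pathwise one: it stops at $\tau_K=\inf\{t:\exists\,s<t,\ |\gamma(t)-\gamma(s)|\ge K(t-s)^\alpha\}$ and works with $L(s,t;K)$. On $\{\tau_K>t_{j+1}^{(n)}\}$ one has $|\gamma(r)-\gamma(t_j^{(n)})|\le K\,2^{-n\alpha}$ for $t_j^{(n)}\le r\le t_{j+1}^{(n)}$, so the support of the \emph{difference} $M_{t_j^{(n)}\wedge\tau_K}(z)-M_{t_{j+1}^{(n)}\wedge\tau_K}(z)$ is contained in $\{|z-\gamma(t_j^{(n)})|\le K\,2^{-n\alpha}\}$ in the \emph{original} $z$-coordinates; squaring, applying \eqref{twopointer}, summing, and then \eqref{may14.5} gives $\sum_{j}\E[L(j,n)^2]\le c\,2^{-n\alpha d}$, i.e.\ $\E[L(s,t;K)^2]\le c\,(t-s)^{1+\alpha d}$ (Proposition \ref{nov18.prop2}). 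This estimate, fed into \cite[Proposition 4.1]{LS}, yields a.s.\ convergence of $\Theta_t^{(n)}-\Theta_s^{(n)}$ and H\"older-$\beta$ continuity of $t\mapsto\Theta_{t\wedge\tau_K}$ for every $\beta<\alpha d/2$; then one lets $K\to\infty$ (using $\tau_K\uparrow\infty$ by Lemma \ref{holder}) and $\alpha\uparrow\alpha_*$. Your H\"older paragraph gestures at something similar but never carries out the moment bound --- you write that ``a sharper version of this bound \ldots\ is what is needed'' --- and your appeal to scaling of $\Theta_{\sigma_k}-\Theta_{t_k^{(n)}}$ would require knowing how $\Theta$ transforms under $g_{t_k^{(n)}}$, which is the content of Section \ref{natothersec} (proved \emph{after} this theorem).
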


 The bulk of the work in proving this theorem will come
 in establishing the next result.   We will  conclude
 this section by showing how to derive Theorem \ref{existtheorem2}
 from Theorem \ref{maytheorem2}.

\begin{theorem}  \label{maytheorem2}
Suppose $0 < \alpha < \alpha_*$ and $K <\infty$.  Let
$\tau_K = \tau_{K,\alpha}$ denote the stopping time
\[   \tau_K = \inf\left\{t: \exists s \in [0,t) \mbox{ with }
|\gamma(t) - \gamma(s)| \geq K(t-s)^\alpha \right\}. \]
Then if $1  \leq t  \leq 2$,  the limit
\[         \lim_{n \rightarrow \infty} \left[\Theta_{t \wedge \tau_K}^{(n)}  -
  \Theta_{1 \wedge \tau_K}^{(n)} \right] :=    Q_K(t)
  ,\]
 exists  in $L^2$ and with probability one.
 Moreover, with probability one, if $\beta < \alpha d/2$,
 the function $t \mapsto Q_K(t)$
 is  H\"older continuous of order $\beta$.
 \end{theorem}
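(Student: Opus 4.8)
The plan is to write the successive differences $\Theta^{(n+1)}_{\cdot\wedge\tau_K}-\Theta^{(n)}_{\cdot\wedge\tau_K}$ as sums of orthogonal mean‑zero increments, to bound the sum of their second moments geometrically in $n$ using the H\"older control on $\gamma$ supplied by $\tau_K$ (Lemma \ref{holder}) together with the two‑point bounds, and then to read off $L^2$ and almost sure convergence and H\"older continuity by routine martingale and chaining arguments. Fix $n$, put $h=2^{-n-1}$, and for each $j$ set $a_j=j2^{-n}$, $s_j=(2j+1)2^{-n-1}$, $b_j=(j+1)2^{-n}$. The martingale property of $N_s$ in \eqref{Diffmartingale} (equivalently \eqref{apr23.6.alt}) gives $L(a_j,s_j)=\E[L(a_j,b_j)-L(s_j,b_j)\mid\F_{a_j}]$, hence
\[
\bigl(L(a_j,s_j)+L(s_j,b_j)\bigr)-L(a_j,b_j)=\delta_j:=L(s_j,b_j)-\E[L(s_j,b_j)\mid\F_{a_j}],\qquad \E[\delta_j\mid\F_{a_j}]=0 .
\]
Summing over the level‑$n$ dyadic intervals inside $[0,t\wedge\tau_K]$ and refining to level $n+1$, one finds that $\Theta^{(n+1)}_{t\wedge\tau_K}-\Theta^{(n)}_{t\wedge\tau_K}$ equals $\sum_{j:\,b_j\le t\wedge\tau_K}\delta_j$ plus at most one ``half‑interval'' term $L(a_{j_1},s_{j_1})$ coming from the single dyadic interval straddling $t\wedge\tau_K$; since the first half of that interval lies inside $[0,\tau_K]$, this term is covered by the per‑interval bound below exactly as a single $\delta_j$.

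The crux is the per‑interval estimate: on $\{b_j\le\tau_K\}$ the window $[a_j,b_j]$ lies inside $[0,\tau_K]$, so by Lemma \ref{holder} the arc of $\gamma$ produced during it has diameter $O\!\bigl(K(2^{-n})^{\alpha}\bigr)$; by Loewner distortion this diameter is comparable to $(2^{-n})^{1/2}\sup|f_v'|$ over the part of $\Half$ near $U_v$ at height $\sim(2^{-n})^{1/2}$ (for $v=a_j,s_j$), which forces $\sup|f_v'|\le cK(2^{-n})^{\alpha-1/2}$ there, and since $G^{\ell}$ is concentrated in that region with $\int G^{\ell}(w)\,dA(w)=c\,\ell^{d/2}$, one gets the pathwise bound $L(v,v+\ell)\le cK^{d}\ell^{\alpha d}$ on $\{b_j\le\tau_K\}$ (for the relevant $\ell\asymp2^{-n}$). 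Because $L\ge0$ and, by \eqref{oct21.5} (and the $L^1$ convergence $\Theta^{(n)}_t\to\Theta_t$), $\E[L(v,v+\ell)]\le c\,\ell$ for $v\in[1,2]$, we may multiply and take expectations to obtain $\E[\delta_j^2\,\mathbf 1\{b_j\le\tau_K\}]\le c\,K^{d}(2^{-n})^{1+\alpha d}$. Orthogonality then controls the sum: since $\{b_j\le\tau_K\}\in\F_{b_j}$ and $b_j\le a_k$ for $j<k$, the product $\delta_j\mathbf 1\{b_j\le t\wedge\tau_K\}$ is $\F_{a_k}$‑measurable, so the cross term collapses to $\E[\delta_k\mathbf 1\{\tau_K<b_k\}\mid\F_{a_k}]$, which is $\le C^{1/2}\Prob(\tau_K<b_k\mid\F_{a_k})^{1/2}$ by the uniform bound \eqref{sept23.2} (in its conditional form), and $\Prob(\tau_K<b_k\mid\F_{a_k})$ is super‑exponentially small in $2^{n}$ on $\{a_k\le\tau_K\}$ by \eqref{reflection}, since the curve moves by a fixed amount in capacity time $2^{-n}$ only if the driving Brownian motion does. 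Summing the $O(2^{n})$ diagonal terms therefore yields
\[
\E\!\left[\bigl(\Theta^{(n+1)}_{t\wedge\tau_K}-\Theta^{(n)}_{t\wedge\tau_K}\bigr)^2\right]\le c\,K^{d}\,2^{-\alpha d\,n},\qquad 1\le t\le2 ,
\]
uniformly in $t$. I expect the main obstacle to be precisely this per‑interval bound --- the Loewner/Koebe distortion step that converts the diameter control into a bound on $\sup|f_v'|$ near the tip, the verification that $L(v,v+\ell)=\int|f_v'|^{d}G^{\ell}$ is governed by that near‑tip region, and the bookkeeping needed because $\{b_j\le\tau_K\}$ is not $\F_{a_j}$‑measurable; the two‑point estimate \eqref{twopointer} enters only through the uniform a priori control $\E[\delta_j^2\mid\F_{a_j}]\le C$ used for the straddling and cross terms.

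Granting this geometric bound, the rest is standard. Summability of $\sum_n\|\Theta^{(n+1)}_{t\wedge\tau_K}-\Theta^{(n)}_{t\wedge\tau_K}\|_{2}$ makes $(\Theta^{(n)}_{t\wedge\tau_K})_n$ Cauchy in $L^2$, defining $Q_K(t)$ as the $L^2$ limit; applying Doob's $L^2$ maximal inequality to the martingale part of $s\mapsto\Theta^{(n+1)}_{s\wedge\tau_K}-\Theta^{(n)}_{s\wedge\tau_K}$ (and bounding the single straddling term by the same estimate) gives $\sum_n\E[\sup_{1\le s\le2}(\Theta^{(n+1)}_{s\wedge\tau_K}-\Theta^{(n)}_{s\wedge\tau_K})^2]<\infty$, so the convergence is almost surely uniform in $s$; in particular $Q_K(t)$ exists simultaneously for all $t$, and it is increasing because each $t\mapsto\Theta^{(n)}_{t\wedge\tau_K}$ is.

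For H\"older continuity I would run the same estimate localized to a subinterval $[t',t]\subseteq[1,2]$, where summing the per‑interval bound over only the $O(|t-t'|2^{n})$ dyadic intervals meeting $[t',t]$ gives $\E[(\Theta^{(n+1)}_{\cdot\wedge\tau_K}-\Theta^{(n)}_{\cdot\wedge\tau_K}\text{ over }[t',t])^2]\le cK^{d}|t-t'|\,2^{-\alpha d\,n}$. Telescoping over $n\ge m$ with $2^{-m}\asymp|t-t'|$ and adding the coarsest‑scale term $\Theta^{(m)}_{\cdot\wedge\tau_K}$ over $[t',t]$ (a window inside $[0,\tau_K]$, hence again covered by the per‑interval bound) yields $\E[(Q_K(t)-Q_K(t'))^{2}]\le cK^{d}|t-t'|^{1+\alpha d}$. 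A Borel--Cantelli argument over a dyadic net of time pairs then produces, for every $\beta<\alpha d/2$, the a.s.\ bound $|Q_K(t)-Q_K(t')|\le C|t-t'|^{\beta}$ for dyadic $t',t$, and monotonicity of $Q_K$ extends it to all $t',t\in[1,2]$; the factor $\tfrac12$ in the exponent $\alpha d/2$ is exactly the loss in passing from the $L^2$ modulus $|t-t'|^{(1+\alpha d)/2}$ to an almost sure one.
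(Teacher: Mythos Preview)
Your framework for passing from a per-interval second-moment bound to $L^2$/a.s.\ convergence and H\"older continuity (orthogonal martingale differences, Doob's inequality, chaining) is essentially the standard one the paper defers to \cite[Proposition~4.1]{LS}, and that part is fine. The real gap is the per-interval bound itself, and the mechanism you propose for it is not the right one.

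Your distortion step claims that, on $\{b_j\le\tau_K\}$, $\sup|f_v'(w)|\le cK(2^{-n})^{\alpha-1/2}$ for $w$ at height $\sim(2^{-n})^{1/2}$, deduced from $\diam\gamma[a_j,b_j]\le K(2^{-n})^{\alpha}$. But $f_v$ depends only on the past $\gamma[0,v]$, whereas the set $\gamma[v,v+2^{-n}]$ lies in the future; there is no direct relation between $\diam\gamma[v,v+2^{-n}]$ and $|f_v'|$. If instead you try to use the H\"older control on $[0,v]$, that still does not bound $|f_v'|$ near the tip: a H\"older curve can make a pass close to $\gamma(v)$ at a much earlier time, producing a narrow fjord in $H_v$ and hence arbitrarily large $|f_v'|$ at height $(2^{-n})^{1/2}$. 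In short, the pathwise inequality $L(v,v+\ell)\le cK^{d}\ell^{\alpha d}$ on $\{v+\ell\le\tau_K\}$ is not available by distortion, and the argument you sketch is circular (you would need a bound on $|f_v'|$ to control $|f_v(w)-\gamma(v)|$, which is what you are trying to deduce).

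The paper's route avoids $|f_v'|$ entirely by localizing in the \emph{original} domain. On $\{v+2^{-n}\le\tau_K\}$ one has $M^{v+2^{-n}}_v(z)=0$ unless $|z-\gamma(v)|\le K2^{-\alpha n}$, so
\[
L(v,v+2^{-n};K)^2\le \int_{J_n}|g_v'(z)\,g_v'(w)|^{2-d}\,G^{2^{-n}}(Z_v(z))\,G^{2^{-n}}(Z_v(w))\,dA(z,w),
\]
with $J_n=\{|z-w|\le 2K2^{-\alpha n}\}$. At this point the time-dependent two-point bound \eqref{twopointer} is \emph{essential}, not auxiliary: it replaces the product $G^{2^{-n}}(Z_v(z))G^{2^{-n}}(Z_v(w))$ by $c\,G^{c2^{-n},c2^{-n}}(Z_v(z),Z_v(w))$, turning the integrand into the two-point local martingale. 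Summing over $j$ telescopes against the supermartingale, yielding $\sum_j\E[L(j,n)^2]\le c\int_{J_n}G^{c,c}(z,w)\,dA(z,w)$, and the near-diagonal estimate \eqref{may14.5} gives the factor $2^{-\alpha d\,n}$. That is where the exponent $\alpha d$ actually comes from; your proposal to get it from a pathwise bound on a single factor times the crude $\E[L]\le c\,2^{-n}$ bound does not go through.
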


 \begin{proof}  See Section  \ref{maysection}.
 \end{proof}

 \begin{proof}  [Proof of Theorem \ref{existtheorem2} given
 Theorem \ref{maytheorem2}]
 Suppose $\alpha < \alpha_*$, $\beta < \alpha d/2$,
  and let $\tau_K
= \tau_{K,\alpha}$ be as in Theorem
\ref{maytheorem2}.
Lemma \ref{holder}
 implies that with probability one $\tau_K > 0$ for all
$K > 0$ and
\[    \lim_{K \rightarrow \infty} \tau_K = \infty . \]
 Therefore, if $Q(t) = \lim_{K \rightarrow \infty} Q_K(t)$,
 with probability one, for $1 \leq t \leq 2$,
 \[         \lim_{n \rightarrow \infty} \left[\Theta_{t}^{(n)}  -
  \Theta_{1}^{(n)} \right] =    Q(t)
  .\]
Moreover, $t \mapsto Q(t)$ is H\"older continuous of order
$\beta$ on $[1,2]$.

 Using $SLE$ scaling, we can see that Theorem
 \ref{maytheorem2} implies that with probability one for
 every positive integer $m$ and $2^{-m } \leq t $,
 the limit
 \[    Q(t;m) =    \lim_{n \rightarrow \infty} \left[\Theta_{t }^{(n)}  -
  \Theta_{2^{-m}}^{(n)}   \right]
 \]
 exists. Moreover, with probability one, for $t_0 < \infty$,
 $t \mapsto Q(t;m) $
 is  H\"older continuous of order $\beta$ on $[2^{-m},t_0]$.
   Using
  \eqref{oct21.5} and
  Fatou's lemma, we get
  $\E[Q(t;m)] \leq c \, t^d$.  From this we can define
  by monotonicity
  \[   \Theta_t = \lim_{m \rightarrow \infty} Q(t;m) \]
  and we can see that $\E[\Theta_t] \leq c \, t^d$.
  Note that if $2^{-m} \leq s \leq t$, then
  \[           \Theta_t - \Theta_s = Q(t;m) - Q(s;m) . \]
 We claim  that with probability one, $\Theta_{0+}=0$.
Indeed,
 \[   \Prob\{\Theta_t > \epsilon\} \leq  \epsilon^{-1} \,
   \E[\Theta_t] \leq c \, \epsilon^{-1} \, t^d , \]
   and hence
  \[          \sum_{j=1}^\infty \Prob\{\Theta_{2^{-j}} > 2^{-dj/2}\}
     < \infty. \]
Therefore, by Borel-Cantelli, we get with probability one $\Theta_{0+} = 0.$

We have shown that $\Theta_t$ is weakly $\beta$-H\"older
for every  $\beta < \alpha_*d/2$; hence it is weakly
$(\alpha_*d/2)$-H\"older. \end{proof}

%
%
%
%

\subsection{Proof of Theorem \ref{maytheorem2}}  \label{maysection}

We fix $\alpha < \alpha_*$ and allow all constants to depend
on $\alpha$.
As before, let
 \[        M_t (z) = |g_t'(z)|^{2-d} \, G^{ }
 (Z_t(z)) , \]
 and if $ 0 \leq s < t \leq \theta$,
 \[       \Psi_t  = \int_\Half M_t (z) \, dA(z) , \]
 \[      L(s,t;K) = \E\left[\Psi_{s \wedge \tau_K}
  - \Psi_{t \wedge \tau_K}  \mid \F_{s} \right].\]
  The theorem is an example of the Doob-Meyer decomposition.
  To prove the existence of the decomposition there is an easy
  step and a hard step: the easy step is to discretize time and
  give an approximation and the hard step is to take the
  limit of the approximations.  This latter step is not so
  difficult if one establishes extra moment bounds.  In particular,
  our 
  theorem follows from \cite[Proposition 4.1]{LS} once
  we obtain the following estimate which we derive
  in this section.

  \begin{proposition} \label{nov18.prop2}
    For every $\epsilon >0$ and $K < \infty$,
there exists $c < \infty$ such that if $\epsilon \leq s < t
\leq 1/\epsilon$,
\[    \E\left[L(s,t;K)^2\right]\leq c \, (t-s)^{1+\alpha d}.\]
\end{proposition}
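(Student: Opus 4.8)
The plan is to reduce the second-moment bound on the increasing-process increment $L(s,t;K)$ to an integral of the two-point time-dependent Green's function, exactly as in the proof of the bound \eqref{sept23.2}, but now keeping track of the dependence on $t-s$ and exploiting the fact that on the event $\{t \le \tau_K\}$ the curve $\gamma$ has controlled modulus of continuity. First I would write, using the definition of $L(s,t;K)$ as a conditional expectation of a difference of supermartingales and \eqref{Diffmartingale},
\[
   L(s,t;K) = \int_\Half \E\left[ |g_{s\wedge\tau_K}'(z)|^{2-d}\left(G^{t\wedge\tau_K - s\wedge\tau_K}(Z_{s\wedge\tau_K}(z))\right) \;\Big|\; \F_s \right] dA(z),
\]
or rather the appropriate difference-of-Green's-functions version that is a genuine (nonnegative) conditional expectation. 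Squaring and using Cauchy--Schwarz / Fubini as in the earlier proposition, $\E[L(s,t;K)^2]$ is bounded by a double integral over $(z,w)\in\Half\times\Half$ of $\E$ of a product of two such one-point expressions. Applying Lemma \ref{may14.lemma1} (the estimate \eqref{twopointer}) converts the product of one-point time-dependent Green's functions into a constant times a two-point time-dependent Green's function $G^{c(t-s),c(t-s)}$ evaluated along the flow, and then the two-point local martingale property lets us remove the flow: the contribution is at most $c\int G^{c(t-s),c(t-s)}(z,w)\,dA(z,w)$.

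The key remaining point is therefore to show
\[
   \int_{\Half\times\Half} G^{r,r}(z,w)\, dA(z,w) \le c\, r^{1+\alpha d}
\]
is \emph{not} what we want directly (that would give power $r^d$ from pure scaling of $G^{r,r}$, which is too weak only in the wrong direction); instead the point is that the $\tau_K$-stopping forces the relevant mass of the $SLE$ path visited between times $s\wedge\tau_K$ and $t\wedge\tau_K$ to lie in a region of diameter $O((t-s)^\alpha)$, so the two integrated points $z,w$ can effectively be taken within distance $O((t-s)^\alpha)$ of each other. This is where I would use \eqref{may14.5}: up to the localization coming from $\tau_K$, only pairs $(z,w)$ with $|z-w|\lesssim (t-s)^\alpha$ contribute, and $\int G^{1,1}(z,w)\,1\{|z-w|\le\epsilon'\}\,dA(z,w)\le c\,(\epsilon')^d$ with $\epsilon' \asymp (t-s)^\alpha$ gives the factor $(t-s)^{\alpha d}$; the extra factor $(t-s)^{1}$ comes from the ``time budget'' $\hcap[\gamma_{t\wedge\tau_K}]-\hcap[\gamma_{s\wedge\tau_K}] \le a(t-s)$, which controls (via the diameter-of-path bound recalled before Lemma \ref{may19lemma}, together with the exponential tail \eqref{may19.12}) the probability that the path even gets near a fixed region, producing one more power of $(t-s)$. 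Making the localization argument rigorous — translating ``$t\le\tau_K$ forces the newly visited part of $\gamma$ into a small ball'' into an honest bound on the conditional expectation defining $L(s,t;K)$, and pairing it correctly with the scaling of $G^{r,r}$ — is the main obstacle; I would handle it by first conditioning on $\F_{s\wedge\tau_K}$, applying $SLE$ scaling to reduce to $t-s$ of order $1$ on the new domain $H_{s\wedge\tau_K}$, and then invoking Lemma \ref{may19lemma} and Lemma \ref{nov16.1} on the post-$s$ curve together with the deterministic capacity/diameter bound and distortion estimates to transfer everything back.

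Assembling the pieces, the inequality $\E[L(s,t;K)^2]\le c\,(t-s)^{1+\alpha d}$ follows by combining the Cauchy--Schwarz reduction to the two-point integral, Lemma \ref{may14.lemma1}, the two-point martingale property, and the localized two-point integral bound \eqref{may14.5} with $\epsilon'\asymp(t-s)^\alpha$, the restriction $\epsilon\le s<t\le 1/\epsilon$ being used only to keep the implicit constants (coming from distortion estimates for $g_{s\wedge\tau_K}$ on bounded regions and from the scaling between $t-s$ and order-$1$ time) uniform. With this proposition in hand, Theorem \ref{maytheorem2} follows from \cite[Proposition 4.1]{LS} as stated.
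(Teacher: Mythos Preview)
Your proposal identifies most of the right ingredients --- the localization of the increment to a ball of radius $K(t-s)^\alpha$ via the stopping time $\tau_K$, the conversion of the product of one-point functions to a two-point function via Lemma~\ref{may14.lemma1}, and the use of \eqref{may14.5} to extract the factor $(t-s)^{\alpha d}$ --- but there is a genuine gap in your accounting for the remaining factor $(t-s)^1$.

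If you carry out the direct argument you sketch, you obtain (with $s$ bounded between $\epsilon$ and $1/\epsilon$)
\[
  \E\bigl[L(s,t;K)^2\bigr]
  \;\le\; c \int_{|z-w|\le 2K(t-s)^\alpha}
        \E\bigl[|g_s'(z)g_s'(w)|^{2-d}\,G^{c(t-s),c(t-s)}(Z_s(z),Z_s(w))\bigr]\,dA(z,w),
\]
and the two-point martingale identity in Section~\ref{two-pointsec} gives that the expectation on the right equals $G^{s+c(t-s),\,s+c(t-s)}(z,w)-G^{s,s}(z,w)$, which is at most $G^{c',c'}(z,w)$ for a \emph{constant} $c'$ depending on $\epsilon$ --- not $G^{c(t-s),c(t-s)}(z,w)$ as you write. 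Applying \eqref{may14.5} then yields only $\E[L(s,t;K)^2]\le c(t-s)^{\alpha d}$, which is short by a full power of $(t-s)$. Your attempt to recover this factor from the ``time budget'' $\hcap$-increment and the diameter bound does not work: that effect is already entirely encoded in the time-dependent Green's function $G^{t-s}$ appearing in $M_s^t(z)$, so it cannot be counted a second time.

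The paper supplies the missing factor by a different mechanism. Using the scaling Lemma~\ref{lscaling} and its Corollary~\ref{lscaling2}, one reduces to showing $\sum_{j=1}^{2^n}\E[L(j,n)^2]\le c\,2^{-\alpha d n}$ for the $2^n$ consecutive increments of width $2^{-n}$ on $[1,2]$; the desired bound then follows because the \emph{average} over $j$ controls $\E[L(1,1+2^{-n-1};K)^2]$. The sum is handled by writing each term (after the localization and Lemma~\ref{may14.lemma1}) as a martingale difference of a two-point process $\hat M_t(z,w)$ with a \emph{fixed} terminal time, so that summing over $j$ telescopes to a single integral $\int_{J_n}G^{2u,2u}(z,w)\,dA(z,w)$ with $u$ a constant. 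It is this telescoping-plus-averaging step, not any additional probabilistic decay, that produces the factor $2^{-n}\sim t-s$.
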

%
%
   In \cite{LS},
 a similar estimate
  was obtained for $\kappa < 5.0\cdots$
 using the reverse Loewner flow.  No such estimate was  established in \cite{LZ}
 for $\kappa < 8$,  and this is why the result there was not as strong.  The
 proof we give here uses the usual (forward) Loewner flow,
 as well as   the
 H\"older continuity of the $SLE$ curve (in the capacity parametrization).

%
 Let
 \[        M_t^\theta(z) = |g_t'(z)|^{2-d} \, G^{\theta - t}
 (Z_t(z)) , \;\;\;\;
      \Psi_t^\theta = \int M_t^\theta(z) \, dA(z) . \]
 For every $0 \leq s \leq t \leq \theta$,
 \[      L(s,t;K) = \E\left[\Psi_{s \wedge \tau_K}
 ^\theta - \Psi_{t \wedge \tau_K}^\theta \mid \F_{s} \right],\]
 and, in particular,
 \[    L(s,t;K) = \E\left[\Psi_{s \wedge \tau_K}
 ^t - \Psi_{t \wedge \tau_K}^t \mid \F_{s} \right].\]
 Note that for fixed $s$, $L(s,t;K)$ is  increasing
 in $t,K$.

\begin{lemma}  \label{lscaling}
If $r > 0$ and $0 \leq s < t$,
then the distribution of $L(s,t;K)$ is the same as that
of $r^{-d} \,L(r^2s,r^2t; r^{2\alpha - 1}K)$.
In particular, it has the same distribution as
\[        s^{d/2} \, L(1,t/s;s^{\frac 12 - \alpha}K).\]
\end{lemma}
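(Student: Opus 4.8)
The plan is to read the identity off from the scaling property of $SLE_\kappa$ (Section~\ref{SLEdef}), together with the scaling rules for $|g_t'|$ and for $G^t$ recalled in Section~\ref{timesec} and the elementary scaling of the H\"older stopping time $\tau_K$. Observe first that the second assertion is just the first with $r=s^{-1/2}$: then $r^2s=1$, $r^2t=t/s$, $r^{-d}=s^{d/2}$ and $r^{2\alpha-1}=s^{1/2-\alpha}$. So it is enough to prove the first.

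Fix $r>0$ and set $\hat\gamma(u)=r^{-1}\gamma(r^2u)$, which by the scaling proposition is an $SLE_\kappa$ curve with $\hat g_u(z)=r^{-1}g_{r^2u}(rz)$, $\hat U_u=r^{-1}U_{r^2u}$, hence $\hat g_u'(z)=g_{r^2u}'(rz)$, $\hat Z_u(z)=r^{-1}Z_{r^2u}(rz)$, and $\hat\F_{\hat s}=\F_{r^2\hat s}$. I would build every object in the definition of $L(\cdot,\cdot;K)$ out of $\hat\gamma$, rewrite it through the objects built from $\gamma$, and then use $\hat\gamma\overset{d}{=}\gamma$ (so a conditional‑expectation functional of $\hat\gamma$ has the same law as the same functional of $\gamma$). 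Three scaling facts enter. (i) From the above and the rule $G^{\theta}(r^{-1}w)=r^{2-d}G^{r^2\theta}(w)$ one gets $\hat M_u^{\theta}(z)=r^{2-d}M_{r^2u}^{r^2\theta}(rz)$; integrating in $z$ and substituting $w=rz$ ($dA(z)=r^{-2}dA(w)$) gives $\hat\Psi_u^{\theta}=r^{-d}\Psi_{r^2u}^{r^2\theta}$ — exactly the computation already done for $L(s,s+t)$ around \eqref{oct2.6}. (ii) Substituting $T=r^2u$, $S=r^2v$ in the inequality $|\hat\gamma(u)-\hat\gamma(v)|\ge K(u-v)^\alpha$ defining $\tau_K$ for $\hat\gamma$ turns it into the inequality defining $\tau_{K'}$ for $\gamma$, the constant $K'$ being $K$ scaled by the power of $r$ by which an $\alpha$‑H\"older modulus is dilated under $\gamma\mapsto r^{-1}\gamma(r^2\cdot)$, i.e.\ $K'=r^{2\alpha-1}K$ as in the statement; since $u\mapsto r^2u$ is increasing this gives $\hat\tau_K=r^{-2}\tau_{r^{2\alpha-1}K}$, and hence $r^2(\hat s\wedge\hat\tau_K)=(r^2\hat s)\wedge\tau_{r^{2\alpha-1}K}$ (the minimum commutes with the time dilation). (iii) $\hat\F_{\hat s}=\F_{r^2\hat s}$, already noted.

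Assembling: take $\theta=\hat t$ in (i), so that $\hat\Psi^{\hat t}_{\hat s\wedge\hat\tau_K}=r^{-d}\Psi^{r^2\hat t}_{(r^2\hat s)\wedge\tau_{r^{2\alpha-1}K}}$ and similarly with $\hat s$ replaced by $\hat t$; taking conditional expectation given $\hat\F_{\hat s}=\F_{r^2\hat s}$ yields $\widehat L(\hat s,\hat t;K)=r^{-d}L(r^2\hat s,r^2\hat t;r^{2\alpha-1}K)$, where $\widehat L$ denotes the analogue of $L$ built from $\hat\gamma$ and the $L$ on the right is built from $\gamma$. Since $\hat\gamma\overset{d}{=}\gamma$, the left side has the law of $L(\hat s,\hat t;K)$, which is the claim after renaming $\hat s,\hat t$ as $s,t$.

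The one genuinely delicate point is (ii) — pinning down the power of $r$ that the constant $K$ picks up when $\tau_K$ is rescaled, where one could easily slip a sign; everything else is the area‑integral and Loewner‑flow bookkeeping already behind \eqref{oct2.6}, with the $G^t$ scaling rule and the $SLE$ scaling proposition used verbatim, so I expect no substantive obstacle there.
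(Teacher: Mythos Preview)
Your proof is correct and follows essentially the same route as the paper's: define the rescaled curve $\hat\gamma(u)=r^{-1}\gamma(r^2u)$, compute the scaling of $\hat M_u^{\theta}$, $\hat\Psi_u^{\theta}$, and $\hat\tau_K$ exactly as the paper does, and then take the conditional expectation using $\hat\F_{\hat s}=\F_{r^2\hat s}$ together with $\hat\gamma\overset{d}{=}\gamma$. The only cosmetic addition is that you record explicitly that the ``in particular'' clause is the case $r=s^{-1/2}$.
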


\begin{proof}
We use the scaling rule for $SLE_\kappa$.
Fix $r$ and let
\[  \tilde g_t(z)   = r^{-1} \, g_{r^2t}(rz),\;\;\;\;
     \tilde \gamma(t)
      = r^{-1} \, \gamma(r^2t).\]
Then
$\tilde g_{t},
\tilde \gamma$ have the same distribution as
$g_t,\gamma$.  Indeed,$\tilde g_{t}$   is the solution to the Loewner
equation with driving function $\tilde U_t = r^{-1}
\, U_{r^2t}$.  Let
\[  \tilde Z_t(z)  = \tilde g_t(z) -
  \tilde U_t = r^{-1} \, Z_{r^2t}(rz). \]
   We define
\[   \tilde \tau_K  = \inf\{t: \exists s \leq t:
  |\tilde \gamma(t) - \tilde \gamma(s)|  = K \, (t-s)^\alpha\}.\]
  Since
  \[
 \frac{|\tilde \gamma(t) - \tilde \gamma(s)|
} {(t-s)^\alpha} =
     \frac{|\gamma(r^2t) - \gamma(r^2s)|}
    {r \, (t-s)^\alpha} =
        \frac{|\gamma(r^2t) - \gamma(r^2s)|}
    {r^{1-2\alpha} \, (r^2t-r^2s)^\alpha},\]
 we get
 \[    \tilde \tau_K = r^{-2} \, \tau_{r^{2\alpha -1}K}.\]
 Let  $\tilde M_t^\theta(z), \tilde \Psi_t^\theta$
 be defined as above using $\tilde g, \tilde \gamma$.
  Note that
 \begin{eqnarray*}
 \tilde M_t^\theta(z) & = & |\tilde g_t'(z)|^{2-d}
   \, G^{\theta - t}(\tilde Z_t(z))\\
    & = & |g_{r^2t}'(rz)|^{2-d} \, G^{\theta - t}(Z_{r^2t}(rz)/r)\\
    & = & |g_{r^2t}'(rz)|^{2-d} \,r^{2-d} \,
     G^{r^2\theta - r^2t}(Z_{r^2t}(rz))\\
     & = & r^{2-d} \, M_{r^2t}^{r^2\theta}(rz),
   \end{eqnarray*}
 \begin{eqnarray*}
 \tilde \Psi_t^\theta & = & \int \tilde M_t^\theta(z) \, dA(z)\\
  & = & r^{2-d}  \int  M_{r^2t}^{r^2\theta}(rz) \, dA(z)\\
& = & r^{-d}  \int   M_{r^2t}^{r^2\theta}(rz) \, dA(rz)
  = r^{-d} \, \Psi_{r^2t}^{r^2\theta}. \end{eqnarray*}
 Moreover,  for each $K$,
  \[   \tilde \Psi_{t \wedge \tilde \tau_K}^\theta
             = r^{-d} \, \Psi_{r^2t\wedge \tau_{r^{2\alpha - 1}K
             }}^{r^2\theta}
               .\]
Since $\tilde \F_t = \F_{r^2t}$ we can take conditional
expectations and get the result.
 \end{proof}

 \begin{corollary} \label{lscaling2}
 If $n$ is a positive integer,
 \[  \E\left[L(1,1+2^{-n-1};K)^2\right] \leq
         2^{-n} \sum_{j=1}^{2^n} \E\left[L(1 + (j-1)2^{-n}, 1 + j2^{-n};
             K)^2 \right]. \]
 \end{corollary}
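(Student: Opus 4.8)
The plan is to read the inequality off the scaling relation of Lemma~\ref{lscaling}: I would compare each of the $2^n$ summands on the right-hand side to the single quantity on the left, and then average over $j$.

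Fix $n$, and for $1\le j\le 2^n$ put $s_j=1+(j-1)2^{-n}$, so that $1\le s_j\le 2-2^{-n}<2$ and the $j$-th summand is $\E[L(s_j,s_j+2^{-n};K)^2]$. By the ``in particular'' form of Lemma~\ref{lscaling} (that is, the choice $r=s_j^{-1/2}$), the random variable $L(s_j,s_j+2^{-n};K)$ has the same law as $s_j^{d/2}\,L(1,1+2^{-n}/s_j;s_j^{1/2-\alpha}K)$, and hence, squaring and taking expectations,
\[
   \E\!\left[L(s_j,s_j+2^{-n};K)^2\right]
   = s_j^{d}\;\E\!\left[L\!\left(1,\,1+\tfrac{2^{-n}}{s_j};\,s_j^{1/2-\alpha}K\right)^{\!2}\right].
\]
Now $s_j\ge 1$ gives $s_j^{d}\ge 1$, and $s_j<2$ gives $2^{-n}/s_j>2^{-n-1}$, so that $1+2^{-n}/s_j>1+2^{-n-1}$. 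Since (for a fixed first argument) $L(s,t;K)$ is increasing in $t$ and in $K$, and since $s_j^{1/2-\alpha}\ge 1$ when $\alpha\le 1/2$, the right-hand side above is at least $\E[L(1,1+2^{-n-1};K)^2]$; that is,
\[
   \E\!\left[L(s_j,s_j+2^{-n};K)^2\right]\ \ge\ \E\!\left[L(1,1+2^{-n-1};K)^2\right],\qquad 1\le j\le 2^n.
\]
Averaging these $2^n$ inequalities is precisely the assertion.

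The one step that needs care is the shift of the parameter $K$ under the rescaling, which replaces $K$ by $s_j^{1/2-\alpha}K$. If $\alpha\le 1/2$ this is $\ge K$, and monotonicity of $L$ in $K$ closes the argument exactly as above. If $\alpha>1/2$ the exponent $1/2-\alpha$ is negative, but $\alpha<\alpha_*<1$ forces $|1/2-\alpha|<1/2$, hence $s_j^{1/2-\alpha}\ge 2^{-1/2}$; one then additionally needs that decreasing $K$ by a fixed factor decreases $\E[L(1,1+2^{-n-1};K)^2]$ by at most a bounded amount, which follows from the monotone and continuous dependence of the stopping time $\tau_K$ (and therefore of $L(\cdot\,;K)$) on $K$, together with the almost sure finiteness of $\tau_K$ supplied by Lemma~\ref{holder}. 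I expect this bookkeeping with $K$ to be the only real obstacle; the rest is a routine combination of scaling, monotonicity in the time argument, and averaging.
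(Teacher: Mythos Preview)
Your argument is essentially the paper's own: apply Lemma~\ref{lscaling} to each summand, then use $s_j^{\,d}\ge 1$, monotonicity of $L$ in the second time argument (via $s_j<2$, so $2^{-n}/s_j>2^{-n-1}$), and monotonicity in $K$. The paper's proof simply writes the final inequality without commenting on the $K$-shift.

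You are right to flag the case $\alpha>1/2$: this does occur for small $\kappa$ (indeed $\alpha_*(\kappa)\to 1$ as $\kappa\to 0$), and then $s_j^{1/2-\alpha}<1$, so monotonicity in $K$ goes the wrong way. The paper's proof is silent on this point and implicitly treats $\alpha\le 1/2$. Your proposed workaround---that replacing $K$ by $s_j^{1/2-\alpha}K\ge K/\sqrt 2$ changes the expectation by at most a fixed factor---is not rigorous as written, and in any case would only yield the corollary with a multiplicative constant rather than the stated inequality. That weaker form is, however, all that is actually used in the proof of Proposition~\ref{nov18.prop2}, where constants are explicitly allowed to depend on $K$.
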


\begin{proof}  Letting $s = 1 + (j-1)2^{-n} \in [1,2]$, we have
\begin{eqnarray*}
 \E\left[ L(1 + (j-1)2^{-n}, 1 + j2^{-n};
             K)^2\right
             ]  & =&  s^{d} \,\E\left[ L\left(1, 1 + \frac{1}{2^n +j-1}
             ; s^{\frac 12 - \alpha}K \right)\right]^2\\
             &
             \geq & \E\left[L( 1 , 1 + 2^{-(n+1)}; K)^2\right].
\end{eqnarray*}
             \end{proof}

%

\begin{proof} [Proof of Proposition \ref
{nov18.prop2}]
Using Lemma \ref{lscaling} we see that it  suffices
to prove the result when $s=1$ and $t-s = 2^{-n}$ for some positive
integer $n$.  By Corollary
\ref{lscaling2}, it suffices to prove that for all $K < \infty$,
\[   \sum_{j=1}^{2^n} \E\left[L(j,n)^2\right]
 \leq c_K \, 2^{-n \alpha d}, \]
 where
\[    L(j,n) = L\left(1 + (j-1)2^{-n}, 1 + j2^{-n}; K \right).\]
For the remainder, we fix $K$ and allow constants to depend on
$K$.    We write $G^{s}(z,w)$ for $G^{s,s}(z,w)$.

  Let $\tau = \tau_K$ and
\[      Y_t = \Psi_{(1+t) \wedge \tau} . \]
The definition of $\tau$ implies that
 $|\gamma((t+s) \wedge \tau) - \gamma(t \wedge \tau)|
 \leq K s^{\alpha}$.   Therefore,
 \[  \E\left[M_{(t+s) \wedge \tau }(z) \, 1\{|z - \gamma(t)| > Ks^\alpha\}
  \right] = M_{t \wedge \tau}(z) \, 1\{|z-\gamma(t)| >Ks^\alpha \}. \]
In particular, if
 $t = 1+(j-1)2^{-n}$,
\[
 \E\left[Y_{ (j-1)2^{-n}} - Y_
 {j2^{-n}}    \mid \F_t \right]
 \leq \int |g_t'(z)|^{2-d}
   \, G^{2^{-n}}(Z_t(z)) \, 1\left\{|z- \gamma(t)|
     \leq K \, 2^{-\alpha n} \right\}\, dA(z).
\]
Hence,  if
 \[    J_n = \{(z,w) \in \Half^2: |z-w| \leq 2K2^{- \alpha n}\},\]
 then
 \[  L(j,n)^2 \leq \int_{J_n} |g_t'(z)|^{2-d}
 \, |g_t'(w)|^{2-d} \,
   \, G^{2^{-n}}(Z_t(z)) \, G^{2^{-n}}(Z_t(w))
    \,   dA(z,w) .\]
Lemma \ref{may14.lemma1} implies that there exists $u < \infty$
such that
\[    L(j,n)^2 \leq u \int_{J_n} |g_t'(z)|^{2-d}
 \, |g_t'(w)|^{2-d} \,
   \, G^{u2^{-n}}(Z_t(z),Z_t(w))
    \,   dA(z,w) .\]
Without loss of generality we assume $u = 2^{m_0}$ for some integer
$m_0 \geq 3$.

Let
\[         \hat M_t(z,w) =   |g_t'(z)|^{2-d}
 \, |g_t'(w)|^{2-d} \,
   \,  G^{u+1-t}(Z_t(z),Z_t(w)), \]
and note that if $t=j2^{-n}$,
\[    \E\left[   \hat  M_t(z,w) -   \hat  M_{t + u2^{-n}}(z,w)
   \right]
    \geq \E\left[|g_t'(z)|^{2-d}
 \, |g_t'(w)|^{2-d} \,
   \,  G^{u2^{-n}}(Z_t(z),Z_t(w))\right].\]
 (Roughly speaking, the right-hand side corresponds to the
 normalized  probability
 that the path goes through both $z$ and $w$  between times
 $t$ and $t + u2^{-n}$.)
By summing over $j$, we see that
\[  \sum_{j=1}^{2^{n-m_0}} \E[L(jm_0,n)^2]
  \leq c \int_{J_n} G^{u+1}(z,w)
    \, dA(z,w). \]
    Similarly, for $k=0,1,2,\ldots,m_0-1$, we see that
    \[  \sum_{j=1}^{2^{n-m_0}} \E[L(jm_0 + k ,n)^2]
  \leq c \int_{J_n} G^{u+k2^{-n} +1}(z,w)
    \, dA(z,w). \]
By summing over $k$, we see that
  \[  \sum_{j=1}^{2^{n}} \E[L(j,n)^2]
  \leq c \int_{J_n} G^{2u}(z,w)
    \, dA(z,w). \]
    and by using
 \eqref{may14.5} and scaling
we see that
\[ \sum_{j=0}^{2^n-1} \E[L(j,n)^2]
  \leq c \, 2^{-d\alpha n}.\]

 \end{proof}

The next lemma will be used later.

\begin{lemma}  \label{laterlemma}
 There exists $c < \infty$ such that
if $s,t > 0$ and $r \leq 1/2$, then
\[        L(s,s+(1+r)t) - L(s,s+t) \leq c \, r \, L(s,s+t).\]
\end{lemma}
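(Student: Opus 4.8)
The plan is to reduce the pathwise inequality to a single self-improving estimate for the increasing function $\tau\mapsto L(s,s+\tau)$. Substituting $z=f_s(w)$ in \eqref{oct2.5} (as in \eqref{oct2.6}) gives $P(\tau):=L(s,s+\tau)=\int_\Half|f_s'(w)|^d\,G^\tau(w)\,dA(w)$, so the claim is exactly $P((1+r)t)\le(1+c\,r)\,P(t)$ for $r\le\tfrac12$. I would prove this from the logarithmic-derivative bound
\[ \tau\,P'(\tau)\le c\,P(\tau),\qquad \tau>0; \]
indeed, integrating $d\log P$ over $[t,(1+r)t]$ gives $P((1+r)t)\le(1+r)^{c}\,P(t)\le(1+c'\,r)\,P(t)$ for $r\le\tfrac12$. (The function $\tau\mapsto\phi(w;\tau)$ is $C^1$, so $P$ is differentiable; one may equivalently run the argument below with difference quotients.)

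To bound $\tau\,P'(\tau)$ I use the scaling form $P(\tau)=\tau^{d/2}\,Q(\tau)$ with $Q(\tau)=\int_\Half|f_s'(\sqrt\tau\,z)|^d\,G^1(z)\,dA(z)$, which follows from the scaling rule $G^{r^2t}(rz)=r^{d-2}G^t(z)$. Since $\tau\,P'(\tau)=\tfrac d2\,P(\tau)+\tau^{1+d/2}\,Q'(\tau)$, it suffices to bound $\tau^{1+d/2}|Q'(\tau)|$ by a multiple of $P(\tau)$. Differentiating under the integral and invoking the half‑plane Koebe distortion estimate $|f_s''(\zeta)|\le 4\,|f_s'(\zeta)|/\Im(\zeta)$ for the univalent map $f_s:\Half\to H_s$, one gets $\tau|Q'(\tau)|\le C\int_\Half|f_s'(\sqrt\tau z)|^d\,\frac{|z|}{\Im z}\,G^1(z)\,dA(z)$, which after the substitution $w=\sqrt\tau z$ and one more use of scaling equals $C\,\tau^{-d/2}\int_\Half|f_s'(w)|^d\,\frac{G^\tau(w)}{S(w)}\,dA(w)$, where $S(w)=\sin(\arg w)$. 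Hence $\tau^{1+d/2}|Q'(\tau)|\le C\int_\Half|f_s'(w)|^d\,\frac{G^\tau(w)}{S(w)}\,dA(w)$, and the whole lemma comes down to the estimate, uniform in $\tau>0$,
\[ \int_\Half|f_s'(w)|^d\,\frac{G^\tau(w)}{S(w)}\,dA(w)\ \le\ c\int_\Half|f_s'(w)|^d\,G^\tau(w)\,dA(w)\ =\ c\,P(\tau). \tag{$\ast$} \]

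The remaining work --- and \emph{the main obstacle} --- is $(\ast)$: the extra inverse power of $S$ must be absorbed uniformly over the random weight $|f_s'|^d$, and the pointwise bound $1/S(w)\le c$ is false near $\partial\Half$. I would split $\Half$ by the size of $S(w)$. On $\{S(w)\ge\tfrac12\}$ the factor $1/S(w)$ is at most $2$ and there is nothing to prove. On $\{S(w)<\tfrac12\}$, i.e. $w=x+iy$ with $y$ small relative to $|x|$, I would write $G^\tau(w)/S(w)=\Im(w)^{d-2}S(w)^{4a-2}\,\phi(w;\tau)$ and use Lemma \ref{may19lemma} ($\phi(w;\tau)\le c\,e^{-x^2/(8\tau)}$, and $\phi(w;\tau)=0$ once $y^2\ge 2a\tau$); the key point is that the exponent of $S$ satisfies $(d-2)+(4a-2)>-1$ --- equivalent to $(4a-1)^2>0$, which holds since $\kappa\ne 8$ --- so the extra inverse power of $S$ is still integrable near $\R$. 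To control the weight there I would transport the estimate back to $H_s$ by $z=f_s(w)$, using the identity $|f_s'(w)|^d\,\Im(w)^{d-2}S(w)^{4a-2}\,dA(w)=\Upsilon_{H_s}(z)^{d-2}\,S_{H_s}(z;\gamma(s),\infty)^{4a-2}\,dA(z)$ together with $\Upsilon_{H_s}(z)\asymp\dist(z,\partial H_s)$ and \eqref{hmeasure}, and Beurling/extremal‑distance estimates for the hull complement $H_s$, to show that the near‑boundary mass is comparable to the corresponding expression with exponent $4a-1$; for $|w|\gtrsim\sqrt{\hcap[\gamma_s]}$ one has $|f_s'(w)|\asymp 1$ (from $f_s'(\infty)=1$), so the far field is harmless by the Gaussian factor, while for $|w|\lesssim\sqrt{\hcap[\gamma_s]}$ the distortion estimate keeps $|f_s'|$ comparable along short vertical segments, which lets one compare the near‑boundary mass to the bulk. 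Assembling $(\ast)$ into $\tau P'(\tau)\le cP(\tau)$ and integrating as above yields the lemma; I expect the verification of $(\ast)$, especially the boundary comparison and the regime $\tau$ small relative to $\hcap[\gamma_s]$, to be the longest part of the argument.
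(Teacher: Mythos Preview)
Your route is far more elaborate than the paper's. The paper never differentiates $P(\tau)$: it applies the scaling rule $G^{(1+r)t}(w)=u^{d-2}G^t(w/u)$ with $u=\sqrt{1+r}$ directly inside the integral and substitutes $z=w/u$ to obtain
\[
L(s,s+(1+r)t)=u^d\int_\Half |f_s'(uz)|^d\,G^t(z)\,dA(z),
\]
then simply asserts $|f_s'(uz)|=|f_s'(z)|\,[1+O(u-1)]$ ``by the distortion theorem'' and is finished in three lines. So the whole apparatus of $P(\tau)=\tau^{d/2}Q(\tau)$, $\tau P'(\tau)\le cP(\tau)$, and the reduction to $(\ast)$ is unnecessary for the argument the paper has in mind.

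That said, the difficulty you isolate is real, and you have put your finger on exactly the point the paper glosses over. Koebe distortion on $\Half$ gives only $|\log(f_s'(uz)/f_s'(z))|\le C(u-1)/S(z)$, since the hyperbolic distance between $z$ and $uz$ is $\asymp (u-1)/S(z)$; the bound is \emph{not} uniform in $z$, and the paper's one--line distortion claim hides this. Your derivative computation converts this missing uniformity precisely into $(\ast)$. But the detour through $P'$ buys nothing: $(\ast)$ is the integrated form of the very same $1/S$ obstacle one meets if one tries to justify the paper's distortion step honestly. Your sketch for $(\ast)$---splitting on $S(w)$, transporting to $H_s$, invoking Beurling/extremal--distance bounds---is a plausible outline but is not a proof; in particular, you have not shown why the near--boundary mass with weight $|f_s'|^d S^{4a-2}$ is controlled by the same integral with weight $|f_s'|^d S^{4a-1}$ uniformly in the random hull, and your treatment of the regime where $|w|$ is small relative to $\sqrt{\hcap[\gamma_s]}$ is only a sentence. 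So as written your proposal has a genuine gap at $(\ast)$, the same gap the paper's three--line proof papers over with the phrase ``the distortion theorem implies.''
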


\begin{proof}  Let $f(z) = g_s^{-1}(U_s + z)$
and $u = \sqrt{1+r}$.
   Then,
\[    L(s,s+ t) = \int_\Half |f'(z)|^d \, G^t(z) \, dA(z) .\]
\begin{eqnarray*}
    L(s,s+ (1+r)t)&   = &   \int_\Half |f'(w)|^d \, G^{t(1+r)}(w) \, dA(w) \\
    & = & u^{d -2}
    \int_\Half |f'(w)|^d \, G^t(w/u) \, dA(w)  \\
    & = & u^{d} \int_\Half |f'(uz)|^d \, G^t(z) \, dA(z)
\end{eqnarray*}
The distortion theorem implies that
\[  |f'(uz)| = |f'(z)| \, [1 + O(u-1)] = |f'(z)| \, [1 + O(r)].\]
Since $u^d = 1+O(r)$,
\[ L(s,s+(1+r)t) = [1 + O(r)]
\int_\Half |f'(uz)|^d \, G^t(z) \, dA(z) =
 L(s,s+t) \,   [1 + O(r)]   .\]
 \end{proof}

%

\subsection{Two useful lemmas}  \label{usefulsec}
We have shown that
\begin{equation}  \label{oct10.1}
      \Theta_t =\lim_{n \rightarrow \infty}
  \sum_{j \leq t2^n}
  \int_\Half |f_{(j-1)2^{-n}}'(z)|^d \,
       G^{2^{-n}}(z) \, dA(z) ,
       \end{equation}
       where the limit is in $L^1$.
The integral is concentrated on $z$ near the origin.
Proposition \ref{prop.oct9}
makes a precise statement of this.   Recall
L\'evy's theorem on
the modulus of continuity of Brownian motion (see, e.g.,
\cite[Theorem 9.5]{KS}) which states that
 if $U_t$ is a standard Brownian motion and $t_0 < \infty$,
then with probability one
\begin{equation}  \label{modulus}
      \sup\left\{\frac{|U_t - U_s|}{
   \sqrt{|t-s| \, |\log (t-s)|}} : 0 \leq s < t \leq t_0\right\}
   < \infty.
   \end{equation}

\begin{proposition}   \label{prop.oct9} Suppose $\delta_n$ is a sequence
of positive numbers such that
\[     \lim_{n \rightarrow \infty} 2^{-n/2} \, \sqrt n \,
  \delta_n^{-1} = 0 . \]
Let $\Half_n = \{z \in \Half: |z| \leq \delta_n\}$.
Then,
\[ \Theta_t =\lim_{n \rightarrow \infty}
  \sum_{j \leq t2^n}
  \int_{\Half_n}   |f_{(j-1)2^{-n}}'(z)|^d \,
       G^{2^{-n}}(z) \, dA(z) , \]
  where the limit is in $L^1$.
  \end{proposition}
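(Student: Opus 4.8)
\emph{Proof proposal.} The plan is to show that the difference between the sum in the proposition and the full sum in \eqref{oct10.1} tends to $0$ in $L^1$. Concretely, write $s_j=(j-1)2^{-n}$ and set
\[
 R_n \;=\; \sum_{j\le t2^n}\int_{\Half\setminus\Half_n}
   |f_{s_j}'(z)|^d\,G^{2^{-n}}(z)\,dA(z)\;\ge\;0 .
\]
Since $\Half\setminus\Half_n=\{z\in\Half:|z|>\delta_n\}$, the sum in the proposition equals $\sum_{j\le t2^n}\int_\Half|f_{s_j}'(z)|^d\,G^{2^{-n}}(z)\,dA(z)$ (whose $L^1$--limit is $\Theta_t$ by \eqref{oct10.1}) minus $R_n$. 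Hence it is enough to prove $R_n\to 0$ in $L^1$, and because $R_n\ge 0$ this is equivalent to $\E[R_n]\to 0$. Put $r_n=\delta_n\,2^{n/2}$; the hypothesis $2^{-n/2}\sqrt n\,\delta_n^{-1}\to 0$ says exactly that $r_n^2/n\to\infty$ (so in particular $r_n\to\infty$).

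The core of the argument is a pointwise estimate on $G^{2^{-n}}$ restricted to $\{|z|>\delta_n\}$. By Lemma \ref{may19lemma}, $G^{2^{-n}}(z)=0$ unless $(\Im z)^2<2a\,2^{-n}$; consequently, once $n$ is large enough that $\sqrt{2a}\,2^{-n/2}\le\delta_n/2$ (possible since $r_n\to\infty$), the nonzero part of $G^{2^{-n}}$ on $\{|z|>\delta_n\}$ lives where $|\Re z|\ge\delta_n/2$. Using also the bound $G^{2^{-n}}(z)\le c\,e^{-2^n(\Re z)^2/8}\,G(z)$ from the same lemma and splitting the exponent as $2^n(\Re z)^2/16+2^n(\Re z)^2/16$, one gets
\[
 G^{2^{-n}}(z)\,1\{|z|>\delta_n\}\;\le\;c\,e^{-r_n^2/64}\,e^{-2^n(\Re z)^2/16}\,G(z) .
\]
The leftover factor is then turned back into a time-dependent Green's function: the lower bound in Lemma \ref{may19lemma}, applied with time parameter $80\cdot 2^{-n}$, gives $e^{-2^n(\Re z)^2/16}\,G(z)\le c_1^{-1}\,G^{160\cdot 2^{-n}}(z)$ at every $z$ in the support of $G^{2^{-n}}$ (where $(\Im z)^2\le 2a\,2^{-n}\le 160a\,2^{-n}$). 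Combining, $G^{2^{-n}}(z)\,1\{|z|>\delta_n\}\le c'\,e^{-r_n^2/64}\,G^{160\cdot 2^{-n}}(z)$ for all $z$, with $c'$ depending only on $\kappa$.

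Then I would integrate against $|f_{s_j}'(z)|^d$ and use $\int_\Half|f_s'(z)|^d\,G^{\rho}(z)\,dA(z)=L(s,s+\rho)$: each term of $R_n$ is at most $c'\,e^{-r_n^2/64}\,L(s_j,s_j+160\cdot 2^{-n})$. By \eqref{apr23.6}, together with the monotonicity and finiteness of $\rho\mapsto\int G^\rho\,dA$ (from the scaling rule and \eqref{oct21.5}), $\E[L(s_j,s_j+160\cdot 2^{-n})]\le\int G^{t+160}(z)\,dA(z)=:C_t<\infty$, uniformly in $n$ and $j$ (recall $s_j<t$). Summing over the at most $t2^n$ indices $j$ gives $\E[R_n]\le C_t\,c'\,t\,2^n e^{-r_n^2/64}$, which $\to 0$ since $r_n^2/n\to\infty$. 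I expect the delicate step to be the second application of Lemma \ref{may19lemma}: one \emph{cannot} simply bound $G^{2^{-n}}\le G$ and integrate, because $G$ is not integrable against $dA$ along the thin strip $\{\Im z<\sqrt{2a}\,2^{-n/2}\}$ near infinity once $\kappa\ge 4$, so part of the Gaussian decay in $\Re z$ must be retained to restore integrability while the rest produces the factor $e^{-r_n^2/64}$. Morally this Gaussian is the quantitative form of the reflection-principle fact that a capacity increment of length $2^{-n}$ leaves the curve within distance $O(\sqrt n\,2^{-n/2})$ of its tip, which is exactly what the hypothesis on $\delta_n$ encodes.
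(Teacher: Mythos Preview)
Your argument is correct and takes a genuinely different route from the paper's proof. The paper introduces an auxiliary sequence $\epsilon_n$ and stopping times $\lambda_n,\hat\lambda_n$ based on the path/driving-function excursions, invokes L\'evy's modulus of continuity \eqref{modulus} to show $\lambda_n\uparrow\infty$, and then compares $\Theta_{t\wedge\lambda_n}$ with the truncated Riemann sums via the conditional-expectation interpretation of $L(s,s+2^{-n})$. Your approach instead bounds the remainder $R_n$ directly in $L^1$ using only the pointwise Gaussian-type estimates on $G^t$ from Lemma~\ref{may19lemma}: you extract a factor $e^{-r_n^2/64}$ from the tail $\{|z|>\delta_n\}$ and reabsorb the residual decay into $G^{160\cdot 2^{-n}}$ so that the integrals become $L(s_j,s_j+160\cdot 2^{-n})$, whose expectations are bounded uniformly by \eqref{apr23.6} and scaling. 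The hypothesis $r_n^2/n\to\infty$ then kills the polynomial factor $t2^n$. This is shorter and avoids both the auxiliary stopping times and L\'evy's theorem; the paper's argument, by contrast, is more pathwise and makes the geometric picture (the curve stays within $\delta_n$ of its tip over a time interval $2^{-n}$) explicit, which is perhaps more adaptable when sharp Green's function bounds are not available.
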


\begin{proof} Let $\epsilon_n$ be a sequence with
\[         \lim_{n \rightarrow \infty} 2^{-n/2} \, \sqrt n \,
  \epsilon_n^{-1} = 0 , \]
  \[     \lim_{n \rightarrow \infty} \epsilon_n \, \delta_n^{-1} = 0.\]
Let
\[         \gamma^{(s)}(t) =g_s(\gamma(t+s)) -
g_s(\gamma(s)) =  g_s(\gamma(t+s)) -
  U_s , \]
and let
\[    \lambda_n = \inf\{t: \exists m \geq n, s \leq t \mbox{ with }
|t-s| \leq 2^{-m} \mbox{ and } |\gamma^{(s)}(t)|
   \geq \delta_m\}, \]
   \[  \hat \lambda_n =  \inf\{t: \exists m \geq n , s \leq t \mbox{ with }
|t-s| \leq 2^{-m} \mbox{ and } |U_t - U_s|
   \geq  \epsilon_m\}.  \]
Deterministic estimates using the Loewner equation
(see \cite[Lemma 4.13]{Law1}) show that
for all $n$ sufficiently large, $\hat \lambda_n \leq \lambda_n$.
Using \eqref{modulus} can see that with probability one $\hat \lambda_n
\uparrow \infty$ and hence
  $\lambda_n\uparrow
\infty$.

%
 Note that for fixed $m$, if $n \geq m$,
 \[ \E[\Theta_{(s +2^{-n})  \wedge \lambda_m} -
    \Theta_{s \wedge \lambda_m}
     \mid \F_{s \wedge \lambda_m}]
       \leq
\int_{\Half_n}   |f_{s}'(z)|^d \,
       G^{2^{-n}}(z) \, dA(z) .\]
Therefore,
\[    \Theta_{t \wedge \lambda_n} \leq
        \liminf_{n \rightarrow \infty}
          \sum_{j \leq t2^n}
  \int_{\Half_n}   |f_{(j-1)2^{-n}}'(z)|^d \,
       G^{2^{-n}}(z) \, dA(z) .\]
However $\Theta_{t \wedge \lambda_n}$ converges
monotonically to $\Theta_t$ and hence $\E[\Theta_t
 - \Theta_{t \wedge \lambda_n} ] \rightarrow 0$.
 Combining with \eqref{oct10.1}, we get the result.
 \end{proof}

 \begin{lemma}  \label{uniformlemma}
  Suppose $U_t$ is a driving function such that the
Loewner equation generates a curve.  Then for all $T < \infty$, the
functions $\{f_s(z): 0 \leq s \leq T\}$ are equicontinuous at $0$.
In other words, for every $\epsilon >0$, there exists $\delta > 0$
such that if $|z| < \delta$ and $t \in [0,T]$, then $|f_t(z) - \gamma(t)|
< \epsilon$.
\end{lemma}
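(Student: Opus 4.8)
The plan is to argue by contradiction. Suppose equicontinuity at $0$ fails: then there are $\epsilon_0>0$, times $t_n\in[0,T]$, and points $z_n\in\Half$ with $z_n\to 0$ but $|f_{t_n}(z_n)-\gamma(t_n)|\ge\epsilon_0$. Since the Loewner equation bounds $\diam(\gamma_t)$ uniformly on $[0,T]$, the basic estimate $|g_t^{-1}(w)-w|\le c\,\diam(\gamma_t)$ makes $\{f_{t_n}(z_n)\}$ bounded, so after passing to a subsequence we may assume $t_n\to t_*\in[0,T]$ and $f_{t_n}(z_n)\to w_*$; by continuity of $\gamma$, $|w_*-\gamma(t_*)|\ge\epsilon_0$, while $g_{t_n}(f_{t_n}(z_n))=z_n+U_{t_n}\to U_{t_*}$. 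I would first record the standard continuity of the Loewner chain in the curve topology: because $\gamma$ is a curve, $\gamma_{t_n}\to\gamma_{t_*}$ in the Hausdorff metric, hence $H_{t_n}\to H_{t_*}$ in the Carath\'eodory sense relative to $\infty$; the hydrodynamic normalizations agree (with $\hcap(\gamma_{t_n})\to\hcap(\gamma_{t_*})$), so the Carath\'eodory kernel theorem gives $g_{t_n}^{-1}\to g_{t_*}^{-1}$ locally uniformly on $\Half$, and with $t\mapsto U_t$ continuous, $f_{t_n}\to f_{t_*}$ locally uniformly on $\Half$ (and $f_{t_n}^{-1}\to f_{t_*}^{-1}$ locally uniformly on $H_{t_*}$).

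Next I would locate $w_*$. If $w_*\in H_{t_*}$ the argument is immediate: for large $n$ the $f_{t_n}(z_n)$ lie in a fixed compact subset of $H_{t_*}$, so $z_n=f_{t_n}^{-1}(f_{t_n}(z_n))\to f_{t_*}^{-1}(w_*)\in\Half$, contradicting $z_n\to 0$. Thus $w_*\in\overline\Half\setminus H_{t_*}$, and the Carath\'eodory convergence of the complements then forces $\dist(f_{t_n}(z_n),\partial H_{t_n})\to 0$. Writing $w_n=f_{t_n}(z_n)$, the Beurling estimate shows Brownian motion from $w_n$ exits $H_{t_n}$ inside $B(w_*,\epsilon_0/2)$ with probability tending to $1$; transporting this harmonic measure to $\Half$ through $f_{t_n}$ and using $z_n\to 0$, the set $f_{t_n}^{-1}(\partial H_{t_n}\cap B(w_*,\epsilon_0/2))\subseteq\R$ must approach $0$. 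So there is $\xi_n\to 0$ with $f_{t_n}(\xi_n)$ converging to some $\tilde w_*$ with $|\tilde w_*-w_*|\le\epsilon_0/2$, hence $\tilde w_*\ne\gamma(t_*)$; but for a small fixed $\eta>0$ the interior convergence gives $f_{t_n}(\xi_n+i\eta)\to f_{t_*}(i\eta)$, which lies within $\epsilon_0/8$ of $\gamma(t_*)$ once $\eta$ is chosen small. This would make $f_{t_n}$ oscillate by at least $\epsilon_0/4$ across the short vertical segment $[\xi_n,\xi_n+i\eta]$ for all large $n$ — incompatible with the behaviour of $f_{t_n}$ near the prime end $0$ (using $f_{t_n}(\xi_n)\in\partial H_{t_n}$ and, once more, the Carath\'eodory convergence) — giving the contradiction.

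The step I expect to be the real obstacle is exactly that last one: ruling out a limit point $w_*\ne\gamma(t_*)$ on $\partial H_{t_*}$, i.e. controlling the image under $f_{t_n}$ of shrinking half-disks at $0$ when the limiting hull is pinched near its tip. If this is awkward to push through cleanly, an alternative is to avoid compactness and estimate directly from the Loewner flow via the factorization $f_t=f_{t-s}\circ\Phi_s$, with $\Phi_s$ the normalized map of the small hull $g_{t-s}(\gamma((t-s,t]))-U_{t-s}$: one has $\Phi_s(B(0,\delta)\cap\Half)\subseteq B(0,C[\sqrt s\vee\max_{0\le r\le s}|U_{t-s+r}-U_{t-s}|])\cap\Half$ for $\delta$ small, so the image of a half-disk at $0$ at time $t$ is pushed into a half-disk at $0$ at time $t-s$. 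The snag is that $\Phi_s$ can dilate the radius by a fixed factor $C>1$, so one application does not close the estimate; I anticipate the cleanest complete proof combines this factorization with the normal-family argument above.
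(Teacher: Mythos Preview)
Your argument has a genuine gap, precisely where you yourself flag it: the case $w_*\in\partial H_{t_*}\setminus\{\gamma(t_*)\}$ is not actually closed. The ``oscillation'' you produce --- $f_{t_n}(\xi_n)$ near $w_*$ but $f_{t_n}(\xi_n+i\eta)$ near $\gamma(t_*)$ --- is not in itself a contradiction. A conformal map onto a domain with a rough boundary (such as a slit or a curve tip) can and does behave exactly this way near a boundary point; nothing in Carath\'eodory convergence prevents it. More importantly, notice that you never directly invoke the hypothesis of the lemma --- that each individual $f_t$ extends continuously to $0$ with $f_t(0)=\gamma(t)$. Carath\'eodory convergence of the interiors does not by itself transfer that boundary continuity to the limit, and without it the compactness route stalls.

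The paper's proof is essentially your ``alternative'' at the end, but with one twist that removes the dilation problem you worried about: one does \emph{not} iterate the factorization. Instead, fix $t$ and use the hypothesis once to find $r>0$ with $|f_t(z)-\gamma(t)|<\epsilon/2$ for $|z|<r$. Then for $s$ close to $t$, write $f_s = f_t\circ(g_s^t-\,U_t)^{-1}$ (or $f_s = f_t\circ h_t^s(\,\cdot\,+U_s-U_t)$ for $s>t$); the Loewner estimate $|g_s^t(w)-w|,\ |h_t^s(w)-w|\le c\,[v(|t-s|)\vee\sqrt{|t-s|}]$, together with uniform continuity of $U$, guarantees that for $|z|$ and $|t-s|$ small enough (depending on $t$), $f_s(z)=f_t(w)$ for some $|w|<r$. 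Hence $|f_s(z)-\gamma(t)|<\epsilon/2$. This gives, for each $t\in[0,T]$, a $\delta(t)>0$ that works for all $s$ in $(t-\delta(t),t+\delta(t))$; compactness of $[0,T]$ then yields a single $\delta$. The point you were missing is that the hypothesis ``$f_t$ continuous at $0$'' absorbs the dilation from the factorization in a single step, so no iteration is needed.
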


\begin{proof} if $s < t$, define $g_s^t$ by $g_t = g_s^t \circ g_s$
and write $h_t = g_t^{-1}, h_s^t = (g_s^t)^{-1}$. Note that if
$s<t$, then $h_s = h_t \circ g_s^t$ and $h_t = h_s \circ h_s^t$.

Since $U_t$  is uniformly continuous on $[0,T]$, there exists increasing
$v(s)$
with $v(0+)  = 0$ such that
$  |U_{t+s} - U_t| \leq v(s), 0 \leq t \leq T-s$.  From the Loewner
equation (see \cite[Lemma 4.13]{Law1}), one can see that there
exists a universal $c < \infty$  such that for all $z$,
and $ 0 \leq t \leq T-s$,
\begin{equation}  \label{oct21.1}
            |g_{t}^{t+s}(z) - z|  \leq  c \, [v(s) \vee \sqrt s] , \;\;\;\;
   |h_t^{t+s}(z) - z|
   \leq  c \, [v(s) \vee \sqrt s].
   \end{equation}

Since the Loewner equation is generated by a curve, we know
that for each $t$, $f_t(z)$ can be extended
continuously to $\overline{\Half}$  with $f_t(0) = h_t(U_t)
= \gamma(t).$ We claim the following stronger fact: for every
$\epsilon > 0$ and $t \in [0,T]$,
 there exists $\delta = \delta(t,\epsilon)  > 0$ such that if $|z|< \delta$
and $|t-s| < \delta$, then $|f_s(z) - \gamma(t)| < \epsilon$.  To see this,
fix $t$ and find $r$ such that
$|f_t(z) - f_t(0)| < \epsilon/2$ for $|z| < r$.
Using \eqref{oct21.1}, we can find
 $\delta < r/4$ so that if $|s-t| < \delta $,
\[  |U_t - U_s| < r/4 \]
and
\[    \sup_w  |g_s^t(w) - w| \leq r/4 \;\;\;\; \mbox{ if } \;\;\; s < t , \]
\[   \sup_w |h_t^s(w) - w | \leq r/4 \;\;\;\;  \mbox{ if } \;\;\;
 s > t . \]
 Then if $t-\delta < s < t$ and $|z| < \delta$, we have
\[  |f_s(z) - f_t(0)| = |h_s(z + U_s) - f_t(0)| =
  |h_t(g_s^t(z+U_s)) - f_t(0)| = |f_t(w) - f_t(0)|, \]
  for some $w$ with $|w| < r$.  Hence $|f_s(z) - f_t(0)|
  < \epsilon/2$.   A similar argument proves this estimate
  for $t < s < t+\delta$.
Therefore, if $|z| < \delta$ and
  $|s-t| < \delta$,
  \[ |f_s(z) - \gamma(t)| =   |f_s(z) - f_t(0)| \leq
    |f_s(z) - f_s(0)| + |f_s(0) - f_t(0)| <   \epsilon . \]

  We finish the proof by a standard compactness argument, covering $[0,T]$
  by a finite number of intervals $[t - \delta_t, t+ \delta_t]$ and then choosing
  $\delta = \min\{\delta_t\}$.
\end{proof}

 \subsection{Natural parametrization in other domains}  \label{natothersec}

 Suppose $D$ is a simply connected domain with distinct boundary
 points $z,w$.  Then $SLE_\kappa$ from $z$ to $w$ in $D$ is defined
 (up to reparameterization) by $\eta(t) = F \circ \gamma(t)$, where
 $\gamma$ is an $SLE_\kappa$ in $\Half$ from $0$ to $\infty$ and
 $F: \Half \rightarrow D$ is a conformal transformation with $F(0) = z,
 F(\infty) = w$.  The conformal transformation is unique only up to an
 initial dilation, but scaling shows that the distribution is independent
 of the choice.    If $\Theta_t$ denotes the natural parametrization for
 the curve $\gamma$, we {\em define} the natural parametrization
 $\tilde \Theta_t$ in
 $D$ by the scaling rule
\begin{equation}  \label{otherdef}
        \hat \Theta_t = \int_0^t |F'(\gamma(s))|^d \, d \Theta_s.
        \end{equation}
  A simple argument shows that the joint distribution of $(F \circ \gamma
  (t), \hat \Theta_t)$, modulo time reparameterization,
   is independent of the choice of $F$.

   This definition can also be given more intrinsically.  Suppose
   for ease that $D$ is a bounded domain with
\begin{equation}  \label{oct21.6}
     \int_D G_D(\zeta;z,w) \, dA(\zeta) < \infty.
     \end{equation}
   Let $\eta(t)$ be an $SLE_\kappa$ path from $z$ to $w$ in $D$;
   the choice of parametrization is not important.  Then the
   natural length in $D$ is the unique increasing process
  $\tilde \Theta_t$ such that
  \[       \int_D G_{D_t}(\zeta; \eta(t),w) \, dA(\zeta)  +
      \tilde \Theta_t , \]
  is a martingale.  Here $D_t$ represents
     the component of
    $D \setminus \eta_t$ whose boundary includes $\eta(t)$
    and $w$.  In particular, $\eta$ has the natural parametrization
    if
    \[      \int_D G_{D_t}(\zeta; \eta(t),w) \, dA(\zeta)  +
   t  \]
   is a martingale.  If \eqref{oct21.6} does not hold we
   can define $\tilde \Theta$ as we did in the half plane
   by using some type of  cut-off.  (In this paper, the cut-off
   was given by using a time-dependent Green's function; in
   \cite{LS,LZ} the cut-off was made by considering the Green's
   function
 in a bounded domain bounded away from the origin.)

   If $\gamma[0,t]$ is a curve that lies in both $D_1$ and $D_2$, then
   it is not immediate from our definition that the natural length
   of $\gamma_t$ is independent of whether we consider it in $D_1$
 or in $D_2$. We will establish this in this
 section.  By conformal invariance, it
 suffices to prove this for $ D_1 = D = \Half \setminus V
  \subset \Half = D_2$ with $V$ bounded and $\dist(0,V) > 0$
  which we now assume.

  Let
 \[   F: \Half \longrightarrow D \]
 be the unique conformal transformation with
  $F(z) - z = o(1)$ as $z \rightarrow \infty$, and let $\Phi =
 F^{-1}$.  If $\gamma$ is an $SLE_\kappa$ path from $0$
 to $\infty$  in $\Half$,
 let
 \[     \tau_D = \inf\{t: \gamma(t) \in \overline V\}. \]
Suppose $\sigma$ is a stopping time with $\sigma < \tau_D$.
There are two probability measures on $\gamma_\sigma$,
that of
$SLE$ in $D$ and that of  $SLE$ in $\Half$ (in both cases
we are choosing endpoints $0$ and $\infty$).  It is known that these
two probability measures, considered as measures on the
stopped paths
$\gamma_\sigma$,
 are mutually absolutely continuous.
The natural length of $\gamma_\sigma$ can be considered using
either measure.  In the case of $SLE$ in $D$, one defines the
length in $\Half$ and then uses $F$ and the conformal covariance
rule \eqref{otherdef}
to get the length in $D$.  We will show the
two definitions are the same.  We start by setting
up some notation.

Let $\gamma(t)$ be an $SLE_\kappa$ path in $\Half$ defined
as usual with $\hcap[\gamma_t] = at.$   Let $g_t$
and $U_t$ denote the conformal maps and driving function,
respectively.  We assume $t < \tau_D$,
 and let

 \[     D_t = g_t(D), \;\;\;\;
       \eta^*(t) =   \Phi \circ \gamma(t) . \]
Let $H_t^*$ be the unbounded component of $\Half
 \setminus \eta^*_t$, and let
  $g^*_t$ denote the unique conformal transformation of
  $H^*_t$ onto $\Half$ with $g^*_t(z) - z = o(1)$
 as $z \rightarrow \infty$.
  Let
 \[    \Phi_t = g_t^* \circ \Phi \circ g_t^{-1} , \;\;\;\;
   F_t = \Phi_t^{-1} . \]
   Then $\Phi_t: D_t \rightarrow \Half$ is the unique
   conformal transformation with $\Phi_t(z) - z \rightarrow
   0$ as $z \rightarrow \infty$.
Let $\hat a(t)$ denote the capacity of the curve viewed
in $D$, that is,
\[           \hat a(t) = \hcap\left[\Phi \circ \gamma_t\right]
 = \hcap[\eta_t^*]. \]
Then \cite[Section 4.6]{Law1}
 \[  \hat a(t)  = a \int_0^t \Phi_s'(U_s)^2   \, ds, \]
 and
  $g^*_t$ satisfies the Loewner equation
   \[   \p_t g^*_t(z) = \frac{a \, \Phi_t'(U_t)^2}{
                    g_t^*(z) - U_t^*}, \]
   where $U_t^* = g_t^*(\eta^*(t)) = \Phi_t(U_t)$.  Let
   $\delta(t) = \dist(U_t,\Half \setminus D_t)$.  By Schwarz
   reflection, $\Phi_t$ can be extended to a conformal transformation
   on the open disk of radius $\delta(t)$ about $U_t$.  The distortion
   theorem implies that there exists a universal $c < \infty$ such that
    \[            |\Phi_t''(z)| \leq c \, \delta(t)^{-1} \, \Phi_t'(U_t),\;\;\;\;
               |z - U_t| \leq \delta(t)/2, \]
                \[            |\Phi_t'''(z)| \leq c \, \delta(t)^{-2} \, \Phi_t'(U_t),\;\;\;\;
               |z - U_t| \leq \delta(t)/2, \]
               \[                | \Phi_t'(z) - \Phi_t'(U_t)| \leq
      c \, \delta(t)^{-1} \, \Phi_t'(U_t)\, |z-U_t|,\;\;\;\;\;
         |z - U_t| \leq \delta(t)/2. \]
         Using the Loewner equation (see \cite[Proposition 4.41]{Law1}),
  we see that that there exist $c $ such that
  \[                \p_t \Phi_t'(x) \leq c \, \delta(t)^{-2}
   \, \Phi_t'(x) , \;\;\;\;\;  U_t - \frac{\delta(t)}{2} \leq
   x \leq U_t + \frac{\delta(t)}{2}. \]
  We will not need the full force of these estimates, but we
  will use the following consequence. Let $\hat \delta_t = \inf\{\delta(s):
 0 \leq s \leq t\}$, and
 \[  \Delta(s,t) = \max\{|U_r - U_s|: s \leq r \leq t\}.\]
 \begin{itemize}
 \item For every $\delta > 0$, there exists $\epsilon > 0, c < \infty$ such that
 if $\hat \delta_t   > \delta$, $0 \leq s \leq t \leq s+\epsilon$ and
 $\Delta(s,t) \leq \epsilon$, then
 \[           |\Phi_t'(U_t) - \Phi_s'(U_s)| \leq c\, [(t-s) + \Delta(s,t)]
        \, \Phi_s'(U_s). \]
  \end{itemize}

  Let $\sigma(t) = \inf\{s: \hat a(s) = at\}$, and let
  $\tilde \gamma(t) = \gamma(\sigma_t)$ denote the path $\gamma$
reparameterized so that $\hcap[ \Phi \circ \gamma_t] = at$. We
do this only  for
\[            t <  \tilde \tau_D = \inf\{s:
\tilde \gamma(s)  \in \overline V \}. \]
(Note that $\tau_D$ and $\tilde \tau_D$ are essentially the
same stopping time considered in two different parameterizations.)
For $t < \tilde \tau_D$, let
\[         \eta(t) = \Phi \circ \tilde \gamma(t)  = \eta^*(\sigma_t) , \]
and note that $\eta$ is parameterized so that
$\hcap[\eta_t] = at$.  Let $\hat g_t = g^*_{\sigma_t},
 \hat U_t = U_{\sigma_t}^*,
\hat \Phi_t = \Phi_{\sigma_t},\tilde U_t = U_{\sigma_t},
 \tilde g_t = g_{\sigma_t}, \hat f_t(z)=
\hat g_t^{-1}(z + \hat U_t)$, $\tilde f_t(z) = \tilde g_t^{-1}(z +
\tilde U_t)$, $\hat F_t = \tilde f_t^{-1} \circ F
  \circ \hat f_t.$   Note that
  \[             \tilde f_t \circ \hat F_t =
        F \circ \hat f_t.\]   Since
\[   at = \hcap[\eta_t] = a \int_0^{\sigma_t}  \Phi_s'(U_s^*)^2
 \,ds , \]
 we see that
 \[    \p_t \sigma_t = K_t \;\;\;\; \mbox{where}
 \;\;\;\;K_t =  \tilde \Phi_t'( \tilde U_t)^{-2} = \hat F_t'(0)^2. \]

Let $\Theta_t^*$ denote the natural parametrization associated
to the curve $\eta$ (under the measure of $SLE_\kappa$ from $0$
to $\infty$ in $\Half$).    Recall that
\begin{equation}  \label{sept25.1}
      \Theta_t^* = \lim_{n \rightarrow \infty}
    \sum_{j < t2^n}   L^*\left(\frac{j-1}{2^n}, \frac{j}{2^n}
      \right),
      \end{equation}
where
\[   L^*(r,s) = \int_\Half |\hat f_r'(z)|^d \, G^{s-r}(z) \, dA(z) ,  \]
and the limit is in $L^1$.
 Using Proposition \ref{prop.oct9}, we can also write
 \[  \Theta_t^* = \lim_{n \rightarrow \infty}
    \sum_{j < t2^n} J^*(j,n) \]
    where
  \[  J^*(j,n) =
     \int_{\Half_n}  | \hat f_{j-1,n}'(z)|^d \, G^{2^{-n}}(z) \, dA(z), \;\;\;\;
       \Half_n = \{z\in \Half: |z| \leq n 2^{-n/2} \}, \;\;\;
          \hat f_{j,n} = \hat f_{j2^{-n}} . \]
The expression on the right-hand side of \eqref{sept25.1}
 is a deterministic function of the
curve $\eta(s), 0 \leq s \leq t$.
Using continuity, it follows that
\begin{equation}  \label{oct9.3}
  \hat \Theta_t := \int_0^t |F'(\eta(s))|^d \,
  d\Theta^*_s =  \lim_{n \rightarrow \infty} \hat \Theta_{t,n}
  \end{equation}
   where
\[  \Theta_{t,n} =
    \sum_{j < t2^n} J^*(j,n)\, \left| F'\left(\eta\left( \frac{j-1}{2^n}
      \right) \right) \right|^{d}.
  \]
  The left-hand side of \eqref{oct9.3}
  is the natural parametrization of $\tilde \gamma$
considered as $SLE$ in $D$.   If $\Theta_t$ is the natural
parametrization associated to $\gamma$, then $\tilde \Theta_t
= \Theta_{\sigma_t}$ is the length of $\tilde \gamma$ considered
as an $SLE$ curve in $\Half$.   Having set up the notation,
we can now state the main theorem of this section.

\begin{theorem} \label{indyprop}
 With probability one, for all $t < \tau_D$,
$\tilde \Theta_t = \hat \Theta_t$.
\end{theorem}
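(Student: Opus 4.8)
The plan is to realize both $\tilde\Theta_t$ and $\hat\Theta_t$ as limits of one and the same sequence of discrete sums. Since both $t\mapsto\tilde\Theta_t=\Theta_{\sigma_t}$ and $t\mapsto\hat\Theta_t=\int_0^t|F'(\eta(s))|^d\,d\Theta^*_s$ are continuous and increasing, it is enough to prove $\tilde\Theta_t=\hat\Theta_t$ a.s.\ for each fixed $t$ and to take a countable dense set of times at the end. Fixing $t$, I would also intersect with $\{\tau_K>\sigma_t\}$ for the stopping times $\tau_K=\tau_{K,\alpha}$ of Theorem \ref{maytheorem2} (so that $\gamma_{\sigma_t}$ stays at a positive distance $\delta$ from $\overline V$, all the distortion estimates following the definition of $\hat\delta_t$ become uniform, and the $L^2$ bounds of Proposition \ref{nov18.prop2} apply), recovering the full statement by letting $K\to\infty$. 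Set $s_j=j2^{-n}$ and define the compensator approximant along the time change,
\[
\tilde\Theta_{t,n}=\sum_{j<t2^n}L(\sigma_{s_{j-1}},\sigma_{s_j}),\qquad
L(\sigma_{s_{j-1}},\sigma_{s_j})=\E[\Theta_{\sigma_{s_j}}-\Theta_{\sigma_{s_{j-1}}}\mid\F_{\sigma_{s_{j-1}}}].
\]
I claim $\tilde\Theta_{t,n}$ converges both to $\tilde\Theta_t$ and to $\hat\Theta_t$, which forces equality.

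For convergence to $\tilde\Theta_t$, write $D_j=L(\sigma_{s_{j-1}},\sigma_{s_j})-(\Theta_{\sigma_{s_j}}-\Theta_{\sigma_{s_{j-1}}})$; the displayed identity shows $(D_j)$ is a martingale-difference sequence for $(\F_{\sigma_{s_j}})$, so $\E[(\sum_jD_j)^2]=\sum_j\E[D_j^2]\le c_K\,2^n\,2^{-n(1+\alpha d)}\to0$ using the $L^2$ bounds obtained from Proposition \ref{nov18.prop2} (both for $L(\sigma_{s_{j-1}},\sigma_{s_j})$ and for the increments of $\Theta$, as in the construction of $\Theta$). The remaining sum $\sum_{j<t2^n}(\Theta_{\sigma_{s_j}}-\Theta_{\sigma_{s_{j-1}}})$ telescopes to $\Theta_{\sigma_{\lfloor t2^n\rfloor 2^{-n}}}$, which tends to $\Theta_{\sigma_t}=\tilde\Theta_t$ by continuity of $\sigma$ and of $\Theta$ (Theorem \ref{existtheorem2}). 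Hence $\tilde\Theta_{t,n}\to\tilde\Theta_t$ in $L^2$; this is simply the time-change Doob--Meyer observation recorded after the proof of \eqref{sept23.2} and does not yet use smoothness of $\sigma$.

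The substantive step is $\tilde\Theta_{t,n}-\hat\Theta_{t,n}\to0$, where $\hat\Theta_{t,n}=\sum_{j<t2^n}|F'(\eta(s_{j-1}))|^d\,J^*(j,n)$ are the approximants of \eqref{oct9.3} and $J^*(j,n)=\int_{\Half_n}|\hat f_{s_{j-1}}'(z)|^d\,G^{2^{-n}}(z)\,dA(z)$. Fix an index, write $s=s_{j-1}$, $h=\sigma_{s+2^{-n}}-\sigma_s$. By \eqref{oct2.5}, $L(\sigma_s,\sigma_s+h)=\int_\Half|f_{\sigma_s}'(z)|^d\,G^h(z)\,dA(z)$ with $f_{\sigma_s}=\tilde f_s$. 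The key algebraic fact is the identity $\tilde f_s\circ\hat F_s=F\circ\hat f_s$ together with $\hat F_s(0)=0$, $\hat f_s(0)=\eta(s)$, and $\hat F_s'(0)^2=K_s=\partial_s\sigma_s$; moreover $\hat F_s$ sends $\R$ to $\R$ near $0$ (the slits $\eta^*_s$ and $\gamma_{\sigma_s}$ cancel because $F\circ\eta^*=\gamma$), so it extends conformally across $0$, and on $\{\tau_K>\sigma_t\}$ one gets uniform bounds $\hat F_s(w)=\sqrt{K_s}\,w\,(1+O(|w|))$, $\hat F_s'(w)=\sqrt{K_s}\,(1+O(|w|))$ on a disk of radius bounded below. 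Substituting $z=\hat F_s(w)$, using the chain rule to eliminate $\tilde f_s'$, and using the exact scaling rules $G(rz)=r^{d-2}G(z)$ and $\phi(rz;r^2\tau)=\phi(z;\tau)$ (so $G^h(\sqrt{K_s}w)=K_s^{(d-2)/2}G^{h/K_s}(w)$), the powers of $K_s$ cancel and one finds $L(\sigma_s,\sigma_s+h)\approx\int_\Half|F'(\hat f_s(w))|^d\,|\hat f_s'(w)|^d\,G^{h/K_s}(w)\,dA(w)$. Now $h=\int_s^{s+2^{-n}}K_r\,dr=K_s2^{-n}(1+O(\epsilon_n))$ with $\epsilon_n\to0$ uniformly, by the bullet point on the continuity of $\Phi_t'(U_t)$ (transported through $\sigma$, which is $C^1$ with derivative $K$) and L\'evy's modulus \eqref{modulus}; Lemma \ref{laterlemma} then lets one replace $G^{h/K_s}$ by $G^{2^{-n}}$ at the cost of a factor $1+O(\epsilon_n)$, Proposition \ref{prop.oct9} (applied to $\tilde\gamma$) restricts the $w$-integral to $\Half_n$ with a sum-wise negligible error, and on $\Half_n$ we have $\hat f_s(w)\to\eta(s)$ so $|F'(\hat f_s(w))|^d=|F'(\eta(s))|^d(1+o(1))$ at regularity points of $F'$. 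Collecting these, $L(\sigma_s,\sigma_s+h)=|F'(\eta(s))|^d\,J^*(j,n)\,(1+o(1))$ uniformly, and summing over $j$ gives $|\tilde\Theta_{t,n}-\hat\Theta_{t,n}|\le o(1)\,\hat\Theta_{t,n}\to0$ since $\hat\Theta_{t,n}\to\hat\Theta_t<\infty$ by \eqref{oct9.3}.

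The main obstacle is making all of the $o(1)$'s in the last step uniform in $s$ and $n$, because the distortion estimates for $\hat F_s$, $\hat f_s$ and $F$ degrade near $\partial\Half$. I would handle this by first restricting every integral to a cone $\{z\in\Half:\Im z\ge c|z|\}$, where $G^h(\hat F_s(w))/G^h(\sqrt{K_s}w)=1+O(|w|)$ and the distortion bounds are clean, and then checking that the part of each integral off the cone is only an $O(c')$-fraction of the whole, uniformly in $s,n$, with $c'\to0$ as $c\to0$: this holds because $G(z)=\Im(z)^{\beta}\,|z|^{1-4a}$ carries the strictly positive power $\beta=\tfrac\kappa8+\tfrac8\kappa-2>0$ of $\Im z$ when $\kappa<8$, so $G$ (hence $G^h$) does not concentrate near the real axis. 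Once this is in place, the two convergence statements give $\tilde\Theta_t=\hat\Theta_t$ a.s.\ on $\{\tau_K>\sigma_t\}$; letting $K\to\infty$ and then invoking continuity of $t\mapsto\tilde\Theta_t$ and $t\mapsto\hat\Theta_t$ over a countable dense set of $t<\tau_D$ completes the proof.
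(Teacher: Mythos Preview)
Your overall strategy is the same as the paper's: reduce to discrete compensator sums, use the identity $\tilde f_s\circ\hat F_s=F\circ\hat f_s$ and the chain rule, apply Lemma~\ref{laterlemma} to pass from $G^{h}$ to $G^{2^{-n}}$, and compare term by term with the approximants $\hat\Theta_{t,n}$ of \eqref{oct9.3}. So the architecture is right. But there is one genuine gap and two places where your route is rougher than necessary.

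\textbf{The gap: wrong localizing stopping time.} You write that on $\{\tau_K>\sigma_t\}$ the curve ``stays at a positive distance $\delta$ from $\overline V$'' and the distortion estimates for $\hat F_s$ become uniform. That is not what $\tau_K=\tau_{K,\alpha}$ delivers: $\tau_K$ bounds only the H\"older modulus of $\gamma$ and says nothing about $\dist(\gamma_{\sigma_t},\overline V)$, about $\hat\delta_t=\inf_{s\le t}\dist(U_s,\Half\setminus D_s)$, about $\Phi_t'(U_t)$, or about $|F'(\eta(t))|$. Without those bounds you cannot get the uniform estimates $\hat F_s(w)=\sqrt{K_s}\,w\,(1+O(|w|))$ on a fixed disk, nor the bullet point you invoke on the continuity of $\Phi_t'(U_t)$, nor a bound on $K_s=\hat F_s'(0)^2$. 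The paper handles this by introducing a different family $\tau_m$, defined precisely as the first time that $t\ge m$, $\tilde\delta(t)\le 1/m$, $\tilde\Phi_t'(\tilde U_t)\notin[1/m,m]$, or $|F'(\eta(t))|\notin[1/m,m]$; these are exactly the quantities needed, and $\tau_m\uparrow\tau_D$. Your argument goes through if you replace $\tau_K$ by such a $\tau_m$ (the H\"older control $\tau_K$ is not needed here at all).

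\textbf{Convergence $\tilde\Theta_{t,n}\to\tilde\Theta_t$.} Your martingale--difference $L^2$ argument requires $\E[L(\sigma_{s_{j-1}},\sigma_{s_j})^2]\le c\,2^{-n(1+\alpha d)}$ and an analogous bound for increments of $\Theta$. Proposition~\ref{nov18.prop2} is stated for deterministic $s,t$ stopped at $\tau_K$, not for the random pair $(\sigma_{s_{j-1}},\sigma_{s_j})$; to use it you first need $\sigma_{s_j}-\sigma_{s_{j-1}}\le C\,2^{-n}$, i.e.\ a bound on $K_s$, which again $\tau_K$ does not give (but $\tau_m$ does). The paper sidesteps all of this: once $\tau_m$ is fixed, $\tilde L_s=L_{\sigma_s}$ is a time change of a positive supermartingale with uniform $L^2$ bound \eqref{sept23.2}, so general Doob--Meyer yields $\tilde\Theta_{t,n}\to\tilde\Theta_t$ in weak--$L^1$, and weak--$L^1$ uniqueness is enough to conclude $\tilde\Theta_t=\hat\Theta_t$ from $\E|\tilde\Theta_{t,n}-\hat\Theta_{t,n}|\to 0$.

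\textbf{The cone detour.} Your substitution $z=\hat F_s(w)$ forces you to compare $G^h(\hat F_s(w))$ with $G^{h/K_s}(w)$, which is why you worry about the behavior near $\R$ and introduce the cone. The paper avoids this entirely by using the exact scaling identity \eqref{oct2.6}, which rewrites $L(\sigma_{j-1,n},\sigma_{j-1,n}+K_{j,n}2^{-n})$ as $r_{j-1,n}^d\int|\tilde f_{j-1,n}'(r_{j-1,n}z)|^d\,G^{2^{-n}}(z)\,dA(z)$ with $r_{j-1,n}=\hat F'_{(j-1)2^{-n}}(0)$; the factor $G^{2^{-n}}(z)$ is then identical on both sides of the comparison, and the whole problem reduces to the derivative estimate $r|\tilde f_t'(rz)|=|F'(\eta(t))|\,|\hat f_t'(z)|\,(1+o(1))$ uniformly on $\Half_n$, which follows from the chain rule, distortion, and Lemma~\ref{uniformlemma}. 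This is both simpler and cleaner; no boundary issue arises because the Green's function is never perturbed.
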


By absolute continuity, the ``with probability one'' in the
statement can be taken either with respect to $SLE_\kappa$ in
$\Half$ or $SLE_\kappa$ in $D$.
Since both $\tilde \Theta_t$ and $\hat \Theta_t$ are continuous
in $t$ it suffices to show that for each $t < \tau_D$, $\tilde \Theta_t
= \hat \Theta_t$ with probability one.
To do this will we define a sequence of stopping times
$\tau_m$ with $\tau_m \uparrow \tau_D$ and prove that for
each $m$ and $t$
\begin{equation}  \label{oct22.1}
      \Prob \left\{ \tilde \Theta_{t \wedge \tau_m}\neq
 \hat \Theta_{t \wedge \tau_m}\right\} = 0.
 \end{equation}
   We let $\tau_m$ be the first
time $t$ such that one of the following occurs: $t \geq m$;
$\tilde \delta(t) \leq 1/m $;
  $\tilde \Phi_t'(\tilde U_t) \geq m$;  $\tilde \Phi_t'(\tilde U_t) \leq 1/m$;
 $|F'(\eta(t))| \geq m$, $|F'(\eta(t)| \leq 1/m.$  It
 is easy to check from our distortion estimates above
  that $\tau_m \uparrow \tau$.  For the remainder, we fix
 $ m$ and write $\tau = \tau_m$.   All constants, implicit or explicit,
 may depend on $ m$.  Also, we only need consider $t \leq m$,
 since for $t > m$, $t \wedge \tau =m \wedge \tau$.

We will change our reparameterization slightly by setting
\[          \p_t \sigma_t =  \tilde K_t , \;\;\;\; \tilde K_t = K
_{t \wedge \tau}. \]
 This will have no affect on times $t \leq \tau$ which is all that is
 important for us.  The advantage of this change is that we can write
 \[   c \leq \tilde K_t \leq C, \;\;\;\;\;
        |\tilde K_t - \tilde K_s| \leq u(t-s) , \]
  for a continuous function $u$ with $u(0+) = 0$.
   Since we are considering
  only $t  \leq \tau$, we will write $K_t$ rather than $\tilde K_t$.

 In order to prove \eqref{oct22.1}, we consider $\tilde L_t = L_{\sigma_t}$.
 This is a supermartingale, and as noted in Section \ref{nathalfsec}, we can write
 \begin{equation}  \label{sept25.3}
    \tilde \Theta_t = \lim_{n \rightarrow \infty} \tilde \Theta_{t,n},
   \end{equation}
  where
  \[  \tilde \Theta_{t.n} =
    \sum_{j < t2^n}  \tilde L\left(\frac{j-1}{2^n}, \frac{j}{2^n}
      \right)
 ,
 \;\;\;\;\tilde L(r,s) = \E\left[\tilde L_r - \tilde L_s
 \mid \tilde \F_r\right]. \]
 In this case, the Doob-Meyer theorem only
 gives a  weak-$L^1$ limit in  \eqref{sept25.3}, but this is
 all that we need.  We will show that
 \begin{equation}  \label{oct10.4}
 \E\left[|\hat \Theta_{t \wedge \tau,n} - \tilde \Theta_{t \wedge \tau,n}  |\right] \rightarrow 0.
 \end{equation}
 Since weak limits are unique, this implies that $\hat \Theta_{t \wedge \tau} =
 \tilde \Theta_{t \wedge \tau}$.
 To establish \eqref{oct10.4}, we claim that it
 suffices to find for fixed $t$ and $\tau$,  a uniformly
 bounded sequence of random variables $J_n$ with $J_n \rightarrow 0$
 with probability one and
 \[            |\hat \Theta_{t \wedge \tau,n} - \tilde \Theta_{t \wedge \tau,n}  |
   \leq J_n \, \hat \Theta_{t \wedge \tau,n} .\]
 Indeed, since
 $\hat \Theta_{t,n}
 \rightarrow \hat \Theta_t$ in $L^1$, the random variables
 $\{\hat \Theta_{t,n}: n=1,2,\ldots\}$ are uniformly integrable.
Using this and the fact
that the $J_n$ are uniformly bounded with $J_n \rightarrow 0$,
we see that
$\E[J_n \, \hat \Theta_{t,n}] \rightarrow 0$.
       If we let
     $u_n = u(2^{-n}),$
 $K_{j,n} = K_{(j-1)2^{-n}}$, then  if $\tau > (j-1)2^{-n}$,
\[  L\left(\sigma_{j-1,n},  \sigma_{j-1,n}+ \frac{K_{j,n} -  u_n}{2^n}
        \right)\leq
          \tilde L\left(\frac{j-1}{2^n}, \frac{j}{2^n}\right)
       \leq   L\left(\sigma_{j-1,n},  \sigma_{j-1,n}+ \frac{K_{j,n} +  u_n}{2^n}
        \right).\]
Using Lemma \ref{laterlemma}, we can conclude that
\[   \tilde L\left(\frac{j-1}{2^n}, \frac{j}{2^n}\right)
  = L\left(\sigma_{j-1,n},  \sigma_{j-1,n}+ \frac{K_{j,n}  }{2^n}
        \right)\, [1 + O(u_n)].\]
        Using \eqref{sept25.3}, we see that
\[    \tilde \Theta_t = \lim_{n \rightarrow \infty}
    \sum_{j < t2^n}  L\left(\sigma_{j-1,n},  \sigma_{j-1,n}+ \frac{K_{j,n}  }{2^n}
        \right)
 .\]
 We recall that
\begin{eqnarray*}
   L\left(\sigma_{j-1,n},  \sigma_{j-1,n}+ \frac{K_{j,n}  }{2^n}
        \right) & = & \E\left[\Theta_{t + r^2 2^{-n}
        } - \Theta_t \mid \F_t\right]\\
   & = &
        \int_\Half |\tilde f_t'(z)|^d \, G_t^{r^22^{-n}}(z) \, dA(z) ,
         \end{eqnarray*}
    with $t = \sigma_{j-1,n}$ and $r^2= K_{j,n}$.
    Using \eqref{oct2.6}, this can also be written as
 \[    r_{{j-1,n}}^{d } \int_\Half |\tilde f_{j-1,n}'(r_{j-1,n}
 z)|^d \, G_t^{2^{-n}}(z) \, dA(z), \;\;\;\;
  \tilde f_{j,n} = \tilde f_{j2^{-n}}, \;\;\; r_{j,n} = \hat F_{j2^{-n}}'(0) . \]
   Arguing as in Proposition \ref{prop.oct9}, we get
\begin{equation}  \label{oct9.5}
\tilde \Theta_t= \lim_{n \rightarrow \infty}
    \sum_{j < t2^n} r_{{j-1,n}}^{d } \int_{\Half_n} |\tilde f_{j-1,n}'(r_{j-1,n}
 z)|^d \, G_t^{2^{-n}}(z) \, dA(z)
 ,
 \end{equation}
 where as before $\Half_n = \{z \in \Half: |z| \leq n \, 2^{-n/2}\}.$

 By comparing \eqref{oct9.3} and \eqref{oct9.5}, we see that
 it suffices to prove the following.  For each $m$, there exists
 a sequence $u_n \downarrow 0$, such  that if $ t
  \leq \tau_m$ and $r=\hat F'_t(0)$, then for $z \in \Half_n$,
 \[\left| r   |\tilde f_t'(rz)| -
  \left| F'\left(\eta\left(t
      \right) \right) \right|\,  | \hat f_{t}'(z)| \right|
        \leq u_n \, r   |\tilde f_t'(rz)| .\]
        The sequence $u_n$ can depend on the path but the
        estimate must hold uniformly for all $t \leq \tau_m$
        and $z \in \Half_n$.
     We write this shorthand as
\begin{equation}  \label{oct10.2}
 r   |\tilde f_t'(rz)|
      =  \left| F'\left(\eta\left(t
      \right) \right) \right|\,   |\hat f_{t}'(z)|
      \, [1+o(1)] ,
      \end{equation}
   with the above uniformly implied.  We claim
   that
   \[   |F'(\hat f_t(z)) | =  \left| F'\left(\eta\left(t
      \right) \right) \right| \, [1 + o(1) ]. \]
Since $\eta(t) = \hat f_t(0)$, this
 follows from distortion estimates on $F$
   provided that we have uniform bounds
   on  $|\hat f_t(z) - \hat f_t(0)|.$
 But these are provided by Lemma \ref{uniformlemma}.
 Distortion estimates also imply
 \[          |\hat F_t'(z)| = r \, [1 + o(1)], \]
 \[          rz = \hat F_t(z) \, [1 + o(1) ], \]
 \[        |\tilde f_t'(rz)| = |\tilde f_t'(F_t(z))| \, [1 + o(1) ]. \]
 Therefore, \eqref{oct10.2} becomes
    \[  |\tilde f_t'(\hat F_t(z))| \, |\hat F_t'(z)|
      =  |F'(\hat f_t(z)) |\,    |\hat f_{t}'(z)|
      \, [1+o(1)] . \]
      But $\tilde f_t \circ \hat F_t = F \circ \hat f_t$, so the
   chain rule implies that $ |\tilde f_t'(\hat F_t(z))| \, |\hat F_t'(z)|
      =  |F'(\hat f_t(z)) |\,    |\hat f_{t}'(z)|  $.

\subsection{A particular case}

 If $\gamma$ is an $SLE_\kappa$ curve from $0$
 to $\infty$ in $\Half$ and $r > 0$, then
 \[  \gamma^{(r)}(t) = \gamma(t+r) , \]
 is an $SLE_\kappa$ curve from $\gamma(r)$ to
 $\infty$ in $H_r$.   We would like to say that the
 natural parametrization in $H_t$ is the same as
 that in $\Half$, and we make this precise here.

 Suppose $\gamma(t)$ has driving function $U_t = - B_t$.  We
 recall that we can define $\Theta_t$ by first defining it for
dyadic rational $t$ by the $L^1$ limit
\[         \Theta_t = \lim_{n \rightarrow \infty}
    \Theta_t^{(n)} , \]
  and for other $t$ it is defined by continuity.  Here we
  use the fact that the natural parametrization is continuous
  with respect to the capacity parametrization.  A key
  observation, is that given the Brownian motion $B_t$,
  there is a well defined natural parametrization
  $\Theta_t$ for all $t \geq 0$,
  up to a single null event.

  If $\tau$ is a stopping time,   let
 \[    \gamma^{\tau}(t) = \gamma(\tau + t), \;\;\;\;
 \eta^{\tau}(t) = g_\tau(\gamma^{\tau}(t) ) - U_\tau, \]
 and note that
 \[    \gamma^{\tau}(t) = f_\tau(\eta^\tau(t)). \]
 The strong Markov property implies that
  $\eta^{\tau}$ is an $SLE_\kappa$ curve from
 $0$ to $\infty$ with driving function $U^{\tau}_t
 = - B^{\tau}_t$ where $B^{\tau}$
is  the Brownian motion,
  \[         B_t^{\tau}  =  B_{t + \tau} - B_\tau . \]
Let $\Theta^{\tau}_t$ denote the corresponding version
of the natural parametrization defined as in the previous
paragraph.   Since $f_\tau: \Half \rightarrow H_\tau$
is a conformal transformation, we can view
$\gamma^{\tau}$ as an $SLE_\kappa$ curve from
$\gamma(\tau)$ to $\infty$.  The natural length of $\gamma^r[0,t]$
considered as a curve in $H_\tau$ is
\[   \int_0^t |f_\tau'(\eta^{\tau}(r))|^d \, d\Theta^{\tau}_r.\]

\begin{lemma} Suppose $r > 0$,   and $\tau$ is
a stopping time with $\tau \leq r$.  Let
\[        Z_n(s,t)= \sum L(\tau + j2^{-n},
 \tau + (j+1)2^{-n} ) , \]
 where the sum is over all $j$ with $s \leq j/2^{n} \leq t$.
 Then with probability one for all $0 < s < t$,
 \[   \lim _{n \rightarrow \infty}
    Z_n(s,t) = \Theta_{\tau +t} - \Theta_{\tau + s} .\]
\end{lemma}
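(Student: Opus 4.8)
The plan is to read $Z_n(s,t)$ as a discrete compensator sum for $\Theta$ along the random grid $a_j:=\tau+j2^{-n}$ and to compare it directly with the telescoping sum for $\Theta_{\tau+t}-\Theta_{\tau+s}$. Each gap $a_{j+1}-a_j=2^{-n}$ is non-random, so the martingale property \eqref{Diffmartingale} together with optional stopping at the stopping times $a_j\le a_{j+1}$ gives $L(a_j,a_{j+1})=\E[\Theta_{a_{j+1}}-\Theta_{a_j}\mid\F_{a_j}]$. Hence, up to the two end intervals of length $2^{-n}$ (whose contribution tends to $0$ almost surely, $\Theta$ being continuous), $Z_n(s,t)-(\Theta_{\tau+t}-\Theta_{\tau+s})=-\sum_j\xi_j$ with $\xi_j:=(\Theta_{a_{j+1}}-\Theta_{a_j})-\E[\Theta_{a_{j+1}}-\Theta_{a_j}\mid\F_{a_j}]$. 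The $\xi_j$ are martingale differences for $(\F_{a_j})_j$, so
\[
  \E\Bigl[\bigl(\textstyle\sum_j\xi_j\bigr)^2\Bigr]=\sum_j\E\bigl[\Var(\Theta_{a_{j+1}}-\Theta_{a_j}\mid\F_{a_j})\bigr]\le\sum_j\E\bigl[(\Theta_{a_{j+1}}-\Theta_{a_j})^2\bigr].
\]

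Everything therefore reduces to a second-moment modulus bound: for some $\alpha<\alpha_*$ and $c<\infty$, for every stopping time $a\le r+t$ and every $h\in(0,1]$,
\[
  \E\bigl[(\Theta_{a+h}-\Theta_a)^2\bigr]\le c\,h^{1+\alpha d}.
\]
Granting this with $a=a_j$, $h=2^{-n}$, the sum above is $\le c\,2^n(2^{-n})^{1+\alpha d}=c\,2^{-n\alpha d}$, so $Z_n(s,t)\to\Theta_{\tau+t}-\Theta_{\tau+s}$ in $L^2$ at a geometric rate, and since $\sum_n2^{-n\alpha d/2}<\infty$ the convergence is almost sure for each fixed $0<s<t$. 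To get a single exceptional null set for all $0<s<t$ simultaneously, I would establish the convergence for all dyadic rational pairs and then sandwich: each summand $L(a_j,a_{j+1})$ is nonnegative, so $Z_n(s,t)$ is monotone under enlarging $[s,t]$; comparing with dyadic pairs $s'\le s<t\le t'$ and $s\le s''<t''\le t$ and letting $s'\uparrow s$, $t'\downarrow t$, $s''\downarrow s$, $t''\uparrow t$ through dyadics, the a.s.\ continuity of $u\mapsto\Theta_{\tau+u}$ (Theorem \ref{existtheorem2}) pins $\lim_n Z_n(s,t)=\Theta_{\tau+t}-\Theta_{\tau+s}$.

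The only non-routine point is the displayed modulus estimate for a \emph{stopping time} $a$ in a bounded window. Proposition \ref{nov18.prop2} is exactly such a bound, but for the \emph{compensator increments} $L(s,t;K)$, with the H\"older cutoff $\tau_K$, and for \emph{deterministic} $s,t$; I would promote it in three ways. First, from compensator increments to full increments: the Doob--Meyer machinery of \cite[Proposition 4.1]{LS}, fed Proposition \ref{nov18.prop2}, already gives the $L^2$-estimate $\E[(\Theta_{t\wedge\tau_K}-\Theta_{s\wedge\tau_K})^2]\le c(t-s)^{1+\alpha d}$ that underlies the H\"older continuity in Theorem \ref{maytheorem2}. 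Second, from deterministic to stopping-time endpoints: condition on $\F_a$ and combine the strong Markov property with the scaling of Lemma \ref{lscaling}, which reduces a small-gap estimate at $a$ to a normalized one. Third, removing the cutoff: argue as in the proof of Theorem \ref{existtheorem2}, letting $K\to\infty$ with $\tau_K\uparrow\infty$ and bounding the complementary event using $\Theta_{a+h}-\Theta_a\le\Theta_{r+t}$ and the uniform second moment \eqref{sept23.2} and its scalings. This is where the real work lies; the martingale-difference step above is soft.

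Finally, this lemma is what makes the text's claim precise: via the chain rule $f_{\tau+a}=f_\tau\circ f^\tau_a$ (with $f^\tau$ the Loewner maps of $\eta^\tau$), $L(\tau+a,\tau+b)=\int_\Half|f_\tau'(f^\tau_a(v))|^d\,|f^\tau_a{}'(v)|^d\,G^{b-a}(v)\,dA(v)$, so the lemma identifies the natural length of $\gamma[\tau+s,\tau+t]=\gamma^\tau[s,t]$ measured inside $H_\tau$ with $\Theta_{\tau+t}-\Theta_{\tau+s}$; i.e.\ the natural parametrization restarts at $\tau$.
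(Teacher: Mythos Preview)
Your martingale-difference framework is sound and is in fact an explicit version of what the paper's terse proof (``apply the same proof as for Theorem~\ref{maytheorem2}'') unpacks to. The gap is in the modulus bound you need, specifically your promotion steps 2 and 3.

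Step 2 (deterministic to stopping-time endpoints via strong Markov plus Lemma~\ref{lscaling}) does not work as stated: conditioned on $\F_a$, the continuation $g_a(\gamma(a+\cdot))-U_a$ is a fresh $SLE$ in $\Half$, but $\Theta_{a+h}-\Theta_a$ is \emph{not} the natural length of that fresh curve; it carries the Jacobian weight $|f_a'|^d$, which is not controlled by anything you have cited. (Indeed, relating the two is precisely the content of the Proposition that follows this lemma, so invoking it here would be circular.) Step 3 (removing $\tau_K$) appeals to \eqref{sept23.2}, but that bounds $\E[L_T^2]$, not $\E[\Theta_T^2]$; the paper never establishes an unconditional second moment for $\Theta$, so the complementary-event argument does not close.

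The paper's route sidesteps both issues by working with the compensator increments $L$ and the cutoff $\tau_K$ throughout, and only letting $K\to\infty$ at the very end (as in Theorem~\ref{existtheorem2}). The crucial observation is that the proof of Proposition~\ref{nov18.prop2} never actually uses that the grid points are deterministic: the localization $|\gamma(a_{j+1}\wedge\tau_K)-\gamma(a_j\wedge\tau_K)|\le K\,2^{-n\alpha}$ only uses the gap $a_{j+1}-a_j=2^{-n}$, and the telescoping of the two-point supermartingale $\hat M_t(z,w)$ holds at bounded stopping times by optional sampling. Hence the sum estimate $\sum_j\E[L(a_j,a_{j+1};K)^2]\le c_K\,2^{-n\alpha d}$ goes through verbatim on the shifted grid $a_j=\tau+j2^{-n}$, and feeding this into \cite[Proposition 4.1]{LS} (or equivalently your martingale-difference argument, now with $\xi_j$ built from the \emph{stopped} increments) gives the $L^2$ and a.s.\ convergence of $Z_n^K(s,t)$ to $\Theta_{(\tau+t)\wedge\tau_K}-\Theta_{(\tau+s)\wedge\tau_K}$. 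Your sandwich argument for passing from dyadic to all $s<t$ is fine.
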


\begin{proof} By continuity it suffices to prove
the result for fixed $s < t$ that are dyadic
rationals.
Apply the same proof as 
for Theorem \ref{maytheorem2} to the supermartingale
\[   \Psi^{\tau}_t = \Psi_{t+ \tau} -\Psi_\tau. \]
\end{proof}

\begin{proposition}  If $\tau$ is a stopping time
with $\Prob\{\tau < \infty\} = 1$, then
with probability one, for all $0<s<t$,
\begin{equation}  \label{dec15.1}
    \Theta_{t+\tau} - \Theta_{s+\tau} =
   \int_s^t |f_\tau'(\eta^{\tau}(r))|^d \, d\Theta^{\tau}_r.
   \end{equation}
 \end{proposition}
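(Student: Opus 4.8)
The plan is to reduce to a bounded stopping time, use the lemma immediately preceding the proposition to express the left‑hand side of \eqref{dec15.1} as a limit of sums of $L$‑increments, and then transport those increments to the curve $\eta^\tau$ by a conformal change of coordinates. For the reduction, fix $R<\infty$ and apply the result to $\tau\wedge R$; on $\{\tau\le R\}$ we have $\tau\wedge R=\tau$, $\eta^{\tau\wedge R}=\eta^\tau$, and $f_{\tau\wedge R}=f_\tau$, so letting $R\to\infty$ and using $\Prob\{\tau<\infty\}=1$ gives the general statement. Assume therefore $\tau\le R$. By the preceding lemma, with probability one, for all $0<s<t$,
\[
   \Theta_{\tau+t}-\Theta_{\tau+s}=\lim_{n\to\infty}\sum_{j:\,s\le j2^{-n}\le t}L\bigl(\tau+j2^{-n},\tau+(j+1)2^{-n}\bigr),
\]
so it remains to identify this limit with $\int_s^t|f_\tau'(\eta^\tau(r))|^d\,d\Theta^\tau_r$; by continuity in $(s,t)$ of both sides it suffices to do this for fixed (say dyadic) $0<s<t$.

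The key conformal fact is the factorization $f_{\tau+t}=f_\tau\circ\hat f^\tau_t$, where $\hat f^\tau_t$ is the inverse Loewner map of the $SLE_\kappa$ curve $\eta^\tau$ (so $\hat f^\tau_t(0)=\eta^\tau(t)$ and $\hat f^\tau_t(z)=z+o(1)$ at $\infty$): indeed $f_\tau\circ\hat f^\tau_t$ is the hydrodynamically normalized conformal map of $\Half$ onto the unbounded component of $\Half\setminus\gamma[0,\tau+t]$ sending $0$ to $\gamma(\tau+t)$, using that $\hcap[\gamma_{\tau+t}]-\hcap[\gamma_\tau]=at=\hcap[\eta^\tau_t]$, so the time parameter of $\eta^\tau$ matches $\tau+\cdot$ with no reparametrization (unlike in Theorem \ref{indyprop}). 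The chain rule gives $|f_{\tau+t}'(z)|^d=|f_\tau'(\hat f^\tau_t(z))|^d\,|\hat f^\tau_t{}'(z)|^d$, hence
\[
  L\bigl(\tau+j2^{-n},\tau+(j+1)2^{-n}\bigr)=\int_\Half\bigl|f_\tau'(\hat f^\tau_{j2^{-n}}(z))\bigr|^d\,\bigl|\hat f^\tau_{j2^{-n}}{}'(z)\bigr|^d\,G^{2^{-n}}(z)\,dA(z).
\]
Applying Proposition \ref{prop.oct9} to $\eta^\tau$ restricts the integral to $\Half_n=\{|z|\le n2^{-n/2}\}$ up to an $L^1$‑negligible error after summing over $j$; on $\Half_n$, Lemma \ref{uniformlemma} applied to $\eta^\tau$ gives $\hat f^\tau_{j2^{-n}}(z)=\eta^\tau(j2^{-n})+o(1)$ uniformly in $j$, and then distortion estimates for $f_\tau$ yield $|f_\tau'(\hat f^\tau_{j2^{-n}}(z))|^d=|f_\tau'(\eta^\tau(j2^{-n}))|^d[1+o(1)]$ uniformly — the exact analogue of \eqref{oct10.2}. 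Pulling $|f_\tau'(\eta^\tau(j2^{-n}))|^d$ out of the integral and recognizing $\int_{\Half_n}|\hat f^\tau_{j2^{-n}}{}'(z)|^d G^{2^{-n}}(z)\,dA(z)$ as, up to the same negligible error, the $L$‑increment for $\eta^\tau$, whose sums converge to $\Theta^\tau_t-\Theta^\tau_s$ by Theorem \ref{existtheorem2} applied to $\eta^\tau$, a standard Riemann–Stieltjes argument (using continuity of $r\mapsto|f_\tau'(\eta^\tau(r))|^d$ and of $\Theta^\tau$) identifies the limit with $\int_s^t|f_\tau'(\eta^\tau(r))|^d\,d\Theta^\tau_r$.

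The main obstacle is the uniform distortion comparison $|f_\tau'(\hat f^\tau_{j2^{-n}}(z))|\asymp|f_\tau'(\eta^\tau(j2^{-n}))|$ on $\Half_n$; this is the step that genuinely reuses the work in Section \ref{natothersec}. One must control $f_\tau'$ near the points $\eta^\tau(r)$, $r\in[s,t]$ — in particular at times (for $\kappa>4$) when $\eta^\tau$ approaches $\partial\Half$, and one must avoid the square‑root degeneracy of $f_\tau$ at the tip $\gamma(\tau)$, where $f_\tau'(0)=0$. As in Section \ref{natothersec}, this is handled by first establishing the identity up to a family of stopping times $\tau_m\uparrow\infty$ on which the relevant derivatives stay bounded away from $0$ and $\infty$, and then passing to the limit; the restriction to increments $[s,t]$ with $s>0$ keeps us away from the degeneration at $r=0$, and the uniform equicontinuity of $\{\hat f^\tau_r\}$ at $0$ furnished by Lemma \ref{uniformlemma} controls the remaining range.
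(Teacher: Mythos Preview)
Your approach is essentially the paper's: use the preceding lemma for the left side, the factorization $f_{\tau+r}=f_\tau\circ\hat f^\tau_r$ and the chain rule, localize to $\Half_n$ via Proposition~\ref{prop.oct9}, and then compare $|f_\tau'(\hat f^\tau_{j2^{-n}}(z))|$ with $|f_\tau'(\eta^\tau(j2^{-n}))|$ using continuity of $f_\tau'$ near $\eta^\tau[s,t]$.

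The one place your write-up diverges is in handling the boundary behavior, and there your proposed fix does not quite work as stated. For $\kappa>4$ the curve $\eta^\tau$ touches $\partial\Half$ at arbitrarily large positive times, so stopping times $\tau_m$ defined by keeping $|f_\tau'(\eta^\tau(\cdot))|$ (equivalently $\dist(\eta^\tau(\cdot),\partial\Half)$) bounded away from $0$ will \emph{not} satisfy $\tau_m\uparrow\infty$; they are all dominated by the first return of $\eta^\tau$ to $\partial\Half$. Likewise, taking $s>0$ by itself does not keep $\eta^\tau[s,t]$ away from the real axis. The paper's resolution is cleaner: it first observes that the measure $d\Theta_{\tau+\cdot}$ (and, by the same reasoning, $d\Theta^\tau$) is supported on times $r$ with $\gamma(\tau+r)\in H_\tau$, i.e.\ $\eta^\tau(r)$ in the open upper half-plane. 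By continuity of both sides in $(s,t)$ it therefore suffices to prove \eqref{dec15.1} for dyadic $s<t$ with $\gamma[\tau+s,\tau+t]\subset H_\tau$, equivalently with $\eta^\tau[s,t]$ a compact subset of the open $\Half$. On such an interval $f_\tau'$ is continuous and nonzero on a neighborhood of $\eta^\tau[s,t]$, which is precisely the hypothesis your derivative comparison needs, and the rest of your argument goes through unchanged.
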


 \begin{proof}

  Without loss of generality, we may assume
 $\tau$ is a bounded stopping time (otherwise, apply the
 proposition to $\tau \wedge k$ and let $k \rightarrow
 \infty$).   We  note that
 for each $s$, the natural parametrization
 after time $s$ is supported on the curves in $H_s$.
 Since both sides  of \eqref{dec15.1}
 are continuous in $t$ with
 probability one, it suffices to show that for positive
 dyadic rationals $s < t$ such that $\gamma[s,t] \subset
 H_\tau$
 \[  \Theta_{t + \tau} - \Theta_{s + \tau}
   =  \int_s^t |f_\tau'(\eta^{\tau}(r))|^d \, d\Theta^{\tau}_r.\]
Let $t_{j,n} = \tau + j2^{-n}$.  By the lemma and \ref{prop.oct9},
\[  \Theta_{t+\tau} - \Theta_{s+\tau}
 = \lim_{n \rightarrow \infty} \sum_{s+\tau \leq t_{j,n}
  < t+\tau} \int_{\H_n}  |f_{t_{j,n}}'(z)|^d \, G^{2^{-n}}(z)
   \, dA(z) . \]
   
   In Proposition \ref{prop.oct9}, we have a $L^1$ limit, but
   we can assume it is an almost sure limit by taking a subsequence
   if necessary (for notational ease, we will assume it is an
   almost sure limit, but the remainder of this proof could be
   done along a subsequence if necessary).
    Take $\delta_n$ in the definition of $\H_n$ such 
that $\diam(\H_n) \rightarrow 0$ as $n \rightarrow \infty$. 

   Define $\tilde f_{j,n}$ by $f_{t_{j,n}} =
   f_\tau \circ \tilde f_{j,n}. $  Then by Proposition \ref{prop.oct9},
\begin{equation} \label{donut}
 \Theta_r^{\tau} =  \lim_{n \rightarrow \infty} \sum_{ t_{j,n}
    \leq r
  } \int_{\H_n}  |\tilde f_{j,n}'(z)|^d \, G^{2^{-n}}(z)
   \, dA(z) . 
\end{equation}

By continuity of natural length, we can write the right-hand side of \eqref{dec15.1} as

\[ \lim_{n \rightarrow \infty} \sum_{s+\tau \leq t_{j,n} \leq t+\tau} \int_{\H_n} |f'_\tau(\eta^\tau(t_{j,n}-\tau))|^d \, |\tilde f_{j,n}'(z)|^d \, G^{2^{-n}}(z) \,dA(z). \]
Hence, the difference of the two sides of \eqref{dec15.1} is
\begin{equation}     \label{additivedif}
\lim_{n \rightarrow \infty} \sum_{s+\tau \leq t_{j,n} \leq r+\tau} \int_{\H_n}  \left[|f'_\tau(\eta^\tau(t_{j,n}-\tau))|^d-|f'_\tau(\tilde f_{j,n}(z))|^d\right] \, |\tilde f_{j,n}'(z)|^d \, G^{2^{-n}}(z) \,dA(z). 
\end{equation}
By \eqref{donut}, we see that this is bounded above by
\[      [\Theta_{t}^\tau - \Theta_{s}^\tau] \, \limsup_{n \rightarrow
\infty} K_n(\gamma), \]
 where 
\[ K_n(\gamma)=\max_{j,z \in \H_n} \{||f'_\tau(\eta^\tau(t_{j,n}-\tau))|^d-|f'_\tau(\tilde f_{j,n}(z))|^d|\}.
\]
Hence it suffices to show that $K_n(\gamma) \rightarrow 0$
with probability one.   


Note that because it is an almost sure expression we can show it goes to zero for a given curve $\g$ so our constants may depend on it. We have $\tilde f_{j,n}(0)=\eta^\tau(t_{j,n}-\tau)$ so because $z \in \H_n$ and $\diam(\H_n) \rightarrow 0$ this is equivalent to the fact that $f'_\tau$ is continuous in a neighborhood of $\eta^\tau[s,t]$. Using the assumption $\g[s,t] \subset H_\tau$, we see that $\eta^\tau[s,t] \subset \{\Im(z)>\epsilon\}$ for some $\e>0$ which depends on $\gamma$. Continuity of $f'_\tau$ on $\Im(z)>\epsilon$ is a consequence of Cauchy integral formula and we are done.

\end{proof}

 \section{Bounds on the two-point Green's function}  \label{boundssec}

  \subsection{Lower bound for the time-dependent Green's function}  \label{uglysec}

  In this section we prove Lemma \ref{may14.lemma1}.
  This is a generalization of a result in \cite{LZ} where it is shown that
 there exists $c < \infty$ such that for all $z,w \in \Half$,
  \[     G(z) \, G(w)  \leq c \, \left[\hat G(z,w) + \hat G(w,z)\right].\]
  Examination of that  proof shows that the same argument
  establishes the existence of $c < \infty$ such that for $|z|,|w| \leq 1$,
  \begin{equation}  \label{sept1.1}
      G(z) \, G(w)  \leq c \, \left[\hat G^{c,c}(z,w) + \hat G^{c,c}(w,z)\right].
      \end{equation}
  We will assume \eqref{sept1.1}.

  We write $z = x_z + iy_z,
 w = x_w + i y_w$.   We assume
\begin{equation}  \label{sept8.3}
        y_z^2 \leq 2as,\;\;\; y_w^2\leq 2at,
   \end{equation}
   for otherwise
 the left-hand side of \eqref{twopointer} is  $0$.   All constants
 in this proof are independent of $z,w,s,t$, but may depend on $a$.
 We will first handle the case when $t \ll   y_z^2$ or $s \ll y_w^2$.

\begin{itemize}
\item {\bf Claim 1}: There exist $\epsilon >0, c< \infty$ such
 that
  \[  G^s(z) \, G^t(w)  \leq   c\, \hat G^{ct,cs}(w,z)  \;\;\; \mbox{ if }
   \;\;\; t \leq \epsilon \, y_z^2 , \]
   and
   \[  G^s(z) \, G^t(w) \leq c \, \hat G^{cs,ct}(z,w) \;\;\;
    \mbox{ if } \;\; s \leq \epsilon y_w^2. \]
  \end{itemize}

 By scaling, we may assume that $1 = |z| \leq |w|$.
 We will prove the first inequality which is the harder of the
 two; the second can be proved similarly. We will  assume that
  $t \leq  y_z^2/(200a)$; we will later choose $\epsilon < 1/(200a)$.
Using \eqref{sept8.3}  we see that
 $t \leq s/100$,  and $y_z \leq 1$ implies that $t \leq 1/(200a) \leq 1/50$.
Using \eqref{sept8.3} again, we see that  $y_w \leq 1/10$,
 and hence $|w|\asymp |x_w|  \geq  \sqrt{99}/10$.
 Let $T = T_w$ and $\gamma = \gamma[0,T]$.
By Corollary \ref{aug31.corollary}, there exists  $c_1 < \infty$ such that
\[      \Prob_w^*\{ \diam(\gamma) \leq c_1 \, |w| \mid T \leq 2t\}
   \geq \frac 12 . \]
   Let  \[    E = \{\diam(\gamma) \leq c_1 \, |w|, T \leq 2t\}, \]
   and note that $\Prob_w^*(E) \geq \Prob_w^*\{T \leq 2t\}/2$.
  On the event $E$, we can use the Loewner equation 
  \eqref{loew}
  and conformal
  invariance arguments to estimate $Z_r(z)$ and $g_r(z)$
  for $0 \leq r \leq T$.  We
  list the estimates here omitting the straightforward proofs.  First,
  there exists $c_2 < \infty$ such that
\[ |U_r| \leq c_2 \, |w|,\;\;\;\; 0 \leq r\leq T.\]
Also,
 \[  |Z_r(z)| \geq  Y_r(z) \geq \frac{y_z}{2}, \;\;\; 0 \leq r \leq T, \]
\[ |\p_r g_r(z)| \leq \frac{a}{|Z_r(z)|} \leq \frac{2a}{y_z}, \;\;\; 0 \leq r \leq T, \]
\[ |g_T(z) - z| \leq \frac{ 4at}{y_z } \leq \frac{y_z}{50}  \leq \frac 1{50}, \]
 \[ |Z_T(z)| \leq |g_T(z)| +|U_T(z)| \leq \frac{51}{50} + c_1|w|
    \leq  c_2 |w| , \]
\[ |\p_r \log g_r'(z)|  \leq \frac{a}{|Z_r(z)|^2} \leq \frac{ 4a}{y_z^2},\]
\[\left| \,\log |g_T'(z)| \, \right|\leq \frac{8at}{y_z^2} \leq \frac 1{25} .\]
If   $S_T(z)  = \sin \arg Z_T(z)$, then
   \[    S_T(z) = \frac{Y_T(z)}{|Z_T(z)|}  \geq \frac{c_3 \, y_z}{|w|}. \]
   \[  |g_T'(z)|^{2-d} \, G(Z_T(z)) = |g_T'(z)|^{2-d}
    \,  Y_T(z)^{1-4a} \, S_T(z)^{4a-1} \geq c_4 \, |w|^{1-4a} \,
      G(z). \]

   Using Lemma \ref{may19lemma} we can find $c_5,\beta_1$ such
   that
   \[        |g_T'(z)|^{2-d} \, G^s(Z_T(z))  \geq c_5 \, e^{-\beta_1|w|^2/s}
    \, G(z). \]
  Therefore, we get
  \[   G^{2s,2t}(z,w) \geq c_6 \, e^{-\beta_1|w|^2/s} \, G^s(z) \, G^{2t}(w) . \]
  Using the last inequality in Lemma \ref{may19lemma}, we can see that
  there exists $\delta$ such that
  \[ G^{2s,2t}(z,w) \geq c_7 \,  G^s(z) \, G^{\delta t}(w) . \]
If we let $\hat t = \delta t$, then we can rewrite this as
\[    G^{2s,(2/\delta) \hat t} \geq c_7 \, G^s(z) \, G^{\hat t}(w) , \]
which is now valid if $\hat t \leq \delta \, y_z^2/(200a)$. This establishes
the claim with $\epsilon = \delta/(200a)$.

\medskip

For the remainder of the proof we fix $0 < \epsilon_0 <1/2a$ such that Claim 1
holds, and we assume that
$t \geq \epsilon_0 \, y_z^2, s \geq \epsilon_0 y_w^2$, and hence
\begin{equation}  \label{sept8.5}
   s \geq \frac{y_z^2}{2a}, \;\;\;\; t \geq \frac{y_w^2}{2a}, \;\;\;\;
  s \wedge t \geq \epsilon_0 \, (y_z \vee y_w)^2 \end{equation}

 \begin{itemize}
\item   {\bf Claim 2}.
There exist $l,\rho>0$ such that the following holds.  Suppose $z = x+iy$
with $|z| \leq 1$ and $y^2/a \leq s $. Let $V$ denote
the event
\[ V = V_{z,l} = \left\{\gamma[0,T_z] \subset \{z' : |z'| \leq l \} \right\}.\]
Then
\begin{equation}  \label{sept1.3}
\Prob_z^*(V \mid T_z \leq s ) \geq \rho.
\end{equation}
\end{itemize}
This  was proved in Corollary
\ref{aug31.corollary}.
We fix $l \geq 8$ and $\rho> 0$ such that \eqref{sept1.3} holds.

\medskip

%
%

\begin{itemize}
\item
   {\bf Claim 3}.   There exist $c_1 , \beta_1 < \infty$ such that
  if $ |w| \geq 2l|z|$, then
 \[ G^s(z) \, G^t(w) \leq c_1 \, \hat G^{\beta_1 s, \beta_1 t}
   (z,w) . \]
   \end{itemize}

 By scaling we may assume $|z| = 1$.
  Let
\[   u = \min\left\{ 4s, \frac{2t}{a\epsilon_0}, 8 \right\},\]
and note that
$y_z^2 \leq au/2$.
      Let $T = T_z, \gamma = \gamma[0,T]$  and let $V$ be  as above.
Then
\[  \Prob_z^*(V \mid T \leq u) \geq \rho . \]
 On the event $V$,  since $T \leq 8$,
 \[   \gamma  \subset \{x+iy: 0 < y  \leq \sqrt {4a}, \;\;\;
   -l \leq x \leq l \}. \]
 Standard conformal mapping estimates imply that
 \[       |X_T(w)| \asymp  |x_w| , \;\;\;\; Y_T(w) \asymp y_w, \;\;\;\; |g_T'(w)|
 \asymp 1.\]
So if $u \in \{4s,8\}$ because we get $G^u(z)>cG^s(z)$ the argument proceeds in the same way (actually, somewhat more easily)  as Claim 1.\\

If $u=\frac{2t}{a\epsilon_0}$ then if we have $\beta_1$ then we can find $\beta_2$ such that

\[ cG^{\beta_1s,\beta_1t}(z,w) \geq cG^{\beta_2t,2\beta_2t}(z,w) \geq c G(z)\E^*_z[M_T^{\beta_2t}(w)\;|\; T \leq \beta_2t,V]\Prob[T \leq \beta_2t]
\]
Then again we have same estimates on $|X_T(w)| , \; Y_T(w)$ and $|g_T'(w)|$ as above, so by lemma \ref{may19lemma} the last term is bigger than $cG(z)G^{\delta \beta_2t}(w)e^{-\frac{5}{\beta_2t}}$ for some $\delta>0$ fixed. Again using Lemma \ref{may19lemma} we need to show existance of $\beta_2$ such that $\frac{-5|w|^2}{\delta\beta_2}-\frac{5}{\beta_2} \geq \frac{-|w|^2}{8}$ which we can do by $|w|>2l$ so we are done.

\medskip

\begin{itemize}
\item {\bf Claim 4.} There exists $c < \infty$ such that if $|z|,|w| \leq 4l$
and $t \geq (y_z \vee y_w)^2/(2a)$, then
\[            G(z) \, G(w)
 \leq c \, e^{\beta/t} \, G^{ct,ct}(z,w) .\]
\end{itemize}

It
suffices to prove the result for $t$ sufficiently
small for otherwise
we can use \eqref{sept1.1}.  For the moment we assume
 $t \leq 1/(100a)$.  Using Claim 3, we see
that  it suffices to prove the
estimate for  $z = 1 + iy_z, w = x_w + i y_w$ with $y_z,y_w \leq 1/10$
and $1 \leq |x_w| \leq 4l$.  In this case, $G(z) \asymp y_z^{4a-1},
G(w) \asymp y_w^{4a-1}$. Let
\[  \sigma = \inf\{t: \Re[\gamma(t)]= 1 -\sqrt t\},\]
and let $E$ be the event
\[   E = E_t = \left\{\sigma \leq t, \;\; \, \gamma_\sigma \subset  \{\Re[z']
\geq -1/2\} \right\} . \]
Arguing as in Lemma \ref{may19lemma}, we can see that
\begin{equation}  \label{clinton}
  \Prob\{\sigma \leq t, \Re[\gamma(s)] \geq -1/2 \mbox{ for }
0 \leq s \leq t \} \geq c_1 \, e^{-\beta_1/t}.
\end{equation}
Since $\hcap[\gamma_\sigma] = a\sigma$, we see that on the event $E$,
\[             \gamma_\sigma\subset V := \{x+iy:
    -1/2 \leq x \leq 1-\sqrt t, 0 \leq y \leq \sqrt{2at} \}. \]
By the strong Markov property, for every $s > 0$,
\[   G^{s+t,s+t}(z,w) \geq \E\left[|g_\sigma'(z)|^{2-d} \, |g_\sigma'(w)|^{2-d}
  \, G^{s,s}(Z_\sigma(z),Z_\sigma(w))\, 1_E \right].\]
We now list some deterministic estimates that hold on the event
$E$. The constants $0 < c_1 < c_2<\infty$ can be chosen
  uniformly over every curve
$\gamma$ with $\gamma_\sigma \subset V$ and $\Re[\gamma(\sigma)]
 = 1 - \sqrt t$.  We follow the statement of each estimate
 with a brief
 justification.  We write $g = g_\sigma$.

 \begin{itemize}

 \item $  c_1 y_z \leq  Y_\sigma(z) \leq y_z , \;\;\;c_1 y_w
   \leq  Y_\sigma(w) \leq  y_w . $
 The argument is the same for $z$ and $w$.  By conformal
 invariance and the fact that $g(z') \sim z'$ as $z'
 \rightarrow \infty$,
 \[    Y_\sigma(z) = \lim_{R \rightarrow \infty} R \, \Prob^{z}
 \{ \Im[B_{\tau}] = R \} , \]
 where $B$ is a complex Brownian motion and $\tau = \tau_{R,\gamma}$
is the first time $r$ that $B_r \in \R \cup \gamma_\sigma \cup
 \{ \Im(w') = R\}$.  The probability that a Brownian motion
starting at $z = 1 + iy_z$ reaches $\{\Im(w') = 2\sqrt{at}\}$ before
time $\tau$ is bounded below by $c y_z/\sqrt t$ and given this the probability
that $\Im(B_{\tau}) = R$ is greater than $c \sqrt t/R$.

\item      $c_1 \leq  |g'(z)|,  |g'(w)|  \leq c_2 .$

It follows from the previous estimate that $c_1 \leq g'(1),
g'(x_w) \leq 1$.  We can then use the Schwarz reflection and distortion theorem.

\item $        c_1 \sqrt t \leq |Z_\sigma(z)| \leq c_2 \sqrt t .$

For the lower bound consider $g$ (extended by
Schwarz reflection) on the disk of radius $\sqrt t$ about
$z$.  The image of this disk contains a disk of radius
$|g'(z)| \sqrt t/4$.  Therefore, $|Z_\sigma(z)| \geq
|g'(z)| \sqrt t/4 \geq c \, \sqrt t.$ For the upper
bound consider $\zeta = 1 + i\sqrt t$.  Using conformal
invariance, it is easy to see that $S_\sigma(\zeta) \geq
c$ and hence $|Z_\sigma(\zeta)| \asymp Y_\sigma(\zeta)
\leq \sqrt t$.  Since $|g'|$ is uniformly bounded on the
line segment $[1,1+i \sqrt t]$, we get the upper bound.

\item  $G(Z_\sigma(z)) \geq c_1 \, G(z), \;\;\;
   G(Z_\sigma(w)) \geq c_1 \, G(w) .$

This is immediate from the estimates we have already established.
 \end{itemize}

Since $|g_\sigma'(z)| \asymp |g_\sigma'(w)| \asymp 1$,
\eqref{clinton} implies for all $\beta > 0$,
\begin{eqnarray*}
   G^{(\beta+1)t,(\beta+1)t}(z,w) &  \geq & c\, \E\left[
   G^{\beta t,\beta t}(Z_\sigma(z),Z_\sigma(w))\, 1_V \right]\\
   & \geq & c \, e^{-\beta_1/t} \,
      \E\left[  G^{\beta t,\beta t}(Z_\sigma(z),Z_\sigma(w)) \mid
      V \right].
      \end{eqnarray*}
 We now consider two cases.  First, if $|Z_\sigma(w)| \geq
 2l |Z_\sigma(z)|$, then Claim 3 gives the existence of $c, \beta$
 such that
 \[           G(Z_t(z)) \, G(Z_t(w)) \leq  c\,
 G^{\beta t,\beta t}(Z_t(z),Z_t(w)) . \]
 If $|Z_\sigma(w)| \leq 2l |Z_\sigma(z)|$, then 
 distortion estimates imply that $|Z_\sigma(w)|
 \asymp \sqrt t$.  Scaling gives
 \[     G^{\beta t,\beta t}(Z_t(z),Z_t(w))
   = t^{ 2-d} \,
   G^{\beta,\beta}(Z_t(z)/\sqrt t, Z_t(w)/\sqrt t).\]
 If we choose $\beta$ sufficiently large,
 \begin{eqnarray*}
    G^{\beta,\beta}(Z_t(z)/\sqrt t, Z_t(w)/\sqrt t)
    & \geq & c\,  G(Z_t(z)/\sqrt t) \, G( Z_t(w)/\sqrt t)\\
       & \geq & c t^{d-2} \, G(z)\, G(w).
       \end{eqnarray*}
Hence, we can find $c, \beta$ such that
\[  G^{\beta t, \beta t}(z,w)
\geq c \, e^{-\beta_1/t} \,
       G(z) \, G(w) . \]

       \medskip

Claim 5 finishes the proof of the lemma.

 \begin{itemize}
 \item {\bf Claim 5.}  There exists $c < \infty$ such that if $|z|,|w| \leq 4l$,
 and $s,t$ satisfy \eqref{sept8.5}, then
 \[  G^s(z) \, G^t(w) \leq c \, G^{cs,ct}(z,w).\]
 \end{itemize}

 By symmetry we may assume $t \leq s$.   By \eqref{sept8.5}, we can find
 $r$ such that $rt \geq (y_z \vee y_w)^2/(2a)$.  From Claim 4 we can find
 (different) $c,\beta$ such that
 \[            G(z) \, G(w)
  \leq c \, e^{\beta/t} \, G^{ct,ct}(z,w)  . \]
  We also know that there exist $c,\beta_2$ such that $G^t(w) \leq
 c\, e^{-\beta_2/t} \, G(w).$  By choosing $c$ even larger in the
 last displayed formula, we can guarantee that
 \[            G(z)\, G(w) \leq c e^{\beta_2/t} \, G^{ct,ct}(z,w) . \]
 Therefore,
 \[  G^s(z) \, G^t(w) \leq G(z) \, G^t(w) \leq
   c \, e^{-\beta_2/t} \, G(z) \, G(w) \leq c \, G^{ct,ct}(z,w)
    \leq c \, G^{cs,ct}(z,w).\]

\subsection{Proof of Theorem \ref{sept8.theorem} }  \label{uglysec2}

In \cite{LZ} it is shown that there exists $c < \infty$ such that for all
$z,w\in \Half$,
\[      G(z) \, G(w) \leq c \, G(z,w) . \]
In \cite{LW}, a bound in the other direction was given: if $|w| = 1$
and $|z-w| \leq 1/2$, then
\[       G(z,w) \leq c \, |z-w|^{d-2} . \]
However, these papers did not give precise estimates in the case
where $S(z),S(w)$ are small.  Our proof will use the ideas
in \cite{LW}.
Throughout this
   section, $\gamma$ will denote an $SLE_\kappa$ curve and
   \[  \gamma_t = \gamma(0,t], \;\;\;\Delta_t(z) = \dist(z,\gamma_t),
  \;\;\; \Delta(z) = \Delta_\infty(z). \]
 We write
$\asymp$ to indicate that quantities are bounded by constants
where the constants depend only on $\kappa$.
     We recall
that in \cite{LW} it is shown that for each $z,w$, there
exist $\epsilon_z,\delta_w$ such that if $\epsilon < \epsilon_z,
\delta < \delta_w$,
\begin{equation}  \label{nov17.1}
   \Prob\{ \Delta(z)\leq \epsilon\} \asymp
     G(z) \, \epsilon^{2-d} , \;\;\;\;
       \Prob\{ \Delta(w) \leq \delta\} \asymp G(w)
        \, \delta^{2-d} ,
        \end{equation}
        \begin{equation}  \label{nov17.2}
  \Prob\{ \Delta(z)\leq \epsilon,
     \Delta(w)\leq \delta\} \asymp G(z,w)
     \, \epsilon^{2-d} \, \delta^{2-d}.
     \end{equation}

    When estimating
$\Prob\{\Delta(z) \leq \epsilon\}$ there are two regimes.
The interior or bulk regime, where $\epsilon \leq \Im(z)$ can
be estimated using Proposition \ref{mar14.prop1} since in this
case $ \Delta(z)   \asymp
 \Upsilon(z)  $.  However for the boundary
regime $\epsilon > \Im(z)$,
one needs a different result.

\begin{lemma}  \label{oct23}
There exists $0 < c_1 < c_2 < \infty$ such that if $ 0 < y \leq 1/4$ and
$\sigma = \inf\{t: |\gamma(t) - 1| \leq 2y\}$, then
\[  c_1 y^{4a-1} \leq \Prob\{\sigma < \infty, S_\sigma(1 + iy)
\geq 1/10\} \leq
\Prob\{\sigma < \infty\} \leq c_2 y^{4a-1}.\]
\end{lemma}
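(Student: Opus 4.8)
The plan is to read off the estimate from the optional-stopping identity \eqref{oct18.1} for the local martingale $M_t(z)=|g_t'(z)|^{2-d}G(Z_t(z))=\Upsilon_t(z)^{d-2}S_t(z)^{4a-1}$ (see \eqref{localmart}, \eqref{green}) applied at $z=1+iy$ and the stopping time $\sigma$. First I would record a few elementary geometric facts. As long as $|\gamma(t)-1|>2y$ one has $\dist(z,\gamma_t)\ge y=\dist(z,\R)$, hence $\dist(z,\partial H_t)\ge y$ and $\Upsilon_t(z)\ge y/2$; a short conformal-radius estimate then shows that on $\{\sigma<\infty\}$ the point $z$ cannot be swallowed strictly before $\sigma$ (otherwise $\Upsilon_t(z)\to0$ would push $\dist(z,\gamma_t)$ below $y$), so $\sigma\le\tau_{y/3,z}$ in the notation of \eqref{nov18.1}, and $M_\infty(z)=0$ on $\{\sigma=\infty\}$. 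Moreover on $\{\sigma<\infty\}$ one has $\Upsilon_\sigma(z)\asymp y$: the lower bound as above, and $\Upsilon_\sigma(z)\le 6y$ because $\gamma(\sigma)\in\partial H_\sigma$ with $|\gamma(\sigma)-z|\le|\gamma(\sigma)-1|+y=3y$. Then \eqref{oct18.1} gives $G(z)=\E[M_\sigma(z);\sigma<\infty]=\E[\Upsilon_\sigma(z)^{d-2}S_\sigma(z)^{4a-1};\sigma<\infty]$, and since $\arg(1+iy)\asymp y$ for $y\le1/4$ we have $G(1+iy)\asymp y^{d-2}y^{4a-1}$; dividing by $\Upsilon_\sigma(z)^{d-2}\asymp y^{d-2}$ yields
\[ \E\bigl[S_\sigma(1+iy)^{4a-1}\,;\,\sigma<\infty\bigr]\;\asymp\;y^{4a-1}. \]
Since $S_\sigma(z)^{4a-1}\le1$, the left side is at most $\Prob\{\sigma<\infty\}$, which already gives the lower bound $\Prob\{\sigma<\infty\}\ge c\,y^{4a-1}$.

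The remaining content of the lemma — the upper bound, and the refinement with $S_\sigma(1+iy)\ge1/10$ — will both follow from a single \emph{separation estimate}: there is $\rho>0$, independent of $y$, with
\[ \Prob\bigl\{\,S_\sigma(1+iy)\ge\tfrac1{10}\ \big|\ \sigma<\infty\,\bigr\}\;\ge\;\rho. \]
Indeed this immediately gives $\Prob\{\sigma<\infty,\ S_\sigma(z)\ge\tfrac1{10}\}\ge\rho\,\Prob\{\sigma<\infty\}\ge c_1 y^{4a-1}$, which is the left inequality of the lemma; and then $\E[S_\sigma(z)^{4a-1};\sigma<\infty]\ge(\tfrac1{10})^{4a-1}\rho\,\Prob\{\sigma<\infty\}$, so comparison with the displayed identity forces $\Prob\{\sigma<\infty\}\le c_2 y^{4a-1}$. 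The value $1/10$ plays no role here: it suffices to produce such a $\rho$ for some fixed constant $<1$, and the argument below will in fact give $S_\sigma(z)$ bounded below by any prescribed constant on a set of the right order.

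I expect the separation estimate to be the main obstacle. The plan for it is: by $SLE$ scaling, replace $(1,1+iy)$ by $(X,X+i)$ with $X=1/y\ge4$, so $\sigma=\inf\{t:|\gamma(t)-X|\le 2\}$, and then apply the strong Markov property at $\sigma_*=\inf\{t:|\gamma(t)-X|\le 8\}$. On $\{\sigma<\infty\}$ one has $\sigma_*<\infty$, the point $X+i$ is not yet swallowed, the trace $\gamma_{\sigma_*}$ lies at distance $\ge 7$ from $X+i$ and $B(X,8)\cap\H\subseteq H_{\sigma_*}$, and the continuation from $\gamma(\sigma_*)$ is a fresh $SLE_\kappa$ in $H_{\sigma_*}$. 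What must be shown is the \emph{fixed-scale} fact that, conditionally on this continuation ever reaching $B(X,2)$, a proportion bounded below — uniformly over the tip position on $\partial B(X,8)$ and over the far-away shape of $H_{\sigma_*}$ — of the conditional law enters $B(X,2)$ with its tip well separated in harmonic measure from $X+i$, equivalently with $S_\sigma(X+i)\asymp1$ by \eqref{hmeasure}, rather than ``grazing'' so that $X+i$ is pinned against the swallowing point. Near $X$ everything happens at unit scale and far from the origin, so I would deduce this from the unit-scale versions of tools already available: Proposition \ref{mar14.prop1}, the convergence of the conditioned curves to two-sided radial $SLE_\kappa$ together with its continuity at the hitting time, and extremal-length/Beurling arguments of the type used in the proof of Lemma \ref{aug31.lemma1} to rule out the grazing scenario with overwhelming conditional probability. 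Granting this, the three inequalities of the lemma follow exactly as in the preceding paragraph.
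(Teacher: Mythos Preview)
The paper does not itself prove this lemma; it simply cites \cite{LZ}, noting that $\Prob\{\sigma<\infty\}\asymp y^{4a-1}$ is classical and that the refinement with $S_\sigma\ge 1/10$ is Lemma~2.10 there. Your framework --- deduce $\E[S_\sigma(1+iy)^{4a-1};\,\sigma<\infty]\asymp y^{4a-1}$ from optional stopping for $M_t(1+iy)$, then derive all three inequalities from a separation estimate $\Prob\{S_\sigma\ge\tfrac1{10}\mid\sigma<\infty\}\ge\rho$ --- is correct and is essentially how such arguments are organized.

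Two comments. First, a minor correction: for $4<\kappa<8$ your justification that $\sigma\le T_z$ is wrong, since $\Upsilon_t(z)$ need \emph{not} tend to $0$ at the swallowing time (the pocket cut off may contain a disk of radius $\asymp y$ about $z$, so its conformal radius stays bounded below). The conclusion is nevertheless true by topology: if $T_z<\sigma$ then $\gamma_{T_z}$ avoids $\overline{B(1,2y)}$, so the half-disk $\overline{B(1,2y)}\cap\Half$ lies entirely in the bounded pocket containing $z$, and the curve can never subsequently enter it, contradicting $\sigma<\infty$ (off the null event $\gamma(\sigma)\in\R$).

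Second, and more importantly, your sketch of the separation estimate is not yet a proof. The Markov step at the outer radius is natural, but after it the geometry of $H_{\sigma_*}$ near the tip and the value $S_{\sigma_*}(X+i)$ are uncontrolled, so the ``fixed-scale'' step cannot be read off from Proposition~\ref{mar14.prop1} or two-sided-radial continuity alone --- those require a fixed domain and a target bounded away from the boundary, neither of which is uniform here. The standard route (and the one taken in \cite{LZ}) is to establish the upper bound $\Prob\{\sigma<\infty\}\le c_2 y^{4a-1}$ first, independently, from the boundary one-point estimate, and then to \emph{construct} an explicit event of probability $\ge c\,y^{4a-1}$ on which the curve enters $B(1,2y)$ through a prescribed region, forcing $S_\sigma$ to be large deterministically. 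That concrete construction, rather than a soft compactness or convergence argument, is what supplies the uniform $\rho$; your outline would need to be filled in along those lines.
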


\begin{proof}  The bound $\Prob\{\sigma < \infty\} \asymp
y^{4a-1} $ can be found in a number of places.  A proof which includes a
  proof of the first inequality can be found in \cite{LZ}.  The first
  inequality is Lemma 2.10 of that paper.
\end{proof}

   We will prove Theorem \ref{sept8.theorem} in a sequence
   of propositions.   We assume $|z| \leq |w|$ and let
   \[ q = |w-z|, \;\;\;\;
   \beta = (4a-1) - (2-d) = 4a + \frac{1}{4a} -2> 0. \]
   It will be useful to define a quantity that allows us
to consider  the boundary and interior cases simultaneously.
 Let
\[  \newernot_t(z)   = \Delta_t(z)^{4a-1}  \;\; \mbox{ if } \;\;
  \Delta_t(z)\geq \Im(z), \]
  \[ \newernot_t(z) =   \Im(z)^{4a-1} \,  \left[\frac{\Delta_t(z)}{\Im(z)}\right]^{2-d}
  \;\;\mbox{ if } \;\; \Delta_t(z) \leq \Im(z) , \]
     and let $\newernot(z) = \newernot_\infty(z)$.
    Note that $\newernot_0(z) = |z|^{4a-1} $, and scaling implies that
    the distribution of $\newernot(rz)$ is the same as that
    of $r^{4a-1}\newernot(z)$.  Since $4a-1 > 2-d$,
    we see that
    \[  \Delta_t(z)^{4a-1}
    \leq  \Phi_t(z)   . \]
    The next lemma combines the interior and boundary estimates into one
    estimate.

    \begin{lemma}  There exist $0 < c_1 < c_2 < \infty$ such
    that for all $z \in \Half$ and $0 < \epsilon \leq 1$,
 \begin{equation}  \label{sept5.1}
  c_1 \epsilon \leq \Prob\{\newernot(z) \leq \epsilon \, \newernot_0(z)\}
\leq c_2  \epsilon .
\end{equation}
   \end{lemma}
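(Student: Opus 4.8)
\textbf{Proof proposal for \eqref{sept5.1}.} The plan is to rewrite the event $\{\newernot(z) \le \epsilon\,\newernot_0(z)\}$ as a single ``getting close'' event for the curve, and then to estimate its probability by splitting according to whether the relevant distance scale lies above or below $\Im(z)$, using Lemma \ref{oct23} in the boundary regime and Proposition \ref{mar14.prop1} in the interior regime. First I would use that the distribution of $\newernot(rz)$ equals that of $r^{4a-1}\newernot(z)$ and that $\newernot_0(rz) = r^{4a-1}\newernot_0(z)$ to reduce to $|z| = 1$, so that $\newernot_0(z) = 1$ and, since $\sin(\arg z) = \Im(z)$ when $|z|=1$, $G(z) = (\Im z)^{\beta}$; write $y = \Im(z) \in (0,1]$. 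Unwinding the definition of $\newernot$ one finds that, up to a $\Prob$-null set, $\{\newernot(z) \le \epsilon\} = \{\Delta(z) \le \delta(\epsilon)\}$, where $\delta(\epsilon) = \epsilon^{1/(4a-1)} \ge y$ and $\epsilon = \delta^{4a-1}$ when $\epsilon \ge y^{4a-1}$, while $\delta(\epsilon) = (\epsilon\, y^{-\beta})^{1/(2-d)} \le y$ and $\epsilon = y^{\beta}\delta^{2-d}$ when $\epsilon \le y^{4a-1}$ (the two descriptions agreeing at $\epsilon = y^{4a-1}$, where $\delta = y$). Hence it suffices to establish, for $|z| = 1$,
\begin{equation} \label{plan:bdy}
 \Prob\{\Delta(z) \le \delta\} \asymp \delta^{4a-1} \quad (y \le \delta \le 1), \qquad
 \Prob\{\Delta(z) \le \delta\} \asymp y^{\beta}\,\delta^{2-d} \quad (0 < \delta \le y).
\end{equation}

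For the first estimate in \eqref{plan:bdy} I would exploit that $\delta \ge y = |z - \Re z|$, so the events $\{\Delta(z) \le \delta\}$ and $\{\dist(\Re z,\gamma_\infty) \le \delta\}$ agree up to a bounded change of $\delta$. If $y \le 1/2$ then $|\Re z| = \sqrt{1-y^2} \asymp 1$, and after rescaling by $|\Re z|^{-1}$ and using left--right symmetry this becomes the probability that $SLE_\kappa$ comes within $O(\delta)$ of the boundary point $1$; Lemma \ref{oct23} makes this $\asymp \delta^{4a-1}$ once $\delta$ is below an absolute constant, while both sides of \eqref{plan:bdy} are $\asymp 1$ for larger $\delta$ (and for all admissible $\delta$ when $y > 1/2$), the lower bound there coming again from Lemma \ref{oct23}.

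For the second estimate I am in the regime $\delta \le y = \Im(z)$, so the disk $\{|w-z|\le\delta\}$ lies in $\Half$; writing $D_\infty$ for the component of $\Half\setminus\gamma_\infty$ containing $z$, the Koebe and Schwarz bounds recalled after \eqref{may24.1} show $\{\Delta(z)\le\delta\}$ and $\{\Upsilon_{D_\infty}(z)\le\delta\}$ agree up to a bounded change of $\delta$, and (disposing of $\delta$ of order $y$, where both sides are $\asymp y^{4a-1}$) we may take $\delta \le y/4$. Optional stopping applied to the martingale $M_{t\wedge\tau_{\delta,z}}(z)$ of \eqref{localmart} — for which $M_0(z)=G(z)$, $M_\infty(z)=0$ on $\{\tau_{\delta,z}=\infty\}$, and $M_{\tau_{\delta,z}}(z)=\delta^{d-2}S_{\tau_{\delta,z}}(z)^{4a-1}\le\delta^{d-2}$ — gives at once the lower bound $\Prob\{\Upsilon_{D_\infty}(z)\le\delta\}\ge\delta^{2-d}G(z)=\delta^{2-d}y^\beta$. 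For the matching upper bound I would use Proposition \ref{mar14.prop1}, which yields $\Prob\{\Upsilon_{D_\infty}(z)\le\delta\}\sim c_*\,\delta^{2-d}G(z)$ as $\delta\downarrow0$; when $y$ is bounded below this already gives a two-sided bound uniform over the compact family of such $z$, and when $y$ is small I would first rescale by $(\Re z)^{-1}$ to put $z$ at $1+iy'$ with $y'\asymp y$, apply the boundary estimate just proved (the curve reaches scale $y'$ near the point $1$ with probability $\asymp (y')^{4a-1}$, and on the refined event of Lemma \ref{oct23} does so with $S\asymp 1$), and then invoke the strong Markov property and conformal invariance to reduce the remaining ``descent from scale $y'$ to scale $\delta$'' to an interior estimate at unit scale for a point of bounded $\Upsilon$ and bounded $S$ — again covered by Proposition \ref{mar14.prop1} on a compact family. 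Multiplying the two contributions gives $\Prob\{\Delta(z)\le\delta\}\asymp (y')^{4a-1}(\delta/y')^{2-d}\asymp y^\beta\delta^{2-d}$.

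The step I expect to be the main obstacle is exactly this last uniformity: Proposition \ref{mar14.prop1} is only stated as an asymptotic for a fixed interior point, whereas \eqref{plan:bdy} needs $z$-independent constants, including for $\Im(z)$ arbitrarily small. Handling that through the scaling-plus-strong-Markov reduction above (or, equivalently, checking that the threshold $\epsilon_z$ in the uniform one-point estimate \eqref{nov17.1} may be taken uniform over the relevant $z$, again by compactness and scaling) is the technical core; once \eqref{plan:bdy} is in hand, substituting back through $\epsilon\leftrightarrow\delta(\epsilon)$ immediately gives \eqref{sept5.1}.
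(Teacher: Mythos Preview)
Your proposal is correct and follows essentially the same route as the paper: reduce by scaling to $|z|=1$, translate $\{\newernot(z)\le\epsilon\}$ into $\{\Delta(z)\le\delta\}$, and then establish the two regimes $\Prob\{\Delta(z)\le\delta\}\asymp\delta^{4a-1}$ for $\delta\ge y$ and $\Prob\{\Delta(z)\le\delta\}\asymp y^{4a-1}(\delta/y)^{2-d}$ for $\delta\le y$ via Lemma~\ref{oct23} and Proposition~\ref{mar14.prop1}. The paper's proof simply asserts these two estimates as consequences of those results without addressing the uniformity-in-$z$ issue you flag; your scaling-plus-strong-Markov reduction is a legitimate way to fill that in, and is more detailed than what the paper actually writes.
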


   \begin{proof}  Let $z=x+iy$.
    By scaling we may assume that $|z| = 1$ and hence $\newernot_0(z) = 1,
   S(z) = y$.  Let $\Delta = \Delta_\infty(z), \newernot = \newernot_\infty(z)$.
  Proposition \ref{mar14.prop1}
    and Lemma \ref{oct23}
    imply that
   \[  \Prob\{\Delta  \leq \epsilon \} \asymp \epsilon^{4a-1} ,\;\;\;\;
      \epsilon \geq y, \]
    \[   \Prob\{\Delta  \leq \epsilon  \} \asymp y^{4a-1} \, [\epsilon/y]^{2-d}
      ,\;\;\;\;
      \epsilon \leq y. \]
   If $\epsilon \geq y$, then
\[
   \Prob\{\newernot  \leq \epsilon^{4a-1}   \}
    = \Prob\{\Delta  \leq \epsilon  \} \asymp \epsilon^{4a-1} .\]
   If $\epsilon \leq y $, then if $u = (4a-1)/(2-d)$,
\[
     \Prob\{\newernot \leq \epsilon^{4a-1}   \}
     =   \Prob\{ y\, (\Delta /y)^{\frac{2-d}{4a-1}} \leq \epsilon   \}
    =  \Prob\{\Delta \leq y \, (\epsilon/y)^{u} \}
   \asymp y^{4a-1} \, [(\epsilon/y)^{u}]^{2-d}  = \epsilon^{4a-1}.
\]

    \end{proof}

The next proposition establishes
     an upper bound
   for the probability that an $SLE$ path gets close to a point and
   subsequently returns to a given crosscut.  It is a generalization
   of Lemmas 4.10 and 4.11 of \cite{LW}, and we use ideas
   from those proofs. Suppose $\eta:(0,1) \rightarrow \Half$
is a simple curve with $\eta(0+) = 0, \eta(1-) > 0$ and
write $\eta = \eta(0,1)$.   Let
$V_1,V_2$ denote respectively the bounded and unbounded
components of $\Half \setminus \eta$ and assume that $z
= x_z + i y_z
\in V_1, w  = x_w + i y_w\in V_2$.  Given the $SLE$ curve $\gamma$, one can show
(see \cite[Appendix A]{LW}) that there is a collection of open subarcs
$\{I_t: t < T_z \wedge T_w\}$ of $\eta$ with the following properties.
 Recall that $H_t$ is the unbounded component of $\Half \setminus \gamma_t$.
\begin{itemize}
\item  $I_0 = \eta$.
\item  $I_t \subset H_t$.  Moreover, $H_t \setminus I_t$ has
two connected components, one containing $z  $ and the other containing $w
 $.
\item  If $s < t$, then $I_t \subset I_s$.  Moreover, if
  $\gamma(s,t] \cap I_s = \emptyset$, then $I_t = I_s$.
\end{itemize}
If $\zeta \in \{z,w\}$, define
  stopping times $\sigma_{k} ,\sigma,\tau$  depending
on  $\zeta$  by
\[  \sigma_{k} = \inf\{t: \newernot_t(\zeta) = 2^{-k} \, \newernot_0(\zeta)\}, \;\;\;\;
  \sigma = \sigma_1, \]
  \[   \tau = \inf\{t \geq \sigma: \gamma(t) \in \overline {I_\sigma}
    \}. \]
  If $\tau < \infty$, let
  \[ J  = \frac{
 \newernot_\tau(\zeta)}{\newernot_0(\zeta)} . \]

\begin{proposition}  \label{brentprop}
There exists $c < \infty $ such that under the setup
above if $0 < \epsilon \leq 1/2$ and $\alpha =
2a - \frac 12 > 0$,
\[           \Prob\{\tau < \infty, J \leq \epsilon \}
   \leq c \,   \epsilon, \;\; \mbox{ if } \zeta = z, \]
\[     \Prob\{\tau < \infty , J \leq \epsilon \} \leq
    c \,  \epsilon \, \left[\frac{\diam(\eta)}{|w|}\right]^{\alpha} , \;\;
    \mbox{if } \zeta = w.\]
  \end{proposition}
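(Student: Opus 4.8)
\emph{Overall approach.} I would handle the two cases separately. The case $\zeta = z$ I expect to be essentially free: since $\Delta_t(z) = \dist(z,\gamma_t)$ is non-increasing in $t$ and the exponents $4a-1$ and $2-d$ are positive, the map $t \mapsto \newernot_t(z)$ is continuous and non-increasing, so on $\{\tau < \infty,\ J \le \epsilon\}$ one has $\newernot(z) \le \newernot_\tau(z) = J\,\newernot_0(z) \le \epsilon\,\newernot_0(z)$; thus this event lies inside $\{\newernot(z) \le \epsilon\,\newernot_0(z)\}$, which has probability $\le c_2\epsilon$ by \eqref{sept5.1}. The same observation gives $\Prob\{\tau < \infty,\ J \le \epsilon\} \le c_2\epsilon$ for $\zeta = w$ as well; all the work is in gaining the extra factor $[\diam(\eta)/|w|]^{\alpha}$, and the rest of the plan concerns this.

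\emph{Set-up for $\zeta = w$.} By the scaling rule for $\newernot$ I may take $|w| = 1$, and write $\rho = \diam(\eta)$. The bound to be proved already exceeds $c_2\epsilon$ once $\rho$ is larger than a constant depending only on $\kappa$, so I may assume $\rho$ (and $\epsilon$) small. Two elementary facts enter: first, since $0 \in \overline{\eta}$, $\overline{I_\sigma} \subset \overline{\eta} \subset \overline{B(0,\rho)}$, so $\overline{I_\sigma}$ has diameter $O(\rho)$ and sits at distance $\ge 1-\rho$ from $w$; second, $\newernot_\sigma(w) = \tfrac12\newernot_0(w)$ forces $\Delta_\sigma(w) \le c_0$ with $c_0 = c_0(\kappa) < 1$ (check both branches of the definition of $\newernot$). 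With $j_0 := \lfloor \log_2(1/\epsilon) \rfloor \ge 1$ (so $2^{-j_0} \in [\epsilon,2\epsilon)$) and $\sigma_{j_0}$ as in the statement, the value of $\newernot_\cdot(w)$ at $\sigma_{j_0}$ is attained at the tip $\gamma(\sigma_{j_0})$, so $|\gamma(\sigma_{j_0}) - w| = \Delta_{\sigma_{j_0}}(w) \le \Delta_\sigma(w) \le c_0$, and hence $\dist(\gamma(\sigma_{j_0}), \overline{I_\sigma}) \ge (1-c_0)/2$ once $\rho$ is small.

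\emph{The dyadic reduction.} On $\{\tau < \infty,\ J \le \epsilon\}$ one has $\newernot_\tau(w) \le \epsilon\,\newernot_0(w) \le 2^{-j_0}\newernot_0(w)$, and since $\newernot_\cdot(w)$ is continuous and non-increasing with value $\newernot_0(w)$ at time $0$, this forces $\sigma_{j_0} < \infty$ and $\sigma_{j_0} \le \tau$; hence
\[
\{\tau < \infty,\ J \le \epsilon\}\ \subseteq\ \{\sigma_{j_0} < \infty\}\ \cap\ \big\{\gamma \text{ meets } \overline{I_\sigma} \text{ at some time } \ge \sigma_{j_0}\big\}.
\]
By \eqref{sept5.1}, $\Prob\{\sigma_{j_0} < \infty\} \le \Prob\{\newernot(w) \le 2^{-j_0}\newernot_0(w)\} \le 2c_2\epsilon$, and by the strong Markov property $\gamma[\sigma_{j_0},\infty)$ is, conditionally on $\F_{\sigma_{j_0}}$, an $SLE_\kappa$ from $\gamma(\sigma_{j_0})$ to $\infty$ in $H_{\sigma_{j_0}}$. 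So everything reduces to the conditional estimate
\[
\Prob\big\{\gamma \text{ meets } \overline{I_\sigma} \text{ after } \sigma_{j_0}\ \big|\ \F_{\sigma_{j_0}}\big\}\ \le\ c\,\rho^{\alpha}\quad\text{on }\{\sigma_{j_0} < \infty\},
\]
after which $\Prob\{\tau < \infty,\ J \le \epsilon\} \le c\,\rho^{\alpha}\,\Prob\{\sigma_{j_0} < \infty\} \le 2cc_2\,\epsilon\,\rho^{\alpha}$, which is the claim since $|w| = 1$.

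\emph{The hard part.} What remains — that an $SLE_\kappa$ from $\gamma(\sigma_{j_0})$ to $\infty$ in $H_{\sigma_{j_0}}$ hits the (possibly truncated) crosscut $\overline{I_\sigma} \cap \overline{H_{\sigma_{j_0}}}$, a set of diameter $O(\rho)$ at distance bounded below from the starting point, with probability $\le c\rho^{\alpha}$, $\alpha = 2a - \tfrac12 = (4a-1)/2$ — is where the real difficulty lies, and it is the analogue, adapted to an interior crosscut and to the quantity $\newernot$, of Lemmas 4.10 and 4.11 of \cite{LW}. I would prove it by mapping $H_{\sigma_{j_0}}$ conformally onto $\Half$ with $\gamma(\sigma_{j_0}) \mapsto 0$ and $\infty \mapsto \infty$; using distortion and Beurling-type estimates to see that the image of the truncated crosscut lies, up to constants, in a disk of radius $O(\rho)$ about a point at distance $\asymp 1$ from the origin; and then invoking a hitting estimate for chordal $SLE_\kappa$. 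The exponent is half the boundary one-point exponent $4a-1$ of Lemma \ref{oct23} because the relevant approach to the crosscut is ``one-sided'' (the curve starts near $w$, on one side of $I_\sigma$). The main obstacle is to make both the conformal estimates and the $SLE$ hitting estimate uniform over all admissible $\eta$ and all $\gamma_{\sigma_{j_0}}$, which is exactly the content that must be drawn from \cite{LW}.
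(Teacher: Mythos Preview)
Your treatment of the case $\zeta=z$ (and of the trivial range $|w|\lesssim\diam(\eta)$ when $\zeta=w$) is fine and matches the paper. The gap is in the ``hard part''. You condition at time $\sigma_{j_0}$ and claim that, uniformly over the history $\gamma_{\sigma_{j_0}}$, the image of the crosscut $I_\sigma$ under the centered map has diameter $O(\rho)$ at distance $\asymp 1$ from the tip. This is not true in general: the hydrodynamic map $g_{\sigma_{j_0}}$ is identity-like only near $\infty$, and nothing prevents $\gamma[\sigma,\sigma_{j_0}]$ from making many excursions arbitrarily close to $I_\sigma$, so there is no distortion control near $I_\sigma$. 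Your ``one-sided'' heuristic for the exponent $(4a-1)/2$ is also off --- if the image really sat in a ball of radius $O(\rho)$ touching $\R$ at distance $\asymp 1$, Lemma~\ref{oct23} would give $\rho^{4a-1}$, not $\rho^{\alpha}$; the factor of $1/2$ in the paper comes from an intermediate scale, not from one-sidedness. Finally, deferring to \cite{LW} is not enough: the paper itself states that this proposition is a \emph{generalization} of Lemmas~4.10--4.11 there and gives a new proof.

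The paper's argument is structurally different. It scales so that $\diam(\eta)=1$, $|w|\geq 4$, introduces the half-circle $C$ of radius $\sqrt{|w|}$, and splits according to whether $\hat\sigma:=\sigma_k$ occurs before or after the disconnection time $\rho=\inf\{t:w\notin\text{unbounded comp.\ of }H_t\setminus C\}$. If $\hat\sigma<\rho$, then an arc $l\subset C\cap H_{\hat\sigma}$ still separates $w$ from $I$, and the excursion measure between $g_{\hat\sigma}(I)$ and $[U_{\hat\sigma},\infty)$ is dominated by that between $I$ and $l$, hence by the annulus bound $O(|w|^{-1/2})$; this is what produces the exponent $(4a-1)/2$. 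If instead $\rho<\hat\sigma$, the excursion-measure argument is no longer available, and the paper decomposes over the dyadic value $\hat J=\newernot_\rho(w)/\newernot_0(w)$, using a Beurling estimate to show $S_\rho(w)\leq c\,2^{-j/2}|w|^{-1/2}$ and then \eqref{sept5.1} to control $\Prob\{\hat\sigma<\infty\mid E_j\}$. Your conditioning at $\sigma_{j_0}$ collapses both cases into one and tries to extract the decay entirely from the return step, which is precisely where the uniform bound fails; the intermediate separator is what makes the argument go through.
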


\begin{proof}  The first inequality follows immediately from
\eqref{sept5.1}, as does the second if $|w| \leq 4 \,
\diam(\eta)$.   Therefore, using scaling, we may
 assume that $\diam(\eta) = 1, |w| \geq 4, \zeta = w$.  Let $C$
 denote the half-circle of radius $\sqrt{|w|}$ in $\Half$ centered at
 the origin.  Let $k_0$ be the largest
 integer such that $2^{-k_0} \geq S(w) = \Im(w)/|w|$.
 Let $\rho$ be the first time $t$  that $w$ is not in
 the unbounded component of $H_t \setminus C$.  
 Note that if  $\rho <
 T_w$,
 then $\gamma(\rho) \in C$.
Let
 \[     \hat J = \frac{\newernot_\rho(w)}{\newernot_0(w)}. \]
 Then, if $k$ is a positive integer and $\hat \sigma = \sigma_{k}$,
 \[   \Prob\{\tau < \infty, J \leq 2^{-k}\}
  \leq \Prob\{\hat  \sigma  < \rho  \wedge \tau, \tau < \infty\} + \sum_{j=1}^k
    \Prob\{\rho <  \hat \sigma  < \infty, 2^{-j} < \hat J \leq 2^{-j+1}\}. \]

  We will now show that
 \begin{equation}  \label{sept5.2}
 \ \Prob\{\hat  \sigma  < \rho  \wedge \tau, \tau < \infty\}
  \leq c \,2^{-k} \,  |w|^{-\alpha} .
  \end{equation}
Let $  H = H_{\hat \sigma}, I = I_{\hat \sigma}, g = g_{\hat \sigma},
U = U_{\hat \sigma}$.
 By \eqref{sept5.1},
 \[  \Prob\{\hat  \sigma  < \rho  \wedge \tau\} \leq
   \Prob\{\hat \sigma < \infty \} \leq c \, 2^{-k} . \]
   Let $H^*$ be the component of $H \setminus
 C$ containing $w$. On the event $\hat  \sigma  < \rho $,  $H^*$ is unbounded.
 Using simple connectedness of $H$, we can see that there is a
 subarc $l \in \p H^* \cap C$ that is a crosscut of $H$ and that separates
 $w$ from $I$ in $H$.  Since $l$ does not separate $w$ from $\infty$, $g(l)$
 is a crosscut of $\Half$ that does not separate $U$ from $\infty$; for ease
 let us assume that its endpoints are on $(-\infty,U]$.    Since $l$
 separates $w$ from $I$,  $l$ also separates $I$ from infinity in $H$.
 Therefore $g(l)$ separates $g(I)$ from $U$ and infinity in $\Half$.
 We use excursion measure (see \cite[4.1]{LW} for definitions and similar
 estimates) to estimate the probability that $\gamma[\hat \sigma, \infty)$
 returns to $I$.  The
 excursion measure between $g(I)$ and $[U,\infty)$ in $\Half \setminus
 g(I)$ is bounded above by the excursion measure between $g(I)$ and $g(l)$ in
 $\Half \setminus (g(I) \cup g(l))$ which by conformal invariance equals
 the excursion measure between $I$ and $l$ in $H \setminus(I \cup l)$.
 This in turn is bounded above by the excursion measure between
 $C$ and $\p \Disk$ in $\{\zeta \in \Half: 1 < |\zeta| < \sqrt{|w|}\}$ which
 is $O(1/\sqrt{ |w|}).$  Given this, we can use the estimate from Lemma
 \ref{oct23} to see  that the probability that an $SLE_\kappa$
 path from $U$ to $\infty$ in $\Half$ hits $g(I)$ is $O(|w|^{-(4a-1)/2})$.  Using
 conformal invariance, we conclude that
 \[     \Prob\{\tau < \infty \mid\hat  \sigma  < \rho  \wedge \tau\}
   \leq c \, |w|^{(1-4a)/2} \]
  which gives \eqref{sept5.2}.

We noted above that if $j \leq k_0$, then
 \[  \Prob\{\rho < \hat \sigma  < \infty\} = 0 . \]
 We will now show that if $j > k_0$,
 \begin{equation}  \label{sept5.3}
  \Prob\{\rho <  \hat \sigma  < \infty, 2^{-j} < \hat J \leq 2^{-j+1}\}
   \leq c \,2^{-k}  \, 2^{-j\alpha} \ |w|^{-\alpha}  .  \end{equation}
   The proposition then follows by summing over $j$.
Consider the event
 \[  E_j = \{\rho <   \infty, 2^{-j} < \hat J \leq 2^{-j+1}\}. \]
  Using
  \eqref{sept5.1}, we see that
\begin{equation}  \label{sept5.4}
     \Prob(E_j) \leq c \, 2^{-j}
     \end{equation}
 Let $H = H_\rho$.  On the event $E_j$, there is a subarc $l$ of
 $H \cap C$ that is a crosscut of $H$ with one endpoint equal
 to $\gamma(\rho)$ such that $l$ disconnects
 $w$ from infinity
in $H$.  Using this and the relationship between $S$ and
harmonic measure we see that $S_\rho(w)$ is bounded above by
the probability that a Brownian motion starting at $w$ reaches
$C$ without leaving $H$.  Using the Beurling estimate, we see
that the probability that it reaches distance $|w|/2$ from $w$  without leaving
$H$ is $O(2^{-j/2})$.  Given this, the probability that is reaches
$C$ without leaving $\Half$ is bounded above by $O(1/|\sqrt{|w|})$.
Therefore,  on the event $E_j$,
\[               S_\rho(w) \leq c \, 2^{-j/2} \, |w|^{-1/2}. \]
Using the strong Markov property and \eqref{sept5.1}, we see that
\[       \Prob\{\hat \sigma < \infty \mid E_j\} \leq c \, 2^{-j \alpha}
  \, |w|^{-\alpha} \, 2^{-(k-j)}, \]
  which combined with \eqref{sept5.4} gives \eqref{sept5.3}.

 \end{proof}

   In this section, we will consider
  two-point correlations.  The next proposition, which is the hardest to prove,
  shows that if $z,w$ are separated, then the events are independent
  up to a multiplicative constant.

  \begin{proposition}  \label{nov17.prop1}
   There exists $c < \infty$ such that if $|z| \leq 4|w|$,
  and $0 < \epsilon_z,\epsilon_w \leq 1$, then
  \[ \Prob\{\newernot(z) \leq \epsilon_z \,\newernot_0(z) ,  \newernot(w) \leq
  \epsilon_w\, \newernot_0(w)\}  \leq c\,
  \epsilon_z \, \epsilon_w, \]
  \end{proposition}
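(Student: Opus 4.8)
The plan is to follow the method of \cite{LW}, deducing the estimate from the one‑point return bound of Proposition \ref{brentprop} together with the one‑point estimate \eqref{sept5.1}; the genuinely new feature is that, because the quantity $\newernot$ already absorbs the boundary regime, interior target points and target points near $\partial\Half$ are handled on the same footing, whereas \cite{LW} treated only the interior case.

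First I would reduce to $\epsilon_z,\epsilon_w\le\epsilon_0$ for a small absolute constant $\epsilon_0$: if, say, $\epsilon_z>\epsilon_0$, then by \eqref{sept5.1}
\[ \Prob\{\newernot(z)\le\epsilon_z\newernot_0(z),\ \newernot(w)\le\epsilon_w\newernot_0(w)\}\le\Prob\{\newernot(w)\le\epsilon_w\newernot_0(w)\}\le c\,\epsilon_w\le\frac{c}{\epsilon_0}\,\epsilon_z\epsilon_w, \]
and symmetrically in $w$. Then, since $z$ and $w$ are separated (so $|z-w|\asymp|w|$), I would fix a crosscut $\eta$ of $\Half$ with both endpoints on $\R$ — a circular arc or a radial segment, according to whether $z$ and $w$ are separated radially or angularly — so that $z$ and $w$ lie in the two distinct components $V_1$ (bounded) and $V_2$ (unbounded) of $\Half\setminus\eta$ and $\diam\eta\asymp\dist(z,\eta)\asymp\dist(w,\eta)\asymp|w|$. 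Since $\diam\eta\asymp|w|$, the factor $[\diam\eta/|w|]^\alpha$ in Proposition \ref{brentprop} is $\asymp1$, so that proposition gives a bound $\le c\epsilon$ in both cases $\zeta=z$ and $\zeta=w$.

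The core of the argument is a decomposition of $A=\{\newernot(z)\le\epsilon_z\newernot_0(z),\ \newernot(w)\le\epsilon_w\newernot_0(w)\}$ according to which of $z,w$ is first visited to within its target scale — i.e.\ according to the order of $\tau_z=\inf\{t:\newernot_t(z)\le\epsilon_z\newernot_0(z)\}$ and the corresponding $\tau_w$ — refined by a dyadic decomposition of how small $\newernot_t$ is for the remaining point at the moment the curve last returns to the active subarc of $\eta$ before visiting it deeply. Since at such a return time the curve lies on $\eta$, hence at Euclidean distance $\asymp|w|$ from that remaining point, $\newernot$ of that point is then comparable to its initial value up to the dyadic factor, and the strong Markov property together with \eqref{sept5.1} applied conformally in the current slit domain — using that $\newernot_t$ is preserved up to multiplicative constants by the maps $g_t$, which is also what makes the estimates uniform over the interior and boundary regimes — bounds the remaining probability. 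Combining this with Proposition \ref{brentprop} applied to the first, deeply visited point (reaching a given scale of it and returning to $\eta$ costs at most a constant times that scale) and summing yields $\Prob(A)\le c\,\epsilon_z\epsilon_w$.

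The main obstacle is to organize this summation without losing a logarithmic factor: a naive union bound over the dyadic scales of the second point at the handoff produces a bound of the shape $c\,\epsilon_z\epsilon_w\log(1/(\epsilon_z\wedge\epsilon_w))$, since crudely each dyadic contribution is of the same order $\epsilon_z\epsilon_w$. Removing the logarithm is precisely the point of the \cite{LW}‑type argument (here extended to the boundary regime via $\newernot$): one exploits that in every domain arising along the way the crosscut still sits at scale $\asymp|w|$ from both points, so that the contributions in fact decay in the dyadic index and the series converges to $O(\epsilon_z\epsilon_w)$. Making this decay precise, controlling the bounded conformal distortions of $\newernot$ incurred at each return to $\eta$, and dispatching the handful of geometric subcases for the placement of the crosscut relative to $0$ and $\infty$, is where the real work lies.
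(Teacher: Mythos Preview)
Your overall architecture matches the paper's: scale so that $|z|\le 1/2$, $|w|=2$, take the separating half-circle $I_0=\{|\zeta|=1\}$, run the iterative \cite{LW} scheme of alternating ``halve $\newernot$ for one of $z,w$'' / ``return to the current crosscut'' stopping times, and finish as in \cite[Section 4.4]{LW}. The use of $\newernot$ to put the interior and boundary regimes on equal footing is exactly the point of the paper's extension.

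There is, however, a real gap in your proposal at the one place you flag as ``where the real work lies'': your stated reason for the dyadic decay is not the mechanism that actually removes the logarithm. Saying that ``in every domain arising along the way the crosscut still sits at scale $\asymp|w|$ from both points'' only reproduces the bound from Proposition~\ref{brentprop} with its $[\diam(\eta)/|w|]^\alpha\asymp 1$, i.e.\ probability $\le c r$ per step; summing that over dyadic scales gives precisely the $\log$ you are trying to avoid. The paper's key step is the sharpened conditional bound
\[
\Prob\bigl\{\tau_{k+1}<\infty,\ R_k=\zeta,\ \newernot_{\tau_{k+1}}(\zeta)\le r\,\newernot_{\tau_k}(\zeta)\ \big|\ \gamma_{\tau_k}\bigr\}\le c\,r\,\newernot_{\tau_k}(\zeta)^{\alpha},
\]
with an \emph{extra} factor $\newernot_{\tau_k}(\zeta)^{\alpha}$. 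This factor is obtained by mapping forward by $g_{\tau_k}$ and using the Beurling estimate: since $\gamma$ has already come within distance $\Delta=\Delta_{\tau_k}(\zeta)$ of $\zeta$, a Brownian motion from any point of the image of the disk of radius $c\Delta$ about $\zeta$ reaches the image crosscut $\hat I$ only with probability $O(\Delta^{1/2})$. One then splits into the cases where $\hat\zeta$ lies in the bounded component of $\Half\setminus\hat I$ (so $S(\hat\zeta')\le c\Delta^{1/2}$ directly), or in the unbounded component with small angle (same conclusion), or in the unbounded component with angle $\ge\Delta^{1/4}$ (in which case $\diam(\hat I)\le c\Delta^{1/4}|\hat\zeta'|$ and Proposition~\ref{brentprop} supplies the gain). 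In every case the extra power of $\Delta$, hence of $\newernot_{\tau_k}(\zeta)$, appears, and that geometric decay in $k$ is what makes the series converge without a logarithm. Your ``crosscut stays at scale $|w|$'' observation is used, but only to control the distortion of $\newernot$ at return times; it is not the source of the decay.
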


  \begin{proof}  By scaling, we may assume that $|z| \leq 1/2, |w| =  2$.   As
  before we consider a decreasing collection of arcs
  $\{I_t: t < T_z \wedge T_w\}$ with the following
  properties.
  \begin{itemize}
  \item  $I_0 =    \{\zeta \in \Half: |\zeta| = 1\}.$
  \item  For each $t$, $I_t$ is a crosscut of $H_t$ that separates $z$ from
  $w$ in $H_t$.
  \item  If $t > s$, then $I_t \subset I_s$.  Moreover, if $\gamma(s,t] \cap
  \overline{I_s} = \emptyset$, then $I_t = I_s$.
  \end{itemize}
  We define a sequence of stopping times as follows.
  \[      \sigma_0 = 0 , \]
  \[       \tau_0 = \inf\{t: |\gamma(t)| = 1\} = \inf\{t: \gamma(t)
  \in \overline{I_{\sigma_0}}\}. \]
  Recursively, if $\tau_k < \infty$,
  \[     \sigma_{k+1} = \inf\left\{t > \tau_k: \newernot_t(w)  = \frac 12 \, \newernot_{\tau_k}(w)
   \mbox{ or } \newernot_t(z)  = \frac 12 \, \newernot_{\tau_k}(z) \right\},\]
   and if $\sigma_{k+1} < \infty$,
   \[    \tau_{k+1} = \inf\{t \geq \sigma_{k+1} : \gamma(t) \in \overline{I_{\sigma_{k+1}}}\}.\]
  If one of the stopping times takes on the value infinity, then all the subsequent ones
  are set equal to infinity.  If $\sigma_{k+1} < \infty$, we set $R_k = z$ if
  $\newernot_{\sigma_{k+1}}(z) =\newernot_{\tau_k}(z)/2.$  Note that in this case,
  \[ \Delta_{\sigma_{k+1}} (z) \leq q\, \Delta_{\tau_k}(z) \mbox{ where }
    q =  2^{-\frac{1}{4a-1}}  < 1, \]
    and
  $\newernot_{t}(w) > \newernot_{\tau_k}(w)/2$ for all $t \leq \tau_{k+1}$.
  Likewise, we set $R_k = w$ if $\newernot_{\sigma_{k+1}}(w) =\newernot_{\tau_k}(w)/2.$

 It follows immediately from \eqref{sept5.1}, that for $r \leq 1/2$,
 \[
  \Prob\left \{    \newernot_{\tau_{0}}(z)
     \leq r \, \newernot_{0}(z) \right\} \leq c \, r , \]
   and for $r$ sufficiently small
   \[  \Prob\left \{  \newernot_{\tau_{0}}(w)
     \leq r \, \newernot_{0}(w) \right\} =0.\]

The key estimate, which we now establish,  is the following.
\begin{itemize}
\item  There exists $c,\alpha$ such that if $\tau_k < \infty, 0 <r \leq 1/2$ and $\zeta = x+iy \in \{z,w\}$,
then
\begin{equation}  \label{sept8.2}
  \Prob\left \{ \tau_{k+1} < \infty,  R_{k} = \zeta,  \newernot_{\tau_{k+1}}(\zeta)
     \leq r \, \newernot_{\tau_k}(\zeta) \mid \gamma_{\tau_k}\right\} \leq c \, r \, \newernot_{\tau_k}(\zeta)^\alpha.
     \end{equation}
  \end{itemize}
Let $H = H_{\tau_k}, I = I_{\tau_k},\hat  g = g_{\tau_k} - U_{\tau_k},
 \hat I = \hat g(I), \hat \zeta = \hat g(\zeta), \Delta = \Delta_{\tau_k}(\zeta),
 \newernot = \newernot_{\tau_k}(\zeta), \lambda = |g'(\zeta)|$.
 Recall that $\Delta^{4a-1} \leq \newernot$.
 If $ \newernot_t(\zeta) = r \newernot$ then $|\zeta - \gamma(t)| =
 \theta \, \Delta$ where
 \[   \theta = \left[\frac{y\wedge \Delta}{\Delta} \vee r\right]^{\frac 1{4a-1}}
   \,   \left [ \frac {r\Delta} { y \wedge \Delta} \wedge 1\right]^{\frac 1{2-d}}. \]
 Note that if $r \leq 1/2$ then $\theta \leq2^{-\frac 1{4a-1}}  < 1$.

 Let $V$ denote the closed
  disk of radius $2^{-\frac 1{4a-1}}  \Delta$ about $\zeta$, $y_*
 = y \vee (\theta\Delta/2)$ and
 $\zeta_* = x + y_*i \in V$.
  Note that $g$ is a conformal transformation defined on the open disk of radius
 $\Delta$ about $\zeta$ (if $y < \Delta$, then we extend $g$ by Schwarz
 reflection).  Hence by the distortion theorem, there exist $0 < c_1 < c_2 < \infty$
 such that if $\zeta_1 \in V$,
 \[                     c_1 \, \lambda \leq |\hat g'(\zeta_1)| \leq c_2 \, \lambda, \]
 \[                    c_1 \, \lambda \, |\zeta_1 - \zeta| \leq
             |\hat g(\zeta_1) - \hat \zeta| \leq c_2 \, \lambda \, |\zeta_1 - \zeta|. \]
%
%
  In particular, 
 \[               c_1 \, \lambda \, y \leq \Im \hat \zeta \leq c_2 \, \lambda \, y . \]

%
%
%
%
%
%
%
%
Note that   $\hat I$ is a crosscut of
 $\Half$ with one endpoint equal to zero.  We consider separately the cases where
 $\hat \zeta$ is in the bounded or unbounded component of $\Half \setminus \hat I$.

Let $E_1$ denote the event that
 $\hat \zeta$ is in the bounded component.  We claim that there
 exists $c < \infty$, such that for all $\hat \zeta' = \hat g(\zeta')
  \in \hat g( V)$,
\begin{equation}  \label{sept8.1}
        S(
       \hat \zeta') =    \frac{\Im(\hat \zeta') }{|\hat \zeta'|}
        \leq c \, \Delta^{1/2}
           \end{equation}
  To see this, assume for ease that $\Re[\hat \zeta'] \geq 0$
  and let $\Theta = \arg \hat \zeta'$.  Then     $\Im(\hat \zeta')/|\hat \zeta'| =
  \sin \Theta  \leq \Theta$ and $\Theta/\pi$ is the probability
  that a Brownian motion starting at $\hat \zeta'$ hits $(-\infty,0]$
  before leaving $\Half$.  This is bounded above  by
    the probability that a Brownian motion starting at $\hat \zeta'$
 hits $\hat I$ before leaving $\Half$.  By conformal invariance, this
 last  probability is the
 same as the probability that a Brownian motion starting at $ \zeta'$ hits
 $I$ before leaving $H$.   The Beurling
 estimate implies that this is bounded above by
 $c \Delta^{1/2}$.  This gives \eqref{sept8.1}.   Therefore, there exists
 $c$ such that if $|\zeta - \gamma(t)| = \theta \Delta$, then
 \[             \newernot(\hat g(\gamma(t))) \leq
       c \, \Delta^{(4a-1)/2} \, r \,|\gamma(t)| \leq c\, \sqrt{\newernot}
        \, r |\gamma(t)|. \]
       Using \eqref{sept5.1}, we see
  that
  \[  \Prob\{\newernot(\zeta) \leq r \, \newernot_{\tau_k}(\zeta), E_1 \mid
  \gamma_{\tau_k} \} \leq   c \, \sqrt{\newernot}\, r  . \]

%
%
%
%

  We now suppose that $\hat \zeta$ is in the unbounded component.
  By the same argument, for every $\hat \zeta' :=
  \hat g(\zeta') \in  \hat g( V)$, the probability
  that a Brownian motion starting at $\hat \zeta':= \hat g(\zeta')$ 
  hits $\hat I$ before leaving
  $\Half$ is bounded above by   $c\Delta^{1/2}$.  We will split
  into two subcases.  We first assume that
  \[            \Im(\hat \zeta') \leq  \Delta^{1/4} \, |\hat \zeta'|, \;\;\;\;
   \zeta' \in V. \]
  In this case, we an argue as in the previous paragraph to see that
  the probability $SLE_\kappa$ in $\Half$ hits $\hat g( V)$ is bounded above
  by  $c \, \newernot^{1/4} \, r$.  For the other case we assume that
  $\Im( \hat \zeta') \geq \Delta^{1/4} \,  |\hat \zeta'|$ for some $
  \hat\zeta' \in
  \hat g( V)$.  Using the Poisson kernel in $\Half$, we can see that the
  probability that a Brownian motion starting at $\hat \zeta'$ hits $\hat I$ before
  leaving $\Half$ is bounded below by a constant times
  \[                         \frac{\diam(\hat I)}{\Delta^{1/4} \, |\hat \zeta '|}. \] From
  this we conclude that
  \[              \diam(\hat I) \leq  c \, \Delta^{1/4}
   \, |\hat \zeta'|   . \]
  We appeal    to Proposition \ref{brentprop} to say that the probability
  that $SLE_\kappa$ in $\Half$ hits $\hat g(V)$ and then returns
  to $\hat I$ is bounded above by a constant times
  \[       r \, [\diam \hat I/|\hat \zeta'|]^{(4a-1)/2} \leq c\,  r \, \newernot^{1/8}.\]

  Given \eqref{sept8.2}, the remainder of the proof proceeds in the
  same way  as \cite[Section 4.4]{LW} so we omit this.

   \end{proof}

%
%
 \begin{proposition} \label{oct23.prop1}
There exist
 $0 < c_1 < c_2 < \infty$ such that if $|z| \leq |w|/4$,
 \[     c_1\,  G(z) \, G(w) \leq G(z,w) \leq c_2 \, G(z) \, G(w) .\]
 \end{proposition}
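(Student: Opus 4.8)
The plan is to get the lower bound for free and to deduce the upper bound from Proposition~\ref{nov17.prop1}. The lower bound $c_1\,G(z)\,G(w) \leq G(z,w)$ needs no work here: it is proved in \cite{LZ} that $G(z)\,G(w) \leq c\,G(z,w)$ for \emph{all} $z,w \in \Half$, with no separation hypothesis whatsoever. So the whole content of the proposition is the upper bound $G(z,w) \leq c_2\,G(z)\,G(w)$, and for this I would express both sides through the hitting probabilities in \eqref{nov17.1}--\eqref{nov17.2} and then invoke the two-point decoupling estimate of Proposition~\ref{nov17.prop1}.

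First I would fix $z,w$ with $|z| \leq |w|/4$ and choose $\epsilon,\delta>0$ small --- small enough that \eqref{nov17.1} and \eqref{nov17.2} apply and that $\epsilon < \Im(z)$, $\delta < \Im(w)$. On the event $\{\Delta(z)\leq\epsilon\}$ one is in the ``interior regime'', $\Delta_\infty(z)\leq\epsilon<\Im(z)$, so $\newernot(z) = \Im(z)^{4a-1}\,[\Delta(z)/\Im(z)]^{2-d}$; since $\newernot_t(z)$ is an increasing function of $\Delta_t(z)$ and $\newernot_0(z) = |z|^{4a-1}$, one gets the exact set identity $\{\Delta(z)\leq\epsilon\} = \{\newernot(z)\leq\mu_z\,\newernot_0(z)\}$, where a short computation using $\beta = (4a-1)-(2-d) = (4a-1)+(d-2)$, $S(z)=\Im(z)/|z|$ and $G(z)=\Im(z)^{d-2}\,S(z)^{4a-1}$ gives
\[
  \mu_z \;=\; \frac{\Im(z)^{(4a-1)-(2-d)}}{|z|^{4a-1}}\;\epsilon^{2-d} \;=\; G(z)\,\epsilon^{2-d}.
\]
The same computation gives $\{\Delta(w)\leq\delta\} = \{\newernot(w)\leq\mu_w\,\newernot_0(w)\}$ with $\mu_w = G(w)\,\delta^{2-d}$, and for $\epsilon,\delta$ small both $\mu_z,\mu_w$ lie in $(0,1]$.

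Next, since $|z|\leq|w|/4\leq 4|w|$, Proposition~\ref{nov17.prop1} applies (with its free parameters $\epsilon_z,\epsilon_w$ taken to be $\mu_z,\mu_w$) and yields
\[
  \Prob\{\Delta(z)\leq\epsilon,\ \Delta(w)\leq\delta\}
   \;=\; \Prob\{\newernot(z)\leq\mu_z\,\newernot_0(z),\ \newernot(w)\leq\mu_w\,\newernot_0(w)\}
   \;\leq\; c\,\mu_z\,\mu_w
   \;=\; c\,G(z)\,G(w)\,\epsilon^{2-d}\,\delta^{2-d}.
\]
On the other hand \eqref{nov17.2} gives $\Prob\{\Delta(z)\leq\epsilon,\ \Delta(w)\leq\delta\}\asymp G(z,w)\,\epsilon^{2-d}\,\delta^{2-d}$, with implicit constants depending only on $\kappa$. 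Comparing the two displays and cancelling $\epsilon^{2-d}\delta^{2-d}$ yields $G(z,w)\leq c_2\,G(z)\,G(w)$ with $c_2 = c_2(\kappa)$, as desired.

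The argument is short precisely because all the real difficulty has been front-loaded into Proposition~\ref{nov17.prop1}, which is the hard estimate; I therefore do not anticipate a genuine obstacle in this deduction. The one thing that requires care is the exponent bookkeeping in the computation establishing $\mu_z = G(z)\,\epsilon^{2-d}$: one must check that the change of variables between $\{\Delta\leq\epsilon\}$-events and $\{\newernot\leq\mu\,\newernot_0\}$-events is \emph{exactly} compatible with the normalizations in \eqref{nov17.1}--\eqref{nov17.2}, so that $\mu_z$ comes out to be precisely $G(z)\,\epsilon^{2-d}$ and not merely a comparable quantity. With that bookkeeping in place, the comparison of the two displays delivers both inequalities.
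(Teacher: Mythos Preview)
Your proposal is correct and follows essentially the same approach as the paper: cite \cite{LZ} for the lower bound, and for the upper bound translate the $\{\Delta\leq\epsilon\}$ events into $\{\newernot\leq\mu\,\newernot_0\}$ events, apply Proposition~\ref{nov17.prop1}, and compare with \eqref{nov17.2}. The paper's proof is terser (it simply asserts the factorization of the two-point probability follows from Proposition~\ref{nov17.prop1} and then invokes \eqref{nov17.1}--\eqref{nov17.2}), whereas you spell out the exact identity $\mu_z=G(z)\,\epsilon^{2-d}$; this is a nice touch but not a different argument.
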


 \begin{proof}
 The bound $ G(z,w) \geq c \, G(z) \, G(w)$ was proved in \cite{LZ} so
 we need only show the other inequality. Proposition
 \ref{nov17.prop1} implies that for $\epsilon$ sufficiently small
 \[        \Prob\{\Delta(z) \leq \epsilon, \Delta(w) \leq \epsilon\}
   \leq c \, \Prob\{\Delta(z) \leq \epsilon\} \, \Prob\{\Delta(w) \leq \epsilon\} . \]
   Hence \eqref{nov17.1} and \eqref{nov17.2} imply that
   $G(z,w) \leq c \, G(z) \, G(w)$.
   \end{proof}

%

  The next estimate will be important even though it is not a very
  sharp bound for large $|z|,|w|$.

   \begin{proposition}  \label{sept9.prop1}
   For every $\epsilon >0$, there exists $c < \infty$ such that
   if $|z|,|w| \geq \epsilon$ and $|z-w| \geq \epsilon$, then
   \[         G(z,w) \leq c \, \Im(z)^{4a-1} \, \Im(w)^{4a-1} .\]
   \end{proposition}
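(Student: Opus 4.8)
The plan is to derive the bound from the hitting‑probability description \eqref{mar29.2} of $G(z,w)$ together with Proposition \ref{mar14.prop1}. Up to a universal constant,
\[
  G(z,w)=\lim_{\epsilon',\delta'\downarrow 0}\epsilon'^{\,d-2}\delta'^{\,d-2}\,\Prob\{\Upsilon_\infty(z)\le\epsilon',\ \Upsilon_\infty(w)\le\delta'\}.
\]
First I would split the event on the right according to which of $z,w$ is reached (to the relevant scale) first; by the symmetric roles of $z$ and $w$ it suffices to bound the part $A$ on which $z$ comes first. Let $\tau=\inf\{t:\Upsilon_t(z)\le\epsilon'\}$; on $A$ we have $\tau<\infty$, $\Upsilon_\tau(w)>\delta'$, and, since $\Upsilon_t(\cdot)\asymp\dist(\cdot,\p H_t)$ and $|z-w|\ge\epsilon$, also $\dist(w,\gamma_\tau)\ge\epsilon/2$ once $\epsilon'$ is small. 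Conditioning on $\F_\tau$ and applying Proposition \ref{mar14.prop1} to the future curve in $H_\tau$ gives
\[
   \Prob\{\Upsilon_\infty(w)\le\delta'\mid\F_\tau\}\ \asymp\ \delta'^{\,2-d}\,M_\tau(w),\qquad M_\tau(w)=G_{H_\tau}(w;\gamma(\tau),\infty),
\]
so that $\Prob(A\cap\{\Upsilon_\infty(z)\le\epsilon',\Upsilon_\infty(w)\le\delta'\})\lesssim\delta'^{\,2-d}\,\E[M_\tau(w)\,;\,\tau<\infty]$. Since $\Prob\{\tau<\infty\}=\Prob\{\Upsilon_\infty(z)\le\epsilon'\}\asymp G(z)\,\epsilon'^{\,2-d}$ by Proposition \ref{mar14.prop1}, the whole estimate reduces to a pointwise bound on $M_\tau(w)$ on $\{\tau<\infty\}$.

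The key estimate is $M_\tau(w)\le c_\epsilon\,\Im(w)^{\beta}$ on $\{\tau<\infty\}$, where $\beta=(4a-1)-(2-d)=\tfrac\kappa8+\tfrac8\kappa-2>0$. Write $M_\tau(w)=\Upsilon_{H_\tau}(w)^{\,d-2}\,S_{H_\tau}(w;\gamma(\tau),\infty)^{\,4a-1}$. For the conformal radius, the Koebe theorem and $\p H_\tau\subset\R\cup\gamma_\tau$ give $\Upsilon_{H_\tau}(w)\asymp\dist(w,\p H_\tau)\ge\tfrac12\min\{\Im(w),\dist(w,\gamma_\tau)\}\ge\tfrac12\min\{\Im(w),\epsilon/2\}$, so $\Upsilon_{H_\tau}(w)^{\,d-2}\le c_\epsilon\,\Im(w)^{\,d-2}$ when $\Im(w)\le\epsilon/2$ and $\le c_\epsilon$ otherwise. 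For the sine factor: if $\Im(w)>\epsilon/2$ use $S_{H_\tau}\le 1$; if $\Im(w)\le\epsilon/2$, then $w$ is close to $\R$ and at distance $\ge\epsilon/2$ from $\gamma_\tau$, and a one dimensional gambler's ruin estimate bounds the probability that a Brownian motion from $w$ reaches height $\epsilon/2$ before hitting $\R$ by $O(\Im(w)/\epsilon)$; since $\p H_\tau$ away from the segment of $\R$ adjacent to $w$ is either at height $\ge\epsilon/2$ or screened from $w$ by $\gamma_\tau$, this forces the smaller of the two harmonic measures in \eqref{hmeasure} to be $O_\epsilon(\Im(w))$, whence $S_{H_\tau}(w;\gamma(\tau),\infty)\le c_\epsilon\,\Im(w)$. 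Multiplying the factors yields $M_\tau(w)\le c_\epsilon\,\Im(w)^{(d-2)+(4a-1)}=c_\epsilon\,\Im(w)^{\beta}$ when $\Im(w)\le\epsilon/2$, and $M_\tau(w)\le c_\epsilon\le c_\epsilon'\,\Im(w)^{\beta}$ when $\Im(w)>\epsilon/2$.

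Combining this with $G(z)=\Im(z)^{\beta}\,|z|^{-(4a-1)}\le\epsilon^{-(4a-1)}\,\Im(z)^{\beta}$ gives
\[
  \Prob(A\cap\{\Upsilon_\infty(z)\le\epsilon',\Upsilon_\infty(w)\le\delta'\})\ \le\ c_\epsilon\,\epsilon'^{\,2-d}\delta'^{\,2-d}\,\Im(z)^{\beta}\Im(w)^{\beta};
\]
adding the symmetric contribution, multiplying by $\epsilon'^{\,d-2}\delta'^{\,d-2}$ and letting $\epsilon',\delta'\downarrow0$ gives $G(z,w)\le c_\epsilon\,\Im(z)^{\beta}\Im(w)^{\beta}$, the asserted bound (with exponent $\beta=\tfrac\kappa8+\tfrac8\kappa-2$, the power that the argument produces). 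I expect the main obstacle to be the harmonic measure bound in the middle step: one has to use the topology of $\p H_\tau$---which boundary arc $w$ faces, and how $\gamma_\tau$ screens the rest---to obtain $O_\epsilon(\Im(w))$ rather than the cruder $O_\epsilon(\sqrt{\Im(w)})$ that the Beurling estimate gives directly; a secondary point is making the ``$z$ first'' decomposition precise and controlling the uniformity of Proposition \ref{mar14.prop1} over the random domain $H_\tau$ as $\epsilon',\delta'\downarrow0$.
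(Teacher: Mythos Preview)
Your argument yields the exponent $\beta=(4a-1)-(2-d)$, not the exponent $4a-1$ that appears in the statement, and you are right to flag this. In fact the statement as written cannot hold: take $z=1+i\delta$, $w=2+i\delta$ with $\delta\downarrow 0$ and $\epsilon=1$. Since $|z|,|w|\asymp 1$ one has $G(z)\asymp G(w)\asymp\delta^{\beta}$, and the lower bound $G(z,w)\ge c\,G(z)G(w)$ from \cite{LZ} (recalled just before Proposition~\ref{oct23.prop1}) gives $G(z,w)\ge c\,\delta^{2\beta}$, which is incompatible with $G(z,w)\le c\,\delta^{2(4a-1)}$ for small $\delta$ because $\beta<4a-1$. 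So the correct exponent in the proposition is $\beta$, and that is exactly what your argument produces. The paper's own proof sketch---``as in the previous proposition, with an appropriate splitting crosscut $I_0$''---runs the iterative argument of Proposition~\ref{nov17.prop1} and likewise yields $G(z,w)\le c_\epsilon\,G(z)G(w)\le c_\epsilon\,\Im(z)^{\beta}\Im(w)^{\beta}$; the appearance of $4a-1$ in the statement (and in the surrounding display $G(w)\asymp y_w^{4a-1}$) is a slip.

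Your route is genuinely different from the paper's. You stop at a single time $\tau$ and take a \emph{pointwise} bound on $M_\tau(w)$, while the paper argues through the two-scale quantity $\Phi$ and an inductive sequence of crosscuts $I_0\supset I_1\supset\cdots$ separating $z$ from $w$. Your approach is shorter and more transparent for this particular estimate; the paper's machinery is what is needed for the sharper two-sided bound of Theorem~\ref{sept8.theorem}. The harmonic-measure step you single out is fine: under $\Im(w)\le\epsilon/2$ and $\dist(w,\gamma_\tau)\ge\epsilon/2$ the half-ball $B(w,\epsilon/2)\cap\Half$ lies in $H_\tau$, its real segment is a connected piece of $\partial H_\tau$ containing neither $\gamma(\tau)$ nor $\infty$ and hence lies in a single boundary arc, and a Brownian motion from $w$ must exit this half-ball through its circular part (probability $O(\Im(w)/\epsilon)$) before it can reach the other arc, giving $S_{H_\tau}(w)\le c_\epsilon\Im(w)$ via \eqref{hmeasure}. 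For the uniformity issue, it is cleanest to bypass Proposition~\ref{mar14.prop1} altogether and work directly with $\hat G(z,w)=G(z)\,\E^*_z[M_{T_z}(w)]$ from \eqref{mar29.4}: your pointwise bound on $M_{T_z}(w)$ then gives $\hat G(z,w)\le c_\epsilon\,G(z)\,\Im(w)^{\beta}$ without any limiting argument.
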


   \begin{proof}  By scaling it suffices to prove the result
   when $\epsilon = 1$.  This can be done as the  proof
   of the previous proposition, so we omit
   the details.  The key step is to choose an appropriate splitting
   curve $I_0$.   We can choose $I_0$ either to be a half-circle with
   endpoints on $\R$ or a vertical line.  We choose $I_0$ so that
   $I_0$ separates $z$ and $w$ and $\dist(z,I_0), \dist(w,I_0)
   \geq 1/4$.
   \end{proof}

 We will now prove  Theorem
   \ref{sept8.theorem}.
    By scaling, we may assume that $|w| = 1$ and hence $q = |w-z|$.
If $q \geq 1/10$,  the conclusion is
 \[            G(z,w) \asymp G(z) \, G(w).  \]
 The bound $G(z,w) \geq c \, G(z) \, G(w)$ was done in \cite{LZ}.
The other inequality
   can be deduced from Propositions \ref{oct23.prop1}
 and \ref{sept9.prop1}, respectively, for $|z| \leq 1/4$ and $|z|
 \geq 1/4$.  Here we use the fact that $G(z) \geq \Im(z)^{4a-1}$
 for $|z| \leq 1$.

 For the remainder of the proof we assume $q \leq 1/10$, and hence
 $9/10 \leq |z| \leq 1$.
 Let $z = x_z + i y_z, w = x_w + i y_w,$  and
  $\zeta = x_w + i(y_w \vee q)$. Note that 
   $G(w) \asymp
  y_w^{4a-1}, G(z) \asymp y_z^{4a-1}$.
  Let $\sigma = \inf\{t:
 |\gamma(t) - w|  = 2 q \},$ and on the event $\{\sigma < \infty\}$,
 let $h = \lambda [g_\sigma- U_\sigma]
 $ where the constant $\lambda$ is chosen so that
 $\Im[h(\zeta)] = 1$. We write
 \[ h(\zeta) =\hat \zeta = \hat x_\zeta + i, \;\;\;\;
     h(z) = \hat z =  \hat x_z + i \hat y_z, \;\;\;\; h(w) =
     \hat w =  \hat x_w + i \hat y_w.\]
      Then
\begin{eqnarray*}
 G(z,w) & = & \E\left[|g_\sigma'(z)|^{2-d} \, |g_\sigma'(w)|^{2-d}
  \, G(Z_\sigma(z), Z_\sigma(w) ); \sigma < \infty \right] \\
 & = & \E\left[|g_\sigma'(z)|^{2-d} \, |g_\sigma'(w)|^{2-d}
 \, \lambda^{2(2-d)}
  \, G(\lambda
  Z_\sigma(z), \lambda Z_\sigma(w) ); \sigma < \infty
  \right] \\ & = & \E\left[|h'(z)|^{2-d} \, |h'(w)|^{2-d} \,
     G(\hat z, \hat w); \sigma < \infty \right] .
     \end{eqnarray*}
The Koebe $(1/4)$-theorem implies that
$             |h'(\zeta)| \asymp q^{-1} . $
   Distortion estimates
 (using Schwarz reflection if $y_w  \leq 2q$) imply that
 \[    |h'(z) | \asymp |h'(w)| \asymp |h'(\zeta)| \asymp q^{-1} ,
        \]\[  |\hat z - \hat w| \asymp 1 , \]
          \[     |\hat z|, |\hat w| \geq c , \]
          \[    \hat y_z \asymp (y_z \wedge q) \, q^{-1} , \;\;\;\; \hat y_w \asymp
          (y_w \wedge q)
           \, q^{-1} . \]
These estimates hold regardless of the value of  $S(\hat \zeta)$.
  If we also know that $S(\hat \zeta) \geq 1/10$, then
  \[     |\hat \zeta | \asymp |\hat z| \asymp | \hat w| \asymp 1.\]
   Hence, by Proposition \ref{sept9.prop1}, we see that
  \[   G(\hat z,\hat w) \leq c  \, \left[\frac{(y_z \wedge q) \, (y_w \wedge q)}
     {q^2} \right]^{4a-1}, \]
     \[     G(\hat z,\hat w) \geq c'  \, \left[\frac{(y_z \wedge q) \, (y_w \wedge q)}
     {q^2} \right]^{4a-1},\;\;\;\; \mbox{ if } S(\hat \zeta) \geq 1/10. \]
     Lemma \ref{oct23} implies that
     \[  \Prob\{\sigma < \infty\}   \asymp \Prob\{\sigma < \infty, S(\hat \zeta)
      \geq 1/10\}
      \asymp \left\{ \begin{array} {ll} y_w^{4a-1} \, (q/y_w)^{2-d} ,&   y_w \geq q \\
          q^{4a-1} , &y_w \leq q . \end{array} \right.  \]
Therefore,
\[  G(z,w) \asymp      y_w^{4a-1} \, (q/y_w)^{2-d} \,  q^{2(d-2)}\,
        \left[\frac{(y_z \wedge q) \, q}
     {q^2} \right]^{4a-1}
       , \;\;\;\; y_w \geq q, \]
     \[ G(z,w) \asymp  q^{4a-1} \, q^{2(d-2)}\,     \left[\frac{(y_z \wedge q) \, y_w}
     {q^2} \right]^{4a-1} , \;\;\;\; y_w \leq q. \]
     If $q \leq y_w \leq 2q$ we can use either  expression.
 If $y_w \leq 2q$, then $y_w \wedge q \asymp y_w,
     y_z \wedge q \asymp y_z, S(w) \vee q \asymp q$ and we can write
      \[  G(z,w)\asymp  q^{2(d-2)} \, q^{1-4a} \,   y_z^{4a-1} y_w^{4a-1}
\asymp q^{d-2} \, [S(w) \vee q]
  ^{-\beta} \, G(z) \, G(w)  . \]
  If $y_w \geq 2q$, then $y_z  \asymp y_w, y_z \wedge q \asymp q,
  S(w) \vee q \asymp y_w$, and we can write
  \[            G(z,w) \asymp y_w^{4a-1}\,q^{d-2} \,y_w^{d-2}
  = q^{d-2} \, y_w^{-\beta} \, y_w^{2(4a-1)}
   \asymp  q^{d-2} \, [S(w) \vee q]
  ^{-\beta} \, G(z) \, G(w). \]

\section{Proof of Theorem \ref{jul23.theorem1}}  \label{newsection}

By scaling and translation invariance,
we may assume that $z = 0$
and $\dist(z,\p D) = 1$. We first consider the case
$D = \Disk$, $w_1 = 1,$
and $w_2 = w = e^{2\theta i}$.  Note that
\[  G_\Disk(0;1,e^{2\theta i})
 =    S_{\Disk}(0;1,e^{2\theta i})^{4a-1} = \sin^{4a-1} \theta . \]

Let
$\Disk_t = e^{-t} \, \Disk$.
Let $\gamma$ be a chordal $SLE_\kappa$
path from $0$ to $w=e^{2i\theta}$ in
$\Disk$ and 
\[q(t,\theta) = [\sin\, \theta] ^{1-4a} \,
\Prob\left\{\dist(0,\gamma) \leq e^{-t} 
\right\}.\]
 We will use the
radial parametrization  normalized
so that at time
$t$, $|g_t'(0)| = e^{t}$. Here $g_t$
denotes the conformal transformation of
(the connected component containing the
origin) of $\Disk \setminus \gamma_t$ with
$g_t(0) = 0, g_t(\gamma(t)) = 1$.
We define $\theta_t$ by 
  $g_t(w) = e^{2\theta_t i} $.
  Under this parametrization, the local martingale
  is
  \[   M_t = e^{(2-d)t} \, [\sin \theta_t]^{4a-1} . \]
This parameterization ends at the time $T$ which is
the first time that $w$ is disconnected from $0$
by the curve $\gamma_T$; if $d \leq 4$, then $T$ is
the time at which $\gamma(T) = w$. Note that
$\dist(0,\gamma) = \dist (0,\gamma_T)$. 
  
 We will also consider two-sided radial $SLE$ which is
 the measure obtained by tilting by the local martingale
 $M_t$.  We recall some facts (see \cite[Lemmas 2.8 and 2.9]{LW}).
 The invariant probability density is 
  $f(\theta) = c \, \sin^{4a} \theta$.  Moreover, there
 exists $\alpha > 0$ such that if $f_t(\theta') = f_{t,\theta}(\theta')$
 denotes the density at time $t$, then for $t \geq 1$, 
\begin{equation}  \label{jul23.4}
f_t(\theta') = f(\theta') \, [1 + O(e^{-\alpha t})],
\end{equation}
 where the error term is bounded uniformly over the
 starting angle $\theta.$  Here $\alpha > 0$ is a constant
 that could be determined, but we will not need its exact value.
 Let
 \[       q(t) = \int_{0}^\pi q(t,\theta) \, f(\theta) \, d\theta.\]
 

\begin{proposition}  There exists $c < \infty$
such that for all $\theta$ and
all  $t \geq 1,s \geq 2$,
\[         q(s+ c e^{-s}) \,[1 - ce^{-\alpha t}]
 \leq    e^{t(2-d)}
  \, q(t+s,\theta)
    \leq q(s - c e^{-s})\,[1 + ce^{-\alpha t}].\]
 In particular,
 \[         q(s+ c e^{-s}) \,[1 - ce^{-\alpha t}]
 \leq    e^{t(2-d)}
  \, q(t+s)
    \leq q(s - c e^{-s})\,[1 + ce^{-\alpha t}].\]
\end{proposition}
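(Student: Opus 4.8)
The plan is to run the radial $SLE$ up to capacity-time $t$, use the domain Markov property to rewrite $q(t+s,\theta)$ in terms of $q$ evaluated near $s$, and then average over the angle $\theta_t$ using the ergodicity estimate \eqref{jul23.4}. Throughout $s\geq 2$ and $t\geq 1$, and $c$ may be taken to be the maximum of a universal constant produced by the Koebe theorem and the constant implicit in \eqref{jul23.4}. Two deterministic facts set the stage. Since the parametrization is normalized so that $\Upsilon_{H_t}(0)=e^{-t}/2$, Koebe gives $\dist(0,\p H_t)\asymp e^{-t}$ and hence $\dist(0,\gamma_t)\geq e^{-t}/4>e^{-(t+s)}$ for $t<T$; applying the same bound at the disconnection time $T$ shows that for $s\geq 2$ the event $\{\dist(0,\gamma)\leq e^{-(t+s)}\}$ has probability $0$ on $\{t\geq T\}$. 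Combined with $\dist(0,\gamma)=\dist(0,\gamma_T)$ and the fact that $\gamma[T,\infty)$ lies in the component of $\Disk\setminus\gamma_T$ not containing $0$, this shows that on $\{t<T\}$ the events $\{\dist(0,\gamma)\leq e^{-(t+s)}\}$ and $\{\dist(0,\gamma[t,\infty))\leq e^{-(t+s)}\}$ coincide.

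The key step compares $\dist(0,\gamma[t,\infty))$ with the distance-to-$0$ of the image curve $\tilde\gamma:=g_t(\gamma[t,\infty))$. On $\{t<T\}$, $g_t^{-1}:\Disk\to H_t$ is conformal with $g_t^{-1}(0)=0$ and $|(g_t^{-1})'(0)|=e^{-t}$, so the Koebe growth theorem gives $|g_t^{-1}(\zeta)|=e^{-t}|\zeta|\,(1+O(|\zeta|))$ for $|\zeta|$ small, and therefore, for $s\geq 2$,
\[ \{\dist(0,\tilde\gamma)\leq e^{-(s+ce^{-s})}\}\subseteq\{\dist(0,\gamma[t,\infty))\leq e^{-(t+s)}\}\subseteq\{\dist(0,\tilde\gamma)\leq e^{-(s-ce^{-s})}\}. \]
By the domain Markov property, conditionally on $\F_t$ (on $\{t<T\}$) the curve $\tilde\gamma$ is an $SLE_\kappa$ from $1$ to $e^{2\theta_t i}$ in $\Disk$, so $\Prob\{\dist(0,\tilde\gamma)\leq e^{-u}\mid\F_t\}=[\sin\theta_t]^{4a-1}\,q(u,\theta_t)$ by the definition of $q$. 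Hence on $\{t<T\}$,
\[ [\sin\theta_t]^{4a-1}\,q(s+ce^{-s},\theta_t)\;\leq\;\Prob\{\dist(0,\gamma)\leq e^{-(t+s)}\mid\F_t\}\;\leq\;[\sin\theta_t]^{4a-1}\,q(s-ce^{-s},\theta_t). \]

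Now multiply by $e^{t(2-d)}[\sin\theta]^{1-4a}$ and take expectations. On $\{t<T\}$ the prefactor $e^{t(2-d)}[\sin\theta_t]^{4a-1}$ is the radial local martingale $M_t$; on $\{t\geq T\}$ both sides vanish and $M_{t\wedge T}=0$. Since $[\sin\theta]^{1-4a}=M_0^{-1}$ and two-sided radial $SLE$ is chordal $SLE$ reweighted by $M_{t\wedge T}/M_0$, this yields
\[ \E_\theta^*\!\left[q(s+ce^{-s},\theta_t)\right]\;\leq\;e^{t(2-d)}\,q(t+s,\theta)\;\leq\;\E_\theta^*\!\left[q(s-ce^{-s},\theta_t)\right]. \]
By \eqref{jul23.4}, for $t\geq 1$ the law of $\theta_t$ under $\Prob_\theta^*$ has density $f(\theta')[1+O(e^{-\alpha t})]$, uniformly in $\theta'$ and in the starting angle; and $q(u):=\int_0^\pi q(u,\theta')f(\theta')\,d\theta'<\infty$ for every $u$, since the factors $[\sin\theta']^{1-4a}$ and $[\sin\theta']^{4a}$ coming from $q$ and $f$ multiply to $\sin\theta'$ while the remaining probability factor is $\leq 1$, so $q(u,\theta')f(\theta')\leq c\sin\theta'$. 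Therefore $\E_\theta^*[q(u,\theta_t)]=q(u)[1+O(e^{-\alpha t})]$, which is the first assertion; integrating it in $\theta$ against the probability density $f$ — under which the $\theta$-free bounds are unchanged — gives the second.

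The main obstacle is the bookkeeping around $g_t^{-1}$ and the disconnection time $T$: one must check that $\dist(0,\gamma)$ is realized by time $T$, that the growth-theorem comparison produces the clean factor $e^{\pm c e^{-s}}$ rather than an uncontrolled constant, and that reweighting the chordal expectation by $M_{t\wedge T}$ genuinely yields a two-sided radial expectation — this last point requires that $M_{t\wedge T}$ is a true martingale vanishing at $T$, which is where the continuity of two-sided radial at $T_z$ (and $\kappa<8$) enters. The distortion estimates and the invocation of \eqref{jul23.4} are routine.
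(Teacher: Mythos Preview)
Your proof is correct and follows essentially the same route as the paper: both arguments condition on the curve at radial-capacity time $t$, use Koebe/distortion to trap $g_t(\Disk_{t+s})$ (equivalently $g_t^{-1}$ of a disk of radius $\approx e^{-s}$) between concentric disks $\Disk_{s\pm c e^{-s}}$, invoke the domain Markov property to recognize $q(\cdot,\theta_t)$, tilt by the local martingale $M_t=e^{(2-d)t}S_t^{4a-1}$ to pass to the two-sided radial measure, and finish with the equilibrium estimate \eqref{jul23.4}. The only cosmetic difference is that you work with $g_t^{-1}$ and the growth theorem while the paper works with $g_t$ and the distortion theorem; you also spell out the integrability $q(u,\theta')f(\theta')\leq c\sin\theta'$ and the handling of $\{t\geq T\}$ a bit more explicitly than the paper does.
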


\begin{proof}

The Koebe
$(1/4)$-theorem implies that the domain of
$g_t$ includes $ \Disk_{t + \log 4}$.
Using the distortion theorem, we see that if
$z \in \Disk_{t+2}$, then
\[ |g_t(z)|= e^{t} \,|z| [1 +O(e^{t}|z|)].\]
In particular, if $|z| = e^{-(s+t)},$
then
\[    |g_t(z)| = e^{-s} \, [1 + O(e^{-s})]
= \exp \left\{-s + O(e^{-s})\right\}.\]
Therefore, there exists $c_1$ such that
on the event $\{T > t\}$,
\begin{equation}  \label{jul21.3} 
 \Disk_{s + c_1 e^{-s}}
  \subset
     g_t(\Disk_{s+t}) \subset \Disk_{s - c_1e^{-s}} 
.
\end{equation}

Let $\xi =\xi_{s+t}  = \inf\{r: |\gamma(r)| = e^{-(t+s)}\}.$
Let
\[   Y(t,s) = \Prob\{\xi < T \mid
\gamma_{t \wedge T}\}. \]
Since $s \geq 2$, the Koebe $(1/4)$-theorem implies
that  $Y(t,s) = 0$ if $T \leq  t$.
The domain Markov property implies that
\[     q(t+s,\theta) = [\sin \theta]^{1-4a} \, \E\left[Y(t,s) \right]
 =  [\sin \theta]^{1-4a} \,\E \left[Y(t,s)\,;\, T > t\right]
  ,\]
  where here and below $\E$ denotes expectation with respect to chordal $SLE_\kappa$ from
  $0$ to $w$.  
  If $S_t = \sin \theta_t$, we know that
\[   M_t = e^{(2-d)t} \, S_t^{4a-1} \]
is a local martingale with $\Prob\{M_T
= 0\} = 1$.  Therefore,
\begin{eqnarray*}
\E \left[Y(t,s)\,;\, T > t  \right]
 & = & e^{(d-2)t} \,
 \E \left[Y(t,s)\,M_t \,
 S_t^{1-4a};\,T > t \right]\\
 & = & e^{(d-2)t} \, S_0^{4a-1}
  \, \E^*\left[Y(t,s) \, S_t^{1-4a} \right].
 \end{eqnarray*}
 Here $\E^*$ denotes the  measure
 obtained by tilting by $M$
  which is the same as the two-sided radial measure.  Again
 the initial $\theta$ is implicit in the
 notation.
 Hence,
 \[ q(t+s,\theta) =  e^{(d-2)t} 
  \, \E^*\left[Y(t,s) \, S_t^{1-4a} \right].\]
 By the strong Markov property, $
  Y(t,s)$ is
 the probability that a chordal $SLE$ from $0$
 to $e^{2i\theta_t}$ enters $g_t(\Disk_{t+s})$.
 Using \eqref{jul21.3}, we see that
for $s \geq 2$,
\[         q(s +c_1 e^{-s}, \theta_t)
 \leq   S^{1-4a}_t\,    Y(t,s) 
 \leq q(s -c_1 e^{-s},\theta_t). \]
From \eqref{jul23.4} we see that 
\[
 \E^*\left[Y(t,s) \, S_t^{1-4a} \right]
   \leq q(s-c_3e^{-s}) \, [1 + O(e^{-\alpha
    t})], \]
 \[
 \E^*\left[Y(t,s) \, S_t^{1-4a} \right]
   \geq q(s+c_3e^{-s}) \, [1 - O(e^{-\alpha
    t})]. \]
This completes the proof. \end{proof}

The next proposition finishes the proof of
Theorem \ref{jul23.theorem1} in the case
$D = \Disk$ with $u = \alpha/2$.

\begin{proposition}  There exist $\hat c,
c$ such
that for all $\theta$ and all $t \geq 2$,
   \[    |e^{t(2-d)} \, q(t,\theta)
         - \hat c| \leq c \, e^{-t \alpha/2}.\] 
\end{proposition}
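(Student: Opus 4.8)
The plan is to rescale: set $\psi(t,\theta)=e^{t(2-d)}q(t,\theta)$, with angular average $\bar\psi(t)=e^{t(2-d)}q(t)=\int_0^\pi\psi(t,\theta)\,f(\theta)\,d\theta$, identify $\hat c=\lim_{t\to\infty}\bar\psi(t)$, and then deduce the rate for $\psi$. Throughout, $c$ denotes a positive constant that may change from line to line, $c_0$ is the fixed constant denoted $c$ in the preceding proposition, and $\alpha$ is the constant of \eqref{jul23.4}; since any smaller positive exponent is also an admissible rate in \eqref{jul23.4}, we may and do assume $\alpha\le1$. The standard one-point estimate (a consequence of Proposition \ref{mar14.prop1} together with $\dist(\cdot,\partial D)\le2\Upsilon_D$) gives $\psi(t,\theta)\le c$ uniformly in $\theta$ for $t\ge2$, which we will use to handle the bounded range $2\le t\le4$; so below I concentrate on $t$ large. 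Multiplying the two displays of the preceding proposition by $e^{s(2-d)}$ and absorbing the factors $e^{\pm c_0e^{-s}(2-d)}=1+O(e^{-s})$ (recall $0<2-d<1$), I rewrite them as: for $r\ge1$, $s\ge2$ and $t=r+s$,
\begin{equation}\label{eq:normsand}
(1-ce^{-s})(1-ce^{-\alpha r})\,\bar\psi(s+c_0e^{-s})\le\psi(t,\theta)\le(1+ce^{-s})(1+ce^{-\alpha r})\,\bar\psi(s-c_0e^{-s}),
\end{equation}
and likewise with $\bar\psi(t)$ in place of $\psi(t,\theta)$. The decisive feature of \eqref{eq:normsand} is that the two outer quantities do not depend on $\theta$.

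To see that $\bar\psi$ converges, fix a large $s\ge2$ and let $r\to\infty$ in the $\bar\psi$-version of \eqref{eq:normsand}: this gives $\liminf_{t\to\infty}\bar\psi(t)\ge(1-ce^{-s})\,\bar\psi(s+c_0e^{-s})>0$ (the limit inferior is positive because $\bar\psi$ is everywhere positive and finite) and $\limsup_{t\to\infty}\bar\psi(t)\le(1+ce^{-s})\,\bar\psi(s-c_0e^{-s})<\infty$. Now $s\mapsto s-c_0e^{-s}$ is strictly increasing, with derivative $1+c_0e^{-s}>0$, and maps $[2,\infty)$ onto a half-line, so $\liminf_{s\to\infty}(1+ce^{-s})\,\bar\psi(s-c_0e^{-s})=\liminf_{v\to\infty}\bar\psi(v)$. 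Hence $\limsup_t\bar\psi(t)\le\liminf_t\bar\psi(t)$, so $\bar\psi(t)\to\hat c$ for some $\hat c\in(0,\infty)$.

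Letting $r\to\infty$ with $s$ fixed in the $\bar\psi$-version of \eqref{eq:normsand}, the now-known convergence of $\bar\psi$ gives, for every large $s$, $(1-ce^{-s})\,\bar\psi(s+c_0e^{-s})\le\hat c\le(1+ce^{-s})\,\bar\psi(s-c_0e^{-s})$; that is, $\bar\psi(s+c_0e^{-s})\le\hat c(1+ce^{-s})$ and $\bar\psi(s-c_0e^{-s})\ge\hat c(1-ce^{-s})$. For a given large $v$, choosing $s$ with $s+c_0e^{-s}=v$ forces $s\ge v-c_0$, whence $\bar\psi(v)\le\hat c(1+ce^{-v})$; choosing instead $s$ with $s-c_0e^{-s}=v$ forces $s\ge v$, whence $\bar\psi(v)\ge\hat c(1-ce^{-v})$. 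Thus $|\bar\psi(v)-\hat c|\le c\,e^{-v}$ for all $v$ large. Feeding $\bar\psi(s\pm c_0e^{-s})=\hat c\,(1+O(e^{-s}))$ back into \eqref{eq:normsand}, and using that its outer terms are $\theta$-independent, yields $|\psi(t,\theta)-\hat c|\le c\,(e^{-s}+e^{-\alpha r})$ for $t=r+s$ with $r\ge1$, $s\ge2$, uniformly in $\theta$. Taking $r=s=t/2$ (permissible for $t\ge4$) and recalling $\alpha\le1$, this is $\le c\,(e^{-t/2}+e^{-\alpha t/2})\le c\,e^{-\alpha t/2}$; for $2\le t\le4$ the bound follows from the one-point estimate of the first paragraph (adjusting $c$). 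Since $\psi(t,\theta)=e^{t(2-d)}q(t,\theta)$, this is the assertion.

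The step I expect to be the main obstacle is extracting the limit $\hat c$ from the one-step estimate. A naive iteration of \eqref{eq:normsand} across dyadic scales does not contract---there is no factor $<1$ in front of the value at the smaller scale---so one cannot bound the oscillation of $\bar\psi$ by iteration, and one is tempted to invoke regularity (e.g.\ absence of atoms) of the law of $\dist(0,\gamma)$. The point of the argument above is that this is unnecessary: the comparison $\limsup\le\liminf$ needs only that $s\mapsto s-c_0e^{-s}$ is a cofinal reparametrization, the exponential rate for $\bar\psi$ then follows simply by inverting $s\mapsto s\pm c_0e^{-s}$, and the $\theta$-dependence is controlled for free because the preceding proposition already writes $\psi(t,\theta)$ at time $t$ in terms of the $\theta$-averaged $\bar\psi$ at an earlier time.
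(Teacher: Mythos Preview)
Your proof is correct and follows essentially the same route as the paper's: both rewrite the preceding proposition as a multiplicative comparison between $e^{t(2-d)}q(t+s,\theta)$ and the angle-averaged quantity at time $s$, then split $t=r+s$ with $r=s=t/2$ to obtain the $e^{-\alpha t/2}$ rate. The only stylistic difference is that the paper passes to logarithms and uses a Cauchy-type triangle inequality $|L(t+s)-L(t)|\le|L(t+s)-L(t/2)|+|L(t)-L(t/2)|$ to extract the limit, whereas you argue $\limsup\le\liminf$ directly on $\bar\psi$ and then invert $s\mapsto s\pm c_0e^{-s}$; your version is a bit more explicit about why the $\theta$-dependence disappears.
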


\begin{proof}
By choosing $t' = t  \pm c' e^{-s}$
for large $c'$ in the last proposition, 
we see that 
\[ q(t+s,\theta) = e^{t(d-2)}
 \, q(s) \, [1 + O(e^{-\alpha t})
   + O(e^{- s})].\] 
 If   $L(t)
  = \log  [e^{t(2-d)}
 \, q(s)] $, then this implies that 
   \[  |L(t+s) - L(t)| \leq c \, [e^{-s}
    + e^{-\alpha t}].\]
 If $t \geq 2$ and $s \geq 0$, we have
\[ 
 |L(t+s) - L(t)|
= |L(t+s) - L(t/2)
  + L(t/2) - L(t) |\\
  \leq   c\, e^{-t\alpha/2}
.\]
This implies that $\lim_{t \rightarrow
\infty}L(t) =
L_\infty \in (-\infty,\infty)$ exists and
 \[  L(t) = L_\infty +O(e^{-t\alpha/2}).\]
This gives the result with $\hat c = e^{L_\infty}$.
\end{proof}

To finish the proof of Theorem \ref{jul23.theorem1}
 for general $D$ with $z=0,$ $ \dist(z, \p D)
= 1$, let $F:  \Disk \rightarrow D$ be a conformal transformation
with $F(0) = 0, F(1) = w_1, F(e^{2 \theta i}) = w_2.$
The $\theta$ depends on $D,w_1,w_2$, but conformal
invariance implies that
\[      S_D(0;w_1,w_2) = \sin \theta , \]
and hence
\[     G_D(0;w_1,w_2) = F'(0)^{2-d} \, [\sin \theta]^{4a-1} 
   = F'(0)^{2-d} \, G_\Disk(0;1,e^{2 \theta i}). \]
   The distortion theorem implies that there exists $c < \infty$
   such that 
   \[          [\lambda \epsilon  - c \epsilon^2]
    \, \Disk \subset F^{-1} ( \epsilon\Disk) \leq [\lambda \epsilon
     + c \epsilon^2] \, \Disk, \;\;\;\;
      \lambda = \frac {1}{F'(0)} \in [1/4,1] . \]
 Therefore, by conformal invariance,
\begin{eqnarray*}
 \Prob\{\dist(\gamma,0) \leq \epsilon\}
  & = & \hat c \, [\sin \theta]^{4a-1} \,  (\lambda \epsilon)^{2-d}
    \, [1 + O(\epsilon^u)]  \\
    & = &  \hat c \,  G_D(0;w_1,w_2) \, \epsilon^{2-d}
     \,  [1 + O(\epsilon^u)]  .
    \end{eqnarray*}


\begin{thebibliography}{00}



\bibitem{Bf} V. Beffara (2008).  The dimension of SLE curves, Annals of Probab.
{\bf 36},  1421-1452.\\ 

\bibitem{LJ1} F. Johansson Viklund and G. Lawler (2011). Optimal Holder exponent for the SLE path, Duke Math. J. {\bf 159 }, 351-383.\\



\bibitem{KS} I. Karatzas and S.Shreve. {\em Brownian motion and stochastic calculus, } Volume 113 of {\em Graduate texts in mathematics.} Springer-Verlag, New york, second edition, 1991.\\ 

\bibitem{LLN} S. Lalley, G. Lawler, H. Narayanan (2009).   Geometric interpretation
of half-plane capacity, Electron. Comm. Probab {\bf 14}, 566-571.\\ 


\bibitem{Law1} G. Lawler (2005). {\em Conformally Invariant Processes
in the Plane}, Amer. Math. Soc.\\ 


\bibitem{Law2} G. Lawler (2009). Schramm-Loewner evolution, in {\em statistical mechanics}, S.Sheffield and T. Spencer, ed., IAS/Park City Mathematical Series, AMS (2009), 231-295.\\ 

\bibitem{Law3} G. Lawler, Continuity of radial and two-sided radial SLE, preprint. 


\bibitem{LS} G. Lawler and S. Sheffield (2011). A natural parametrization for the Scheramm-Loewner evolution.   Annals  of Probab.
{\bf 39}, 1896--1937.\\


\bibitem{LSW} G. Lawler. O. Schramm, and W. Werner
(2004).  Conformal invariance of planar loop-erased
random walks and uniform spanning trees, Annals of
Probab. {\bf 32}, 939--995.\\ 
 

\bibitem{LW}  G. Lawler and  B. Werness.  Multi-point Green's function for SLE and an estimate of Beffara, to appear
in Annals of Probab.\\ 

\bibitem{LZ}  G. Lawler and W. Zhou, SLE curves and natural parametrization,  to appear
in Annals of Probab.\\ 




\bibitem{Lind}  J. Lind (2008).  H\"older regularity of the SLE trace, Trans. Amer. Math. Soc. 360 , {\bf 7}, 3557--3578.\\ 

\bibitem{RS} S. Rohde and O. Schramm (2005). Basic properties of
SLE, Annals of Math. {\bf 161}, 879--920.\\ 
 

\bibitem{Sch} O. Schramm (2000).
Scaling limits of loop-erased random walks
and uniform spanning trees, Israel J. Math. {\bf 118}, 221--288.\\ 
 

\bibitem{SS} O. Schramm and S. Sheffield (2005).
Harmonic explorer and its
convergence to SLE(4), Annals of Probab. {\bf 33}, 2127--2148. \\
 


\bibitem{Smir1} S. Smirnov (2001). Critical percolation in the plane: Conformal invariance, Cardy's formula, scaling limits. C. R. Acad. Sci. Paris Ser. I Math. {\bf 333} 239--244.\\
 

\bibitem{Smir2} S. Smirnov (2009). Conformal invariance in random cluster models. I. Holomorphic fermions in the Ising model, Ann of Math. {\bf 172} 1435--1467. 

\end{thebibliography}
\end{document}